\documentclass[11pt]{amsart}
\usepackage[T1]{fontenc}
\usepackage{amssymb}
\usepackage{bm}
\newtheorem{thm}{Theorem}
\newtheorem{lem}{Lemma}[section]
\newtheorem{prop}{Proposition}[section]
\newtheorem{cor}{Corollary}
\newtheorem{conj}{Conjecture}

\newtheorem{rem}{Remark}[section]
\newtheorem{prob}{Problem}
\usepackage{url}
\usepackage{ulem}
\usepackage[usenames]{color}
\usepackage[utf8]{inputenc}
\textheight=615pt
\textwidth=360pt
\textwidth=14cm
\oddsidemargin=1cm
\evensidemargin=1cm

\numberwithin{equation}{section}

\newcommand{\Z}{\mathbb{Z}}

\DeclareMathOperator{\lcm}{lcm}

\begin{document}

\title[special type of unit equations in two unknowns, I\hspace{-1.2pt}I\hspace{-1.2pt}I]
{Number of solutions to a special type \\of unit equations in two unknowns, I\hspace{-1.2pt}I\hspace{-1.2pt}I}

\author{Takafumi Miyazaki}
\address{Takafumi Miyazaki
\hfill\break\indent Gunma University, Division of Pure and Applied Science,
\hfill\break\indent Graduate School of Science and Technology
\hfill\break\indent Tenjin-cho 1-5-1, Kiryu 376-8515.
\hfill\break\indent Japan}
\email{tmiyazaki@gunma-u.ac.jp}

\author{Istv\'an Pink}
\address{Istv\'an Pink
\hfill\break\indent University of Debrecen, Institute of Mathematics
\hfill\break\indent H-4002 Debrecen, P.O. Box 400.
\hfill\break\indent Hungary}
\email{pinki@science.unideb.hu}

\thanks{The first author was supported by JSPS KAKENHI (No. 20K03553).
The second author was supported by the NKFIH grants 150284 and ANN130909.}
\subjclass[2010]{11D61, 11D45, 11J86, 11J61, 11J87}
\keywords{
purely exponential equation, Baker's method, non-Archimedean valuation, Schmidt Subspace Theorem, restricted rational approximation}

\maketitle

\markleft{Takafumi Miyazaki \& Istv\'an Pink}
\markleft{T. Miyazaki \& I. Pink}
\markright{Special type of unit equations in two unknowns, I\hspace{-1.2pt}I\hspace{-1.2pt}I}

\vspace{-0.3cm}\centerline
{\sl\footnotesize Dedicated to Professor Masaaki Amou on the occasion of his retirement from Gunma University}

\vspace{-0.0cm}
\begin{abstract}
It is conjectured that for any fixed relatively prime positive integers $a,b$ and $c$ all greater than 1 there is at most one solution to the equation $a^x+b^y=c^z$ in positive integers $x,y$ and $z$, except for specific cases. 
We develop the methods in our previous work which rely on a variety from Baker's theory and thoroughly study the conjecture for cases where $c$ is small relative to $a$ or $b$. 
Using restrictions derived from the hypothesis that there is more than one solution to the equation, we obtain a number of finiteness results on the conjecture.
In particular, we find some, presumably infinitely many, new values of $c$ with the property that for each such $c$ the conjecture holds true except for only finitely many pairs of $a$ and $b$. 
Most importantly we prove that if $c=13$ then the equation has at most one solution, except for $(a,b)=(3,10)$ or $(10,3)$ each of which gives exactly two solutions.
Further our study with the help of Schmidt Subspace Theorem among others brings strong contributions to the study of Pillai's type Diophantine equations, notably a general and satisfactory result on a well-known conjecture of M. Bennett on the equation $a^x-b^y=c$ for any fixed positive integers $a,b$ and $c$ with both $a$ and $b$ greater than 1. 
Some conditional results are presented under the $abc$-conjecture as well.
\end{abstract}

\section{Introduction}\label{sec-intro}

The object here is the best possible general estimate of the number of solutions to a special type of the unit equations in two unknowns over the rationals. 
The contents of this paper is regarded as a continuation of a series of our works \cite{MiyPin,MiyPin2}, but many results are independent of it.

First of all, we shall give a brief introduction closely related to a well-known conjecture of S. S. Pillai, which is stated, in a little different way as usual, which we state as follows:

\begin{conj}[Pillai's conjecture] \label{Pillai-conj}
For any fixed positive integer $c,$ there are only finitely many solutions to the equation
\begin{equation} \label{abc-pillai-original}
X^x-Y^y = c
\end{equation}
in positive integers $X,Y,x$ and $y$ all greater than $1.$
\end{conj}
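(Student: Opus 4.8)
The plan is to read Conjecture~\ref{Pillai-conj} as the statement that a fixed $c$ is a difference of two perfect powers in only finitely many ways, and to attack it by decoupling the exponents from the bases. Since $m^{uv}=(m^u)^v$, I would first reduce to the case where $x$ and $y$ are primes, and then split the solution set of \eqref{abc-pillai-original} into two regimes according to the size of $M:=\max\{x,y\}$: a \emph{bounded-exponent} regime $M\le N$ and a \emph{large-exponent} regime $M>N$, where $N=N(c)$ is a threshold to be chosen. If each regime contains only finitely many solutions, the conjecture follows.

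The bounded-exponent regime I expect to settle unconditionally. For each \emph{fixed} pair of prime exponents $(x,y)$ the relation $X^{x}-Y^{y}=c$ is a single Diophantine equation in the two unknowns $X,Y$. In the degenerate case $(x,y)=(2,2)$ it factors as $(X-Y)(X+Y)=c$ and clearly has finitely many solutions; in every other case the affine curve $X^{x}-Y^{y}=c$ has positive genus, so by Siegel's theorem it carries only finitely many integer points $(X,Y)$ (effectively, via Baker's method, in the Thue case $x=y$ and in the superelliptic cases $\min\{x,y\}=2$). As there are only finitely many pairs $(x,y)$ with $\max\{x,y\}\le N$, this regime contributes finitely many solutions altogether.

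The whole difficulty is therefore concentrated in the large-exponent regime, and this is the step I expect to be the genuine obstacle. Writing $\Lambda=x\log X-y\log Y$, a solution of \eqref{abc-pillai-original} with, say, $X^{x}>Y^{y}$ forces $|\Lambda|\asymp c/X^{x}$, so $\Lambda$ is extraordinarily small, while Baker's lower bound for linear forms in two logarithms gives $\log|\Lambda|\ge -C\,(\log X)(\log Y)\log M$. Comparing the two estimates yields an inequality of the shape $x\log X\ll \log c+(\log X)(\log Y)\log M$. This \emph{would} bound $M$ in terms of $c$ alone if the heights $\log X,\log Y$ of the bases were fixed — indeed this is exactly how finiteness is proved when $X$ and $Y$ are prescribed — but here the bases are free, and the unbounded factor $\log X$ on the right destroys any uniform control of $M$.

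Removing this dependence on the height of the base from the transcendence bound is precisely the point at which every currently available unconditional technique breaks down, which is why Conjecture~\ref{Pillai-conj} remains open in general (the sole unconditional success being $c=1$, i.e.\ Catalan's equation, resolved by Mih\u{a}ilescu). The one general route that supplies the missing uniformity is the $abc$-conjecture: applied to $Y^{y}+c=X^{x}$ it forces $\max\{X^{x},Y^{y}\}$ to be bounded in terms of $c$, thereby killing the large-exponent regime and settling both regimes at once — but this resolution is conditional. Accordingly I would present the large-exponent regime under the $abc$-conjecture to obtain the conjecture conditionally, the genuinely unconditional content of the approach being the uniform reduction to this single transcendence-theoretic bottleneck together with the complete treatment of the bounded-exponent regime.
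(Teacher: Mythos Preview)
The statement you were asked to prove is labelled \emph{Conjecture} in the paper, and for good reason: the paper does not prove it, nor does anyone else. Pillai's conjecture is open for every $c\ge 2$; the paper merely quotes it as historical motivation before turning to the much more restricted equations \eqref{abc-pillai} and \eqref{abc}. There is therefore no ``paper's own proof'' to compare your proposal against.

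That said, your write-up is an honest and accurate account of the situation rather than a proof, and you say so yourself. Your decomposition into a bounded-exponent regime (handled by Siegel/Baker on curves of positive genus --- this is essentially the content of Proposition~\ref{gp}, which the paper does use repeatedly for its own purposes) and a large-exponent regime is the standard way to frame the problem. Your diagnosis of the obstruction is correct: Baker's lower bound for $|x\log X-y\log Y|$ carries a factor $(\log X)(\log Y)$ that cannot be absorbed when the bases are free. And your conditional resolution via the $abc$-conjecture is correct as well (one needs $1/x+1/y<1$, i.e.\ $\max\{x,y\}\ge 3$, to make the radical estimate bite; the case $x=y=2$ is the trivial factoring case you already isolated).

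So there is no gap in the sense of a mistake, but there is also no proof: your proposal is a correct explanation of why Conjecture~\ref{Pillai-conj} is open unconditionally and how it follows from $abc$. The paper, for its part, never attempts the conjecture and instead proves unconditional results (Theorems~\ref{a-general-pillai_gcd1_ineff}--\ref{c13}) about the far more constrained situation where two of the four unknowns are fixed.
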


This conjecture has been actively studied to date since Pillai's posing it in 1936.
There are many researchers who contributed to Conjecture \ref{Pillai-conj} in history, many of whom treated the case where the values of some of letters in equation \eqref{abc-pillai-original} are given concretely, and they obtained finiteness results on the solutions or solved the equation completely.
It is perhaps that the most well-known achievement among those works is the one of Mih\u{a}ilescu \cite{Mih} who finally resolved Catalan's conjecture asserting that if $c=1$ then equation \eqref{abc-pillai-original} has only the solution corresponding to the identity $3^2-2^3=1$.
For more detail on these topics, see for example \cite[Ch.\,12]{ShTi}, \cite{Ri-book}, \cite{Wa} or \cite{BiBuMi}.

While Pillai posed his conjecture above, his main interest was the case where the values of base numbers of the two terms in the left-hand side of equation \eqref{abc-pillai-original} are fixed.
Namely, the values of $X,Y$ are given, and $x,y$ are considered as positive integer unknowns.
Then we are led to study the following equation:
\begin{equation} \label{abc-pillai}
a^x-b^y=c
\end{equation}
in positive integers $x$ and $y$, where $a,b$ and $c$ are fixed positive integers with both $a$ and $b$ greater than 1.
The above equation is a very typical example of purely exponential equations, or more generally, of unit equations over the rationals (cf.~\cite[Ch.s 4 to 6]{EvGy}).
Pillai \cite{Pi,Pi2} studied it, finding a sharp estimate of the number of solutions to equation \eqref{abc-pillai}.
In particular, in the case where the base numbers $a$ and $b$ are relatively prime, Pillai proved that the equation does not admit more than one solution whenever $c$ is sufficiently large relative to both $a$ and $b$, in other words, there exists some constant $c_0=c_0(a,b)$ depending only on $a$ and $b$ such that the equation has at most one solution whenever $c>c_0$.
It should be remarked that this result is not effective in the sense that its proof, relying on the theory of rational approximation to algebraic irrationals, does not provide a way to quantify the size of $c_0$ in terms of $a$ and $b$.
This result suggested other researchers to expect that there are only a few solutions to equation \eqref{abc-pillai} in general, and they have attempted to obtain general estimates of the number of solutions for it.
After a famous achievement of Baker on lower bounds for linear forms in logarithms in the late 1960s, it has been mainly applied to exponential Diophantine equations including equation \eqref{abc-pillai}. 
One of the celebrated works in that direction is due to Stroeker and Tijdeman \cite{StTi} who resolved another conjecture of Pillai asserting that $c_{0}(3,2)=13$.
Later, another proof of it was given by Scott \cite{Sc} in 1993.
His method relies on elementary number theory over the ring of the imaginary quadratic field $\mathbb Q(\sqrt{-b^y c}\,)$, which gives a sharp estimate of the number of solutions to equation \eqref{abc-pillai} when in particular $a$ is a prime.
This work is regarded as a breakthrough in the field of purely exponential equations (cf.~\cite[p.242]{Gu}).
Further, Le \cite{Le} found an application of Baker's method under the hypothesis of existence of a few hypothetical solutions to equation \eqref{abc-pillai}, and Bennett \cite{Be_cjm_01} finally refined that idea to prove a general and definitive result as follows:

\begin{prop}[Theorem 1.1 of \cite{Be_cjm_01}] \label{atmost2Pillai}
There are in general at most two solutions to equation $\eqref{abc-pillai}.$
\end{prop}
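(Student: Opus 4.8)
The plan is to argue by contradiction: suppose \eqref{abc-pillai} has three solutions in positive integers $x$ and $y$. Since for fixed $a,b,c$ the relation $a^{x}=b^{y}+c$ is monotone (if $y<y'$ then $a^{x}<a^{x'}$, hence $x<x'$) and equal $x$ forces equal $y$, we may label the solutions $(x_{1},y_{1}),(x_{2},y_{2}),(x_{3},y_{3})$ with $x_{1}<x_{2}<x_{3}$ and automatically $y_{1}<y_{2}<y_{3}$; moreover $a^{x_{i}}=b^{y_{i}}+c>b^{y_{i}}$ always, so each linear form $\Lambda_{i}:=x_{i}\log a-y_{i}\log b$ is positive. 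Two configurations are disposed of first. If $a$ and $b$ are multiplicatively dependent, the equation collapses to $u^{X}-u^{Y}=c$ in a single base $u$ and is trivial. If $d:=\gcd(a,b)>1$, then for any prime $p\mid d$ one has $v_{p}(a^{x}-b^{y})\ge\min(x,y)$ unless $x\,v_{p}(a)=y\,v_{p}(b)$; in the former case $\min(x,y)\le\log c/\log p$, reducing matters to boundedly many one-variable subproblems, and in the latter case the solutions lie on a single ray and a short argument (or a Baker bound applied to the resulting $(a^{\lambda})^{k}-(b^{\mu})^{k}=c$) leaves at most one. So assume henceforth $\gcd(a,b)=1$, whence also $\log a/\log b\notin\mathbb{Q}$.

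The heart of the matter is a collision between a gap principle and Baker's theory. Subtracting the equations in pairs gives $a^{x_{i}}(a^{x_{j}-x_{i}}-1)=b^{y_{i}}(b^{y_{j}-y_{i}}-1)$ for $i<j$, and coprimality forces $a^{x_{i}}\mid b^{y_{j}-y_{i}}-1$ and $b^{y_{i}}\mid a^{x_{j}-x_{i}}-1$. From these, together with $a^{x_{1}}>c$, one extracts with a little care $c<\tfrac12 b^{y_{2}}$, a doubling-type gap $x_{3}\ge 2x_{2}$ between the two largest exponents, and — crucially — $b^{y_{3}-y_{2}}>a^{x_{2}}$, so that the linear form of the largest solution is extremely small:
\[
0<\Lambda_{3}=\log\!\Big(1+\frac{c}{b^{y_{3}}}\Big)<\frac{c}{b^{y_{3}}}<\frac{b^{y_{2}}}{b^{y_{3}}}=b^{-(y_{3}-y_{2})}<a^{-x_{2}}.
\]
On the other hand, a sharp lower bound for linear forms in two logarithms — of Laurent--Mignotte--Nesterenko type, or, better, the effective irrationality measures for expressions of exactly the shape $|x\log a-y\log b|$ obtainable by the hypergeometric/Pad\'e method — gives
\[
\Lambda_{3}>\exp\!\big(-C\,(\log a)(\log b)\,(\log\max(x_{3},y_{3})+C')^{2}\big).
\]
Comparing the two bounds yields $x_{2}\log a\ll(\log a)(\log b)(\log x_{3})^{2}$; feeding this back — via $\log c<x_{2}\log a$ and $x_{2}\le x_{3}/2$ — into the upper estimate $b^{y_{3}}<c\,\Lambda_{3}^{-1}$ produces an effective upper bound for $x_{3}$, and hence for every $x_{i}$ and $y_{i}$, in terms of $a$ and $b$ alone. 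One can tighten the gap still further using $\mathrm{ord}_{a^{x_{2}}}(b)\mid y_{3}-y_{2}$, which by the lifting-the-exponent lemma grows geometrically in $x_{2}$ once $x_{2}$ exceeds an $(a,b)$-dependent threshold, squeezing the admissible parameters into a narrow window.

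The last step — and the one I expect to be the main obstacle — is to eliminate this residual regime, since the constant $C$ above is far too large for the gap principle alone to close the argument for small $a$ and $b$: one must clear a finite, but not a priori small, list of possibilities uniformly in $c$. Here I would split off the case in which $a$ (or $b$) is a prime or prime power, where the imaginary-quadratic argument of Scott \cite{Sc} over $\mathbb{Q}\big(\sqrt{-b^{y}c}\,\big)$ is known to bound the number of solutions very efficiently and directly forbids a third; for the remaining bounded shapes I would combine the doubling gap with the refined divisibilities obtained by taking greatest common divisors of two pairwise-subtracted relations — namely $a^{x_{1}}\mid b^{\gcd(y_{2}-y_{1},\,y_{3}-y_{2})}-1$ and $b^{y_{1}}\mid a^{\gcd(x_{2}-x_{1},\,x_{3}-x_{2})}-1$ — to force $x_{1}$ and $y_{1}$ down to a tiny range, reducing to a finite computer search, and I would invoke Mih\u{a}ilescu's theorem \cite{Mih} to dispatch the Catalan-type subcase in which the common cofactor $(a^{x_{2}-x_{1}}-1)/b^{y_{1}}$ equals $1$. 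The delicate point throughout is the bookkeeping of all the constants so that the window surviving the transcendence step is provably finite and small enough to be cleared by these field-theoretic and computational means; obtaining the linear-forms-in-logarithms input sharp enough in precisely that window is the crux, and is why the sharpest available two-logarithm estimates — ideally tailored, hypergeometric-method bounds for $|x\log a-y\log b|$ — are likely indispensable.
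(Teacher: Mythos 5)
The paper does not actually prove this proposition: it is imported verbatim as Theorem 1.1 of \cite{Be_cjm_01}, and the surrounding text only summarizes the strategy (a lower bound for linear forms in two logarithms played against a gap principle extracted from three hypothetical solutions, with the case $\gcd(a,b)>1$ dispatched elementarily). Your outline reproduces exactly that architecture, and its individual ingredients are sound: the divisibilities $a^{x_i}\mid b^{y_j-y_i}-1$ and $b^{y_i}\mid a^{x_j-x_i}-1$, the resulting smallness $0<\Lambda_3<a^{-x_2}$ of the linear form attached to the largest solution, and the comparison with a two-logarithm lower bound are indeed the skeleton of Bennett's argument.

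As a proof, however, there is a genuine gap, and you have located it yourself: the endgame. The collision you set up yields only $x_3\ll(\log b)(\log x_3)^2$, i.e.\ an effective bound on the exponents \emph{for each fixed pair} $(a,b)$, with a constant inherited from the two-logarithm estimate. That does not reduce the statement to ``a finite, but not a priori small, list of possibilities'': it leaves, for every one of the infinitely many pairs $(a,b)$, a nonempty window of admissible exponents, while the proposition to be proved is uniform in $(a,b,c)$. Closing that window uniformly is where essentially all of the length and difficulty of Bennett's paper lies (sharpened gap principles, a case split on the size of $c$ relative to $b^{y_2}$, hypergeometric-method irrationality measures, and Scott's imaginary-quadratic argument for prime bases), and your plan for it --- gcd-refined divisibilities, Mih\u{a}ilescu's theorem for a Catalan-type subcase, and ``a finite computer search'' --- is a list of intentions rather than an argument; note also that Mih\u{a}ilescu's theorem postdates Bennett's proof, so it cannot be the missing ingredient of the cited argument. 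The proposal should therefore be read as a correct high-level reconstruction of the cited proof's strategy, not as a proof of the proposition.
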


The proof of this proposition is achieved by the combination of a special type of Baker's method on lower bounds for linear forms in two logarithms together with a gap principle arising from the existence of three hypothetical solutions.
It should be remarked that the case where $\gcd(a,b)>1$ is handled just in an elementary manner, so that such a case is regarded degenerate in a sense.
Proposition \ref{atmost2Pillai} is best possible in the sense that there are a number of examples which allow equation \eqref{abc-pillai} to have two solutions (cf.~\eqref{ex-excep-abc-pillai} below).
After the mentioned achievement of Bennett, several other researchers have attempted to obtain generalizations of Proposition \ref{atmost2Pillai} (see for example \cite{ScSt_jnt2006,ScSt_jtnb2013,ScSt_jtnb2015}).

While Proposition \ref{atmost2Pillai} is sharp, Bennett noted in \cite[(1.2)]{Be_cjm_01} that there are a number of cases where equation \eqref{abc-pillai} has more than one solution, and he found the following examples:
\begin{gather}
2^3-3=2^5-3^3=5,\nonumber\\
2^4-3=2^8-3^5=13,\nonumber\\
2^3-5=2^7-5^3=3,\nonumber\\
3-2=3^2-2^3=1,\nonumber\\
13-3=13^3-3^7=10,\nonumber\\
\label{ex-excep-abc-pillai} 91-2=91^2-2^{13}=89,\\
6-2=6^2-2^5=4,\nonumber\\
6^4-3^4=6^5-3^8=1215,\nonumber\\
15-6=15^2-6^3=9,\nonumber\\
280-5=280^2-5^7=275,\nonumber\\
4930-30=4930^2-30^5=4900.\nonumber
\end{gather}
Further he asked whether all examples for which equation \eqref{abc-pillai} has exactly two solutions come from the identities in \eqref{ex-excep-abc-pillai}, as follows:

\begin{conj}[Conjecture 1.2 of \cite{Be_cjm_01}] \label{atmost1pillai}
Assume that neither $a$ nor $b$ is a perfect power.
Then there is at most one solution to equation $\eqref{abc-pillai},$ except when $(a,b,c)$ belongs to the following set\,$:$
\begin{align}\label{pillai-excep-set}
\{ \,&(2,3,5),(2,3,13),(2,5,3),(3,2,1),\\
&(13,3,10),(91,2,89),(6,2,4),(6,3,1215),\nonumber\\
&(15,6,9),(280,5,275),(4930,30,4900)\,\}.\nonumber
\end{align}
\end{conj}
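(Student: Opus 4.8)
By Proposition~\ref{atmost2Pillai} the equation $a^x-b^y=c$ has at most two solutions in positive integers, so the plan is to assume it has exactly two, say $(x_1,y_1)$ and $(x_2,y_2)$, and to deduce that then $(a,b,c)$ must lie in the set \eqref{pillai-excep-set}. Comparing the two identities $a^{x_1}-b^{y_1}=a^{x_2}-b^{y_2}=c$ shows $x_1<x_2\iff y_1<y_2$ and excludes $x_1=x_2$, so I may normalise $x_1<x_2$ and $y_1<y_2$ and set $X=x_2-x_1$, $Y=y_2-y_1$. The case $\gcd(a,b)>1$ is degenerate: any prime $p\mid\gcd(a,b)$ divides $c$, and comparing $p$-adic valuations across $a^x-b^y=c$ forces $x$ or $y$ to be bounded, leaving a finite check (this is the elementary case already isolated in \cite{Be_cjm_01}); so assume henceforth $\gcd(a,b)=1$. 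Then $a^{x_1}(a^{X}-1)=b^{y_1}(b^{Y}-1)$ together with coprimality produces a single positive integer $m$ with
\[
a^{X}-1=m\,b^{y_1},\qquad b^{Y}-1=m\,a^{x_1}.
\]

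From this system I would read off the Diophantine-approximation content. Subtracting gives $b^{Y}-a^{X}=m(a^{x_1}-b^{y_1})=mc$, and since $b^{Y}=m a^{x_1}+1>a^{x_1}$ one gets $|1-a^{X}/b^{Y}|=mc/b^{Y}<c/a^{x_1}$, hence $|X\log a-Y\log b|\ll c/a^{x_1}$; similarly $a^{x_1}=b^{y_1}+c$ gives $|x_1\log a-y_1\log b|\ll c/b^{y_1}$, while $m\asymp a^{X}/b^{y_1}\asymp b^{Y}/a^{x_1}$. Against these upper bounds I would apply a sharp lower bound for linear forms in two logarithms of the type underlying Proposition~\ref{atmost2Pillai}, i.e. $|x_1\log a-y_1\log b|\gg\exp(-C\,\log a\,\log b\,\log\max(x_1,y_1))$ and its analogue for $(X,Y)$. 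Iterating these estimates against the inequalities $a^{X}>b^{y_1}$ and $b^{Y}>a^{x_1}$ bounds $x_1,y_1,X,Y$ — hence $m$ — in terms of $a,b$ and $\log c$; this settles the regime where $c$ is large relative to $a$ and $b$ (recovering the fact that a unique solution occurs once $c>c_0(a,b)$), and for each fixed $(a,b,c)$ reduces matters to a finite search.

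The delicate regime — and the main point the paper must supply — is $c$ small relative to $a$ or $b$, where the crude combination above does not close. Here the relations $a^{X}=m b^{y_1}+1$ and $b^{Y}=m a^{x_1}+1$ exhibit an unusually good rational approximation, with numerator and denominator supported on $S=\{p:p\mid ab\}$, to a fixed algebraic number attached to $c$: writing $a^{X}/b^{Y}=(b^{y_1}/a^{x_1})\cdot(1+1/(m b^{y_1}))/(1+1/(m a^{x_1}))$ one sees this quantity approximates $1$ to within $O(c/a^{x_1})$ with controlled, $S$-restricted height. This is exactly what the Schmidt Subspace Theorem governs in its $S$-adic, restricted-approximation form: such approximations occur for only finitely many data, bounding $\max(x_1,y_1,X,Y)$ and, for all but finitely many $(a,b,c)$, excluding them outright. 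I would run this first with $c$ fixed — which gives the complete determination for $c=13$ and, for the further ``new'' values of $c$ announced in the abstract, a proof that only finitely many pairs $(a,b)$ survive — and then, to obtain Conjecture~\ref{atmost1pillai} in full, invoke the $abc$-conjecture: applied to $a^{X}=m b^{y_1}+1$ it bounds $m$, hence $b^{y_1}$, $a^{x_1}$ and all four exponents, effectively and uniformly in $c$, after which a bounded computation finishes.

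The hardest point is that the Subspace Theorem input is ineffective and non-uniform: unconditionally it yields only ``finitely many exceptional $(a,b,c)$'' (or, for each fixed $c$, finitely many $(a,b)$), never an explicit list. Extracting the \emph{exact} exceptional set \eqref{pillai-excep-set} — in particular pinning down $c=13$ — therefore requires feeding the boundedly many surviving configurations back into congruence conditions and quadratic-field arguments in the spirit of Scott's method, followed by a direct check that none of them produces a second solution beyond the identities in \eqref{ex-excep-abc-pillai}. Making the Baker-type effective bounds (which dominate when $c$ is large) mesh with the Subspace-type finiteness (needed when $c$ is small) across one uniform threshold is where most of the effort goes.
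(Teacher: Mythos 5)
The statement you set out to prove is not a theorem of this paper: it is Bennett's Conjecture 1.2, quoted verbatim, and it remains open. The paper itself only establishes fragments of it — Theorem \ref{a-general-pillai_gcd1_ineff} gives, for each fixed $a$ with $\gcd(a,b)=1$, at most one solution outside finitely many (ineffectively determined) pairs $(b,c)$, and Corollary \ref{a13} settles the conjecture completely only for $a=13$; the authors explicitly record (Problem \ref{a-general-pillai}, Section \ref{sec-appro}) that Steps 2 and 3, i.e.\ enumerating and sieving the exceptional pairs, are out of reach in general. So there is no ``paper's own proof'' to compare yours against, and any purported full proof must be examined for where it silently crosses from the possible to the open.

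Your write-up crosses that line in two places. First, the unconditional finiteness you extract comes from Ridout's theorem and the Subspace Theorem, which are ineffective: they assert that only finitely many exceptional configurations exist but give no bound on them whatsoever. Consequently the phrases ``reduces matters to a finite search,'' ``boundedly many surviving configurations,'' and ``a direct check'' describe computations that cannot be set up — there is no explicit bound to search below. This is precisely why the paper's Theorem \ref{a-general-pillai_gcd1_ineff} stops at ineffective finiteness, and why the one fully resolved prime case beyond Scott's and Bennett's earlier work ($a=13$, equivalently $c=13$) requires a separate \emph{effective} restricted irrationality measure for $\sqrt{13}$ (Bauer--Bennett, condition (iii) of Proposition \ref{prop-c7etc}), which is known for essentially no other relevant value. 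Second, your route to the conjecture ``in full'' invokes the $abc$-conjecture to bound $m$ and the exponents uniformly in $c$; that makes the argument conditional on an unproven hypothesis (and even then, the paper's Section \ref{sec-abc} derives only finiteness-type statements from $abc$, not the exact exceptional set \eqref{pillai-excep-set}). The structural identities you derive — $a^{X}-1=m\,b^{y_1}$, $b^{Y}-1=m\,a^{x_1}$, and the $S$-restricted approximation they encode — are indeed the right objects and mirror the paper's actual use of Proposition \ref{BuLu-newyork} and Lemma \ref{c-prime-xneXandyneY}, but they deliver Step 1 of Problem \ref{a-general-pillai} only, not the conjecture.
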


This conjecture is well-known in the field of purely exponential Diophantine equations, and it is regarded as one of the utmost importance throughout the study of the field (cf.~Conjecture \ref{atmost1} below).
It should be remarked that the case where $\gcd(a,b)=1$ is regarded essential in its study because the remaining one is degenerate in a sense (this situation is similar to that of the generalized Fermat equation (cf.~\cite[Ch.\,14]{Co})).
Based on this, in what follows, we shall consider Conjecture \ref{atmost1pillai} mainly in the non-degenerate case. 

In this paper, we are interested in solving Conjecture $\ref{atmost1pillai}$ when the value of $a$ is fixed, where note that $a$ is the base number of the maximal term in the equation under consideration.
It is noted that the conjecture for prime values of $a$ is reduced to consider case where $\gcd(a,b)=1$ (cf.~\cite[Lemma 4.3]{Be_cjm_01}).
As referred to in \cite{Be_cjm_01}, we emphasize that the conjecture is still difficult under such a restricted situation.
More precisely, we shall consider to work out the next problem:

\begin{prob}\label{a-general-pillai}
Fix the value of $a,$ and prove Conjecture $\ref{atmost1pillai}$ in the following steps\,$:$
\begin{itemize}
\item[Step 1] 
Proving that there is at most one solution to equation \eqref{abc-pillai}$,$ except for only finitely many pairs of $b$ and $c.$
\item[Step 2] 
Finding a way to enumerate all possible pairs of $b$ and $c$ described as exceptional in {\rm Step 1} in an effectively computable finite time. 
\item[Step 3] 
Sieving all pairs of $b$ and $c$ found in {\rm Step 2} completely.
\end{itemize}
\end{prob}

To the best of our knowledge, all values of $a$ not being perfect powers for which Step 1 of the above problem is already worked out are given as in the following with the notation $\nu_p$ denoting the $p$-adic valuation:

\begin{prop}\label{result-exist}
In equation \eqref{abc-pillai}$,$ assume that $\gcd(a,b)=1.$
Let $a$ be any fixed positive integer satisfying at least one of the following conditions\,{\rm :}
\begin{itemize}
\item[$\bullet$] $\max\{2^{\nu_2(a)},3^{\nu_3(a)}\}>\sqrt{a}\,;$
\item[$\bullet$] $a$ is a prime of the form $2^r+1$ with some positive integer $r.$
\end{itemize}
Then there is at most one solution to equation \eqref{abc-pillai}$,$ except for only finitely many pairs of $b$ and $c,$ all of which are effectively determined.
\end{prop}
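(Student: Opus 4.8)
\emph{The plan} is to show that the existence of a second solution of \eqref{abc-pillai} forces $a^{x_{1}}$ --- hence $b^{y_{1}}=a^{x_{1}}-c$, hence $c=a^{x_{1}}-b^{y_{1}}$ and $b$ itself --- to be bounded by an effectively computable quantity depending only on $a$; since every estimate in the argument will be effective, the exceptional pairs $(b,c)$ then form an effectively computable finite set, which is precisely Step~1 of Problem~\ref{a-general-pillai}. Because $c$ is fixed, $b^{y}=a^{x}-c$ is increasing in $x$, so in any two solutions both coordinates increase together, and by Proposition~\ref{atmost2Pillai} it is enough to treat the case of exactly two solutions $(x_{1},y_{1})$, $(x_{2},y_{2})$ with $x_{1}<x_{2}$ and $y_{1}<y_{2}$. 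Subtracting the two instances of \eqref{abc-pillai} gives
\begin{equation}\label{plan-1}
a^{x_{1}}\bigl(a^{X}-1\bigr)=b^{y_{1}}\bigl(b^{Y}-1\bigr),\qquad X:=x_{2}-x_{1},\ \ Y:=y_{2}-y_{1},
\end{equation}
and since $\gcd(a,b)=1$ this yields $a^{x_{1}}\mid b^{Y}-1$ and $b^{y_{1}}\mid a^{X}-1$; comparing the two sides of \eqref{plan-1} one even gets $b^{Y}-1=a^{x_{1}}k$ and $a^{X}-1=b^{y_{1}}k$ with one common positive integer $k$, so that $\gcd(k,ab)=1$ and $a^{X}\equiv b^{Y}\equiv1\pmod{k}$.

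\emph{Step A (the shape of $a$).} In the first case of the hypothesis pick $p\in\{2,3\}$ with $p^{\nu_{p}(a)}>\sqrt{a}$. Then $p\mid a^{x_{1}}\mid b^{Y}-1$, the order of $b$ modulo $p$ is at most $p-1\le2$, and the lifting-the-exponent lemma gives $\nu_{p}(b^{Y}-1)\le\nu_{p}(b^{2}-1)+\nu_{p}(Y)<(2\log b+\log Y)/\log p$; combined with $x_{1}\nu_{p}(a)\le\nu_{p}(b^{Y}-1)$ and $\nu_{p}(a)\log p=\log\bigl(p^{\nu_{p}(a)}\bigr)>\tfrac12\log a$, this yields
\begin{equation}\label{plan-2}
x_{1}\log a<4\log b+2\log Y,
\end{equation}
with the coefficient $4$ of $\log b$ \emph{absolute}. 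In the second case $a$ is a prime of the form $2^{r}+1$ (hence a Fermat prime); the same argument carried out at the prime $a$, whose multiplicative group has order $a-1=2^{r}$, gives $x_{1}\log a<(a-1)\log b+2\log Y$, and in the subcase $b$ even one uses in addition that $a\equiv1\pmod{2^{r}}$ to bound $\nu_{2}(a^{X}-1)$, hence $y_{1}$. In either case one has an inequality $x_{1}\log a<c_{1}\log b+2\log Y$; since $b^{y_{1}}<a^{x_{1}}$ gives $\log b<x_{1}\log a/y_{1}$, this is \emph{self-improving} whenever $y_{1}$ exceeds the explicit threshold $y_{0}:=c_{1}$, in which case it yields $x_{1}\log a<c_{2}\log Y$ with an explicit constant $c_{2}$.

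\emph{Step B (Baker's method; conclusion when $y_{1}>y_{0}$).} From $0<c<a^{x_{1}}\le a^{x_{2}}/a$ we obtain $a^{x_{2}}(1-1/a)<b^{y_{2}}<a^{x_{2}}$, hence $y_{2}\log b=x_{2}\log a+O(1)$, $c/b^{y_{2}}<2/a\le1$, and $Y<y_{2}$ with $\log y_{2}<\log x_{2}+O_{a}(1)$. Thus $\Lambda:=x_{2}\log a-y_{2}\log b=\log\bigl(1+c\,b^{-y_{2}}\bigr)$ is a positive linear form in the two logarithms $\log a,\log b$ with $\Lambda<c\,b^{-y_{2}}$, so $\log\Lambda<\log c-y_{2}\log b<x_{1}\log a-y_{2}\log b=-X\log a+O(1)$. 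A lower bound for linear forms in two logarithms of Laurent--Mignotte--Nesterenko type gives $\log\Lambda>-C_{1}(a)\,(\log b)(\log x_{2})^{2}$, and comparing the two estimates yields $X<C_{2}(a)\,(\log b)(\log x_{2})^{2}$. Now assume $y_{1}>y_{0}$. By Step~A and $\log Y<\log x_{2}+O_{a}(1)$ we get $x_{1}\log a<c_{2}\log Y<C_{3}(a)\log x_{2}$, so $x_{1}<C_{3}'(a)\log x_{2}$ and, using $\log b<x_{1}\log a$, also $\log b<C_{4}(a)\log x_{2}$; therefore $x_{2}=x_{1}+X<C_{3}'(a)\log x_{2}+C_{2}(a)C_{4}(a)(\log x_{2})^{3}$, which is impossible once $x_{2}$ exceeds an effectively computable bound depending only on $a$. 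Hence in this case $a^{x_{1}}$, and therefore $b$ and $c$, is effectively bounded in terms of $a$.

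\emph{Step C (the remaining $y_{1}\le y_{0}$: the main obstacle).} Only finitely many values of $y_{1}$ remain --- at most $y_{0}$, which is absolute in the first case and a fixed small number in the second --- but now \eqref{plan-2} is inert, since $b^{y_{1}}$ may be negligible beside $a^{x_{1}}$, so that $\log b$ need not be small compared with $x_{1}\log a$. One must then work with the two equations jointly: from $b^{y_{1}}\mid a^{X}-1$ one has $y_{1}\nu_{q}(b)\le\nu_{q}(a^{X}-1)$ for every prime $q\mid b$, and a $p$-adic lower bound for linear forms in two logarithms (Yu's estimates) bounds $\nu_{q}(a^{X}-1)$ in terms of $(\log a)(\log X)$; feeding this together with $X<C_{2}(a)(\log b)(\log x_{2})^{2}$, the relations $b^{Y}-a^{x_{1}}k=1$ and $a^{X}-b^{y_{1}}k=1$, and once more the special arithmetic of $a$ (keeping $x_{1}\nu_{p}(a)$ under control), a finite case analysis --- according to the parity of $b$, the orders of $a$ and $b$ modulo the relevant small primes, and the relative sizes of $k$, $X$ and $x_{1}$, carried out in the spirit of \cite{MiyPin,MiyPin2} --- yields an effective bound for $x_{1}$ in terms of $a$. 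This is the hard part: when the smaller term $b^{y_{1}}$ is a vanishingly small divisor of $a^{X}-1$ it carries almost no arithmetic information, so the bound must be squeezed out of the interaction of the two equations rather than from either one alone, and it is there that the bulk of the work lies. Once $x_{1}\le C(a)$ has been secured in every case, $b^{y_{1}}<a^{x_{1}}\le a^{C(a)}$ and $c<a^{C(a)}$, so $b$ and $c$ run over an effectively computable finite set, which completes the proof.
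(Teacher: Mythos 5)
The paper offers no self-contained proof of this proposition: it is obtained by viewing $a^{x}-b^{y}=c$ as $c^{1}+b^{y}=a^{x}$ and invoking \cite[Corollary 2]{MiyPin2} together with the proof of \cite[Corollary 1.7]{Be_cjm_01}, so your attempt at a direct argument has to be judged on its own. Your Steps A and B are sound and are indeed the opening moves of the cited proofs: subtracting the two equations gives $a^{x_{1}}\mid b^{Y}-1$ and $b^{y_{1}}\mid a^{X}-1$, the hypothesis on $a$ converts the first divisibility into $x_{1}\log a<4\log b+2\log Y$ (resp.\ $<(a-1)\log b+\log Y$ in the Fermat-prime case), and an Archimedean linear-forms estimate bounds $X$ by $C(a)\log b\,(\log x_{2})^{2}$. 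This genuinely disposes of the case $y_{1}>y_{0}$.

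The genuine gap is Step C, and it is not a small one: it is precisely the content that the cited results supply. Two problems. First, the one concrete tool you propose — bounding $\nu_{q}(a^{X}-1)$ for each prime $q\mid b$ by Yu-type $p$-adic estimates "in terms of $(\log a)(\log X)$" — cannot work as stated, because those lower bounds carry a factor growing at least linearly in $q$, and $q$ ranges over the prime divisors of $b$, which is exactly the quantity you are trying to bound; the inequality $y_{1}\nu_{q}(b)\ll q\,(\log a)(\log X)$ says nothing about the size of $b$. Second, the remainder of Step C is a declaration that "a finite case analysis in the spirit of \cite{MiyPin,MiyPin2} yields an effective bound", which is not an argument: when $y_{1}=1$ the relations you have ($x_{1}\log a<4\log b+2\log Y$, $\log b<X\log a$, $X<C(a)\log b\,(\log x_{2})^{2}$) are mutually consistent with $x_{1},X\to\infty$, so no contradiction is reachable from Steps A--B alone. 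What actually closes this case in \cite{Be_cjm_01,MiyPin2} (and in Section \ref{sec-c-general} of this paper, in the translated variables) is an ingredient you never use: the congruence $h^{Y}\equiv\pm1\pmod{a^{x_{1}}}$ holds for \emph{both} base numbers $h=b$ and $h=c$ of the left-hand side, and a second, non-Archimedean application of Baker's method — taken modulo the large divisor $a^{x_{1}}$ divided by $\gcd$ with $Y$, as in Lemmas \ref{2nd-app} and \ref{c-general-xyXY-ub} — bounds the exponents $y_{1},y_{2}$ themselves effectively before the special $2$- or $3$-adic structure of $a$ is brought to bear. Without that step (or a substitute for it), the proposal does not prove the proposition.
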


This proposition is an immediate consequence of \cite[Corollary 2]{MiyPin2} and the proof of \cite[Corollary 1.7]{Be_cjm_01}.
The values of $a$ treated in Proposition \ref{result-exist}, not being perfect powers, are in ascending order as follows:
\[
a=2, 3, 5, 6, 12, 17, 18, 24, 40, 45, 48, 54, 56, 63,\ldots,257,\ldots,65537,\ldots
\]
We note that among the above cases Conjecture \ref{atmost1pillai} is already solved completely for each of the following cases: $a=2$ by Scott \cite[Theorem 6; $p=2$]{Sco}; $a \in \{3,5,17,257,65537,\ldots\}$ (all Fermat primes) by Bennett \cite{Be_cjm_01}. 

Our first main result contributes to Step 1 of Problem \ref{a-general-pillai}.

\begin{thm}\label{a-general-pillai_gcd1_ineff}
In equation \eqref{abc-pillai}$,$ assume that $\gcd(a,b)=1.$
Then, for any fixed $a,$ there is at most one solution to equation \eqref{abc-pillai}$,$ except for only finitely many pairs of $b$ and $c.$
\end{thm}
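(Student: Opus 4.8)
Suppose that, for a fixed value of $a$, the equation $a^{x}-b^{y}=c$ admits two distinct solutions. Since $a^{x_i}=b^{y_i}+c$ with $a^{x}$ increasing in $x$ and $b^{y}$ increasing in $y$, we may label the solutions $(x_1,y_1)$ and $(x_2,y_2)$ so that $x_1<x_2$, and then necessarily $y_1<y_2$. Subtracting the two instances of the equation gives the identity
\[
a^{x_1}\bigl(a^{x_2-x_1}-1\bigr)=b^{y_1}\bigl(b^{y_2-y_1}-1\bigr),
\]
and, as $\gcd(a,b)=1$ makes $a^{x_1}$ coprime to $b^{y_1}$, this forces the divisibilities
\[
b^{y_1}\mid a^{x_2-x_1}-1,\qquad a^{x_1}\mid b^{y_2-y_1}-1 ,
\]
together with the $p$-adic identities $x_1\,\nu_p(a)=\nu_p\bigl(b^{y_2-y_1}-1\bigr)$ for every prime $p\mid a$. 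On the Archimedean side, $0<c=a^{x_1}-b^{y_1}<a^{x_1}$ forces $c<b^{y_2}$ (otherwise $a^{x_2}=b^{y_2}+c\le 2c<2a^{x_1}$, impossible as $a^{x_2-x_1}\ge 2$). The point of these remarks is a reduction: \emph{it suffices to bound $x_1$ and $x_2$ in terms of $a$ alone}. Indeed, once $x_1,x_2$ are so bounded, $a^{x_2-x_1}-1$ lies in a finite set, hence so does its divisor $b^{y_1}$, whence there are only finitely many pairs $(b,y_1)$ and $c=a^{x_1}-b^{y_1}$ takes only finitely many values.

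The difficulty is that the linear-forms-in-logarithms machinery underlying our earlier work does not supply such a bound. One does obtain a genuinely small linear form: from $c<b^{y_2}$ and $b^{y_2}=b^{y_1}b^{y_2-y_1}>a^{x_1}b^{y_1}>c\,b^{y_1}$ one gets $0<\Lambda<b^{-y_1}$ for $\Lambda:=x_2\log a-y_2\log b$. However, every classical lower bound for $|\Lambda|$ carries a factor $\log b$ (or the height of $b$), so it bounds $x_1,x_2$ only when $b$ is itself bounded in terms of $a$ --- that is, only in the range already covered (ineffectively) by Pillai's theorem \cite{Pi,Pi2} for a fixed pair $(a,b)$. \textbf{The main obstacle is therefore uniformity in $b$:} controlling two solutions when $b$ is allowed to grow.

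To treat that regime I would replace Baker's method by the Schmidt Subspace Theorem, exploiting that the only varying base is $b$, while the finite set of primes that actually occurs --- those dividing $a$ --- is fixed. When $c$ is small relative to $a^{x_1}$, the relation $a^{x_1}-b^{y_1}=c=a^{x_2}-b^{y_2}$ says that the perfect powers $b^{y_1},b^{y_2}$ approximate the fixed-base powers $a^{x_1},a^{x_2}$ within an additive error $c$ that is small compared with $a^{x_1}$, the two approximations being coupled through the common value $c$; equivalently, $M:=(a^{x_2-x_1}-1)/b^{y_1}=(b^{y_2-y_1}-1)/a^{x_1}$ is an unexpectedly large common divisor of $a^{x_2-x_1}-1$ and $b^{y_2-y_1}-1$. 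Feeding this configuration into the Subspace Theorem, in the spirit of Corvaja and Zannier's applications of it to Pillai-type equations and to greatest-common-divisor problems (and disposing separately, by elementary means, of any degenerate sub-configuration in which the solutions are ``parallel''), should confine $x_1,x_2$ to a finite set depending only on $a$. The complementary range, where $c$ is not small relative to $a^{x_1}$, I would handle by combining the $p$-adic identities above at the fixed primes $p\mid a$ with the gap principle afforded by the existence of two solutions, once more reducing matters to a Subspace-Theorem estimate. Assembling the bounds from the two ranges with the bounded-$b$ case gives a bound on $x_1$ and $x_2$ in terms of $a$, and the reduction of the first paragraph completes the proof. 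Since the Subspace Theorem is invoked, the resulting bound --- and hence the list of exceptional pairs $(b,c)$ --- is ineffective, in accordance with the statement (and in contrast with the effective Proposition \ref{result-exist}).
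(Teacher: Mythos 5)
Your elementary reductions are all correct, and you have put your finger on exactly the right obstacle: bounding the exponents uniformly in $b$. But the step that is supposed to overcome it --- ``feeding this configuration into the Subspace Theorem \dots should confine $x_1,x_2$ to a finite set depending only on $a$'' --- is asserted rather than proved, and it is precisely where the proposal breaks down. Every Subspace-type tool you invoke (Ridout's theorem, the Corvaja--Zannier gcd estimates, the Bugeaud--Luca finiteness result reproduced here as Proposition \ref{BuLu-newyork}) requires either a \emph{fixed} finite set of places $S$ outside of which the relevant quantities are units, or that the exponents of the varying base be fixed in advance: in Proposition \ref{BuLu-newyork} the exponents $m>n$ of the varying base $\mathcal Z$ are parameters, not unknowns. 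In your configuration the base $b$ varies, its prime factors are not confined to any fixed $S$, and $y_1,y_2$ are a priori unbounded, so there is no finite system of linear forms to which Proposition \ref{Sc-subspacethm} can be applied. Indeed, the unconditional statement you would need --- finiteness of $(b,x_1,x_2,y_1,y_2)$ with $a^{x_1}-a^{x_2}=b^{y_1}-b^{y_2}$ and only $a$ fixed --- is Corollary \ref{cor-to-BuLu}, which is \emph{deduced from} Theorem \ref{a-general-pillai_gcd1_ineff} in this paper, not available as an input to it.

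The missing ingredient is Baker's method, which you explicitly discard. The paper first bounds the exponents of the \emph{varying} bases: writing the two solutions as $c+b^{y}=a^{x}$, $c+b^{Y}=a^{X}$ (so that the Pillai equation becomes an instance of \eqref{abc} with fixed right-hand base $a$) and combining them modulo a power of $a$ yields the divisibility relation of Lemma \ref{div}\,(iii), to which the $m$-adic lower bound for linear forms in two logarithms (Proposition \ref{Bu-madic}) is applied twice (Lemmas \ref{1st-app} and \ref{2nd-app}); the $\log b$ factors that you correctly identify as the obstruction cancel against the trivial inequalities \eqref{trivial-ineqs}, giving $y,Y\ll_a 1$ uniformly in $b$ and $c$ (Lemma \ref{c-general-xyXY-ub}). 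Only then does the Subspace Theorem enter: since the exponent of $c$ equals $1$ in both equations, eliminating $c$ gives $b^{Y}-b^{y}=a^{X}-a^{x}$ with $y,Y$ now confined to a finite set, and Proposition \ref{BuLu-newyork} (Ridout plus Subspace) bounds $b$, $x$ and $X$; this is Lemma \ref{c-prime-xneXandyneY}, which immediately yields the theorem. So the working architecture is Baker first, Subspace second; a pure-Subspace argument of the kind you sketch does not close, and the two ``ranges'' you propose both founder on the same missing uniform bound for $y_1,y_2$.
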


Although our proof of the above theorem is ineffective in the sense that its proof for each $a$ does not provide a way to quantify the size of finitely many pairs of $b$ and $c$ described `exceptional', the novelty is that Step 1 of Problem \ref{a-general-pillai} in the non-degenerate case is worked out for `arbitrary' values of $a$.

The following are almost immediate corollaries to Theorem \ref{a-general-pillai_gcd1_ineff}, where the latter contributes to answering a question asked by Bugeaud and Luca \cite[Sec.\,7]{BuLu} from the viewpoint of not assuming the primality condition on base numbers.

\begin{cor}\label{cor-a-prime-pillai}
For any fixed prime $a,$ there is at most one solution to equation \eqref{abc-pillai}$,$ except for only finitely many pairs of $b$ and $c.$
\end{cor}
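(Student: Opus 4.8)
The plan is to deduce Corollary~\ref{cor-a-prime-pillai} directly from Theorem~\ref{a-general-pillai_gcd1_ineff} together with the reduction remark already invoked in the excerpt (cf.~\cite[Lemma 4.3]{Be_cjm_01}), which says that for prime $a$ the study of Conjecture~\ref{atmost1pillai} reduces to the case $\gcd(a,b)=1$. Concretely, fix a prime $a$ and suppose equation~\eqref{abc-pillai} has two distinct solutions $(x_1,y_1)$ and $(x_2,y_2)$. First I would dispose of the degenerate possibility that $a\mid b$: since $a$ is prime, $a\mid b$ forces $a\mid c$ (reading~\eqref{abc-pillai} modulo $a$, using $x\ge 1$ and $y\ge 1$), and then a short elementary valuation argument at the prime $a$ — the kind already used by Bennett for the $\gcd(a,b)>1$ branch — shows that two solutions cannot coexist, or pins them down to a bounded set. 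Hence for the purpose of counting solutions we may assume $\gcd(a,b)=1$.

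Once $\gcd(a,b)=1$ is secured, Theorem~\ref{a-general-pillai_gcd1_ineff} applies verbatim with this fixed prime $a$: it yields that, apart from finitely many pairs $(b,c)$, equation~\eqref{abc-pillai} has at most one solution. Folding back in the finitely many pairs $(b,c)$ with $a\mid b$ that survive the degenerate analysis (if any) only enlarges the exceptional set by a finite amount, so the conclusion — at most one solution for all but finitely many $(b,c)$ — persists. Thus the corollary follows immediately, exactly as the excerpt advertises with the phrase ``almost immediate corollaries''.

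The only genuine content beyond quoting Theorem~\ref{a-general-pillai_gcd1_ineff} is the handling of the degenerate case $a\mid b$, and even that is routine: writing $b=a^{s}b'$ with $\gcd(a,b')=1$ and $s\ge 1$, equation~\eqref{abc-pillai} becomes $a^x - a^{sy}b'^{y}=c$; comparing $a$-adic valuations of the two terms on the left forces $\min\{x,sy\}=\nu_a(c)$ to be essentially determined, which rigidly constrains the solution set and is incompatible with three free parameters ranging over two distinct solutions for all but finitely many $(b,c)$. I do not anticipate a real obstacle here; the substantive work is entirely inside Theorem~\ref{a-general-pillai_gcd1_ineff} (and behind it \cite[Corollary 2]{MiyPin2} and \cite{Be_cjm_01}), and the corollary is a packaging statement. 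If one wants a completely self-contained write-up, the main care point is simply to state precisely, with a reference to \cite[Lemma 4.3]{Be_cjm_01}, why primality of $a$ lets one ignore $\gcd(a,b)>1$ at the cost of only finitely many extra exceptional pairs.
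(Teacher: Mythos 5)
Your proposal is correct and follows essentially the same route as the paper: the corollary is obtained by applying Theorem \ref{a-general-pillai_gcd1_ineff} in the case $\gcd(a,b)=1$ and disposing of the degenerate case $a\mid b$ via the elementary reduction for prime $a$ already cited in the introduction (cf.~\cite[Lemma 4.3]{Be_cjm_01}). The paper treats this as immediate and gives no further argument, so your added detail on the $a$-adic valuation in the degenerate branch is just a fleshed-out version of the same packaging step.
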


\begin{cor}\label{cor-to-BuLu}
For any fixed positive integer $a$ with $a>1,$ there are only finitely many solutions to the equation
\[
a^{x_1} - a^{x_2} = b^{y_1} - b^{y_2}
\]
in positive integers $b,x_1,x_2,y_1$ and $y_2$ with $\gcd(b,a)=1$ and $x_1 \ne x_2$ such that $a^{x_1}>b^{y_1}.$
\end{cor}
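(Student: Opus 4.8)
The plan is to derive this directly from Theorem~\ref{a-general-pillai_gcd1_ineff} after an elementary rearrangement. Suppose $(b,x_1,x_2,y_1,y_2)$ is a solution of
\[
a^{x_1}-a^{x_2}=b^{y_1}-b^{y_2}
\]
with $\gcd(a,b)=1$, $x_1\ne x_2$, and $a^{x_1}>b^{y_1}$. First I would rewrite this as $a^{x_1}-b^{y_1}=a^{x_2}-b^{y_2}$ and set $c$ to be this common value; the hypothesis $a^{x_1}>b^{y_1}$ ensures that $c$ is a positive integer, and hence also $a^{x_2}>b^{y_2}$. Then both $(x_1,y_1)$ and $(x_2,y_2)$ are solutions in positive integers of equation~\eqref{abc-pillai} for our fixed $a$ and this $b,c$. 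I would also note that $b\ge 2$, since $b=1$ would give $a^{x_1}=a^{x_2}$ and hence $x_1=x_2$ because $a>1$, against the assumption; thus $a$ and $b$ are coprime integers exceeding $1$, and since $x_1\ne x_2$ the two solutions just exhibited are distinct.

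Next I would invoke Theorem~\ref{a-general-pillai_gcd1_ineff}: as equation~\eqref{abc-pillai} has more than one solution for the triple $(a,b,c)$ with $a$ fixed, the pair $(b,c)$ must belong to a finite set, say $S_a$, depending only on $a$. For each fixed triple $(a,b,c)$ with $(b,c)\in S_a$, equation~\eqref{abc-pillai} has at most two solutions in positive integers by Proposition~\ref{atmost2Pillai}, hence only finitely many; therefore only finitely many quadruples $(x_1,x_2,y_1,y_2)$ can be assembled from them. Since $S_a$ is finite, summing over its elements shows that there are only finitely many admissible tuples $(b,x_1,x_2,y_1,y_2)$ altogether, which is exactly the claim.

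I expect no genuine obstacle in this corollary: all the arithmetic difficulty is absorbed by Theorem~\ref{a-general-pillai_gcd1_ineff}, and what remains is the reduction above together with the trivial exclusion of $b=1$. The one caveat worth stating explicitly is that the conclusion is ineffective, inheriting this defect from Theorem~\ref{a-general-pillai_gcd1_ineff}, whose proof does not quantify the size of the exceptional set $S_a$.
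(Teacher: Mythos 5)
Your proposal is correct and matches the paper's intent: the paper states this as an "almost immediate" corollary of Theorem \ref{a-general-pillai_gcd1_ineff}, and the intended derivation is exactly your rearrangement to $a^{x_1}-b^{y_1}=a^{x_2}-b^{y_2}=c>0$, producing two distinct solutions of equation \eqref{abc-pillai} and hence confining $(b,c)$ to a finite set, with Proposition \ref{atmost2Pillai} bounding the exponent tuples for each such pair. The exclusion of $b=1$ and the remark on ineffectiveness are both appropriate.
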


Theorem \ref{a-general-pillai_gcd1_ineff} and its corollaries above are actually consequences of studying a more general equation, which is the main target of this article. 
Namely, in what follows, we consider the equation
\begin{equation} \label{abc}
a^x+b^y=c^z
\end{equation}
in positive integers $x,y$ and $z$, where $a,b$ and $c$ are fixed relatively prime positive integers greater than 1.
The history of the above equation is rich, and it has been actively studied to date since W. Sierpi\'nski's work on the case $(a,b,c)=(3,4,5)$ (cf.~\cite{MiyPin,MiyPin2}).
Below, we shall discuss equation \eqref{abc} in a similar direction to equation \eqref{abc-pillai}, in particular, we are interested in finding sharp estimates of the number of solutions for it.

Recently, the authors \cite{MiyPin} finally established the following definitive result which is regarded as a 3-variable version of Proposition \ref{atmost2Pillai}.

\begin{prop}[Theorem 1 of \cite{MiyPin}] \label{atmost2}
There are in general at most two solutions to equation $\eqref{abc},$ except for $(a,b,c)=(3,5,2)$ or $(5,3,2)$ each of which gives exactly three solutions.
\end{prop}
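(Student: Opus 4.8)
The plan is to argue by contradiction: suppose that, apart from the stated exceptions, equation~\eqref{abc} admits three solutions $(x_1,y_1,z_1)$, $(x_2,y_2,z_2)$, $(x_3,y_3,z_3)$. I would first observe that we may order them so that $z_1\le z_2\le z_3$, the coincidences among the $z_i$ being an easily handled degenerate case (two solutions with a common $z$ satisfy $a^{x}+b^{y}=a^{x'}+b^{y'}$, which severely restricts the exponents via $\gcd(ab,c)=1$). Since $a,b,c$ are pairwise coprime and each exceeds $1$, exactly one of them is even, and the argument splits accordingly; note that the exceptional triple $(3,5,2)$, which produces the three solutions $3+5=2^{3}$, $3^{3}+5=2^{5}$, $3+5^{3}=2^{7}$, lies in the branch where $c$ is even, so the final bookkeeping must be sharp enough to isolate exactly this configuration.

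The proof would then run on two engines. The first is a family of gap principles obtained by reducing~\eqref{abc} modulo suitable prime powers: reducing the larger of two solutions modulo $c^{z_i}$ of the smaller gives $a^{x_j}+b^{y_j}\equiv 0\pmod{c^{z_i}}$; reducing modulo powers of $a$ or of $b$ ties $b^{y}$ or $a^{x}$ to $c^{z}$ inside the relevant multiplicative groups; and when $c$ is even, the identity $z\,\nu_2(c)=\nu_2(a^{x}+b^{y})$ pins $z$ down through the behaviour of $a,b$ modulo fixed powers of $2$ (a lifting-the-exponent computation), with an analogous $2$-adic rigidity for the exponent of the even base when $a$ or $b$ is even. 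Combining these constraints, and in particular exploiting the presence of \emph{three} solutions rather than two, yields a gap principle forcing a suitable combination of the exponents of each solution to be large relative to those of its predecessor. The second engine is Baker's method: according to which of $a^{x},b^{y}$ dominates, one extracts from a pair of solutions a nonzero linear form in two logarithms of the shape $(x_2-x_1)\log a-(z_2-z_1)\log c$ (or with $b$ in place of $a$), which is small because of the size and divisibility relations just described; a sharp lower bound of Laurent--Mignotte--Nesterenko type, together with its $p$-adic analogue in the branches where $c$ is even, then bounds the exponents of each solution from above by a quantity only polynomial in $\log a,\log b,\log c$ and the exponents of a smaller solution. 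Chaining the gap principle upward to the third solution and confronting it with this Baker-type ceiling forces $z_1$ and $z_2$ below an absolute constant; the surviving parameter space is finite, and a direct---partly computational---check of the admissible triples $(a,b,c)$ with bounded exponent vectors leaves only $(a,b,c)\in\{(3,5,2),(5,3,2)\}$ as carrying three solutions.

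The step I expect to be the main obstacle is calibrating the gap principle finely enough, and uniformly enough in $(a,b,c)$, to be beaten by the Baker bound: because $(3,5,2)$ realizes three solutions with $z$-values $3,5,7$ in arithmetic progression, one cannot merely assert super-exponential spacing of the $z_i$, but must instead track precisely which of the two terms dominates in each solution, how that dominant index varies among the three, and the exact $p$-adic orders, so as to separate the genuinely impossible configurations from the borderline one. A related difficulty is the degenerate sub-case in which two of the three solutions share a coordinate (for instance the same value of $y$), so that the naive Archimedean linear form collapses and one must either play two distinct linear forms against each other or rely entirely on a non-Archimedean estimate; carrying explicit, uniform constants through all of these sub-cases is the technically heaviest part of the argument.
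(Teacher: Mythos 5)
This statement is not proved in the present paper at all: it is Theorem~1 of the authors' earlier work \cite{MiyPin}, quoted here as a known result, and the only thing the paper offers is a one-sentence inventory of the ingredients of that (roughly seventy-page) proof. Measured against that inventory, your outline is pointed in the right direction: the combination of an Archimedean and a $p$-adic form of Baker's method with a gap principle extracted from three hypothetical solutions, a $2$-adic rigidity argument in the branch where the even base occurs, and a terminal finite computation is indeed the architecture of the actual proof.

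That said, what you have written is a plan rather than a proof, and the part you yourself flag as the obstacle --- calibrating a gap principle uniform in $(a,b,c)$ that is sharp enough to be beaten by the Baker-type upper bound, while not excluding the genuine triple $(3,5,2)$ with $z$-values $3,5,7$ --- is precisely where all the work lives; the paper attributes this to an improved version of the Hu--Le gap principle, which your sketch does not reconstruct. You also omit two ingredients the paper names as essential: the result of Scott and Styer from \cite{ScoSt_PMD_2016}, on which the $2$-adic argument relies, and the body of known results on the generalized Fermat equation, which is what actually dispatches the surviving small-exponent configurations (your ``direct, partly computational check'' understates this: the residual cases are not a finite list of triples $(a,b,c)$ but families requiring ternary-equation machinery). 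So the approach is essentially the right one, but the proposal cannot be assessed as a correct proof; it defers exactly the steps on which the theorem's difficulty is concentrated.
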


The proof of the above proposition is achieved by the combination of Baker's method in both complex and $p$-adic cases together with an improved version of the gap principle established by Hu and Le \cite{HuLe,HuLe2,HuLe3} arising from the existence of three hypothetical solutions as well as a certain 2-adic argument relying on the striking result of Scott and Styer \cite{ScoSt_PMD_2016} and many known results on the generalized Fermat equation (for the degenerate case of the proposition see \cite{ScSty}).

Proposition \ref{atmost2} is best possible in the sense that there are infinitely many examples which allow equation \eqref{abc-pillai} to have two solutions.
Indeed, Scott and Styer \cite{ScoSt_PMD_2016} found the following examples, where the condition $\min\{a,b\}=1$ is allowed, by extensive computer search:
\begin{gather}
3^{}+5^{}=2^{3}, \ 3^{3}+5^{}=2^{5}, \ 3^{}+5^{3}=2^{7}; \nonumber\\
3^{}+13^{}=2^{4}, \ 3^{5}+13^{}=2^{8}; \nonumber\\
1^{}+2^{}=3^{}, \ 1^{}+2^{3}=3^{2}; \nonumber\\
2^{2}+5^{}=3^{2}, \ 2^{}+5^{2}=3^{3}; \nonumber\\
2^{}+7^{}=3^{2}, \ 2^{5}+7^{2}=3^{4}; \nonumber\\
2^{3}+3^{}=11^{}, \ 2^{}+3^{2}=11^{}; \nonumber\\ 
3^{}+10^{}=13^{}, \ 3^{7}+10^{}=13^{3}; \nonumber\\
\label{ex-excep-abc} 2^{5}+3^{}=35^{}, \ 2^{3}+3^{3}=35^{};\\ 
2^{}+89^{}=91^{}, \ 2^{13}+89^{}=91^{2};\nonumber\\ 
2^{7}+5^{}=133^{}, \ 2^{3}+5^{3}=133^{}; \nonumber\\ 
2^{8}+3^{}=259^{}, \ 2^{4}+3^{5}=259^{}; \nonumber\\ 
3^{7}+13^{}=2200^{}, \ 3^{}+13^{3}=2200^{}; \nonumber\\ 
2^{13}+91^{}=8283^{}, \ 2^{}+91^{2}=8283^{}; \nonumber\\ 
2^{}+(2^r-1)^{}={2^r+1}^{}, \ 2^{r+2}+(2^r-1)^{2}=(2^r+1)^{2},\nonumber
\end{gather}
where $r$ is any positive integer with $r \ge 2$.
We can observe that the pairs of identities on the lines 6, 8, 10, 11, 12 and 13 in \eqref{ex-excep-abc} are redundant in the sense that each of them is obtained by rearranging another one.
In particular, we emphasize that if we exclude such cases, then the values of $c$ corresponding to one of the remaining pair of identities are $2,3,13,91$ and $2^r+1$ with $r \ge 2$.

Scott and Styer \cite{ScoSt_PMD_2016} put forward the following 3-variable version of Conjecture \ref{atmost1pillai}.

\begin{conj}[\cite{ScoSt_PMD_2016}] \label{atmost1}
Assume that none of $a,b$ and $c$ is a perfect power.
Then there is at most one solution to equation $\eqref{abc},$ except when $(a,b,c)$ or $(b,a,c)$ belongs to the following set\,$:$
\begin{align}\label{excep-set}
\{ \,&(3,5,2),(3,13,2),(2,5,3),(2,7,3),\\
&(2,3,11),(3,10,13),(2,3,35),(2,89,91),\nonumber\\
&(2,5,133),(2,3,259),(3,13,2200),(2,91,8283),\nonumber\\
&(2,2^r-1,2^r+1)\, \},\nonumber
\end{align}
where $r$ is any positive integer with $r=2$ or $r \ge 4.$
\end{conj}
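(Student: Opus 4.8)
\medskip
Since Conjecture~\ref{atmost1} is open in full, a complete proof is beyond reach; the realistic goal is to carry out for equation~\eqref{abc} the analogue of the three-step programme of Problem~\ref{a-general-pillai} --- now with the base $c$ of the dominant term held fixed in place of $a$ --- together with the unconditional resolution of a few small cases such as $c=13$. By Proposition~\ref{atmost2} we may assume, after interchanging $a$ and $b$ if necessary and after discarding any perfect power among $a,b,c$, that $a,b,c$ are pairwise coprime, none a perfect power, and that~\eqref{abc} has exactly two solutions $(x_1,y_1,z_1)$ and $(x_2,y_2,z_2)$ with $z_1<z_2$; the aim is then to force $(a,b,c)$ into the set~\eqref{excep-set}.

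\textbf{Gap principle and Baker bounds.} Reducing the two relations modulo suitable powers of $a$, $b$ and $c$, and sharpening the argument of Hu and Le used in~\cite{MiyPin}, one shows that $a^{x_1},b^{y_1},c^{z_1}$ are small while the exponents of the larger solution are forced to be enormous --- of exponential size in those of the smaller solution. Feeding this into lower bounds for linear forms in two logarithms, in both the complex and the $p$-adic settings exactly as in~\cite{MiyPin,MiyPin2}, yields an \emph{a priori} bound for $\max\{x_2,y_2,z_2\}$ in terms of $a,b,c$, hence in terms of the smaller solution alone. It remains to remove the (in general ineffective) dependence on $a$ and $b$, and here the size of $c$ relative to $\max\{a,b\}$ dictates the method.

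\textbf{The dichotomy.} When $c$ is comparatively large, $z_1$ and $z_2$ are genuinely bounded and one is governed by the circle of ideas used for equation~\eqref{abc-pillai}: Bennett's Proposition~\ref{atmost2Pillai}, Pillai's ineffective theorem that a single solution remains once the additive term dominates, and the effective refinements of~\cite{MiyPin2} together leave only finitely many $(a,b,c)$. When $c$ is small relative to $\max\{a,b\}$, the two relations combine into a very good rational approximation to $\log c/\log a$ subject to the constraint that $c^{z}-a^{x}$ be an $S$-unit for $S=\{p:p\mid b\}$; this restricted approximation problem is exactly what the Schmidt Subspace Theorem, applied in the style of $S$-unit and $abc$ theory, handles, forcing the underlying linear relation to degenerate and pinning $(a,b,c)$ into finitely many families --- with the family $(2,2^{r}-1,2^{r}+1)$ of~\eqref{excep-set} surviving as a genuine exception. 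The narrow intermediate range needs both inputs at once (and is where the $abc$-conjecture gives clean conditional statements). Once $(a,b,c)$ is confined to an explicit finite list, the Baker-theoretic bounds above reduce the problem to a finite search; for $c=13$ this search is feasible and yields the exceptional pairs $(a,b)=(3,10)$ and $(10,3)$.

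\textbf{Main obstacle.} The essential difficulty is the large-$c$ regime: the input from Pillai on the rarity of solutions when the additive term dominates is ineffective, so $a$ --- equivalently the constant $c_0(a,b)$ of the Introduction --- cannot in general be bounded; this is precisely why Theorem~\ref{a-general-pillai_gcd1_ineff} and its analogue for~\eqref{abc} remain finiteness statements, and why an unconditional proof of Conjecture~\ref{atmost1} would require an effective form of the Thue--Siegel--Roth/Subspace theorem, which is not available. The secondary obstacle is the intermediate range, where neither the Subspace-theoretic nor the Baker-theoretic estimate alone suffices and a delicate interplay, of the type demanded by $c=13$, must be engineered; proving the conjecture for infinitely many $c$ amounts to isolating those $c$ for which this interplay can be pushed through uniformly for all but finitely many $(a,b)$.
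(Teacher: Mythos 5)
This statement is a conjecture (due to Scott and Styer), not a theorem of the paper: the paper offers no proof of it, only partial results in its direction (Proposition~\ref{result-MiyPin2}, Theorems~\ref{c-7etc} and~\ref{c13}, and the conditional Theorem~\ref{c_abc}). You correctly recognize that a full proof is out of reach and instead sketch the programme of Problem~\ref{prob-abc}, which is indeed what the paper pursues; to that extent there is nothing to fault. That said, two points in your sketch misrepresent the mechanism the paper actually uses, and they matter if the sketch is read as a roadmap. First, the gap principle you invoke (larger solution's exponents ``enormous --- of exponential size'' in the smaller one) is the three-solution gap principle behind Proposition~\ref{atmost2}; with only two hypothetical solutions the paper argues in the opposite direction: the divisibility relation of Lemma~\ref{div}\,(iii), combined with the smallness of $\Delta=|xY-Xy|$ relative to ${c_1}^z$ (Lemma~\ref{Delta-ub}) and the non-Archimedean Baker bound, yields $zZ\ll_c \log a\log b$ and hence that \emph{all} of $x,y,X,Y$ are bounded by a constant depending only on $c$ (Lemma~\ref{c-general-xyXY-ub}). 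Second, the dichotomy is not organized by the size of $c$ relative to $\max\{a,b\}$: the paper fixes $c$ throughout and lets $\max\{a,b\}\to\infty$; the Subspace Theorem enters only through Proposition~\ref{BuLu-newyork} (the equation $\mathcal Z^m-\mathcal Z^n=q^{y_1}-q^{y_2}$), which is needed to rule out the degenerate cases $x=X$ or $y=Y$ once the exponents are bounded, and this is the sole source of ineffectivity in Theorems~\ref{a-general-pillai_gcd1_ineff} and~\ref{c-7etc}. Your identification of the principal obstacle (ineffectivity of Ridout/Subspace-type inputs, and the delicate effective work needed for individual $c$ such as $13$) is accurate.
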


In this paper, as discussed on Conjecture \ref{atmost1pillai}, we are also interested in solving Conjecture \ref{atmost1} when the value of $c$ is fixed, where note that $c$ is the base number of the maximal term in the equation under consideration.
Thus, with the notation $N(a,b,c)$ denoting the number of solutions to equation \eqref{abc}, we are led to consider the following problem.

\begin{prob}\label{prob-abc}
Fix the value of $c,$ and prove Conjecture $\ref{atmost1}$ in the following steps\,$:$
\begin{itemize}
\item[Step 1] 
Proving that $N(a,b,c) \le 1,$ except for only finitely many pairs of $a$ and $b.$
\item[Step 2] 
Finding a way to enumerate all possible pairs of $a$ and $b$ described as exceptional in {\rm Step 1} in an effectively computable finite time. 
\item[Step 3] 
Sieving all pairs of $a$ and $b$ found in {\rm Step 2} completely.
\end{itemize}
\end{prob}

To the best of our knowledge, all values of $c$ not being perfect powers for which Step 1 of the above problem is already worked out are given as in the following:

\begin{prop}\label{result-MiyPin2}
Let $c$ be any fixed positive integer satisfying at least one of the following conditions\,{\rm :}
\begin{itemize}
\item[$\bullet$] $\max\{2^{\nu_2(c)},3^{\nu_3(c)}\}>\sqrt{c}\,;$
\item[$\bullet$] $c$ is a prime of the form $2^r+1$ with some positive integer $r.$
\end{itemize}
Then $N(a,b,c) \le 1,$ except for only finitely many pairs of $a$ and $b,$ all of which are effectively determined.
\end{prop}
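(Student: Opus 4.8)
Since Conjecture~\ref{atmost1} asserts $N(a,b,c)\le1$ outside a prescribed finite set and $c$ is fixed throughout, it suffices to show that the existence of two distinct solutions of \eqref{abc} confines $a$ and $b$ to an effectively bounded range. So suppose $(x_i,y_i,z_i)$ for $i=1,2$ are distinct solutions with $z_1\le z_2$. Because $c$ is fixed and $a^{x_1},b^{y_1}<c^{z_1}$, the whole problem reduces to bounding $z_1$ from above by a quantity depending on $c$ alone: then $a,b<c^{z_1}$ are bounded, and, every estimate below being effective, the resulting finite set of exceptional pairs $(a,b)$ is effectively determined.

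Consider first the case $\max\{2^{\nu_2(c)},3^{\nu_3(c)}\}>\sqrt c$, and fix $p\in\{2,3\}$ attaining this maximum, so $t:=\nu_p(c)$ satisfies $t\log p>\tfrac12\log c$; since $p\mid c$ and $a,b,c$ are pairwise coprime, $p\nmid ab$. From the $i$-th equation we read off $a^{x_i}\equiv-b^{y_i}\pmod{p^{tz_i}}$, whence, squaring, $\nu_p(a^{2x_i}-b^{2y_i})\ge tz_i$ with $a^{2x_i}\ne b^{2y_i}$. This is a $p$-adic linear form in \emph{two} logarithms — crucially the size of $c$ has been absorbed into the integer $tz_i$ rather than sitting inside a logarithm — so the sharp two-variable $p$-adic estimates of Bugeaud--Laurent (or Yu's bounds) apply, and since $p\in\{2,3\}$ the implied constant below is absolute:
\[
\frac{z_i\log c}{2\log p}<tz_i\le\nu_p\!\left(a^{2x_i}-b^{2y_i}\right)\ll\log a\cdot\log b\cdot\bigl(\log\max\{x_i,y_i\}\bigr)^{2}.
\]
Combining this with $z_i\log c=\log(a^{x_i}+b^{y_i})\ge\max\{x_i\log a,\ y_i\log b\}\ge(\log2)\max\{x_i,y_i\}$ and solving the resulting implicit inequality, one gets
\[
\max\{x_i,y_i,z_i\}\ \ll\ \log a\cdot\log b\cdot\bigl(\log\log(ab)\bigr)^{2}\qquad(i=1,2).
\]

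The decisive step is to convert these bounds, together with the presence of \emph{two} solutions, into a bound for $z_1$ free of $a$ and $b$. For this I would invoke the machinery of \cite{MiyPin,MiyPin2}: the refined version (due to Hu--Le, and further sharpened there) of the gap principle, applied to the pair $(x_1,y_1,z_1),(x_2,y_2,z_2)$, forces the exponents of the larger solution to grow with those of the smaller, while complex linear-forms-in-logarithms estimates (Matveev's theorem) applied to $a^{x_2}+b^{y_2}=c^{z_2}$ — split according to which of $a^{x_2}$ and $b^{y_2}$ dominates — bound those same exponents from above; feeding in the displayed bounds and using $\log a,\log b<z_1\log c$ to re-absorb the dependence on the bases should collapse everything to a bound on $z_1$ depending only on $c$. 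This balancing of a $p$-adic lower bound against an Archimedean upper bound, mediated by the gap principle, is exactly where \cite{MiyPin,MiyPin2} do their hardest work, and it is the main obstacle here: the $p$-adic input on its own only pins the solutions down \emph{in terms of} $a$ and $b$, so the argument remains circular until the gap principle and the complex estimates are brought in together.

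Finally, when $c=2^{r}+1$ is prime, $p=2$ need not divide $c$, but the identity $c-1=2^{r}$, i.e.\ $c\equiv1\pmod{2^{r}}$, supplies the analogous leverage: from two solutions one extracts a congruence linking suitable powers of $a$ and $b$ to a controlled power of $2$, after which the argument proceeds as in the proof of \cite[Corollary~1.7]{Be_cjm_01}, with Pillai's equation \eqref{abc-pillai} replaced by \eqref{abc}, again yielding an effective bound on $z_1$ and hence on $a$ and $b$. As before, the crux is passing from bounds in terms of the bases to a bound in terms of $c$ alone.
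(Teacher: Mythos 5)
There is a genuine gap. Your opening move is sound and parallels the paper's Lemma \ref{1st-app}: from $a^{x_i}\equiv -b^{y_i}\pmod{c^{z_i}}$ one gets $\nu_p(a^{2x_i}-b^{2y_i})\ge \nu_p(c)z_i$ and the $p$-adic two-logarithm bound yields $z_i\ll \log a\,\log b\cdot(\log z_i)^2$. But the decisive step --- passing from a bound \emph{in terms of $a$ and $b$} to a bound \emph{in terms of $c$ alone} --- is exactly the step you do not carry out; you say you "would invoke the machinery" of a gap principle balanced against complex (Matveev-type) estimates and that this "should collapse everything," while conceding the argument is circular as it stands. That balancing is not the mechanism that works here, and it is not the one the paper uses. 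The actual engine (Section \ref{sec-c-general}, following the cited [MiyPin2]) is different: from \emph{two} solutions one extracts the divisibility relation $h^{\Delta}\equiv\pm1\pmod{c^{z}}$ for $h\in\{a,b\}$ with $\Delta=|xY-Xy|$ (Lemma \ref{div}); your first step shows $\Delta$ is only polynomially large in $z$, so the modulus $\mathcal D_1=c_1^{z}/\gcd(\Delta/E_1,c_1^{z})$ still satisfies $\log\mathcal D_1\gg z$; a \emph{second} application of the non-Archimedean Baker bound, now with this huge modulus applied to the second equation, gives $zZ\ll\log a\,\log b$ with no logarithmic losses (Lemma \ref{2nd-app}), whence $x,y,X,Y\ll_c1$ (Lemma \ref{c-general-xyXY-ub}). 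Nothing in your sketch produces this quadratic saving, and without it the exponents are not bounded independently of $a,b$.

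Two further problems. First, even after bounding the exponents one must still bound $a$ and $b$; in this paper the general route to that (Lemma \ref{c-prime-xneXandyneY} via Proposition \ref{BuLu-newyork}) rests on Ridout and the Subspace Theorem and is therefore \emph{ineffective}, whereas the Proposition you are proving asserts the exceptional pairs are \emph{effectively} determined. The effectiveness is precisely what the hypotheses on $c$ buy (via the 2-adic/3-adic arguments of [MiyPin2, Corollary 2] and the proof of [MiyPin2, Theorem 3]), yet your argument never genuinely uses the condition $\max\{2^{\nu_2(c)},3^{\nu_3(c)}\}>\sqrt{c}$ --- the factor $\tfrac12$ coming from $t\log p>\tfrac12\log c$ is cosmetic, since $\nu_p(c)\ge1$ would serve equally well in your chain of inequalities. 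That the hypothesis is idle in your proof is a clear sign the proof cannot be complete. Second, for $c=2^r+1$ prime you defer entirely to "the proof of [Be, Corollary 1.7]" without supplying the congruence or the adaptation from \eqref{abc-pillai} to \eqref{abc}. Note finally that the paper itself does not reprove this Proposition; it records it as an immediate consequence of [MiyPin2], so the comparison here is really with the method of that earlier paper, which is the divisibility-relation-plus-second-$p$-adic-application method described above, not a gap-principle/Archimedean balancing.
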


This proposition is an immediate consequence of \cite[Corollary 2]{MiyPin2} and the proof of \cite[Theorem 3]{MiyPin2} (cf.~Lemma \ref{c-general-minxy=1--minXY=1} below).
The values of $c$ treated in Proposition \ref{result-MiyPin2}, not being perfect powers, are in ascending order as follows:
\[
c=2, 3, 5, 6, 12, 17, 18, 24, 40, 45, 48, 54, 56, 63,\ldots,257,\ldots,65537,\ldots
\]
We note that among the above cases Conjecture \ref{atmost1} is completely solved for each of the following cases: $c=2$ by Scott \cite[Theorem 6; $p=2$]{Sco}; $c=6$ or $c \in \{3,5,17,257,65537\}$ (Fermat primes found so far) by the authors \cite{MiyPin2}. 

Our second main result provides new values of $c$ for which Step 1 can be worked out.

\begin{thm}\label{c-7etc}
If $c$ is a prime of the form $2^r \cdot 3+1$ with some positive integer $r,$ then $N(a,b,c) \le 1,$ except for only finitely many pairs of $a$ and $b.$
\end{thm}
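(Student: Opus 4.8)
The plan is to argue by contradiction: assume that for our fixed prime $c=2^{r}\cdot 3+1$ there are infinitely many coprime pairs $(a,b)$ with both $a,b>1$ and $N(a,b,c)\ge 2$, and derive a contradiction from the fact that $c\equiv 1\pmod{2^{r}}$ with $2^{r}=(c-1)/3$ large — in particular far exceeding $\sqrt{c}$, which is the size of the quantity governing Proposition \ref{result-MiyPin2}. For each offending pair, Proposition \ref{atmost2} gives exactly two solutions of \eqref{abc}, say $(x_{1},y_{1},z_{1})$ and $(x_{2},y_{2},z_{2})$ with $z_{1}\le z_{2}$, apart from the one pair coming from $(a,b,c)=(3,5,2)$. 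The first move is to invoke the reduction underlying Proposition \ref{result-MiyPin2}, namely \cite[Corollary 2]{MiyPin2} together with Lemma \ref{c-general-minxy=1--minXY=1} (proved below, with no arithmetic hypothesis on $c$): outside a finite, effectively determined set of pairs one may assume that the smaller solution has a unit exponent, and, interchanging $a$ and $b$ if necessary, that $x_{1}=1$. Thus $a=c^{z_{1}}-b^{y_{1}}$, and the whole problem reduces to bounding $z_{1}$ in terms of $c$ alone; once that is done, $b^{y_{1}}<c^{z_{1}}$ confines $b$, $y_{1}$ and hence $a$ to a finite set.

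To bound $z_{1}$ I would extract two kinds of arithmetic information from the coexistence of the two solutions. On the one hand, since $c\equiv 1\pmod{c-1}$ every solution satisfies $a^{x}+b^{y}\equiv 1\pmod{c-1}$, and $c-1=2^{r}\cdot 3$ is of size $\asymp c$; pairing the two solutions yields $a^{x_{1}}-a^{x_{2}}\equiv b^{y_{2}}-b^{y_{1}}\pmod{2^{r}\cdot 3}$, and, because exactly one of $a,b$ is even while $c$ is odd, this is a genuine restriction precisely when $r$ is large. On the other hand, reducing both equalities modulo $c^{z_{1}}$ gives $a^{x_{i}}\equiv -b^{y_{i}}\pmod{c^{z_{1}}}$; eliminating $a$ through $a\equiv -b^{y_{1}}\pmod{c^{z_{1}}}$ leads to $b^{\,2(y_{1}x_{2}-y_{2})}\equiv 1\pmod{c^{z_{1}}}$, so $\operatorname{ord}_{c^{z_{1}}}(b)$ divides $2(y_{1}x_{2}-y_{2})$. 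Here $\operatorname{ord}_{c^{z_{1}}}(b)=\operatorname{ord}_{c}(b)\cdot c^{\max(0,\,z_{1}-t)}$ with $t=\nu_{c}\bigl(b^{\operatorname{ord}_{c}(b)}-1\bigr)$ and $\operatorname{ord}_{c}(b)\mid c-1=2^{r}\cdot 3$. An ordinary estimate for linear forms in logarithms applied to the second solution bounds $x_{2},y_{2},z_{2}$ polynomially in $z_{1}$ (for fixed $c$); hence, as long as $t$ stays small, the divisor relation is incompatible with the exponential lower bound $\operatorname{ord}_{c^{z_{1}}}(b)\ge c^{\,z_{1}-t}$ unless $z_{1}$ is bounded. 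The degenerate possibility $y_{1}x_{2}=y_{2}$, which makes the last congruence empty, is disposed of by a lifting‑the‑exponent computation for the prime $c$: writing $A=a$, $B=b^{y_{1}}$ one has $A+B=c^{z_{1}}$ and $A^{x_{2}}+B^{x_{2}}=c^{z_{2}}$ with $x_{2}$ odd, whence $z_{2}=z_{1}+\nu_{c}(x_{2})$, which is inconsistent with the elementary inequality $z_{2}\ge x_{2}(z_{1}-1)$ unless $z_{1}$ is bounded.

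The genuine obstacle, and the place where the specific shape of $c$ is indispensable, is the remaining ``Wieferich‑type'' case $t\asymp z_{1}$, i.e. where $a$ and $b$ have anomalously small multiplicative order modulo $c^{z_{1}}$ — equivalently $c^{z_{1}}\mid b^{c-1}-1$, which then forces $c^{z_{1}}\mid a^{c-1}-1$ via $a\equiv -b^{y_{1}}\pmod{c^{z_{1}}}$. In this case the $c$‑adic argument above is empty and one must fall back on the $2$‑adic side: the congruences $a^{x}+b^{y}\equiv 1\pmod{2^{r}}$ for both solutions, combined with a $2$‑adic lower bound for a linear form in two logarithms of the special type used in the proof of Proposition \ref{atmost2Pillai}, are brought to bear to bound $z_{1}$ once more. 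The estimates here are tight, and this is exactly where one uses that $2^{r}=(c-1)/3$ is as large as it can be for a prime $c$ whose predecessor is supported on $\{2,3\}$ — the odd cofactor $3$ being the smallest admissible value exceeding $1$ — in complete parallel with the role of the cofactor $1$ in the case $c=2^{r}+1$ already treated in \cite{MiyPin2}. Making this balancing precise and uniform, and separating off the finitely many genuinely exceptional pairs (such as $(a,b)=(3,10)$ when $c=13$) that survive for small values of $z_{1}$, is the crux; the remainder is a case analysis organized around the two reductions above, together with the usual estimates for linear forms in logarithms and the gap principle of Hu and Le \cite{HuLe,HuLe2,HuLe3}.
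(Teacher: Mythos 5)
There are two genuine gaps, and both stem from misidentifying why the hypothesis $c=2^r\cdot 3+1$ enters. You use it as a \emph{size} condition ($2^r$ large, congruences modulo $c-1=2^r\cdot 3$, a $2$-adic Baker estimate in your ``Wieferich-type'' branch), but congruences modulo the \emph{fixed} number $c-1$ cannot confine infinitely many pairs $(a,b)$ or bound $z_1$, and there is no linear form in this problem that is naturally divisible by a growing power of $2$; that branch of your argument does not close. The paper uses the hypothesis \emph{structurally}: by Scott's parity theorem (Proposition \ref{twoclass}) combined with $\min\{x,y\}=\min\{X,Y\}=1$ (Lemma \ref{c-general-minxy=1--minXY=1}), the quantity $\Delta=|xY-Xy|$ must be odd for large $\max\{a,b\}$ (Lemma \ref{c-prime-parity}); since $E=e_c(a)=e_c(b)$ divides $\Delta$ (Lemma \ref{div}(ii)) and divides $\varphi(c)=2^r\cdot 3$, and $E>1$, the only possibility is $E=3$. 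Lemma \ref{c-prime-xleE-2--yleE-2} then gives $\max\{x,y\}\le E-2=1$. Your ``$t\asymp z_1$'' case is not handled by a separate $2$-adic argument at all: the divisibility $c^{z-n'}\mid b^{E}-\delta_b$ with $n'$ small forces $\min\{a,b\}\gtrsim c^{z/E}$, which caps the exponents rather than bounding $z$, and the uniform bound $x,y,X,Y\ll_c 1$ comes from a \emph{second} application of Baker's method with the huge modulus $c^z/\gcd(\Delta/E,c^z)$ (Lemmas \ref{2nd-app} and \ref{c-general-xyXY-ub}), not from the first-round polynomial-in-$z_1$ estimate you invoke.

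The second gap is the endgame. After all local arguments one is left with $x=y=1$ and $X=1$ or $Y=1$, i.e.\ $a+b=c^z$ and, say, $a+b^{Y}=c^{Z}$. Here your claim that ``the whole problem reduces to bounding $z_1$'' is correct, but none of your tools bounds $z_1$ in this case: the order/lifting-the-exponent relation only forces $b\gtrsim c^{z/2}$, which is perfectly consistent with $a+b=c^z$, so $z$ remains free. The paper disposes of this case with Lemma \ref{c-prime-xneXandyneY} (one must have $x\ne X$ and $y\ne Y$ for large $\max\{a,b\}$), whose proof reduces to $b^{Y}-b^{y}=c^{Z}-c^{z}$ and invokes Proposition \ref{BuLu-newyork}, i.e.\ Ridout's theorem and the Schmidt Subspace Theorem; this Archimedean approximation input is indispensable and is exactly the source of the theorem's ineffectivity. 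Your proposal contains no substitute for it.
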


The values of $c$ treated in the above theorem are presumably infinitely many, and they are in ascending order as follows:
\begin{equation} \label{c-2r31prime}
c=7, 13, 97, 193, 769, 12289, 786433,3221225473, 206158430209,\ldots
\end{equation}
Although Theorem \ref{c-7etc} is ineffective, we give a certain condition on $c$ to make Theorem \ref{c-7etc} effective, and we succeed in finding an effective treatment of only the case where $c=13$.
Our final main result is as follows.

\begin{thm}\label{c13}
If $c=13,$ then Conjecture $\ref{atmost1}$ is true, namely, $N(a,b,13) \le 1,$ except for $(a,b)=(3,10)$ or $(10,3).$
\end{thm}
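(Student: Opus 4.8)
Since $13 = 2^2\cdot 3 + 1$ is a prime of the form $2^r\cdot 3+1$, Theorem~\ref{c-7etc} already gives $N(a,b,13)\le 1$ for all but finitely many coprime pairs $(a,b)$ (recall that any solution forces $\gcd(ab,13)=1$, so one may treat arbitrary coprime $a,b>1$). What is missing for the statement is, first, an \emph{effective} form of this finiteness --- an explicit bound --- and, second, the resulting finite verification. The plan is to suppose $N(a,b,13)\ge 2$, with two distinct solutions $(x_i,y_i,z_i)$, $i=1,2$, ordered so that $z_1\le z_2$, and to squeeze out enough arithmetic from the interaction of the two solutions to bound everything.

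First I would dispose of the case $z_1=z_2$: here $a^{x_1}+b^{y_1}=a^{x_2}+b^{y_2}$ with the two triples distinct forces, say, $x_1<x_2$ and $y_1>y_2$, hence $a^{x_1}(a^{x_2-x_1}-1)=b^{y_2}(b^{y_1-y_2}-1)$, and this together with $\gcd(a,b)=1$ and the original equation is ruled out by congruences modulo small primes dividing $13^z-1$ (exploiting $12=2^2\cdot 3\mid 13-1$). For $z_1<z_2$, apply a gap principle of Hu--Le type, as used in \cite{MiyPin}, together with the $2$-adic and $3$-adic behaviour of powers of $13$ (again using $13\equiv 1\pmod{12}$), to force the smaller solution into a very restricted shape --- in particular to bound $\min\{x_1,y_1\}$, leaving finitely many parametrized sub-cases such as $a+b^{y_1}=13^{z_1}$ or $a^{x_1}+b=13^{z_1}$. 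In each sub-case, combining the congruences $a^{x_2}\equiv -b^{y_2}\pmod{13^{z_1}}$ with $b^{y_2}\equiv 13^{z_2}\pmod{a^{x_1}}$ and $a^{x_2}\equiv 13^{z_2}\pmod{b^{y_1}}$ and using coprimality to $13$ yields divisibility relations $a^{x_1}\mid 13^{m}-1$ and $b^{y_1}\mid 13^{n}-1$ for explicit linear combinations $m,n$ of the exponents.

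The next step is Baker's method. Applied to the linear forms $z_2\log 13-x_2\log a$ (complex case) and to their $p$-adic analogues for the primes dividing $a$, $b$ or $13$, the sharp lower bounds for linear forms in two and three logarithms, played against the gap inequality and the divisibility relations just obtained, produce an explicit upper bound $B$ for $\max\{a,b\}$ and for all the exponents $x_i,y_i,z_i$. This is precisely where the ``certain condition on $c$'' referred to just before the statement enters: one verifies that $c=13$ meets it, whereas it is not known to hold for $c=7$. With $a$, $b$ and all exponents below $B$, it remains to check by computer that the only coprime pair admitting two solutions is $(a,b)=(3,10)$ --- arising from $3+10=13$ and $3^7+10=13^3$ --- together with its mirror $(10,3)$; this search is made practical by congruence sieving (residues of $13^z$ modulo well-chosen auxiliary moduli, quadratic reciprocity, and the factorizations of $13^z-1$) to thin the candidate list before any direct test.

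The main obstacle is twofold. Conceptually it is the effectivization in the previous paragraph: one must isolate a special feature of $c=13$ --- in essence, that a short list of auxiliary Pillai equations with base $13$ and small parameters has no unexpected solutions --- that allows Baker's method to substitute for the ineffective Diophantine-approximation input buried in the proof of Theorem~\ref{c-7etc}. Computationally the difficulty is to keep $B$ small, and the sieve sharp, enough that the final enumeration actually terminates: bringing the bound coming from linear forms in logarithms down from merely finite to genuinely searchable is where the bulk of the effort goes.
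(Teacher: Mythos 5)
Your outline correctly identifies the overall shape of the argument (Theorem~\ref{c-7etc} gives ineffective finiteness for $c=13$; the work is to effectivize and then search), and you correctly flag where the difficulty lies. But the proposal does not resolve that difficulty, and the step you state as the output of Baker's method is precisely the step that fails. Lower bounds for linear forms in (two) logarithms, played against the gap/divisibility relations $13^{z}\mid (m^{3}-\delta_m)\cdot\Delta/3$ and $\Delta\equiv 0\pmod 3$, bound the \emph{exponents} $x,y,X,Y$ absolutely and bound $z,Z$ against $\log a\log b$; they do not bound $a$ and $b$. In the general (ineffective) proof that last reduction is done by Ridout's theorem and the Subspace Theorem. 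The paper's effective substitute for $c=13$ is concrete and specific: after the exponents collapse to $x=y=X=1$ with $Y$ even, one is left with the residual equation $13^{Z}-13^{z}=b^{Y}-b$, and this is killed by an effective \emph{restricted} irrationality measure $\bigl|\sqrt{13}-P/13^{k}\bigr|>13^{-1.53k}$ for $k\ge 3$, coming from Bauer and Bennett's hypergeometric method — this is the ``certain condition on $c$'' and it is a Diophantine-approximation input, not a statement that ``a short list of auxiliary Pillai equations has no unexpected solutions.'' Your proposal names the obstacle but supplies no mechanism for overcoming it, so the core of the proof is missing.

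Two further ingredients of the actual proof are absent from your sketch. First, the argument is organized around the parity of $\Delta=|xY-Xy|$: Scott's theorem (Proposition~\ref{twoclass}) shows that if $\Delta$ is even then one of the two solutions has both of its exponents over $a$ and $b$ even, and that case is eliminated not by sieving but by a factorization over $\mathbb{Z}[i]$ (writing $13=(2+3i)(2-3i)$), Lucas-sequence congruences to compute $e_{13}$ of the resulting components, and known results on the generalized Fermat equation of signature $(2,4,r)$, forcing $Z\le 3$ and a contradiction. Second, in the $\Delta$ odd case with $x=y=1$, showing $X=1$ (so that the residual equation above even arises) requires the Bennett--Siksek classification of solutions to $S^{2}-13^{k}=T^{n}$, a modular-method input. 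Without these, the reduction to a finite computer search — and hence the final verification that only $(a,b)=(3,10),(10,3)$ survive — cannot be carried out.
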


In the sense of disregarding the redundant cases mentioned just after list \eqref{ex-excep-abc}, the above theorem together with the mentioned earlier works completes the proof of Conjecture \ref{atmost1} for any fixed prime value of $c$ involving the exceptional cases, namely, for $c$ equaling $2,3,13$ or any of (known) Fermat primes.

Theorem \ref{c13} also confirms Conjecture \ref{atmost1pillai} for $a=13$.

\begin{cor}\label{a13}
If $a=13,$ then Conjecture $\ref{atmost1pillai}$ is true, namely, there is at most one solution to equation \eqref{abc-pillai}$,$ except for $(b,c)=(3,10).$
\end{cor}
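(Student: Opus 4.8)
The plan is to derive Corollary~\ref{a13} from Theorem~\ref{c13}: each solution of the Pillai equation $13^x-b^y=c$ is turned into a solution of equation~\eqref{abc} with largest base $13$, after which the exceptional pairs $(b,c)$ can be read off.

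Two preliminary reductions come first. Since $13$ is prime, by \cite[Lemma~4.3]{Be_cjm_01} we may assume $\gcd(13,b)=1$. If $c=1$, then $13^x-b^y=1$ is impossible when $\min\{x,y\}\ge2$ by Mih\u{a}ilescu's theorem~\cite{Mih}, has the unique solution $(b,x,y)=(12,1,1)$ when $x=1$, and forces $b=13^x-1$ (hence a single value of $x$) when $y=1$; so there is at most one solution, and we may assume $c\ge2$. Write $c=d^k$ with $k\ge1$ and $d\ge2$ not a perfect power.

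Now suppose $13^x-b^y=c$ has two distinct solutions $(x_1,y_1)$ and $(x_2,y_2)$. From $b^{y_i}+c=13^{x_i}$ and $\gcd(13,b)=1$ one checks that $13,b,d$ are pairwise coprime (a prime dividing two of $b,c,13$ would divide $13^{x_i}$, hence equal $13$, contradicting $\gcd(13,b)=1$) and that $x_1\ne x_2$ (otherwise $b^{y_1}=b^{y_2}$). Hence $(u,v,w)=(y_i,k,x_i)$ for $i=1,2$ are two distinct solutions of $b^{u}+d^{v}=13^{w}$, an instance of equation~\eqref{abc} with largest base $13$ and all three bases $>1$ and pairwise coprime. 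Thus $N(b,d,13)\ge2$, and Theorem~\ref{c13} forces $\{b,d\}=\{3,10\}$. Note also that, under this correspondence, the solutions of $13^x-b^y=c$ match precisely the solutions $(u,v,w)$ of $b^{u}+d^{v}=13^{w}$ with $v=k$.

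It remains to treat the two surviving cases. If $(b,d)=(3,10)$, then by Proposition~\ref{atmost2} the only solutions of $3^{u}+10^{v}=13^{w}$ are $(1,1,1)$ and $(7,1,3)$, both with $v=1$; since our two solutions have $v=k$, this forces $k=1$, $c=10$, and the two Pillai solutions are $13-3=10$ and $13^3-3^7=10$ --- i.e.\ the asserted exception $(b,c)=(3,10)$, corresponding to the identities $3+10=13$ and $3^7+10=13^3$ in~\eqref{ex-excep-abc}. If $(b,d)=(10,3)$, then by Proposition~\ref{atmost2} the only solutions of $10^{u}+3^{v}=13^{w}$ are $(1,1,1)$ and $(1,7,3)$, carrying the distinct exponents $1$ and $7$ on $3$; so at most one of them has $v=k$, whence $13^x-10^y=3^k$ has at most one solution, contradicting the hypothesis of two. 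Therefore $(b,c)=(3,10)$ is the only exception, which proves Corollary~\ref{a13}. Granting Theorem~\ref{c13}, the only delicate point is precisely this dichotomy: $N(10,3,13)$ also exceeds $1$, yet $(10,3)$ yields no Pillai exception for $a=13$, the reason being that the two solutions of $10^{u}+3^{v}=13^{w}$ carry different exponents on $3$, whereas those of $3^{u}+10^{v}=13^{w}$ share the common exponent $1$ on $10$.
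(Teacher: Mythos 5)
Your proof is correct and follows exactly the route the paper intends (the paper states Corollary \ref{a13} without proof, as an immediate consequence of Theorem \ref{c13} via the standard conversion of $13^x-b^y=d^k$ into $b^u+d^v=13^w$, the same device used in the paper's proof of Theorem \ref{a-general-pillai_gcd1_ineff}). You also correctly handle the two genuine points of care — writing $c=d^k$ with $d$ not a perfect power so that Theorem \ref{c13} applies, and explaining why $N(10,3,13)=2$ produces no Pillai exception since the two solutions of $10^u+3^v=13^w$ carry different exponents on $3$.
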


This corollary together with the mentioned earlier works completes the proof of Conjecture \ref{atmost1pillai} for any fixed prime value of $a$ involving the exceptional set \eqref{pillai-excep-set}, namely, for $a \in \{2,3,13\}$.

The organization of this paper is as follows.
In the next section, we quote several known results based on the theory of Diophantine approximation.
After recalling some ideas to reduce each of our theorems to one of its weak forms in Section \ref{sec-weakform}, under a much more general setting than that of \cite{MiyPin2}, we examine solutions to the system of two equations arising from exceptional triples $(a,b,c)$ with $N(a,b,c)>1$, and find many finiteness results on the solutions when $c$ is small relative to $a$ or $b$.
Here the most important idea is found through applying Baker's method in its non-Archimedean analogue to a certain divisibility relation among solutions, which plays a crucial role in the proof of our results as well as in our previous work \cite{MiyPin2}.
The mentioned finiteness results with the help of Schmidt Subspace Theorem among others lead us to complete the proofs of Theorems \ref{a-general-pillai_gcd1_ineff} and \ref{c-7etc}.
Further, in Section \ref{sec-eff}, after giving some conditions, which involves restricted rational approximation to the square roots of the values in \eqref{c-2r31prime}, sufficient for proving Theorem \ref{c-7etc} effectively, we prove Theorem \ref{c13}, except for a finite search.
We finish half of its finite search in Section \ref{sec-c13-1} by applications of non-Archimedean analogues to Baker's method in number fields of low degrees and a result related to generalized Lebesgue-Nagell equations given recently by Bennett and Siksek \cite{BeSi} together with extensive use of computers, and another half in Section \ref{sec-c13-2} by using a result of Scott \cite{Sco} which restricts the parities of unknowns of the left-hand side of equation \eqref{abc} together with some properties of Lucas sequences and some results on ternary Diophantine equations based on the so-called modular approach.
Section \ref{sec-abc} is devoted to give some applications of $abc$-conjecture to Conjecture \ref{atmost1} in the direction of Problem \ref{prob-abc}.
In the final section we make some remarks towards making Theorems \ref{a-general-pillai_gcd1_ineff} and \ref{c-7etc} effective, with a few open problems for readers.

All computations in this paper were performed by a computer\footnote{Intel Core 7 11800H processor (with 8 cores) and 16GB of RAM} using the computer package MAGMA \cite{BoCaPl}. 
The total computation time did not exceed 1 hour.

\section{Tools from Diophantine approximation}

Henceforth, we frequently use the Vinogradov notation $f \ll_{\kappa_1,\kappa_2,\ldots,\kappa_n} g$ which means that $|f/g|$ is less than some positive constant depending only on $\kappa_1,\kappa_2,\ldots$ and $\kappa_n$, where we simply write $f \ll g$ if the implied constant is absolutely finite.
Note that the implied constant of each Vinogradov notation appearing below is effectively computable unless otherwise referred to.

Let $P[\mathcal A]$ denote the greatest prime factor of a nonzero integer $\mathcal A$, with the convention that $P[1]=P[-1]=1$.

Let $f(t_1,\ldots,t_n)$ be a real-valued function defined on an unbounded region in $\mathbb R^n$.
We write
\[
\lim_{\max\{\,|t_1|,\ldots,|t_n|\,\} \to \infty} f(t_1,\ldots,t_n) = \infty, \ \ \text{effectively},
\]
if, for any $M>0$, there exists some number $T$ which depends only on $M$ and is effectively computable such that $f(t_1,\ldots,t_n)>M$ whenever $\max\{\,|t_1|,\ldots,|t_n|\,\}>T$.

The following is a direct consequence of \cite[Corollary 8.1]{ShTi}. 

\begin{prop}\label{gp}
Let $m$ and $n$ be positive integers with $m>1$ and $n>1$ such that $\max\{m,n\} \ge 3.$
Let $\mathcal X$ and $\mathcal Y$ be variables as a pair of nonzero relatively prime integers.
Then
\[
\lim_{\max\{\,|{\mathcal X}|,\,|{\mathcal Y}|\,\} \to \infty} P[{\mathcal X}^m+{\mathcal Y}^n] = \infty, \ \ \text{effectively}.
\]
\end{prop}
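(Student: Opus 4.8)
The plan is to reduce the statement to a known effective result on the greatest prime factor of a binary form of degree at least $3$, which is precisely what \cite[Corollary 8.1]{ShTi} provides. First I would observe that the hypothesis $\max\{m,n\}\ge 3$, together with $m,n>1$, forces the homogeneous polynomial $F(U,V)=U^m+V^n$ to have, after clearing to a genuine binary form, at least three pairwise non-proportional linear factors over $\overline{\mathbb Q}$: if say $m\ge 3$, then already $U^m+1$ has $m\ge 3$ distinct roots, hence $F(U,1)$ is separable of degree $\ge 3$, and the same reasoning applies with the roles of $U,V$ swapped when $n\ge 3$. The case $m=n$ is included since then $\max\{m,n\}=m\ge 3$. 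Thus $F$ is, up to the trivial homogenization issue when $m\ne n$, a binary form with at least three distinct linear factors in its factorization over $\overline{\mathbb Q}$.

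Next I would invoke the theorem on greatest prime factors of binary forms: for a binary form $G(U,V)\in\mathbb Z[U,V]$ with at least three pairwise non-proportional linear factors over $\overline{\mathbb Q}$, one has $P[G(\mathcal X,\mathcal Y)]\to\infty$ effectively as $\max\{|\mathcal X|,|\mathcal Y|\}\to\infty$ over coprime integer pairs $(\mathcal X,\mathcal Y)$ with $G(\mathcal X,\mathcal Y)\ne 0$; this is the content of \cite[Corollary 8.1]{ShTi}, whose proof rests on Baker-type lower bounds for linear forms in logarithms (in both the Archimedean and non-Archimedean settings) applied to the associated unit equations. The only genuine bookkeeping is to account for the inhomogeneity when $m\ne n$: writing $N=\mathcal X^m+\mathcal Y^n$ and, say, $m>n$, one sets $d=m-n$ and regards $N$ as $\mathcal X^m+\mathcal Y^n$; a clean way is to pass to the homogeneous form $H(U,V)=U^m+U^d V^n$ of degree $m$, note $H(\mathcal X,\mathcal Y)=\mathcal X^m+\mathcal X^d\mathcal Y^n$, and relate $P[H(\mathcal X,\mathcal Y)]$ to $P[\mathcal X^m+\mathcal Y^n]$ using $\gcd(\mathcal X,\mathcal Y)=1$, so that the prime factors of $\mathcal X$ contribute harmlessly; alternatively one applies the binary-forms theorem directly to the (now genuinely binary, separable, degree $\ge 3$) form obtained after this normalization. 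Either route shows $P[\mathcal X^m+\mathcal Y^n]$ is bounded below by an effectively computable function of $\max\{|\mathcal X|,|\mathcal Y|\}$ tending to infinity, and the coprimality hypothesis guarantees $N\ne 0$ except for finitely many explicitly bounded pairs, which we discard.

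The step I expect to require the most care is not the analytic input — that is entirely supplied by \cite{ShTi} — but the verification that the exponent condition $\max\{m,n\}\ge 3$ is exactly what is needed to guarantee three distinct linear factors, and the handling of the degenerate shapes (e.g. $m=2,n\ge 3$ or $m\ge 3,n=2$, where one of the two ``variables'' enters quadratically): in those cases $U^2+V^n$ still has $\ge 3$ distinct roots when viewed as a polynomial in $V$, so the conclusion persists, but one must make sure the reduction to a binary form of degree $\ge 3$ is stated for the correct variable. Once these small structural checks are in place, Proposition \ref{gp} follows immediately from \cite[Corollary 8.1]{ShTi}.
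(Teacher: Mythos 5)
The paper offers no argument for Proposition \ref{gp} beyond the sentence preceding it: the statement (with the hypothesis $m,n>1$, $\max\{m,n\}\ge 3$, i.e.\ $mn\ge 6$) is quoted directly from \cite[Corollary 8.1]{ShTi}, which belongs to the family of effective results on $P[a\mathcal X^m+b\mathcal Y^n]$ proved there by applying Archimedean and $p$-adic lower bounds for linear forms in logarithms to the inhomogeneous expression itself (compare the paper's later pointer to \cite[Theorem 2]{Bu} for an explicit version). Your proposal instead tries to route the statement through the greatest-prime-factor theorem for \emph{binary forms} with at least three pairwise non-proportional linear factors, and this reduction has a genuine gap precisely in the case $m\ne n$, which is the only case where anything needs to be done. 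When $m\ne n$ the polynomial $U^m+V^n$ is not homogeneous, so ``linear factors over $\overline{\mathbb Q}$'' of it are not defined, and your proposed homogenization does not repair this: with $d=m-n$ one has $H(\mathcal X,\mathcal Y)=\mathcal X^m+\mathcal X^d\mathcal Y^n=\mathcal X^{d}\,(\mathcal X^{n}+\mathcal Y^{n})$, which is simply a different integer from $\mathcal X^m+\mathcal Y^n$; coprimality of $\mathcal X$ and $\mathcal Y$ gives no way to ``relate'' their greatest prime factors, since the two quantities share no common structure beyond the single monomial $\mathcal X^m$.

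The failure is not cosmetic. For $n=2$ the quantity your substitution actually controls is governed by $\mathcal X^2+\mathcal Y^2$, and $P[\mathcal X^2+\mathcal Y^2]$ does \emph{not} tend to infinity over coprime pairs: taking $\mathcal X+i\mathcal Y=(2+i)^k$ gives $\gcd(\mathcal X,\mathcal Y)=1$ and $\mathcal X^2+\mathcal Y^2=5^k$ with $\max\{|\mathcal X|,|\mathcal Y|\}\to\infty$. This is exactly why the binary-form theorem requires three non-proportional factors, and it shows that no passage through a quadratic piece can succeed; yet mixed pairs such as $(m,n)=(2,3)$ are genuinely needed in the paper (Proposition \ref{gp} is invoked in Lemma \ref{c-general-minxy=1--minXY=1} for every pair $(x,y)$ with $\min\{x,y\}\ge 2$ other than $(2,2)$). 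What your argument does correctly establish is the diagonal case $m=n\ge 3$, where $U^n+V^n$ is a genuine binary form with $n$ pairwise non-proportional linear factors (this covers the use in Lemma \ref{c-general-coprime}, where $(m,n)=(p,p)$ with $p$ an odd prime). For $m\ne n$ there is no reduction to binary forms; one must run the linear-forms-in-logarithms argument on $\mathcal X^m-(-\mathcal Y)^n$ directly, and that is precisely what \cite[Corollary 8.1]{ShTi} packages. The correct move here is to cite that corollary as the paper does, not to re-derive it from the form theorem.
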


For any positive integer $M>1$, we let $\nu_M(\mathcal A)$ denote the $M$-adic valuation of a nonzero integer $\mathcal A$, namely, the highest exponent $e$ such that $M^e$ divides $\mathcal A$.
Further, for any nonzero rational number $x$, we set $\nu_M(x):=\nu_M(p)-\nu_M(q)$, where $p$ and $q$ are relatively prime integers such that $x=p/q$.

For any algebraic number $\gamma$, we define the absolute logarithmic height ${\rm h}(\gamma)$ of $\gamma$ as follows:
\[
{\rm h}(\gamma) =\frac{1}{[\mathbb{Q}(\gamma):\mathbb{Q}]}\,\Bigl(\, \log c_0 \,+ \,\sum\, \log\, \max \{\,1,\,| \gamma' |\,\} \,\Bigl),
\]
where $c_0>0$ is the leading coefficient of the minimal polynomial of $\gamma$ over $\mathbb Z$, and the sum extends over all conjugates $\gamma'$ of $\gamma$ in the field of complex numbers.

The following is a simple consequence of Bugeaud \cite[Theorem 2]{Bu-madic} with the choice $(\mu,c_2(\mu))=(4,53.6)$.

\begin{prop}\label{Bu-madic}
Let $M$ be an integer with $M>1.$
Let $\alpha_1$ and $\alpha_2$ be nonzero rational numbers such that $\nu_q(\alpha_1)=0$ and $\nu_q(\alpha_2)=0$ for any prime factor $q$ of $M.$
Assume that $\alpha_1$ and $\alpha_2$ are multiplicatively independent.
Let ${\rm g}$ be a positive integer with $\gcd({\rm g},M)=1$ such that
\begin{gather*} \label{Bu-madic-ass1}
\nu_{q}( {\alpha_1}^{{\rm g}}-1 ) \ge \nu_{q}(M), \ \
\nu_{q}( {\alpha_2}^{{\rm g}}-1 ) \ge 1
\end{gather*}
for any prime factor $q$ of $M.$
If $M$ is even, then further assume that
\begin{equation*} \label{Bu-madic-ass2}
\nu_{2}( {\alpha_j}^{{\rm g}}-1 ) \ge 2 \quad (j=1,2).
\end{equation*}
Let $H_1$ and $H_2$ be positive numbers such that
\[
H_j \ge \max \{ \,{\rm h}(\alpha_j),\,\log M\, \} \quad (j=1,2).
\]
Then, for any positive integers $b_1$ and $b_2$ with $\gcd(b_1,b_2,M)=1,$
\[
\nu_M( {\alpha_1}^{b_1} - {\alpha_2}^{b_2}) \le \frac{53.6 \, {\rm g}\, H_1 H_2}{\log^4 M}\, \bigl(\max \{\, \log b^{\ast}+\log \log M+0.64,\,4\log M\, \}\bigl)^2
\]
with $b^{\ast}=b_1/H_2+b_2/H_1.$
\end{prop}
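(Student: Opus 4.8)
The plan is to deduce the proposition directly from Bugeaud's $m$-adic estimate for linear forms in two logarithms, namely \cite[Theorem 2]{Bu-madic}, by specializing its free parameters and checking that the hypotheses imposed here are precisely those under which that theorem applies. Since $\alpha_1$ and $\alpha_2$ are rational numbers, the number field they generate is $\mathbb Q$, so the degree $D=[\mathbb Q(\alpha_1,\alpha_2):\mathbb Q]$ and, for every prime $q\mid M$, the associated residue degree and ramification index all equal $1$; this collapses the field-dependent factors in Bugeaud's general statement and leaves a bound whose denominator is a power of $\log M$ rather than of $\log q$. The point is that \cite[Theorem 2]{Bu-madic} is already formulated for a composite modulus, so no combination of separate per-prime estimates via $\nu_M(\mathcal A)\le\nu_q(\mathcal A)/\nu_q(M)$ is needed; the $m$-adic valuation is treated intrinsically.

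First I would recall the exact shape of \cite[Theorem 2]{Bu-madic}: for a modulus $m$, multiplicatively independent $q$-adic units $\alpha_1,\alpha_2$ (for all $q\mid m$), a positive integer ${\rm g}$ coprime to $m$ obeying the congruence conditions $\nu_q({\alpha_1}^{{\rm g}}-1)\ge\nu_q(m)$ and $\nu_q({\alpha_2}^{{\rm g}}-1)\ge 1$ (with the strengthened $2$-adic condition when $m$ is even), together with height parameters $H_j\ge\max\{{\rm h}(\alpha_j),\log m\}$, the theorem provides, for each admissible value of an auxiliary parameter $\mu$, a leading constant $c_2(\mu)$ and an additive constant $c_3(\mu)$ such that $\nu_m({\alpha_1}^{b_1}-{\alpha_2}^{b_2})$ is at most $\tfrac{c_2(\mu)\,{\rm g}\,H_1H_2}{\log^4 m}\bigl(\max\{\log b^{\ast}+\log\log m+c_3(\mu),\,\mu\log m\}\bigr)^2$. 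I would then verify term by term that every hypothesis of the present proposition is verbatim a hypothesis of that theorem: the multiplicative independence, the unit conditions $\nu_q(\alpha_i)=0$, the coprimalities $\gcd({\rm g},M)=1$ and $\gcd(b_1,b_2,M)=1$, the valuation conditions on ${\rm g}$, the extra condition at $2$, and the height normalization $H_j\ge\max\{{\rm h}(\alpha_j),\log M\}$.

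The substantive step is then the choice $(\mu,c_2(\mu))=(4,53.6)$ flagged in the text: taking $\mu=4$ fixes the front constant at $53.6$, and reading off the corresponding value of Bugeaud's additive constant yields $c_3(4)=0.64$, so the bracketed maximum becomes exactly $\max\{\log b^{\ast}+\log\log M+0.64,\,4\log M\}$; substituting $m=M$ produces the asserted inequality. The one point that requires care, and which I expect to be the only real obstacle, is reconciling the definition of ${\rm g}$: Bugeaud's theorem typically phrases the congruence conditions through the multiplicative orders of $\alpha_1,\alpha_2$ modulo prime powers dividing $m$, so I would confirm that any ${\rm g}$ merely satisfying the stated valuation inequalities (not necessarily the minimal such exponent) is admissible, the point being that enlarging ${\rm g}$ only enlarges the right-hand side and hence preserves the bound. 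The special status of the prime $2$ — where one must demand $\nu_2({\alpha_j}^{{\rm g}}-1)\ge 2$ because the $2$-adic logarithm converges only on $1+4\mathbb Z_2$ and $\mathbb Z_2^{\ast}$ carries $2$-torsion — is exactly the origin of the extra even-$M$ hypothesis, and I would check that it matches the corresponding restriction in \cite[Theorem 2]{Bu-madic}.
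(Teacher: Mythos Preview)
Your proposal is correct and matches the paper's approach exactly: the paper states the proposition as ``a simple consequence of Bugeaud \cite[Theorem 2]{Bu-madic} with the choice $(\mu,c_2(\mu))=(4,53.6)$'' and gives no further argument. Your verification that the hypotheses align, that the rational setting collapses the degree and residue-degree factors, and that the choice $\mu=4$ yields the constants $53.6$ and $0.64$ is precisely the intended derivation.
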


For a number field $\mathbb K$ and a prime ideal $\pi$ in $\mathbb K$, we let $\nu_{\pi}(\alpha)$ denote the exponent of $\pi$ in the prime ideal factorization of the fractional ideal generated by a nonzero element $\alpha$ in $\mathbb K$.

The following is a simple consequence of Bugeaud and Laurent \cite[Th\'eor\`eme 4]{BuLa} with the choice of $(\mu,\nu)=(8,10)$ and $A_j\,(j=1,2)$ chosen so $H_j=(D \log A_j) /f_\pi$.

\begin{prop}\label{BL}
Let $\mathbb K$ be a number field.
Let $\pi$ be a prime ideal in $\mathbb K,$ and $p$ the rational prime lying above $\pi.$
Let $\alpha_1$ and $\alpha_2$ be nonzero elements in $\mathbb K$ such that the fractional ideal generated by $\alpha_1 \alpha_2$ is not divisible by $\pi.$
Assume that $\alpha_1$ and $\alpha_2$ are multiplicatively independent.
Let ${\rm g}$ be a positive integer such that
\[
\nu_{\pi}({\alpha_j}^{\rm g} - 1) \ge 1 \quad (j=1,2).
\]
Let $H_1$ and $H_2$ be positive numbers such that
\[
\quad H_j \ge \max \biggl\{ \frac{D}{f_{\pi}}\,{\rm h}(\alpha_j), \, \log p \biggl\} \quad (j=1,2),
\]
where $D=[\mathbb Q(\alpha_1,\alpha_2):\mathbb Q]$ and $f_{\pi}$ is the inertia index of $\pi.$
Then, for any positive integers $b_1$ and $b_2,$
\begin{multline*}
\nu_{\pi}({\alpha_1}^{b_1}-{\alpha_2}^{b_2}) \le
\frac{27.3 \, D^2 p\, {\rm g} \, H_1 H_2}{{f_{\pi}}^2(p-1)(\log p)^4}\\
\times \Bigl(\max \Bigl\{ \log b^{\ast}+\log \log p+0.4,\,\textstyle{\frac{8 f_{\pi}}{D}}\log p,\,10 \Bigl\} \Bigl)^2
\end{multline*}
with $b^{\ast}=b_1/H_2+b_2/H_1.$
\end{prop}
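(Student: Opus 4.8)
The plan is to obtain Proposition \ref{BL} as a direct specialization of \cite[Th\'eor\`eme 4]{BuLa}, so that the entire argument is one of careful substitution and bookkeeping rather than any new estimate. First I would recall that general theorem in full: for multiplicatively independent $\alpha_1,\alpha_2$ in a number field it bounds the $\pi$-adic valuation of $\alpha_1^{b_1}\alpha_2^{-b_2}-1$ by an expression built from two free parameters $\mu,\nu$, from auxiliary height bounds $\log A_1,\log A_2$, from the arithmetic data $p,f_\pi,D$ of the place, and from an integer playing the role of ${\rm g}$ that encodes the orders of the $\alpha_j$ modulo $\pi$. The constant in front is an explicit function $C(\mu,\nu)$, and the squared ``$\max$'' term there has the shape $\max\{\log b^{\ast}+\log\log p+\kappa,\;\mu(f_\pi/D)\log p,\;\nu\}$, which already matches the layout of the displayed bound.

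Second, I would reconcile the hypotheses on $\alpha_1,\alpha_2$. The assumption that the fractional ideal $(\alpha_1\alpha_2)$ is not divisible by $\pi$ gives $\nu_\pi(\alpha_1)=\nu_\pi(\alpha_2)=0$, whence $\nu_\pi(\alpha_1^{b_1}-\alpha_2^{b_2})=\nu_\pi(\alpha_1^{b_1}\alpha_2^{-b_2}-1)$, so the quantity bounded here is exactly the one bounded in \cite{BuLa}. Multiplicative independence is assumed verbatim, and the condition $\nu_\pi(\alpha_j^{\rm g}-1)\ge 1$ for $j=1,2$ is precisely the input that fixes the integer ${\rm g}$ in the source.

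Third comes the specialization itself. Setting $(\mu,\nu)=(8,10)$ turns the two non-logarithmic arguments of the $\max$ into $\tfrac{8f_\pi}{D}\log p$ and $10$, matching the stated form, and evaluating $C(8,10)$ produces the numerical constant, which I would round up to $27.3$. The height translation is the key rewriting: putting $H_j=(D\log A_j)/f_\pi$ converts the source's lower bounds on $\log A_j$ into the displayed conditions $H_j\ge\max\{(D/f_\pi){\rm h}(\alpha_j),\log p\}$, and replaces the product $\log A_1\log A_2$ by $(f_\pi/D)^2 H_1 H_2$; collecting this against the powers of $D$, $f_\pi$ and the factor $p/(p-1)$ present in the general constant yields the displayed prefactor $27.3\,D^2 p\,{\rm g}\,H_1H_2/(f_\pi^2(p-1)(\log p)^4)$.

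The delicate point, and the only place real care is needed, is the bookkeeping of the normalization of the $p$-adic valuation together with the exact powers of $D$, $f_\pi$ and $\log p$. I would verify that the valuation used in \cite{BuLa} (normalized through $f_\pi\log p$, or through the ramification index $e_\pi$) is converted correctly into the exponent $\nu_\pi$ used here, since a mismatch there would shift the powers of $f_\pi$ and the $(\log p)^4$ in the denominator. Once the normalization and the substitution $H_j=(D\log A_j)/f_\pi$ are tracked consistently and the constant $C(8,10)$ is checked to be at most $27.3$, the inequality of Proposition \ref{BL} follows with no further work.
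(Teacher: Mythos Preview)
Your proposal is correct and matches the paper's own treatment exactly: the paper gives no proof beyond stating that the proposition is a simple consequence of \cite[Th\'eor\`eme 4]{BuLa} with the choice $(\mu,\nu)=(8,10)$ and $A_j$ chosen so that $H_j=(D\log A_j)/f_\pi$, which is precisely the specialization you carry out.
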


The following is a particular case of a generalization of Thue-Siegel-Roth theorem due to Ridout \cite{Rid}.

\begin{prop}\label{Ri-thm}
Let $\xi$ be a nonzero algebraic number.
Let $\mathcal S$ be a non-empty set of finitely many primes.
Then, for every positive number $\varepsilon,$ there exists some positive constant $C$ depending only on $\xi,\mathcal S$ and $\varepsilon$ such that the inequality
\[
\left|\xi - \frac{p}{q}\right| > \frac{C}{q^{1+\varepsilon}}
\]
holds for all integers $p$ and $q$ with $q>0$ and $p/q \ne \xi$ such that every prime factor of $q$ belongs to $\mathcal S.$
\end{prop}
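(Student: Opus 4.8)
Proposition~\ref{Ri-thm} is the case of Ridout's theorem in which the only restricted quantity is the denominator, required to be supported on $\mathcal{S}$; here is how I would prove it. Put $\mathcal{S}_{\infty}=\mathcal{S}\cup\{\infty\}$ and call a pair of integers $(p,q)$ \emph{admissible} if $\gcd(p,q)=1$, $q>0$, every prime factor of $q$ lies in $\mathcal{S}$, and $p/q\ne\xi$; since passing to lowest terms only shrinks the denominator, it suffices to treat admissible pairs. It is enough to prove the finiteness statement $(\star)$: for every $\varepsilon>0$, only finitely many admissible $(p,q)$ satisfy $|\xi-p/q|\le q^{-1-\varepsilon}$. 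Granting $(\star)$, fix $\varepsilon$; the finitely many admissible pairs with $|\xi-p/q|\le q^{-1-\varepsilon}$ realize a positive minimum $m$ of $q^{1+\varepsilon}|\xi-p/q|$, every other admissible pair has $q^{1+\varepsilon}|\xi-p/q|>1$, and hence $C=\tfrac12\min\{m,1\}$ works. One may also assume $\xi$ is real, as $|\xi-p/q|\ge|\operatorname{Im}\xi|>0$ otherwise.

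I would get $(\star)$ from the two-dimensional $p$-adic Subspace Theorem of Schmidt and Schlickewei, applied over $\mathcal{S}_{\infty}$ to $\mathbf{x}=(p,q)\in\mathbb{Z}^{2}$ with the linearly independent linear forms $L_{1,\infty}=X_{1}-\xi X_{2}$, $L_{2,\infty}=X_{2}$ at the archimedean place and $L_{1,v}=X_{1}$, $L_{2,v}=X_{2}$ at each finite $v\in\mathcal{S}$. Since $|p|_{v}\le1$ for every finite $v$ and $\prod_{v\in\mathcal{S}}|q|_{v}=q^{-1}$ by the product formula (valid because $q$ is supported on $\mathcal{S}$), we obtain
\[
\prod_{v\in\mathcal{S}_{\infty}}|L_{1,v}(\mathbf{x})|_{v}\,|L_{2,v}(\mathbf{x})|_{v}
=|p-\xi q|\cdot q\cdot\prod_{v\in\mathcal{S}}|p|_{v}|q|_{v}
\le|p-\xi q|=q\,|\xi-p/q|.
\]
If $|\xi-p/q|\le q^{-1-\varepsilon}$, then, as $|p|\le(|\xi|+1)q$ once $q$ is large, this is $\le q^{-\varepsilon}\ll_{\xi,\varepsilon}\max\{|p|,|q|\}^{-\varepsilon}$, so by the Subspace Theorem all but finitely many such $(p,q)$ lie in one of finitely many lines through the origin; on each of these lines $p/q$ is a fixed rational, which can recur with $q\to\infty$ while $|\xi-p/q|\to0$ only if it equals $\xi$, contrary to admissibility. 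This proves $(\star)$, and the gain of a full power of $q$ over Roth's theorem is precisely the factor $\prod_{v\in\mathcal{S}}|q|_{v}=q^{-1}$.

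A self-contained alternative is Ridout's original argument, namely Roth's method with an $\mathcal{S}$-adic sharpening of its last step. Supposing $(\star)$ fails, one takes $m$ large in terms of $\varepsilon$ and $[\mathbb{Q}(\xi):\mathbb{Q}]$, selects admissible $p_{1}/q_{1},\dots,p_{m}/q_{m}$ with $q_{1}$ enormous and each $q_{j+1}$ far larger than $q_{j}$, and uses Siegel's Lemma to build a nonzero $P\in\mathbb{Z}[X_{1},\dots,X_{m}]$ of degree $\le r_{j}$ in $X_{j}$, with moderately small coefficients, vanishing at $(\xi,\dots,\xi)$ to weighted index $\ge m(\tfrac12-\delta)$. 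A Taylor expansion about $(\xi,\dots,\xi)$ then makes every derivative $D_{\mathbf{i}}P$ of small weighted order minuscule at $(p_{1}/q_{1},\dots,p_{m}/q_{m})$, whereas Roth's Lemma produces such an $\mathbf{i}$ with $D_{\mathbf{i}}P(p_{1}/q_{1},\dots,p_{m}/q_{m})\ne0$; this value is a nonzero rational whose denominator divides $q_{1}^{r_{1}}\cdots q_{m}^{r_{m}}$ and whose $v$-adic size for each $v\in\mathcal{S}$ is likewise controlled, so the product formula over $\mathcal{S}_{\infty}$ lower-bounds its archimedean absolute value by more than the upper bound permits — and it is exactly the $\mathcal{S}$-unit structure of the $q_{j}$ that turns Roth's exponent $2+\varepsilon$ into $1+\varepsilon$. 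In both routes the one genuinely deep ingredient is the main obstacle: the Subspace Theorem itself in the first, and Roth's Lemma — the induction on the number of variables, driven by the rapid growth of the $q_{j}$, that forbids $P$ from having large index at the rational point — in the second.
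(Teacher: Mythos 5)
Your proposal is correct, but there is nothing in the paper to compare it against: the paper does not prove Proposition \ref{Ri-thm} at all, it simply imports it as "a particular case of a generalization of Thue--Siegel--Roth theorem due to Ridout \cite{Rid}" and uses it as a black box (inside the proof of Proposition \ref{BuLu-newyork}). Your first route is the standard modern derivation and is sound in its details: the reduction to coprime pairs and the extraction of the constant $C$ from the finiteness statement $(\star)$ are both fine, the computation $\prod_{v}|L_{1,v}(\mathbf{x})|_{v}|L_{2,v}(\mathbf{x})|_{v}\le q\,|\xi-p/q|$ correctly exploits $\prod_{v\in\mathcal S}|q|_{v}=q^{-1}$ (this is exactly where the exponent drops from $2+\varepsilon$ to $1+\varepsilon$), and the endgame is even simpler than you state, since a primitive pair $(p,q)$ with $q>0$ is the unique such point on its line, so each exceptional subspace contributes at most one solution. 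The one thing worth flagging is that the form $X_{1}-\xi X_{2}$ has an algebraic irrational coefficient when $\xi\notin\mathbb Q$, so you genuinely need the Schmidt--Schlickewei Subspace Theorem with algebraic coefficients, which is strictly stronger than the rational-coefficient version the paper records as Proposition \ref{Sc-subspacethm}; your explicit invocation of Schmidt--Schlickewei covers this, but it should be said that one cannot get away with the paper's stated version. Your second route is a faithful, if compressed, sketch of Ridout's original argument (Roth's machinery plus the $\mathcal S$-adic product-formula sharpening in the final step); as a full proof it would of course require writing out the index calculus and Roth's Lemma, but the key ideas and the location of the $\mathcal S$-adic gain are correctly identified. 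What your approach buys over the paper's is simply a self-contained justification; what the paper's citation buys is brevity, and also a subtle honesty about effectivity --- the paper explicitly notes that the ineffectivity of Propositions \ref{Ri-thm} and \ref{Sc-subspacethm} is what makes Theorems \ref{a-general-pillai_gcd1_ineff} and \ref{c-7etc} ineffective, a feature both of your routes share.
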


For any nonzero $x \in \mathbb Q$, we write $|x|_p=1/p^{\nu_{p}(x)}$ for each prime $p$.
The product formula says that $\prod_{\mu \in \mathbb P \,\cup \,\{\infty\}} |x|_{\mu}=1$, where $|x|_\infty=|x|$ and $\mathbb P$ denotes the set of all prime numbers.

The following is a simplified form of Schmidt Subspace Theorem (cf.~\cite{Sc,Sc2}).

\begin{prop}\label{Sc-subspacethm}
Let $N$ be a positive integer.
Let $\mathcal S$ be a finite set of primes.
For each $\mu \in \mathcal S \cup \{\infty\},$ let $L_{\mu,1},\ldots,L_{\mu,N}$ be $N$ linear forms in $N$ variables with rational coefficients, and assume that they are linearly independent.
Then, for every positive number $\varepsilon$ with $\varepsilon<1,$ there exist finitely many proper subspaces of the $\mathbb Q$ vector space $\mathbb Q^N$ such that any nonzero element $\bm x=(x_1,\ldots,x_N)$ in $\mathbb Q^N$ with $x_i \in \mathbb Z$ for any $i$ satisfying the inequality
\[
\prod_{\mu \in \mathcal S \cup \{\infty\}} \,\prod_{i=1}^{N}\,|L_{\mu,i}(\bm x)|_\mu
\le \frac{1}{(\,\max\{\,|x_i|\,:\, i=1,\ldots,N\,\}\,)^\varepsilon}
\]
belongs to one of those subspaces.
\end{prop}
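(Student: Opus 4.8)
The plan is to prove this $\mathcal S$-arithmetic form of the Subspace Theorem along the lines of Schmidt's original argument, extended to include the non-Archimedean places through the product formula; the whole scheme is a far-reaching amplification of the auxiliary-polynomial method behind the Thue--Siegel--Roth theorem (Proposition \ref{Ri-thm}). First I would make the routine reductions: since the $L_{\mu,i}$ have rational coefficients I may clear denominators and assume integer coefficients, and I may normalize each system $L_{\mu,1},\ldots,L_{\mu,N}$ so that its determinant is a $\mu$-adic unit, absorbing the resulting constants into the implied bounds. Because membership in a subspace is unaffected by scaling, it suffices to treat primitive integer vectors $\bm x$. Arguing by contradiction, I would suppose that infinitely many such $\bm x$ lie \emph{outside} every member of an arbitrary finite family of proper subspaces, the goal being to derive a contradiction once that family is taken large enough in terms of $N$ and $\varepsilon$.

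The core is a two-sided estimate for the \emph{index} of an auxiliary polynomial. I would first bring in geometry of numbers: for each solution and each place $\mu$, the hypothesis says that $\bm x$ lies in a very flat $\mu$-adic parallelepiped cut out by the $L_{\mu,i}$, and Minkowski's theorem on successive minima, in an adelic formulation over $\mathbb Z_{\mathcal S}$-lattices, attaches to $\bm x$ a flag of rational subspaces recording the directions in which the approximation is good. Next, via Siegel's lemma, I would build from finitely many solutions $\bm x^{(1)},\ldots,\bm x^{(m)}$ of rapidly increasing heights $H_1,\ldots,H_m$ a nonzero integer polynomial $P(\bm X^{(1)},\ldots,\bm X^{(m)})$, multihomogeneous of prescribed degree $d_j$ in each block $\bm X^{(j)}$ of $N$ variables, of controlled height, and vanishing to high order along the flags at the point $(\bm x^{(1)},\ldots,\bm x^{(m)})$, with the degrees coupled by $d_1\log H_1 \approx \cdots \approx d_m\log H_m$.

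From here the two estimates collide. On one hand, the approximation hypothesis makes $P$ and its relevant derivatives simultaneously small at the point at every $\mu \in \mathcal S \cup \{\infty\}$, so the product formula forces the index of $P$ at $(\bm x^{(1)},\ldots,\bm x^{(m)})$ to exceed $(1+c\varepsilon)$ times its expected generic value, where $c>0$ depends only on $N$. On the other hand, the generalized Roth lemma --- Schmidt's extension of Roth's nonvanishing lemma to many blocks of variables --- bounds the same index \emph{from above}, provided the $H_j$ satisfy a gap condition and $m$ is large relative to $N$ and $\varepsilon$. For $\varepsilon>0$ these bounds are incompatible, so only finitely many solutions can avoid all the chosen subspaces; a packing argument over the finitely many possible flags then bundles every exceptional solution into one of finitely many proper subspaces, as required.

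The principal obstacle is the generalized Roth lemma together with the geometry-of-numbers bookkeeping that precedes it. Roth's original lemma handles only $N=2$, and its extension to arbitrary $N$ rests on Mahler's theory of compound convex bodies, Davenport's lemma relating the successive minima of a body and its dual, and a delicate inductive control of the index across the blocks; guaranteeing that the approximation vectors are ``independent enough'' to feed into this lemma is precisely what the flag analysis must secure. A secondary but genuine difficulty, special to the present $\mathcal S$-adic statement, is arranging the adelic geometry of numbers so that the Archimedean and non-Archimedean places are handled uniformly through the product formula; one workable route is to reduce, by Minkowski-type arguments over $\mathbb Z_{\mathcal S}$, to the classical real case, at the cost of enlarging $N$ and the number of forms.
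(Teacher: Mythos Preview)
The paper does not give any proof of this proposition at all: it is quoted as a known result, introduced with ``The following is a simplified form of Schmidt Subspace Theorem (cf.~\cite{Sc,Sc2}),'' and then used as a black box in the proof of Proposition~\ref{BuLu-newyork}. So there is no ``paper's own proof'' to compare your proposal against; you have undertaken something the authors never attempted.

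As for the content of your sketch, it is a fair high-level summary of how the Subspace Theorem is actually proved in Schmidt's monographs and in subsequent treatments by Schlickewei, Evertse, and others: the auxiliary polynomial in many blocks of variables constructed via Siegel's lemma, the lower bound on the index coming from the approximation hypothesis and the product formula, and the opposing upper bound from Schmidt's generalized Roth lemma, with geometry of numbers (successive minima, Davenport's lemma, compound bodies) controlling the flag structure. That said, what you have written is an outline, not a proof: each of the three main steps (the index lower bound, the generalized Roth lemma, and the reduction to finitely many subspaces via the flag/packing argument) is a substantial piece of work in its own right, and your sketch does not supply any of the quantitative estimates that make the two bounds actually collide. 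In a paper such as this one, which merely cites the theorem, the appropriate ``proof'' is simply a reference to \cite{Sc,Sc2}; if you genuinely mean to reprove it, you would need to expand your outline by at least an order of magnitude.
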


\section{Reducing to weak forms} \label{sec-weakform}%

For a positive integer $M$, we define $e_M(\mathcal A)$ to be the {\it extended multiplicative order} modulo $M$ of a nonzero integer $\mathcal A$ being coprime to $M$, namely, $e_M(\mathcal A)$ equals the least positive integer $e$ such that $\mathcal A^e$ is congruent to $1$ or $-1$ modulo $M$.
The extended multiplicative orders have properties similar to those of the usual multiplicative orders (cf.~\cite[Lemma 2.1]{MiyPin2}).
In particular, $e_M(\mathcal A)$ is a divisor of $\varphi(M)$.

The following lemma found in \cite[Lemma 2.3]{MiyPin2} tells us that proving Conjecture $\ref{atmost1}$ is reduced to studying a specialization of it.

\begin{lem} \label{weakform}
Let $d$ be a positive divisor of $c$ with $d>2.$
Define positive integers $A$ and $B$ by
\[
A=a^{\,e_d(a)/g}, \quad B=b^{\,e_d(b)/g},
\]
where $g=\gcd(\,e_d(a),e_d(b)\,).$
Then the following hold.
\begin{itemize}
\item[$\bullet$]
The number of solutions $(x,y,z)$ to equation $\eqref{abc}$ equals that to the equation
\[
A^X+B^Y=c^Z
\]
in positive integers $X,Y$ and $Z.$
Further, its correspondence is given by
\[
X=\frac{x}{e_{d}(A)/g}, \ Y=\frac{y}{e_{d}(B)/g}, \ Z=z.
\] 
\item[$\bullet$]
$e_{d}(A)=e_{d}(B).$
\item[$\bullet$]
$(a,b,c)$ belongs to set \eqref{excep-set} if and only if $(A,B,c)$ belongs to the same set.
\end{itemize}
\end{lem}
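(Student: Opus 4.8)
The goal is to prove Lemma~\ref{weakform}, which reduces Conjecture~\ref{atmost1} to a specialized form where the two base numbers share a common ``extended order'' modulo a divisor $d$ of $c$. The plan is to set up a bijection between solutions of $a^x+b^y=c^z$ and solutions of $A^X+B^Y=c^Z$ by exploiting the arithmetic of the extended multiplicative order $e_d(\cdot)$, and then to verify the three bulleted assertions in turn.

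\textbf{Step 1: the divisibility constraint modulo $d$.} First I would observe that if $(x,y,z)$ is a solution of \eqref{abc}, then reducing modulo $d$ (a divisor of $c$ with $d>2$, so $c^z\equiv 0 \pmod d$ once $z$ is large enough, but more carefully $d\mid c^z$) gives $a^x\equiv -b^y \pmod d$. The key point is that this forces both $x$ to be a multiple of $e_d(a)$ times something and $y$ to be a multiple of $e_d(b)$ times something, tied together. Concretely, from $a^x \equiv -b^y \pmod d$ one deduces that $a^{2x}\equiv b^{2y}\pmod d$; combined with the definition of $e_d$ (the least $e$ with $a^e\equiv \pm 1$) and the cited properties from \cite[Lemma 2.1]{MiyPin2}, one shows $e_d(a)\mid x\cdot(\text{unit})$ — more precisely that $x$ is forced into the residue structure so that $x$ is a multiple of $e_d(A)/g = e_d(a)/g$ and similarly $y$ is a multiple of $e_d(B)/g$. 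The cleanest route is: let $g=\gcd(e_d(a),e_d(b))$, write $e_d(a)=g\alpha$, $e_d(b)=g\beta$ with $\gcd(\alpha,\beta)=1$; then $A=a^\alpha$, $B=b^\beta$, and one checks directly that $e_d(A)=e_d(B)=g$ (this is the second bullet), using that raising to the $\alpha$-th power divides the extended order by exactly $\gcd(\alpha,e_d(a)) = \alpha$ since $\alpha\mid e_d(a)$, and that the resulting extended orders of $A$ and $B$ are each $g$ because any further common collapse would contradict minimality of $g$ as the gcd. I would lean on the order-arithmetic lemma from \cite{MiyPin2} here rather than reprove it.

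\textbf{Step 2: the bijection on solutions.} Given the constraint from Step~1, every solution $(x,y,z)$ of \eqref{abc} has $x=(e_d(A)/g)X = (e_d(A)/g)X$ for a positive integer $X$ and $y=(e_d(B)/g)Y$ for a positive integer $Y$; wait — one must be careful that the divisor is $e_d(a)/g$ acting on $x$. Substituting $a^x = (a^{e_d(a)/g})^{X} = A^X$ and $b^y=B^Y$ directly into \eqref{abc} yields $A^X+B^Y=c^z$, i.e. a solution of the new equation with $Z=z$. Conversely, from any solution $(X,Y,Z)$ of $A^X+B^Y=c^Z$ one recovers $(x,y,z)=((e_d(a)/g)X,(e_d(b)/g)Y,Z)$ solving \eqref{abc}. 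The two maps are mutually inverse, giving the claimed equality of solution counts and the stated correspondence formulas. The only thing requiring genuine argument is the forward direction of Step~1 — that \emph{every} solution really does have $x$ and $y$ divisible by the stated quantities; the backward direction and the substitution are formal.

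\textbf{Step 3: invariance of the exceptional set.} For the third bullet I would argue that $(a,b,c)\mapsto(A,B,c)$ preserves membership in \eqref{excep-set}. One direction: if $(a,b,c)$ is in the list, one checks case by case (the list is short) that the associated $A,B$ either equal $a,b$ or land back in the list with $c$ unchanged — e.g. for the infinite family $(2,2^r-1,2^r+1)$ one computes $e_d$ for $d=c=2^r+1$. The other direction is similar. Since $A$ is a power of $a$ and $B$ a power of $b$ with $\gcd(A,c)=\gcd(B,c)=1$ inherited, and since none of $a,b,c$ is a perfect power in the conjecture's hypothesis forces $A=a$, $B=b$ unless one is already in a degenerate listed case, this bullet is essentially bookkeeping. \textbf{The main obstacle} is Step~1: pinning down precisely why a solution of \eqref{abc} forces $e_d(a)/g \mid x$ and $e_d(b)/g \mid y$ \emph{simultaneously} with the \emph{same} cofactor structure — this is where the interplay between ``$\equiv\pm1$'' (the $\pm$ in the extended order) and the gcd $g$ must be handled carefully, and where I expect to invoke the sign-tracking properties of $e_M$ from \cite[Lemma 2.1]{MiyPin2} most heavily; everything else is routine substitution or a short finite check.
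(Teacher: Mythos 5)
A preliminary remark on provenance: this paper does not actually prove Lemma \ref{weakform} --- it is imported verbatim from \cite[Lemma 2.3]{MiyPin2} --- so your proposal is measured against the proof given there, which does follow the outline you describe (force a divisibility on $x$ and $y$, substitute, compute the new extended orders, and finish the last bullet by a finite check). Your architecture is therefore the right one, and you correctly spotted that the displayed correspondence, read literally with $e_d(A)=e_d(B)=g$, would give $X=x$; the intended formula has lowercase $a,b$ in the denominators.

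The genuine gap is exactly the step you label ``the main obstacle'' and then do not carry out: the claim that \emph{every} solution $(x,y,z)$ of \eqref{abc} satisfies $(e_d(a)/g)\mid x$ and $(e_d(b)/g)\mid y$. Everything else in the first bullet is formal substitution, so without this the lemma is unproven, and the needed statement is not literally one of the order-arithmetic properties in \cite[Lemma 2.1]{MiyPin2} that you plan to invoke; it must be written out. The argument is short: since $d\mid c$ and $\gcd(ab,c)=1$, reducing \eqref{abc} modulo $d$ gives $a^x\equiv -b^y\pmod d$; as $d>2$ one may pass to $G=(\mathbb Z/d\mathbb Z)^{\times}/\{\pm1\}$, where the images $\bar a,\bar b$ have orders $e_d(a),e_d(b)$ and the congruence reads $\bar a^{\,x}=\bar b^{\,y}$. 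This common element lies in $\langle\bar a\rangle\cap\langle\bar b\rangle$, a cyclic group whose order divides $g=\gcd(e_d(a),e_d(b))$; since the order of $\bar a^{\,x}$ is $e_d(a)/\gcd(e_d(a),x)$, one gets $e_d(a)\mid g\,\gcd(e_d(a),x)$, i.e.\ $(e_d(a)/g)\mid x$, and symmetrically $(e_d(b)/g)\mid y$. Two smaller defects: in the second bullet the identity $e_d(A)=e_d(a)/\gcd(e_d(a),\alpha)=g$ is already the whole proof, and the appeal to ``minimality of $g$'' is superfluous; in the third bullet the converse direction should not lean on the hypothesis of Conjecture \ref{atmost1} (the bullet is unconditional) --- the correct argument is that no first or second entry of a triple in \eqref{excep-set} is a perfect power, so $(A,B,c)$ in the set forces $\alpha=\beta=1$ and hence $(A,B)=(a,b)$, while the forward direction is the finite (plus one infinite family) verification that $e_d(a)=e_d(b)$ for every divisor $d>2$ of $c$ for each listed triple.
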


Based on this lemma, in all situations in the remaining except for the first part of the next section, for some proper divisor $d$ of $c$, in equation \eqref{abc}, we actually consider $(A,B,c,X,Y,z)$ instead of $(a,b,c,x,y,z)$.
In particular, we consider equation \eqref{abc} for the triples $(a,b,c)$ such that $e_{d}(a)=e_{d}(b)$.
It is enough for our purposes to do so, because the following straightforward properties hold.
\begin{gather*}
\max\{a,b\} >\mathcal C \ \Rightarrow \ \max\{A,B\} > \mathcal C\,;\\
X,Y \ll_{c}1 \ \Rightarrow \ x,y \ll_{c}1\,;\\
(A,B,13) \text{ belongs to set \eqref{excep-set}} \ \Leftrightarrow \ \{a,b\}=\{3,10\},
\end{gather*}
where $\mathcal C$ is any positive number.
This restriction is basically not essential in the study of Conjecture $\ref{atmost1}$, but it will turn out that it is helpful in the proof of Theorem \ref{c13} (cf.~Section \ref{sec-c13-2}). 

\section{Cases for general values of $c$}\label{sec-c-general}

The numbers $\mathcal C_i\,(i=1,2,\ldots,5)$ and ${\mathcal C_5}'$ appearing in this or the next section are some positive constants which depend only on $c$ and are effectively computable.

In this section, we give several finiteness results on the solutions to equation \eqref{abc} for general values of $c$.
On equation \eqref{abc}, it is clear that the implication
\[
z \ll_c 1 \ \Rightarrow \ \max\{a,b\} \ll_c 1
\]
holds, and this will be frequently and implicitly used below.

\begin{lem}\label{c-general-coprime}
If $\max\{a,b\}>\mathcal C_1,$ then $\gcd(x,y,c)=1$ for any solution $(x,y,z)$ to equation $\eqref{abc}.$
\end{lem}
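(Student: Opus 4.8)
The plan is to argue by contradiction, assuming that some prime $p \mid c$ also divides $\gcd(x,y)$ for a solution $(x,y,z)$ while $\max\{a,b\}$ is large, and then to extract from this a power-type Diophantine relation whose solutions are bounded by Proposition~\ref{gp}. Suppose $p \mid \gcd(x,y,c)$. Then writing $x = p x'$ and $y = p y'$, equation \eqref{abc} reads $(a^{x'})^p + (b^{y'})^p = c^z$, so that $c^z$ is a sum of two coprime $p$-th powers, and $p \ge 2$. The key point is that $\max\{a,b\}>\mathcal C_1$ forces $\max\{a^{x'}, b^{y'}\}$ to be large, hence $\max\{|\mathcal X|,|\mathcal Y|\}$ is large for $\mathcal X = a^{x'}$, $\mathcal Y = b^{y'}$, which are nonzero coprime integers.

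First I would dispose of the easy case $p \ge 3$: here $\mathcal X^p + \mathcal Y^p = c^z$ with $\max\{p,p\} = p \ge 3$, so Proposition~\ref{gp} applies directly (with $m=n=p$) and gives $P[\mathcal X^p + \mathcal Y^p] \to \infty$ effectively as $\max\{|\mathcal X|,|\mathcal Y|\} \to \infty$; but $P[c^z] = P[c] \le c$ is bounded in terms of $c$, so $\max\{|\mathcal X|,|\mathcal Y|\}$ — and therefore $\max\{a,b\}$ — is bounded by an effective constant depending only on $c$, contradiction once $\mathcal C_1$ is chosen large enough. The remaining case is $p = 2$, i.e. $2 \mid \gcd(x,y,c)$. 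Then $c$ is even, and since $a,b,c$ are pairwise coprime, $a$ and $b$ are both odd, so $a^{x'} + b^{y'} \cdot\text{(sign)}$ considerations show $\mathcal X^2 + \mathcal Y^2 = c^z$ with $\mathcal X, \mathcal Y$ odd and coprime; this forces $c^z \equiv 2 \pmod 4$, hence $\nu_2(c^z) = 1$, so $z\nu_2(c) = 1$, giving $z = 1$ and $\nu_2(c) = 1$. But $z \ll_c 1$ already implies $\max\{a,b\} \ll_c 1$ as noted in the text just before the lemma, so again $\max\{a,b\}$ is bounded effectively in terms of $c$, contradiction.

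Thus in all cases the assumption $p \mid \gcd(x,y,c)$ for some prime $p$ yields $\max\{a,b\} \le \mathcal C_1$ for a suitable effective $\mathcal C_1 = \mathcal C_1(c)$; contrapositively, $\max\{a,b\}>\mathcal C_1$ implies $\gcd(x,y,c) = 1$. I would also remark that it suffices to rule out a \emph{prime} $p$ dividing $\gcd(x,y,c)$, since $\gcd(x,y,c)>1$ would then be witnessed by one of its prime factors.

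The main obstacle is the $p = 2$ case, because Proposition~\ref{gp} requires $\max\{m,n\} \ge 3$ and is unavailable for a pure sum of two squares; the resolution above instead exploits the $2$-adic valuation forcing $z$ small, which is clean but depends essentially on $c$ being even in that branch — so one must be careful that the constant $\mathcal C_1$ absorbs both the (effective) bound coming from Proposition~\ref{gp} in the $p \ge 3$ case and the (effective) bound $z \ll_c 1 \Rightarrow \max\{a,b\} \ll_c 1$ in the $p = 2$ case.
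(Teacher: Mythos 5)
Your proposal is correct and follows essentially the same route as the paper: reduce to a prime $p\mid\gcd(x,y,c)$, apply Proposition~\ref{gp} with $m=n=p$ for odd $p$, and handle $p=2$ by the observation that a sum of two odd squares is $\equiv 2\pmod 4$, forcing $z=1$ and hence $\max\{a,b\}\ll_c 1$. The only cosmetic difference is that the paper phrases the odd-$p$ step as $P[\mathcal X^m+\mathcal Y^n]=P[c]$ rather than $P[c^z]=P[c]$, which is the same thing.
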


\begin{proof}
Suppose that $\gcd(x,y,c)>1$ for some solution $(x,y,z)$.
Then there is some prime factor of $c$ dividing both $x$ and $y$, say $p$.
It suffices to show that $z \ll_c 1$ or $a,b \ll_c 1$.
Note that $p \ll_{c} 1$.

Write $x=p x_0,\,y=p y_0$.
If $p=2$, then $c$ is even, so that both $a,b$ are odd, leading to $c^z=(a^{x_0})^2+(b^{y_0})^2 \equiv 2 \pmod{4}$, whereby $z=1$.
For odd $p$, one considers the greatest prime factor of each of both sides of equation \eqref{abc} to see that
\[
P[{\mathcal X}^m+{\mathcal Y}^n]=P[c],
\]
where $(m,n):=(p,p)$ and $({\mathcal X},{\mathcal Y}):=(a^{x_0},b^{y_0})$.
For each possible $p$, we shall apply Proposition \ref{gp}.
Since $P[c] \ll_c 1$, it turns out that $\mathcal X,\mathcal Y \ll_{c} 1$, so that $a,b \ll_c 1$.
\end{proof}

Since Conjecture \ref{atmost1} is true for $c=2$ by Scott \cite[Theorem 6;\,$p=2$]{Sco} (or \cite[Corollary 1]{MiyPin2}), we may assume that $c$ is not a power of 2.
Thus there is some divisor $c_1$ of $c$ with $c_1>2$ such that
\begin{equation}\label{c-cond}
\gcd(\,c_1,\varphi(c_1)\,)=1.
\end{equation}
Note that $c_1$ is odd.

\begin{lem}\label{1st-app}
Let $(x,y,z)$ be a solution to equation $\eqref{abc}$ such that $\gcd(x,y,c_1)=1.$
Then
\[
z \ll \max\{1,F_1\} \cdot e \,\log a \,\log b,
\]
where
\[
F_1=F_1(c,c_1)=\max\biggr\{1,\,\frac{\log^2 (\log c\,\log c_1)}{ \log^2 {c_1} }
\biggr\} \cdot \frac{1}{\nu_{c_1}(c)\,\log^2 {c_1}},
\]
and $e=e_{c_1}(a) \cdot e_{c_1}(b) \cdot \lcm(\,e_{c_1}(a),e_{c_1}(b)\,).$
\end{lem}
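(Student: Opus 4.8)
The plan is to apply Proposition \ref{Bu-madic} (Bugeaud's $M$-adic estimate) with $M=c_1$ to the quantity $a^x$ versus $-b^y$, exploiting the fact that from equation \eqref{abc} one has $a^x \equiv -b^y \pmod{c_1^{\,z\nu_{c_1}(c)}}$, so $\nu_{c_1}(a^x+b^y) \ge z\,\nu_{c_1}(c)$. First I would set up the multiplicative data: write $g_1=e_{c_1}(a)$, $g_2=e_{c_1}(b)$, and $\mathrm{g}=\lcm(g_1,g_2)$, so that $a^{g_1}\equiv\pm1$ and $b^{g_2}\equiv\pm1\pmod{c_1}$ and hence $a^{\mathrm g}\equiv\pm1$, $b^{\mathrm g}\equiv\pm1\pmod{c_1}$. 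Since $c_1$ is odd and satisfies $\gcd(c_1,\varphi(c_1))=1$, the order of $-1$ in $(\mathbb Z/c_1)^\times$ divides $\varphi(c_1)$ and is coprime to $c_1$; squaring if necessary one arranges $\alpha_1=a^{2\mathrm g/\delta}$, $\alpha_2=-b^{2\mathrm g/\delta}$ (or a similar normalization absorbing the sign) to be $\equiv 1\pmod{c_1}$, i.e.\ to satisfy the congruence hypotheses $\nu_q(\alpha_j^{\mathrm g'}-1)\ge\nu_q(c_1)$ of Proposition \ref{Bu-madic} for every prime $q\mid c_1$, with a modified exponent $\mathrm g'\ll \mathrm g$. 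Multiplicative independence of $\alpha_1,\alpha_2$ follows because $a,b,c$ are pairwise coprime and both exceed $1$ (if $\max\{a,b\}$ is large this is automatic; the small cases are absorbed into the $\ll$).

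Next I would feed this into the conclusion of Proposition \ref{Bu-madic}. The heights satisfy $\mathrm h(\alpha_1)\ll (\mathrm g/\delta)\log a$ and $\mathrm h(\alpha_2)\ll (\mathrm g/\delta)\log b$, so one may take
\[
H_1 \asymp \max\{\,(\mathrm g/\delta)\log a,\ \log c_1\,\},\qquad
H_2 \asymp \max\{\,(\mathrm g/\delta)\log b,\ \log c_1\,\}.
\]
The exponents $b_1,b_2$ are $x$ and $y$ (up to the normalizing factor), and crucially the ``$b^\ast$'' term $b_1/H_2+b_2/H_1$ is controlled by $z$: from $a^x<c^z$ and $b^y<c^z$ one gets $x\ll z\log c/\log a$ and $y\ll z\log c/\log b$, hence $b^\ast \ll z\log c/(\mathrm g\log c_1/\delta)$ after dividing by the heights. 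Combining $z\,\nu_{c_1}(c)\le \nu_{c_1}(a^x+b^y)\le$ (the Proposition \ref{Bu-madic} bound) yields
\[
z\,\nu_{c_1}(c)\ \ll\ \frac{\mathrm g\, H_1 H_2}{\log^4 c_1}\bigl(\max\{\log b^\ast+\log\log c_1+0.64,\ 4\log c_1\}\bigr)^2 .
\]
Expanding $H_1 H_2\ll \mathrm g^2\log a\log b$ (on the main branch, with the $\log c_1$ contributions handled by the max), dividing through by $\nu_{c_1}(c)$, and bookkeeping the $\max\{\cdots\}^2$ factor into the two branches of $F_1$ — the branch $(4\log c_1)^2$ giving the $1/(\nu_{c_1}(c)\log^2 c_1)$ term and the logarithmic branch giving the $\log^2(\log c\log c_1)/(\nu_{c_1}(c)\log^4 c_1)$ term — produces exactly $z\ll \max\{1,F_1\}\cdot e\,\log a\log b$ with $e=g_1 g_2\,\mathrm g$ as defined. (The factor $e$ rather than $\mathrm g^3$ arises because the normalizing divisor $\delta$ in $\mathrm g/\delta$ for the two heights, multiplied by the leading $\mathrm g$, collapses $\mathrm g^3/\delta^2$ into $g_1 g_2\,\mathrm g = e$ after using $g_1 g_2 = \mathrm g\cdot\gcd(g_1,g_2)$; I would track the gcd/lcm identities carefully here.)

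The main obstacle I anticipate is the sign bookkeeping: the ``extended order'' framework allows $a^{g_1}\equiv -1$, and likewise for $b$, whereas Proposition \ref{Bu-madic} is stated for $\alpha_j^{\mathrm g}\equiv 1$; one must choose the smallest exponent $\mathrm g'$ (at most twice $\lcm(g_1,g_2)$, and exactly $\lcm(g_1,g_2)$ when the signs are compatible) making both $\alpha_j\equiv 1\pmod{c_1}$, and verify the even-$M$ clause is vacuous since $c_1$ is odd. A secondary delicate point is confirming $\nu_q(\alpha_j^{\mathrm g'}-1)\ge\nu_q(c_1)$ for $j=1$ (not merely $\ge1$): this needs the lifting-the-exponent phenomenon, using that $\alpha_1\equiv 1\pmod{c_1}$ together with raising to a power divisible by $c_1^{\,\nu_q(c_1)-1}$ if $c_1$ is not squarefree — but in fact $\gcd(c_1,\varphi(c_1))=1$ forces $c_1$ squarefree, so $\nu_q(c_1)=1$ and the condition is just $\alpha_1\equiv 1\pmod q$, which holds by construction. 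Apart from these congruence‑normalization details the argument is a direct substitution into Proposition \ref{Bu-madic}.
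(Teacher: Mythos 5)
Your overall strategy coincides with the paper's: apply Proposition \ref{Bu-madic} with $M=c_1$ to the congruence coming from equation \eqref{abc}, take heights of size $\log a$ and $\log b$, note that $b^{\ast}\ll z$, and solve the resulting inequality for $z$. But the specific setup you propose would not go through. If you take $\alpha_1=a^{2{\rm g}/\delta}$ and $\alpha_2=-b^{2{\rm g}/\delta}$, then to recover $a^x+b^y$ you would need $\alpha_1^{b_1}=a^{x}$ (up to sign), i.e.\ $b_1=x\delta/(2{\rm g})$, which is not an integer in general, whereas Proposition \ref{Bu-madic} requires positive integer exponents $b_1,b_2$. Relatedly, the sign device $\alpha_2=-b^{\cdots}$ only produces the sum $a^x+b^y$ when the relevant exponent is odd; for even $y$ you would instead be bounding $\nu_{c_1}(a^x-b^y)$, which equals $0$ here. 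The paper sidesteps both issues at once by keeping $(\alpha_1,\alpha_2)=(a,b)$, $(b_1,b_2)=(2x,2y)$, ${\rm g}=2\lcm(e_{c_1}(a),e_{c_1}(b))$, and starting from $\nu_{c_1}(a^{2x}-b^{2y})\ge\nu_{c_1}(a^x+b^y)\ge\nu_{c_1}(c)\,z$, since $a^{2x}-b^{2y}=(a^x+b^y)(a^x-b^y)$. Note also that the hypothesis $\gcd(x,y,c_1)=1$, which you never invoke, is precisely what verifies the condition $\gcd(b_1,b_2,M)=1$ of the Proposition (as $c_1$ is odd, $\gcd(2x,2y,c_1)=\gcd(x,y,c_1)$).

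Two further points. Your accounting for the factor $e=e_{c_1}(a)\,e_{c_1}(b)\,\lcm(e_{c_1}(a),e_{c_1}(b))$ is not correct: in the paper the $\lcm$ comes from ${\rm g}$ in the Baker bound, while the product $e_{c_1}(a)\,e_{c_1}(b)$ comes from bounding $\frac{\log\max\{a,c_1\}}{\log a}\cdot\frac{\log\max\{b,c_1\}}{\log b}\le 4\,e_{c_1}(a)\,e_{c_1}(b)$, using $h^{2e_{c_1}(h)}>c_1$; it does not arise from any collapsing of ${\rm g}^3/\delta^2$ via $\gcd$/$\lcm$ identities. Finally, passing from the Baker estimate to the stated form of $F_1$ is not merely a matter of splitting the $\max\{\cdots\}^2$ into two branches: since $b^{\ast}$ is of order $z$, the quantity $T=4z/(\log a\log b)$ occurs inside the logarithm of its own upper bound, and one must resolve an implicit inequality of the shape $T/\log^2 T\ll R$; the paper does this by a sub-case analysis (distinguishing whether $(\log c\log c_1)T\le c_1$, whether $\log c\log c_1>T$, and whether $R<3$, and using $e\le\varphi(c_1)^4<c_1^4$ in the last branch). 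As written, your proposal asserts the outcome of that analysis without carrying it out.
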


\begin{proof}
By equation \eqref{abc},
\[
\nu_{c_1}(a^{2x}-b^{2y}) \ge \nu_{c_1}(a^x+b^y) = \nu_{c_1}(c^z) \ge \nu_{c_1}(c)\cdot z.
\]
To find an upper bound for the leftmost hand side above, we shall apply Proposition \ref{Bu-madic} for
\[
M:=c_1, \ (\alpha_1,\alpha_2):=(a,b), \ (b_1,b_2):=(2x,2y).
\]
Since, for each $h \in \{a,b\}$, one finds that $h^{2\,e_{c_1}(h)} \equiv (h^{e_{c_1}(h)})^2 \equiv (\pm1)^2 \equiv 1 \pmod{c_1}$, and that $\gcd(\,2\,e_{c_1}(h),c_1\,)=\gcd(\,e_{c_1}(h),c_1\,)=1$ by \eqref{c-cond} with $e_{c_1}(h) \mid \varphi(c_1)$, one may set ${\rm g}:=2\,\lcm(\,e_{c_1}(a),e_{c_1}(b)\,)$.
Observe that $\gcd(b_1,b_2,M)=\gcd(2x,2y,c_1)=\gcd(x,y,c_1)=1$ by assumption.
Putting
\[
H_1:=\log a', \ H_2:=\log b',
\]
where $a'=\max\{a,c_1\}$ and $b'=\max\{b,c_1\}$, one obtains
\[
\nu_{c_1}(a^{2x}-b^{2y}) \ll \frac{\lcm(\,e_{c_1}(a),e_{c_1}(b)\,)\,\log a'\,\log b'}{\log^4 {c_1}} \cdot \mathcal B^2,
\]
where
\[
\mathcal B=\max \biggr\{ \log \biggr( \frac{2x}{\log b'}+\frac{2y}{\log a'} \biggr)+\log \log c_1, \ \log c_1 \biggr\}.
\]
By the trivial inequalities $x<\frac{\log c}{\log a}\,z,\, y<\frac{\log c}{\log b}\,z$ from equation \eqref{abc}, observe that
\[
\log \biggr( \frac{2x}{\log b'}+\frac{2y}{\log a'} \biggr) 
\le \log \biggr( \frac{2x}{\log b}+\frac{2y}{\log a} \biggr)
<\log \biggr( \frac{4\log c}{\log a\,\log b}\,z \biggr).
\]
This together with the two bounds for $\nu_{c_1}(a^{2x}-b^{2y})$ implies
\[
T \ll \frac{1}{\nu_{c_1}(c)} \cdot \frac{\log a'}{\log a} \cdot \frac{\log b'}{\log b} \cdot \frac{\lcm(\,e_{c_1}(a),e_{c_1}(b)\,)}{\log^4 {c_1}} \cdot {\mathcal B'}^2,
\]
where
\[
T:=\frac{4\,z}{\,\log a\,\log b\,}, \quad \mathcal B':=\log \,\max \bigr\{ \,(\log c\,\log c_1)\,T ,\,c_1\,\bigr\}.
\]
For each $h \in \{a,b\}$, since ${c_1}<h^{2\,e_{c_1}(h)}$ as $h^{2\,e_{c_1}(h)} \equiv 1 \pmod{c_1}$, it follows that
\[
\frac{\log h'}{\log h} = \max\biggl\{1,\,\frac{\log c_1}{\log h}\biggr\} \le 2\,e_{c_1}(h),
\]
so that
\[
T \ll \frac{e}{\nu_{c_1}(c)\,\log^4 {c_1}} \cdot {\mathcal B'}^2.
\]
If $(\log c\,\log c_1)\,T \le c_1$, then $\mathcal B' =\log c_1$, so that
\begin{equation}\label{T-ub}
T \ll \frac{e}{\nu_{c_1}(c)\,\log^2 {c_1}}.
\end{equation}
Finally, suppose that $(\log c\,\log c_1)\,T > c_1$.
Then
\[
T \ll \frac{e}{\nu_{c_1}(c)\,\log^4 {c_1}} \cdot \log^2 \bigr(\,(\log c\,\log c_1)\,T\,\bigr).
\]
If $\log c\,\log c_1>T$, then
\begin{equation}\label{T-ub2}
T \ll \frac{\log^2 (\log c\,\log c_1)}{ \log^2{c_1} }\cdot \frac{e}{\nu_{c_1}(c)\,\log^2 {c_1}}.
\end{equation}
While if $\log c\,\log c_1 \le T$, then putting 
\[
R:=\frac{e}{\nu_{c_1}(c)\,\log^4 {c_1}},
\]
one has
\begin{equation}\label{ineq-R}
\quad
\frac{T}{\log^2 T} <u\,R
\end{equation}
for some constant $u$ with $1 \le u \ll 1$. 
If $R<3$, then inequality \eqref{ineq-R} implies that 
\begin{equation}\label{T-ub3}
T \ll 1.
\end{equation}
In the remaining parts, we assume that $R \ge 3$.
If $T \ge 4u R \log^2 R\,(>\exp(2))$, then replacing the number $T$ in \eqref{ineq-R} by $4u R \log^2 R$ implies that
\[
\frac{R}{\log^2 R} <4u,
\]
so that $R \ll 1$, leading to the same as \eqref{T-ub3} by \eqref{ineq-R}.
If $T < 4u R \log^2 R$, then
\[
T \ll R \log^2 R=\frac{e}{\nu_{c_1}(c)\,\log^4 {c_1}} \cdot \log^2 \biggr( \frac{e}{\nu_{c_1}(c)\,\log^4 {c_1}} \biggr).
\]
Since $e \le \varphi(c_1)^4<{c_1}^4$, the above implies the same as \eqref{T-ub}.
The combination of \eqref{T-ub}, \eqref{T-ub2}, \eqref{T-ub3} completes the proof.
\end{proof}

From now on, suppose that equation \eqref{abc} has two solutions, say $(x,y,z)$ and $(X,Y,Z)$, with $(x,y,z) \ne (X,Y,Z)$.
We have
\begin{eqnarray*}
&a^x+b^y=c^z,\\
&a^X+b^Y=c^Z.
\end{eqnarray*}
There is no loss of generality in assuming that $z \le Z$.
The above two equations are referred to the 1st and 2nd equation, respectively.

In what follows, we put
\[
\ \ \Delta:=|xY-Xy| \ \ (>0).
\]
Note that $\Delta$ is nonzero in general (cf.~\cite[Lemma 3.3]{HuLe}).

It holds trivially that
\begin{equation}\label{trivial-ineqs}
x<\frac{\log c}{\log a}\,z, \ \ y<\frac{\log c}{\log b}\,z, \ \ X<\frac{\log c}{\log a}\,Z, \ \ Y<\frac{\log c}{\log b}\,Z.
\end{equation}

Based on the contents of the previous section, from now on, we assume that $e_{c_1}(a)=e_{c_1}(b)$ (however, this can allow $a$ or $b$ to be a perfect power).
We let $E_1$ be its common value, namely,
\[
E_1:= e_{c_1}(a)=e_{c_1}(b).
\]
Recall that there are some properties on $E_1$ similar to those of the usual multiplicative orders modulo $c_1$, in particular, $E_1$ is a divisor of $\varphi(c_1)$.

For each $h \in \{a,b\}$, since $c_1>2$, we can uniquely define $\delta_h \in \{1,-1\}$ by the following congruence:
\begin{equation}\label{cong-h}
h^{E_1} \equiv \delta_h \mod{c_1}.
\end{equation}

The following lemma is a well-known fact in $p$-adic calculations.

\begin{lem}\label{padic-lemma}
Let $p$ be a prime number.
Let $U$ and $V$ be relatively prime nonzero integers.
Assume that
\[\begin{cases}
\, U \equiv V \pmod{p} & \text{if $p \ne 2$},\\
\, U \equiv V \pmod{4} & \text{if $p=2$}.
\end{cases}\]
Then
\[
\nu_p(U^N-V^N)=\nu_p(U-V)+\nu_p(N)
\]
for any positive integer $N.$
\end{lem}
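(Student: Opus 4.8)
The final statement to prove is Lemma \ref{padic-lemma}, the Lifting-the-Exponent lemma: given coprime nonzero integers $U,V$ with $U\equiv V\pmod p$ (and $U\equiv V\pmod 4$ when $p=2$), one has $\nu_p(U^N-V^N)=\nu_p(U-V)+\nu_p(N)$ for every positive integer $N$. The plan is to separate the problem into the prime-to-$p$ part of $N$ and the $p$-part of $N$, handle each by an elementary factorization, and iterate.

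First I would dispose of the case $\gcd(N,p)=1$. Write $U^N-V^N=(U-V)\sum_{i=0}^{N-1}U^iV^{N-1-i}$. Working modulo $p$, since $U\equiv V$ the sum is congruent to $NV^{N-1}$, which is a $p$-unit because $p\nmid N$ and $p\nmid V$ (as $\gcd(U,V)=1$ forces $p\nmid V$, otherwise $p\mid U$ too). Hence $\nu_p(U^N-V^N)=\nu_p(U-V)$, matching the claim since $\nu_p(N)=0$. Next, the crucial step is the case $N=p$. Here I would compute $\nu_p(U^p-V^p)$ directly: again $U^p-V^p=(U-V)S$ with $S=\sum_{i=0}^{p-1}U^iV^{p-1-i}$, and the task is to show $\nu_p(S)=1$. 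Writing $U=V+pt$ with $t\in\Z$ (using $\nu_p(U-V)\ge 1$; for $p=2$ one has $U=V+4t$, giving an even cleaner estimate), expand each $U^i=(V+pt)^i\equiv V^i+i\,V^{i-1}pt\pmod{p^2}$, so $S\equiv \sum_{i}\bigl(V^{p-1}+i\,V^{p-2}pt\bigr)=pV^{p-1}+pt\,V^{p-2}\binom{p}{2}\pmod{p^2}$. Since $p\mid\binom p2$ for odd $p$, the second term is $\equiv 0\pmod{p^2}$, so $S\equiv pV^{p-1}\pmod{p^2}$, hence $\nu_p(S)=1$ exactly. The case $p=2$ needs the hypothesis $U\equiv V\pmod 4$: with $U=V+4t$ the same expansion gives $S=U+V\equiv 2V\pmod 4$, so $\nu_2(S)=1$ again, and one checks $U\equiv V\pmod 4$ is preserved when passing to $U^2,V^2$ so the induction may continue.

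With these two building blocks, I would finish by induction on $\nu_p(N)=:k$. If $k=0$ we are done by the first step. If $k\ge 1$, write $N=pN'$ with $\nu_p(N')=k-1$. Observe that $U'=U^p$ and $V'=V^p$ are still coprime and still satisfy $U'\equiv V'\pmod p$ (and $\pmod 4$ if $p=2$, by the remark above). Then
\[
\nu_p(U^N-V^N)=\nu_p\bigl((U')^{N'}-(V')^{N'}\bigr)=\nu_p(U'-V')+\nu_p(N')=\bigl(\nu_p(U-V)+1\bigr)+(k-1),
\]
where the middle equality is the induction hypothesis applied to $U',V',N'$ and the last uses the $N=p$ case. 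This equals $\nu_p(U-V)+k=\nu_p(U-V)+\nu_p(N)$, as required.

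I do not anticipate a genuine obstacle here, since this is the classical Lifting-the-Exponent lemma; the only point demanding care is the $p=2$ case, where the congruence $U\equiv V\pmod 4$ is exactly what is needed to make $\nu_2(U+V)=1$ and to propagate the hypothesis through repeated squaring — without it the statement is false (e.g. $U=3,V=1,N=2$ gives $\nu_2(3^2-1^2)=3\ne 1+1$). A clean way to present the write-up is to prove the $p$-power case as a standalone sublemma and then state the induction in one line; alternatively one can invoke a standard reference for the lemma, but since the paper is otherwise self-contained the short inductive argument above seems preferable.
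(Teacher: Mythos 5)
Your proof is correct: the prime-to-$p$ case, the $N=p$ case via the expansion of $S=\sum_{i=0}^{p-1}U^iV^{p-1-i}$ modulo $p^2$, and the induction on $\nu_p(N)$ all check out, and you rightly isolate the role of the hypothesis $U\equiv V\pmod 4$ when $p=2$. The paper itself gives no proof of this lemma --- it simply invokes it as ``a well-known fact in $p$-adic calculations'' (it is the classical lifting-the-exponent lemma) --- so there is nothing to compare against; your self-contained argument is the standard one and would serve as a complete proof.
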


\begin{lem}\label{div}
The following hold.
\begin{itemize}
\item[\rm (i)]
$h^{\Delta} \equiv \epsilon \pmod{c^z},$ where $\epsilon=(-1)^{y+Y}$ or $(-1)^{x+X}$ according to whether $h=a$ or $b.$
\item[\rm (ii)]
$\Delta \equiv 0 \pmod{E_1}.$
\item[\rm (iii)]
$\gcd(a^{E_1}-\delta_a,b^{E_1}-\delta_b) \cdot \Delta/E_1 \equiv 0 \pmod{{c_1}^z}.$
\end{itemize}
\end{lem}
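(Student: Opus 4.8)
The plan is to derive all three parts from the two equations $a^x+b^y=c^z$ and $a^X+b^Y=c^Z$ (recall $z\le Z$) by working modulo $c^z$. For part (i), first I would eliminate one of the two bases. Multiplying the first equation by $a^X$ and the second by $a^x$ and subtracting kills the $a$-term and yields $a^X b^y - a^x b^Y = c^z(a^X c^{Z-z} - a^x)$, hence $a^x b^y(a^{X-x}-b^{Y-y})\equiv 0\pmod{c^z}$ (after a sign-and-index bookkeeping step, writing $b^{\Delta'}$ with the appropriate exponent difference). Since $\gcd(a,c)=\gcd(b,c)=1$, the prefactor $a^x b^y$ is a unit modulo $c^z$, so $a^{X-x}\equiv b^{Y-y}\pmod{c^z}$; symmetrically, eliminating $b$ gives $a^{x-X}\equiv b^{y-Y}$ or the reciprocal relation. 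Combining these two congruences (raising one to a suitable power to clear the cross terms) produces $a^{xY-Xy}\equiv \pm1\pmod{c^z}$ with the sign $(-1)^{y+Y}$, and likewise $b^{xY-Xy}\equiv(-1)^{x+X}\pmod{c^z}$; taking absolute value of the exponent, which only affects the relation by inverting a unit, gives the stated $h^{\Delta}\equiv\epsilon\pmod{c^z}$. The sign has to be tracked carefully via parities of $x,X$ (resp. $y,Y$), which is the one place where a little care is needed; everything else is a unit manipulation.

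For part (ii), reduce the congruence in (i) modulo $c_1$ (using $c_1\mid c$ and $z\ge 1$): we get $a^{\Delta}\equiv\pm1\pmod{c_1}$, i.e. $e_{c_1}(a)=E_1$ divides $\Delta$ by the very definition of the extended multiplicative order. So $E_1\mid\Delta$.

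For part (iii), write $\Delta = E_1 k$ with $k=\Delta/E_1$. From (i) reduced mod $c^z$ we have, for $h\in\{a,b\}$, $h^{E_1 k}\equiv\epsilon_h\pmod{c^z}$, while by \eqref{cong-h} $h^{E_1}\equiv\delta_h\pmod{c_1}$. The idea is to pass from congruences mod $c_1$ to congruences mod $c_1^z$ using the lifting-the-exponent Lemma \ref{padic-lemma}, applied prime-by-prime to the odd number $c_1$. For each prime $p\mid c_1$, set $U=h^{E_1}$ and $V=\delta_h$; then $U\equiv V\pmod p$ (indeed mod $c_1$), the hypotheses of Lemma \ref{padic-lemma} hold (as $p$ is odd), and since $U^k=h^{\Delta}\equiv\epsilon_h=\delta_h^k\cdot(\pm1)$... — here one must first check the sign $\epsilon_h$ equals $\delta_h^{k}$ modulo $c^z$, which follows because $h^{\Delta}\equiv\epsilon_h\pmod{c^z}$ and $h^{\Delta}=(h^{E_1})^k\equiv\delta_h^k\pmod{c_1}$ force $\epsilon_h\equiv\delta_h^k\pmod{c_1}$, and both are $\pm1$ with $c_1>2$, so they are equal. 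Thus $\nu_p(h^{\Delta}-\delta_h^k)=\nu_p(h^{E_1}-\delta_h)+\nu_p(k)$ by Lemma \ref{padic-lemma}, while $\nu_p(h^{\Delta}-\delta_h^k)=\nu_p(h^{\Delta}-\epsilon_h)\ge z\,\nu_p(c)\ge z\,\nu_p(c_1)$. Combining, $\nu_p(h^{E_1}-\delta_h)+\nu_p(k)\ge z\,\nu_p(c_1)$ for every $p\mid c_1$ and each $h\in\{a,b\}$. Taking the minimum over $h\in\{a,b\}$ gives $\nu_p\bigl(\gcd(a^{E_1}-\delta_a,\,b^{E_1}-\delta_b)\bigr)+\nu_p(k)\ge z\,\nu_p(c_1)$; multiplying these valuations back up over all $p\mid c_1$ (legitimate since $c_1$ is squarefree-in-the-relevant-sense by \eqref{c-cond}, or more simply because the inequality holds prime-by-prime) yields $c_1^z \mid \gcd(a^{E_1}-\delta_a,b^{E_1}-\delta_b)\cdot k$, which is exactly (iii).

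The main obstacle is the sign bookkeeping in (i): correctly identifying $\epsilon=(-1)^{y+Y}$ for $h=a$ versus $(-1)^{x+X}$ for $h=b$ requires carefully combining the two "eliminate-a-base" congruences with the right exponents and tracking which of $\pm1$ appears — everything else is routine unit arithmetic mod $c^z$ and a clean application of Lemma \ref{padic-lemma}. One should also note that $\Delta\neq 0$ (cited from \cite[Lemma 3.3]{HuLe}) so that $k=\Delta/E_1$ is a genuine positive integer and the divisibility statements are non-vacuous.
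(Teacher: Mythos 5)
Your proposal is correct and follows essentially the same route as the paper: reduce both equations modulo $c^z$ to obtain $h^{\Delta}\equiv\pm1$, read off $E_1\mid\Delta$ from the definition of the extended multiplicative order, identify $\epsilon={\delta_h}^{\Delta/E_1}$ by comparing the two congruences modulo $c_1>2$, and apply Lemma \ref{padic-lemma} prime-by-prime with $(U,V)=(h^{E_1},\delta_h)$ and $N=\Delta/E_1$ before taking the minimum over $h\in\{a,b\}$. The only cosmetic wrinkle is in part (i), where your two ``eliminate-a-base'' congruences are in fact the same relation $a^{X}b^{y}\equiv a^{x}b^{Y}\pmod{c^z}$, so the sign $(-1)^{y+Y}$ must ultimately be extracted by combining with the raw congruences $a^{x}\equiv-b^{y}$ and $a^{X}\equiv-b^{Y}$ (which is what the paper does directly); your plan accommodates this.
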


\begin{proof}
Let $h \in \{a,b\}$.\par
(i) One reduces 1st, 2nd equations modulo $c^z$, and combines the resulting congruences to obtain the assertion (cf.~proof of \cite[Lemma 3.2]{MiyPin2}). \par
(ii) Assertion (i) in particular says that $h^{\Delta} \equiv \pm 1 \pmod{c_1}$.
This implies the asserted divisibility relation by a property of the extended multiplicative orders (cf.~\cite[Lemma 2.1\,(i)]{MiyPin2}). \par
(iii) By (ii) and its proof, one knows that $h^{\Delta} \equiv \epsilon \pmod{c_1}$ with $\Delta \equiv 0 \pmod{E_1}$. 
Since this congruence on the modulus $c_1$ ($>\!2$) is also obtained by raising both sides of congruence \eqref{cong-h} to the $\Delta/E_1$-th power, it turns out that $\epsilon={\delta_h}^{\Delta/E_1}$.
It follows from (i) that
\begin{equation}\label{cong-h^Delta}
h^{\Delta} \equiv {\delta_h}^{\Delta/E_1} \mod{{c_1}^z}.
\end{equation}

Let $p$ be any prime factor of $c_1$.
Note that $p$ is odd.
It follows from \eqref{cong-h^Delta} that
\[
\nu_p(h^{\Delta}-{\delta_h}^{\Delta/E_1}) \ge \nu_p(c_1) \cdot z.
\]
From congruence \eqref{cong-h} observe that $h^{E_1} \equiv \delta_h \pmod{p}$. 
To calculate the left-hand side of the above displayed inequality, Lemma \ref{padic-lemma} is applied for $(U,V)=(h^{E_1},\delta_h)$ and $N=\Delta/E_1$.
It turns out that
\[
\nu_p(h^{E_1}-\delta_h)+\nu_p(\Delta/E_1) \ge \nu_p(c_1) \cdot z.
\]
Since $p$ is chosen arbitrarily, it follows that $(h^{E_1}-\delta_h) \cdot \Delta/E_1$ is divisible by ${c_1}^z$, and the assertion holds.
\end{proof}

\begin{lem}\label{Delta-ub}
If $\max\{a,b\}>\mathcal C_1,$ then
\[
\Delta \ll F_2\,z,
\]
where $F_2=F_2(c,c_1,E_1)=\max\{1,F_1\} \cdot{E_1}^3 \log^2 c.$
\end{lem}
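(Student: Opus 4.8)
The plan is to combine the four trivial inequalities in \eqref{trivial-ineqs} with the bound on $Z$ furnished by Lemma \ref{1st-app} applied to the \emph{second} equation. Since we are assuming $\max\{a,b\}>\mathcal C_1$, Lemma \ref{c-general-coprime} gives $\gcd(X,Y,c)=1$, and hence $\gcd(X,Y,c_1)=1$ because $c_1$ divides $c$. Thus the hypothesis of Lemma \ref{1st-app} is met for the solution $(X,Y,Z)$. Applying that lemma, and using that $e_{c_1}(a)=e_{c_1}(b)=E_1$, so that the quantity $e=e_{c_1}(a)\cdot e_{c_1}(b)\cdot\lcm(e_{c_1}(a),e_{c_1}(b))$ appearing there equals $E_1^{3}$, we obtain
\[
Z \ll \max\{1,F_1\}\,E_1^{3}\,\log a\,\log b.
\]

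Next I would estimate $\Delta$ elementarily. As $xY$ and $Xy$ are both positive, $\Delta=|xY-Xy|\le\max\{xY,\,Xy\}$, and multiplying the bounds from \eqref{trivial-ineqs} pairwise gives
\[
xY<\frac{\log^{2}c}{\log a\,\log b}\,zZ,\qquad Xy<\frac{\log^{2}c}{\log a\,\log b}\,zZ,
\]
hence $\Delta<\dfrac{\log^{2}c}{\log a\,\log b}\,zZ$. Substituting the bound for $Z$ from the previous step makes the factor $\log a\,\log b$ cancel, and we arrive at
\[
\Delta \ll \frac{\log^{2}c}{\log a\,\log b}\cdot\max\{1,F_1\}\,E_1^{3}\,\log a\,\log b\cdot z=\max\{1,F_1\}\,E_1^{3}\,\log^{2}c\cdot z=F_2\,z,
\]
which is exactly the claimed inequality.

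There is no genuine obstacle here; the proof is a short deduction from the two preceding lemmas. The only points that require a moment's attention are (i) invoking Lemma \ref{1st-app} for the larger solution $(X,Y,Z)$ rather than for $(x,y,z)$, so that the resulting estimate turns the factor $Z$ into a constant depending only on $c$, $c_1$ and $E_1$; and (ii) observing that the coprimality condition $\gcd(X,Y,c_1)=1$ needed to apply Lemma \ref{1st-app} is inherited from Lemma \ref{c-general-coprime} precisely under the hypothesis $\max\{a,b\}>\mathcal C_1$.
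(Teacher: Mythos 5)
Your argument is exactly the paper's: the paper derives $\Delta<\frac{\log^2 c}{\log a\,\log b}\,zZ$ from \eqref{trivial-ineqs} and then substitutes the bound on $Z$ obtained by applying Lemmas \ref{c-general-coprime} and \ref{1st-app} to the solution $(X,Y,Z)$, with the factor $e$ equal to ${E_1}^3$ under the standing assumption $e_{c_1}(a)=e_{c_1}(b)=E_1$. Your two ``points requiring attention'' are precisely the steps the paper relies on, so the proposal is correct and essentially identical to the published proof.
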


\begin{proof}
Since $\Delta=|x Y-X y|<\max\{x Y,X y\}$, it follows from inequalities \eqref{trivial-ineqs} that
\begin{equation} \label{Delta-basic-ub}
\Delta<\frac{\log^2 c}{\log a\,\log b}\,z\,Z.
\end{equation}
This together with the upper bound for $Z$ obtained from Lemmas \ref{c-general-coprime} and \ref{1st-app} on the solution $(X,Y,Z)$
implies the assertion.
\end{proof}

The above lemma in particular says that the magnitude of $\Delta$ as a function of $z$ is `very small' relative to that of ${c_1}^z$ appearing in the divisibility relation of Lemma \ref{div}\,(iii).
This observation is the most important one in our previous work \cite{MiyPin2}, and also for the present one.
Based on this, we put
\[
\mathcal D_1:=\frac{{c_1}^z}{\Delta'},
\]
where 
\[
\Delta':=\gcd(\Delta/E_1,{c_1}^z).
\]
It turns out from Lemma \ref{div}\,(iii) that
\begin{equation} \label{cong-h-calD}
h^{E_1} \equiv \delta_h \mod{\mathcal D_1}
\end{equation}
for each $h \in \{a,b\}$.

In what follows, we frequently use the following inequalities:
\[
c_1 \ll_{c} 1, \ \ E_1 \ll_{c} 1.
\]

\begin{lem}\label{calD-l}
If $\max\{a,b\}>\mathcal C_2,$ then $\log \mathcal D_1 \gg (\log c_1)\,z.$
\end{lem}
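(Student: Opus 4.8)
The plan is to bound $\mathcal D_1 = {c_1}^z/\Delta'$ from below by controlling the size of $\Delta' = \gcd(\Delta/E_1, {c_1}^z)$ from above. Since $\Delta' \le \Delta/E_1 \le \Delta$, it suffices to show that $\Delta$ is much smaller than ${c_1}^z$; then $\log \mathcal D_1 = (\log c_1)\,z - \log \Delta'$ will retain a positive constant proportion of the main term $(\log c_1)\,z$. Concretely, I would aim to prove $\log \Delta' \le \tfrac12 (\log c_1)\, z$ once $\max\{a,b\}$ is large enough, which immediately yields $\log \mathcal D_1 \ge \tfrac12 (\log c_1)\, z \gg (\log c_1)\, z$.

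The key input is Lemma \ref{Delta-ub}: provided $\max\{a,b\} > \mathcal C_1$, we have $\Delta \ll F_2\, z$ with $F_2 = F_2(c,c_1,E_1)$ depending only on $c$ (recall $c_1 \ll_c 1$ and $E_1 \ll_c 1$). Hence $\log \Delta' \le \log \Delta \le \log z + O_c(1)$. On the other hand, the main term is $(\log c_1)\, z$, which grows linearly in $z$, while $\log z + O_c(1)$ grows only logarithmically. Therefore, for $z$ exceeding some effectively computable bound depending only on $c$ — say $z \ge z_0(c)$ — we get $\log \Delta' \le \tfrac12 (\log c_1)\, z$, and thus $\log \mathcal D_1 \ge \tfrac12 (\log c_1)\, z$.

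To handle the remaining range $z < z_0(c)$: here $z \ll_c 1$, so by the implication $z \ll_c 1 \Rightarrow \max\{a,b\} \ll_c 1$ recorded at the start of this section, we have $\max\{a,b\} \ll_c 1$. Choosing $\mathcal C_2 \ge \mathcal C_1$ large enough (depending only on $c$) that it exceeds this bound, the hypothesis $\max\{a,b\} > \mathcal C_2$ forces $z \ge z_0(c)$, and we are in the case already treated. This fixes the required constant $\mathcal C_2$.

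The only mild subtlety — not really an obstacle — is bookkeeping the dependence of all implied constants on $c$ alone: one must check that $F_2$, and hence the threshold $z_0(c)$, depends only on $c$, which is clear since $c_1$ is a divisor of $c$ and $E_1 = e_{c_1}(a) = e_{c_1}(b)$ divides $\varphi(c_1) < c_1 \le c$, so $F_1, F_2 \ll_c 1$ as functions bounded in terms of $c$. With that settled, the two cases combine to give $\log \mathcal D_1 \gg (\log c_1)\, z$ whenever $\max\{a,b\} > \mathcal C_2$, completing the proof.
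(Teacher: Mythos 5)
Your proposal is correct and follows essentially the same route as the paper: both bound $\Delta'\le\Delta/E_1\ll_c z$ via Lemma \ref{Delta-ub}, write $\log\mathcal D_1=z\log c_1-\log\Delta'$, note the subtracted term is only $O_c(\log z)$, and dispose of the small-$z$ range by the implication $z\ll_c 1\Rightarrow\max\{a,b\}\ll_c 1$ through the choice of $\mathcal C_2$. No gaps.
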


\begin{proof}
We may assume that $\max\{a,b\}>\mathcal C_1$.
Lemma \ref{Delta-ub} gives that $\Delta' \le \Delta/E_1 \ll (F_2/E_1) z$, so that
\[
\Delta' <\eta \,z
\]
for some $\eta>0$ which depends only on $c$ and is effectively computable.
Then
\[
\log \mathcal D_1 = z \log c_1 - \log \Delta'
>z \log c_1 - \log (\eta z),
\]
so that
\[
\frac{\log \mathcal D_1}{z} > \log c_1 - \frac{\log (\eta z)}{z} \gg \log c_1,
\]
whenever $z$ exceeds some constant depending only on $\eta$.
This completes the proof.
\end{proof}

The following lemma is the most important step in the proofs of our results.

\begin{lem}\label{2nd-app}
If $\max\{a,b\}>\mathcal C_3,$ then
\[
z\,Z \ll \frac{1}{\nu_{c_1}(c) \log^2 c_1} \, {E_1}^3\,\log a\,\log b.
\]
\end{lem}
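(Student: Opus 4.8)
\textbf{Proof plan for Lemma \ref{2nd-app}.}
The plan is to bound $\nu_{c_1}(a^X-b^Y)$ (or a closely related quantity built from the two solutions) from below by something proportional to $(\log c_1)\,z$, using the divisibility relation already packaged in \eqref{cong-h-calD} together with Lemma \ref{calD-l}, and from above by Baker's $p$-adic method (Proposition \ref{Bu-madic}) with exponents $b_1,b_2$ of size controlled by $Z$. Comparing the two bounds will yield an inequality of the shape $(\log c_1)\,z \ll (\text{stuff depending only on } c) \cdot {E_1}^{?}\,\log a\,\log b\,\cdot(\log \text{something})^2$, and then the usual bootstrapping trick (as in the proof of Lemma \ref{1st-app}, handling the case where the max in $\mathcal B$ is the $\log b^\ast$ term versus the $4\log c_1$ term) collapses the logarithmic factor and leaves $z\,Z \ll \frac{1}{\nu_{c_1}(c)\log^2 c_1}\,{E_1}^3\,\log a\,\log b$.

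First I would set up the quantity to be estimated. From \eqref{cong-h-calD} we have $a^{E_1}\equiv\delta_a$ and $b^{E_1}\equiv\delta_b\pmod{\mathcal D_1}$, so $a^{E_1 Y}\equiv{\delta_a}^{Y}$ and $b^{E_1 X}\equiv{\delta_b}^{X}\pmod{\mathcal D_1}$; using Lemma \ref{div}\,(i) (applied to the pair of solutions, which gives the sign bookkeeping ${\delta_a}^{\Delta/E_1}=(-1)^{y+Y}$, ${\delta_b}^{\Delta/E_1}=(-1)^{x+X}$, and the analogous relations needed to match the signs) one gets a congruence of the form $a^{E_1 Y}\equiv \pm\, b^{E_1 X}\pmod{\mathcal D_1}$, hence $\nu_{c_1}\!\big(a^{E_1 Y}\mp b^{E_1 X}\big)\ge \frac{\log\mathcal D_1}{\log c_1}\gg z$ by Lemma \ref{calD-l} (here I am being deliberately schematic about which sign occurs; the precise sign is forced by reducing the 1st and 2nd equations mod a power of $c_1$ exactly as in Lemma \ref{div}). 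So $\nu_{c_1}(a^{E_1Y}-\varepsilon b^{E_1X})\gg(\log c_1)^{-1}\log\mathcal D_1 \gg z$ with $\varepsilon\in\{1,-1\}$; absorbing $\varepsilon$ into $b$ (replace $b$ by $-b$, which does not change heights) I may treat it as $\nu_{c_1}(a^{E_1Y}-b^{E_1X})\gg z$ up to the constant from Lemma \ref{calD-l}.

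Next I would apply Proposition \ref{Bu-madic} with $M:=c_1$ (odd, so the even-case hypotheses are vacuous), $(\alpha_1,\alpha_2):=(a,b)$ (multiplicatively independent since $a,b$ are coprime and $>1$; if they happened to be multiplicatively dependent the lemma is trivial), $(b_1,b_2):=(E_1 Y,\,E_1 X)$, and ${\rm g}:=E_1=e_{c_1}(a)=e_{c_1}(b)$ — note $\gcd(E_1,c_1)=1$ by \eqref{c-cond} since $E_1\mid\varphi(c_1)$, and $\nu_{c_1}(a^{E_1}-1)=\nu_{c_1}(a^{2E_1}-1)\ge\nu_{c_1}(c_1)$ needs the standard trick of replacing $E_1$ by $2E_1$ (so that $a^{E_1}\equiv\delta_a$ becomes $a^{2E_1}\equiv1$), exactly as in Lemma \ref{1st-app}; so really ${\rm g}:=2E_1$. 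With $H_1:=\log a'$, $H_2:=\log b'$ where $a'=\max\{a,c_1\}$, $b'=\max\{b,c_1\}$, and using $\log h'/\log h\le 2e_{c_1}(h)=2E_1$ as before, Proposition \ref{Bu-madic} gives
\[
\nu_{c_1}(a^{E_1 Y}-b^{E_1 X})\ll \frac{E_1\,\log a'\,\log b'}{\log^4 c_1}\,\Big(\max\{\log b^\ast+\log\log c_1+0.64,\ 4\log c_1\}\Big)^2,
\]
with $b^\ast=E_1 Y/\log b'+E_1 X/\log a'$, and by \eqref{trivial-ineqs} (on the solution $(X,Y,Z)$) together with $\log h'/\log h\le 2E_1$ one has $b^\ast\ll E_1^2\,\frac{\log c}{\log a\log b}\,Z$. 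Combining with the lower bound and clearing the ratios $\log a'/\log a,\log b'/\log b\le 2E_1$:
\[
z\ll \frac{1}{\log c_1}\cdot\frac{E_1^3\log a\log b}{\log^4 c_1}\cdot\Big(\max\{\log(E_1^2\tfrac{\log c}{\log a\log b}Z)+\log\log c_1+0.64,\ 4\log c_1\}\Big)^2.
\]
Finally I would run the same bootstrapping as in Lemma \ref{1st-app}: multiply through by $Z$, set $T:=\frac{zZ\nu_{c_1}(c)\log^2 c_1}{E_1^3\log a\log b}$ (or a similar normalization), observe the inequality $T\ll (\text{const})\cdot\log^2 T\cdot(\text{stuff})$ in the bad case and $T\ll(\text{const})$ directly in the good case, use $E_1\le\varphi(c_1)<c_1$ and $c_1\ll_c 1$ to kill the leftover logarithm, and conclude $z\,Z\ll\frac{1}{\nu_{c_1}(c)\log^2 c_1}\,E_1^3\log a\log b$ (after possibly enlarging $\mathcal C_3$ to absorb finitely many small $\max\{a,b\}$).

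\textbf{Main obstacle.} The genuinely delicate point is \emph{not} the Baker estimate — that is routine given Proposition \ref{Bu-madic} — but getting the correct exponent of $E_1$ and the correct congruence to feed into it. I must be careful that the lower bound for $\nu_{c_1}$ comes out as $\gg z$ (not $\gg z/E_1$ or similar), that the exponents $b_1=E_1Y$, $b_2=E_1X$ genuinely carry the factor $E_1$ into $b^\ast$ (which is why the final bound has $E_1^3$ and not $E_1$: one $E_1$ from ${\rm g}$, two from the $b^\ast$/height ratios), and that the signs $\delta_a,\delta_b$ and the exponents $(-1)^{x+X},(-1)^{y+Y}$ from Lemma \ref{div}\,(i) are reconciled so that what I plug into Baker really is a difference $a^{b_1}-b^{b_2}$ of the required shape. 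The sign analysis is the kind of thing that is easy to get wrong and must mirror the proof of Lemma \ref{div} closely; everything after that is the mechanical bootstrapping already rehearsed in Lemma \ref{1st-app}.
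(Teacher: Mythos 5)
There is a genuine gap, and it sits exactly where you dismissed the argument as ``mechanical bootstrapping'': your setup cannot produce the product $z\,Z$ on the left-hand side. You take the congruence $a^{E_1Y}\equiv\pm\,b^{E_1X}\pmod{\mathcal D_1}$ and read off the lower bound $\nu_{c_1}\bigl(a^{E_1Y}\mp b^{E_1X}\bigr)\gg z$, then apply Proposition \ref{Bu-madic} with the \emph{fixed} modulus $M=c_1$. The resulting upper bound is $\ll E_1^3\,\log a\,\log b\cdot\mathcal B^2/\log^4 c_1$ with $\mathcal B\ll_c 1+\log Z$ at worst, so after the bootstrap you obtain only $z\ll_c E_1^3\log a\log b/\log^2 c_1$ --- a bound on $z$ alone, essentially re-deriving the conclusion of Lemma \ref{1st-app} for the smaller solution. ``Multiplying through by $Z$'' is not a legitimate operation: from $z\ll W$ you get $z\,Z\ll W Z$, not $z\,Z\ll W$, and combining your bound with $Z\ll_c\log a\log b$ from Lemma \ref{1st-app} only gives $z\,Z\ll_c(\log a\log b)^2$, which is strictly weaker than the statement and insufficient for Lemma \ref{c-general-xyXY-ub}, whose proof needs the genuine product bound to close the loop $xyXY\ll_c1$.

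The paper's proof obtains the product by pairing a \emph{large} modulus with a \emph{small} lower bound. It applies Proposition \ref{Bu-madic} to $a^{2X}-b^{2Y}$ with $M:=\mathcal D_1$ (of size roughly ${c_1}^z$), using the second equation to get
\[
\nu_{\mathcal D_1}(a^{2X}-b^{2Y})\ \ge\ \nu_{\mathcal D_1}(c^Z)\ \ge\ \nu_{c_1}(c)\,\lfloor Z/z\rfloor\ \gg\ \nu_{c_1}(c)\,Z/z .
\]
The heights stay $H_j\ll E_1\log a,\ E_1\log b$ because $\mathcal D_1<a^{2E_1},\,b^{2E_1}$ by \eqref{cong-h-calD}, while the denominator $\log^4 M=\log^4\mathcal D_1\gg(\log c_1)^4z^4$ (via Lemma \ref{calD-l}) now grows with $z$; it is precisely this power of $z$ in the denominator, played against the $Z/z$ on the other side, that yields $z\,Z\ll E_1^3\log a\log b/(\nu_{c_1}(c)\log^2 c_1)$ in the main case $\mathcal B'=\log\mathcal D_1$. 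Your congruence $a^{E_1Y}\equiv\pm b^{E_1X}\pmod{\mathcal D_1}$ carries no information about $Z$ beyond divisibility by a single copy of $\mathcal D_1$, so no normalization can recover the factor $Z$ (nor the factor $\nu_{c_1}(c)^{-1}$). The sign bookkeeping you worried about is fine and is not the issue; the missing idea is to valuate the second equation $\mathcal D_1$-adically rather than $c_1$-adically.
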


\begin{proof}
The proof proceeds similarly to that of Lemma \ref{1st-app}.
We may assume that $\max\{a,b\}>\mathcal C_1,\mathcal C_2$.
Note that $\mathcal D_1>1$ by Lemma \ref{calD-l}.

First, from 2nd equation, observe that
\[
\nu_{\mathcal D_1}(a^{2X}-b^{2Y}) \ge \nu_{\mathcal D_1}(a^X+b^Y)=\nu_{\mathcal D_1}(c^Z) \ge \nu_{{c_1}^z}(c^Z) \ge \nu_{c_1}(c) \cdot \lfloor Z/z \rfloor.
\]
To find an upper bound for the leftmost hand side above, we shall apply Proposition \ref{Bu-madic} with the same values of $\alpha_1,\alpha_2,b_1,b_2$ as those in the proof of Lemma \ref{1st-app} (on the solution $(X,Y,Z)$).
In this case, we set $M:=\mathcal D_1$, and all required conditions for this choice are satisfied by condition \eqref{c-cond}, congruence \eqref{cong-h-calD} and Lemma \ref{c-general-coprime}.
Then
\[
\nu_{\mathcal D_1}(a^{2X}-b^{2Y}) \ll \frac{E_1 \log a'\, \log b'}{\log^4 \mathcal D_1} \cdot \mathcal B^2,
\]
where $a'=\max\{a,\mathcal D_1\},\, b'=\max\{b,\mathcal D_1\}$ and
\[
\mathcal B=\max \biggr\{ \log \biggr( \frac{2X}{\log b'}+\frac{2Y}{\log a'} \biggr)+\log \log \mathcal D_1, \ \log \mathcal D_1 \biggr\}.
\]
Since $\lfloor Z/z \rfloor \gg Z/z$, and
\[
\log \biggr( \frac{2X}{\log b'}+\frac{2Y}{\log a'} \biggr)<\log \biggr( \frac{4\log c}{\log a\,\log b}\,Z \biggr),
\]
the two bounds for $\nu_{\mathcal D_1}(a^{2X}-b^{2Y})$ together imply
\[
T \ll \frac{1}{\nu_{c_1}(c)} \cdot \frac{\log a'}{\log a} \cdot \frac{\log b'}{\log b} \cdot \frac{E_1\,z^2}{\log^4 \mathcal D_1} \cdot {\mathcal B'}^2,
\]
where
\[
T:=\frac{4\,z\,Z}{\,\log a\,\log b\,}, \quad \mathcal B':=\log\,\max \bigr\{\, (\log c)(\log \mathcal D_1)\,T/z, \,\mathcal D_1\,\bigr\}.
\]
Note that $\log \mathcal D_1 \gg (\log c_1)\,z$ by Lemma \ref{calD-l}.
Since $\mathcal D_1<a^{2E_1},b^{2E_1}$ by congruence \eqref{cong-h-calD}, one has
\[
T \ll \frac{{E_1}^3 z^2}{\nu_{c_1}(c)\,\log^4 \mathcal D_1} \cdot {\mathcal B'}^2.
\]
If $(\log c)(\log \mathcal D_1)\,T/z \le \mathcal D_1$, then ${\mathcal B}'=\log {\mathcal D}_1$, so that 
\[
T \ll \frac{{E_1}^3 z^2}{\nu_{c_1}(c)\,\log^2 \mathcal D_1} \ll \frac{{E_1}^3}{\nu_{c_1}(c) \log^2 c_1},
\]
which gives the asserted inequality.
Finally, suppose that $(\log c)(\log \mathcal D_1)\,T/z >\mathcal D_1$.
Then
\[
\frac{\mathcal D_1}{(\log c)\log \mathcal D_1} < T/z \ll \frac{{E_1}^3\,z}{\nu_{c_1}(c)\,\log^4 \mathcal D_1} \cdot \log^2\bigr(\,(\log c)(\log \mathcal D_1)\,T/z \,\big).
\]
If $(\log c)\log \mathcal D_1>T/z$, then
\[
\frac{\mathcal D_1}{\log \mathcal D_1} \ll_c T/z \ll_c \log \mathcal D_1,
\]
so that $\mathcal D_1 \ll_c 1$, whereby $z \ll \log \mathcal D_1 \ll_c 1$.
While if $(\log c)\log \mathcal D_1 \le T/z$, then
\[
\frac{\mathcal D_1}{\log \mathcal D_1} \ll_c T/z \ll_c \frac{z}{\log^4 \mathcal D_1} \cdot \log^2(T/z).
\]
Since $z \ll \log \mathcal D_1$, it is not hard to see from the right-hand inequality above that $T/z \ll_c 1$, so that the left-hand one yields $\mathcal D_1 \ll_c 1$, and $z \ll_c 1$.
This completes the proof.
\end{proof}

\begin{lem}\label{c-general-xyXY-ub}
If $\max\{a,b\}>\mathcal C_3,$ then
\[
\max\{x,y,X,Y\} \ll {F_3}^2{E_1}^6 \ll_{c} 1,
\]
where $F_3=F_3(c,c_1)=\frac{1}{\nu_{c_1}(c)} \frac{\log^2 c}{\log^2 c_1}.$
\end{lem}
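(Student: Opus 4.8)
The plan is to combine the bound from Lemma \ref{2nd-app} with the trivial inequalities \eqref{trivial-ineqs} to squeeze $zZ$ into a form that forces $\max\{x,y,X,Y\}$ to be bounded in terms of $c$ alone. First I would observe that by \eqref{trivial-ineqs} we have $x < \frac{\log c}{\log a}\,z$ and $y < \frac{\log c}{\log b}\,z$, and similarly for $X,Y$ with $Z$; hence
\[
\max\{x,y,X,Y\} < \frac{\log c}{\min\{\log a,\log b\}}\,Z \le \frac{\log c}{\min\{\log a,\log b\}}\,zZ,
\]
using $z \ge 1$. So it suffices to bound $zZ/\min\{\log a,\log b\}$, and by symmetry (the right-hand side of Lemma \ref{2nd-app} is symmetric in $a,b$) we may as well bound $\frac{zZ\,\log a\,\log b}{\log a\,\log b\,\min\{\log a,\log b\}}$; the cleanest route is to feed in Lemma \ref{2nd-app} directly.

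The key step is that Lemma \ref{2nd-app} gives, under $\max\{a,b\} > \mathcal C_3$,
\[
zZ \ll \frac{{E_1}^3\,\log a\,\log b}{\nu_{c_1}(c)\,\log^2 c_1}.
\]
Dividing by $\log a\,\log b$ and then multiplying by $\log c$ (to match the numerator shape of $F_3$), one gets $\frac{zZ}{\log a \log b} \ll \frac{1}{\nu_{c_1}(c)\log^2 c_1}{E_1}^3$. Now from \eqref{trivial-ineqs}, $\max\{x,y,X,Y\} < \frac{\log c}{\log a}z \cdot \frac{\log c}{\log b}Z \cdot \frac{\log a \log b}{\log c \cdot \min\{\log a,\log b\}}$ — more straightforwardly, $xY < \frac{\log c}{\log a}z \cdot \frac{\log c}{\log b}Z = \frac{\log^2 c}{\log a \log b}zZ$ and likewise each of $xY, Xy, xy, XY \le \frac{\log^2 c}{\log a \log b}zZ$ (for the products); for the individual quantities one uses instead $x < \frac{\log c}{\log a}z \le \frac{\log c}{\log a}zZ$. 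Either way, substituting the Lemma \ref{2nd-app} bound,
\[
\max\{x,y,X,Y\} \ll \frac{\log^2 c}{\log a\,\log b}\cdot\frac{{E_1}^3\,\log a\,\log b}{\nu_{c_1}(c)\,\log^2 c_1} = \frac{\log^2 c}{\nu_{c_1}(c)\,\log^2 c_1}\,{E_1}^3 = F_3\,{E_1}^3.
\]
To get the stated ${F_3}^2{E_1}^6$ one simply notes $F_3 \ge 1$ and $E_1 \ge 1$ (so $F_3{E_1}^3 \le {F_3}^2{E_1}^6$), a harmless weakening that presumably matches how the bound is invoked later. Finally, $F_3 \ll_c 1$ since $c_1$ and $\nu_{c_1}(c)$ depend only on $c$, and $E_1 \mid \varphi(c_1)$ so $E_1 \ll_c 1$; hence ${F_3}^2{E_1}^6 \ll_c 1$.

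I do not expect any real obstacle here: the lemma is essentially a repackaging of Lemma \ref{2nd-app} through the elementary size inequalities \eqref{trivial-ineqs}, and the only mild care needed is bookkeeping of which $\ll$-constants are absolute versus $c$-dependent — in particular verifying that the implied constants introduced when passing to $\max\{x,y,X,Y\}$ remain absolute, so that the $\ll$ in the statement (not $\ll_c$) is justified before the final $\ll_c 1$ conclusion. The one genuinely substantive input, the interdependence of the two solutions encoded in $\Delta$ and the modulus $\mathcal D_1$, has already been absorbed into Lemma \ref{2nd-app}, so nothing new is required.
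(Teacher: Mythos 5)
Your proposal is correct in substance and follows essentially the same route as the paper: feed the bound of Lemma \ref{2nd-app} into the trivial inequalities \eqref{trivial-ineqs}. The path that actually works is the one through the cross products: $xY < \frac{\log^2 c}{\log a\,\log b}\,zZ$ and $Xy < \frac{\log^2 c}{\log a\,\log b}\,zZ$, whence $xY,\,Xy \ll F_3{E_1}^3$; since each of $x,y,X,Y$ is dominated by one of these two products (the cofactor being $\ge 1$), the lemma follows, and $xyXY=(xY)(Xy)\ll {F_3}^2{E_1}^6$ recovers the form the paper extracts and uses later. The paper's own proof reaches the same place slightly differently, via $zZ \ll F_3{E_1}^3\min\{z^2/(xy),\,Z^2/(XY)\}$, which yields first $XY \ll F_3{E_1}^3\,Z/z$ and then $Z/z \ll F_3{E_1}^3/(xy)$.

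Two of your intermediate assertions are, however, wrong and should be deleted (neither is needed). First, the claim that $XY \le \frac{\log^2 c}{\log a\,\log b}\,zZ$ fails: \eqref{trivial-ineqs} only gives $XY < \frac{\log^2 c}{\log a\,\log b}\,Z^2$, and since $z\le Z$ one has $Z^2 \ge zZ$, so this cannot be absorbed into the bound on $zZ$. This is precisely why the paper resorts to the $\min$ trick, and why the exponent $2$ on $F_3{E_1}^3$ in their statement is not just a ``harmless weakening'' of their argument. Second, the ``individual quantities'' route $x < \frac{\log c}{\log a}\,zZ$ is a dead end: substituting Lemma \ref{2nd-app} leaves $x \ll \frac{{E_1}^3 \log c\,\log b}{\nu_{c_1}(c)\log^2 c_1}$, with an unbounded $\log b$ surviving, since only one of the two logarithms cancels; the same defect afflicts your opening display with $\min\{\log a,\log b\}$ in the denominator. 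So the ``either way'' framing is misleading --- only the cross-product route closes. Finally, $F_3\ge 1$ does hold but deserves a word: $c_1^{\nu_{c_1}(c)}\mid c$ gives $\log c\ge \nu_{c_1}(c)\log c_1$, hence $F_3\ge \nu_{c_1}(c)\ge 1$.
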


\begin{proof}
Lemma \ref{2nd-app} together with inequalities \eqref{trivial-ineqs} yields
\begin{align*}
z\,Z \ll F_3 {E_1}^3 \cdot \frac{\log a}{\log c} \cdot \frac{\log b}{\log c}
&<F_3 {E_1}^3 \cdot \min \biggr\{\dfrac{z}{x},\dfrac{Z}{X} \biggr\} \cdot \min \biggr\{\dfrac{z}{y},\dfrac{Z}{Y} \biggr\}\\
& \le F_3 {E_1}^3 \cdot \min \biggr\{\dfrac{z^2}{xy},\dfrac{Z^2}{XY} \biggr\}.
\end{align*}
This implies that
\[
X Y \ll F_3 {E_1}^3 \cdot Z/z \ll F_3 {E_1}^3 \cdot F_3 {E_1}^3/(x y),
\]
leading to the assertion.
\end{proof}

\begin{rem}\rm
(i) Readers should notice that without assuming $e_{c_1}(a)=e_{c_1}(b),$ Lemma \ref{c-general-xyXY-ub} in particular says that all possible values of $x,y,X$ and $Y$ are only finitely many and `effectively' determined, whenever the value of $c$ is fixed.
This fact itself is highly non-trivial, and it will bring us very strong contributions to weak forms of Conjecture \ref{atmost1} which are already interesting (cf.~Theorem \ref{a-general-pillai_gcd1_ineff} and Theorem \ref{x>1y>1z>1} below).
\\
(ii) Since relation \eqref{c-cond} holds if $c_1$ is an odd prime, one can always choose $c_1$ as the least odd prime factor of $c$.
Thus, only applying $p$-adic analogue to Baker's method instead of $m$-adic one due to Bugeaud (Proposition \ref{Bu-madic}) is enough to prove Lemma \ref{c-general-xyXY-ub}. 
On the other hand, one should rely on Bugeaud's mentioned work to find a better estimate of the implied constants in that lemma.
\end{rem}

A bright idea of Luca appearing in the proof of \cite[Lemma 7]{Luca_ActaArith_12} over the ring of Gaussian integers is used in the proof of the following lemma.

\begin{lem}\label{c-general-minxy=1--minXY=1}
If $\max\{a,b\}>\mathcal C_4,$ then the following hold.
\begin{itemize}
\item[\rm (i)] $\min\{x,y\}=1.$
\item[\rm (ii)] $\min\{X,Y\}=1.$
\end{itemize}
\end{lem}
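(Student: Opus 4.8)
The plan is as follows. By the symmetry of equation \eqref{abc} in $a$ and $b$ it suffices to prove (i); assertion (ii) is obtained by rerunning the argument with the two solutions interchanged, which is legitimate since the relations in Lemma \ref{div} are symmetric in $(x,y,z)$ and $(X,Y,Z)$ up to the harmless replacement of a sign by its opposite. Assume, for contradiction, that $\min\{x,y\}\ge 2$ while $\max\{a,b\}$ exceeds a constant depending only on $c$ (this is what will define $\mathcal C_4$). The crucial preliminary remark is that, by Lemma \ref{c-general-xyXY-ub}, each of $x,y,X,Y$ is $\ll_c 1$; hence $\Delta=|xY-Xy|\ll_c1$, $E_1\ll_c1$ and $\Delta/E_1\ll_c1$, and $(x,y)$ ranges over a finite set determined by $c$.

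Next I would extract size information. On the one hand $\min\{x,y\}\ge2$ forces $c^z=a^x+b^y>\max\{a,b\}^2$. On the other hand Lemma \ref{div}(i) gives $a^{\Delta}\equiv\pm1$ and $b^{\Delta}\equiv\pm1\pmod{c^z}$; since $a^{\Delta}\mp1$ and $b^{\Delta}\mp1$ are nonzero integers this yields $c^z\le a^{\Delta}+1$ and $c^z\le b^{\Delta}+1$, that is, $c^z\ll\min\{a,b\}^{\Delta}$. Combining, $\max\{a,b\}^{2}\ll\min\{a,b\}^{\Delta}$, so $a$ and $b$ must be of comparable size, and the case $\Delta=1$ is already excluded; refining $a^{2}+1$ to $(a-1)(a+1)<a^{2}$ (and using $b>1$ when both relevant signs are negative) also excludes $\Delta=2$. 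When in addition $\delta_a=\delta_b$, Lemma \ref{div}(iii) gives $c_1^{\,z}\mid\gcd(a^{E_1}-\delta_a,\,b^{E_1}-\delta_b)\cdot(\Delta/E_1)$, hence $c_1^{\,z}\ll_c\min\{a,b\}^{E_1}$, which together with $c^z>\max\{a,b\}^{2}$ disposes of all cases in which $E_1$ is small relative to $\nu_{c_1}(c)$.

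The configurations not removed by these elementary estimates are handled by Luca's device over the Gaussian integers, as in \cite[Lemma 7]{Luca_ActaArith_12}. Using the parities of the surviving exponents one rewrites the relevant equation as $U^{2}+V^{2}=N$, where $N$ is a divisor of a power of $c$ and $U,V$ are monomials in $a,b$ with $\gcd(U,V)=1$, and factors the left-hand side as $(U+iV)(U-iV)$ in $\mathbb Z[i]$. If a prime factor of $c$ congruent to $3$ modulo $4$ divides $N$, then being inert in $\mathbb Z[i]$ it divides one of the two conjugate factors, hence divides both $U$ and $V$, contradicting $\gcd(U,V)=1$; otherwise the factorization forces $U+iV=\varepsilon\,\gamma^{k}$ for a unit $\varepsilon$ and one of the finitely many $\gamma\in\mathbb Z[i]$ (depending only on $c$) of the appropriate norm. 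Writing the analogous identity produced by the second solution and eliminating the common powers of $\gamma$ leaves a polynomial relation between bounded powers of $a$ and of $b$; since the two solutions are distinct, the dominant terms on the two sides can be matched, and the resulting relation is incompatible with $\max\{a,b\}$ being large. In the degenerate sub-cases where one of the Gaussian factors is a unit, the equation collapses to a Pillai-type equation with a fixed base, to which Proposition \ref{gp} (or simply the boundedness of the exponents) applies. In either event one reaches $\max\{a,b\}\ll_c 1$, the desired contradiction.

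The difficulty here is largely organizational: one must split according to the parities of $x,y$ and of $X,Y$, the signs $\delta_a,\delta_b$ and $(-1)^{x+X},(-1)^{y+Y}$, and according to which of $a^x,b^y$ is dominant, and then in each branch arrange the factorization so that combining the two solutions genuinely bounds \emph{both} $a$ and $b$ rather than only their ratio. In a few branches one is pushed from $\mathbb Z[i]$ into a real quadratic order, where a unit of infinite order has to be absorbed by an additional appeal to the boundedness of the exponents. I expect the verification that the relation obtained after eliminating $\gamma$ is not identically satisfied, together with this case bookkeeping, to be the technical heart of the proof.
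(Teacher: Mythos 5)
There is a genuine gap, and it sits exactly where your proposal leans on the Gaussian--integer factorization. After your elementary estimates (which are fine as far as they go: $c^z\le\min\{a,b\}^{\Delta}+1$ from Lemma \ref{div}\,(i) versus $c^z>\max\{a,b\}^{2}$ does exclude $\Delta\le 2$ and shows $a$ and $b$ are comparable), you are left with essentially every pair $(x,y)$ with $\min\{x,y\}\ge 2$. But the rewriting of $a^x+b^y=c^z$ as $U^2+V^2=N$ with $U,V$ monomials in $a,b$ is only possible when \emph{both} $x$ and $y$ are even; a case such as $(x,y)=(2,3)$ or $(3,3)$ admits no such factorization in $\mathbb Z[i]$, and neither your size estimates nor the remark about $\delta_a=\delta_b$ (which only yields $c_1^{\,z}\ll_c\min\{a,b\}^{E_1}$, no contradiction for $E_1\ge 2$) disposes of it. The paper's proof handles precisely these configurations by a completely different tool: for each fixed pair $(x,y)$ with $x,y>1$ and $\max\{x,y\}\ge 3$ it applies Proposition \ref{gp} (the effective Shorey--Tijdeman bound on $P[\mathcal X^m+\mathcal Y^n]$) to the first equation, so that $P[a^x+b^y]=P[c]\ll_c 1$ forces $a,b\ll_c 1$. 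You cite Proposition \ref{gp} only for ``degenerate sub-cases where a Gaussian factor is a unit,'' which is not where it is needed; without it the bulk of the exponent pairs is simply not covered.

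The second, smaller gap is in the one case where the Gaussian factorization genuinely applies, namely $(x,y)=(2,2)$ (and symmetrically $(X,Y)=(2,2)$). Your plan to ``eliminate the common powers of $\gamma$ and match dominant terms'' does not produce a bound: writing $a=\frac{\epsilon}{2}(\beta^z+\delta\bar\beta^z)$ and combining with $a^{2\Delta}\equiv 1\pmod{c^z}$ from Lemma \ref{div}\,(i) leads to $\nu_{\pi}(\beta^{2\Delta z}-2^{2\Delta})\ge z$ for a prime $\pi\mid\bar\beta$, and the norm of $\beta^{2\Delta z}-2^{2\Delta}$ is of size roughly $c^{2\Delta z}$, so the divisibility carries no elementary contradiction. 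The paper closes this case by invoking the non-Archimedean Baker bound (Proposition \ref{BL}) to get $z\ll_c\log^2(2\Delta z)$ and hence $z\ll_c 1$; some lower bound for linear forms in $p$-adic logarithms (or an equivalent gap principle) is indispensable here. Finally, your reduction of (ii) to (i) ``by symmetry'' is not quite legitimate, since Lemma \ref{div}\,(i) only gives congruences modulo $c^{\min\{z,Z\}}$; the paper's proof of (ii) runs the same $\pi$-adic argument on the second equation but still reduces modulo $\bar\beta^{z}$, obtaining $z\ll_c\log^2(2\Delta Z)$, and then uses the separately established estimate $Z\ll_c z$ to conclude.
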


\begin{proof}
By Lemma \ref{c-general-xyXY-ub}, one finds that $x,y,X,Y \ll_c 1$ and $Z \ll_c z$.
\par
(i) Similarly to the proof of Lemma \ref{c-general-coprime}, for each possible pair $(x,y)$, we shall apply Proposition \ref{gp} with $(m,n):=(x,y),\, (\mathcal X,\mathcal Y):=(a,b)$.
It turns out from 1st equation that $a,b \ll_c 1$, unless either $\min\{x,y\}=1$ or $(x,y)=(2,2)$.

Suppose that $x=y=2$.
We shall observe that this leads to $z \ll_c 1$.
Note that $\Delta\,=2|X-Y|$ is a positive even integer.
As seen in the proof of Lemma \ref{c-general-coprime}, it suffices to consider when $c$ is odd.
Below we shall argue over the ring of Gaussian integers.
A usual factorization argument on 1st equation over $\Z[i]$ shows that there is some element $\beta$ with $\beta\bar{\beta}=c$ such that $a+b i$ equals $\beta^z$ times a unit.
Note that $\beta$ is not a unit and is coprime to $\bar{\beta}$.
This implies that
\[
a=\dfrac{\epsilon}{2}\,(\,\beta^z +\delta \,{\bar{\beta}}^z\,)
\]
for some unit $\epsilon$ and $\delta=\pm 1$.
Observe that $a^{2\Delta} \equiv 1 \pmod{c^z}$ from Lemma \ref{div}\,(i), and that
\[
a^4=\dfrac{1}{2^4}\,(\,\beta^z +\delta\,{\bar{\beta}}^z\,)^4.
\]
These together yield
\[
(\,\beta^z+\delta\,{\bar{\beta}}^z\,)^{2\Delta} \equiv 2^{2\Delta} \mod{c^z}.
\]
One reduces this modulo $\bar{\beta}^z$ to obtain $\beta^{2\Delta z} \equiv 2^{2\Delta} \pmod{\bar{\beta}^z}$.
Taking a prime element $\pi$ dividing $\bar{\beta}$, one has
\[
\nu_{\pi} (\beta^{2 \Delta z}-2^{2\Delta}) \ge z.
\]
To find an upper bound for the left-hand side above, we shall apply Proposition \ref{BL} for $(\alpha_1,\alpha_2):=(\beta,2),\,(b_1,b_2):=(2\Delta z,2\Delta)$.
Note that the corresponding $p$ is some odd prime factor $(\ge 5)$ of $c$ and $(D, f_{\pi})=(2,1)$.
One may set ${\rm g}:=\pi\bar{\pi}-1=p-1$.
Further, one may set $H_1:=\log c$ as ${\rm h}(\beta)=\frac{1}{2}\log c$, and $H_2:=\log p$.
Therefore,
\[
\nu_{\pi} (\beta^{2\Delta z}-2^{2\Delta}) \ll \frac{(p-1) \log c}{\log^3 p} \,\log^2 \max \{\mathcal B, p\},
\]
where
\[ \quad
\mathcal B= \Delta z \cdot \left( 2 + \frac{2\log p}{z \log c} \right).
\]
To sum up, the two bounds for $\nu_{\pi} (\beta^{2\Delta z}-2^{2\Delta})$ together imply
\[
z \ll_c \,\log^2 (2\Delta z).
\]
Since $\Delta=2|X-Y| \ll_c 1$, the above implies that $z \ll_c 1$.\par
(ii) Similarly to the proof of (i), Proposition \ref{gp} is applied for 2nd equation, and it suffices to show $(X,Y) \ne (2,2)$.
In the case where $X=Y=2$, from 2nd equation, one can find that 
\[
z \ll_{c} \,\log^2 (2\Delta Z)
\]
similarly to the proof of (i).
Since $Z \ll_{c} z$, the above implies that $z \ll_c 1$.
This completes the proof.
\end{proof}

The above lemma brings the following consequence to (a more general version of) Terai's conjecture (cf.~\cite{Te}) involving the condition $\min\{x,y,z\}>1$.
His conjecture is a special case of Conjecture \ref{atmost1} and it asserts that if there is some solution to equation \eqref{abc} in positive integers $x,y$ and $z$ all greater than $1,$ then $N(a,b,c) \le 1,$ except for specific cases.

\begin{thm}\label{x>1y>1z>1}
For any fixed $c,$ if there is some solution to equation \eqref{abc} in positive integers $x$ and $y$ both greater than $1,$ then $N(a,b,c) \le 1,$ except for only finitely many pairs of $a$ and $b,$ all of which are effectively determined.
\end{thm}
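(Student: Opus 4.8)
The plan is to prove the contrapositive in quantitative form: every pair $(a,b)$ with $\gcd(a,b)=1$ for which equation \eqref{abc} has a solution with $\min\{x,y\}>1$ and at the same time $N(a,b,c)>1$ must satisfy $\max\{a,b\}\ll_c 1$, with an effective implied constant. I would first dispose of the case where $c$ is a power of $2$: then $N(a,b,c)\le N(a,b,2)$ (a solution of $a^x+b^y=c^z$ gives one of $a^x+b^y=2^{z'}$), so the assertion follows from the known resolution of Conjecture \ref{atmost1} for $c=2$ (Scott \cite{Sco}). Hence from now on $c$ has an odd prime factor; fix $c_1$ as in \eqref{c-cond}. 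Since then $c\ne 2$, Proposition \ref{atmost2} gives $N(a,b,c)\le 2$, so under the hypotheses there are exactly two solutions; write $(x_1,y_1,z_1)$ for the one with $x_1,y_1>1$ supplied by the assumption, and $(x_2,y_2,z_2)$ for the other. The decisive preliminary step is Lemma \ref{c-general-xyXY-ub}, applied after the reduction of Lemma \ref{weakform} as in the remark following that lemma: for $\max\{a,b\}>\mathcal C_3$ it yields $x_i,y_i\ll_c 1$ ($i=1,2$), so from here on all four exponents range over a fixed finite set.

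Next I would branch on $(x_1,y_1)$, both of whose entries are $\ge 2$. If $\max\{x_1,y_1\}\ge 3$, then Proposition \ref{gp} applies to $c^{z_1}=a^{x_1}+b^{y_1}$ (as $\gcd(a,b)=1$), and since $P[a^{x_1}+b^{y_1}]=P[c]\ll_c 1$ it forces $\max\{a,b\}\ll_c 1$ directly. This leaves $x_1=y_1=2$, i.e.\ $a^2+b^2=c^{z_1}$. If $c$ is even then $a,b$ are both odd, $a^2+b^2\equiv 2\pmod 8$, and a $2$-adic comparison forces $z_1=1$ (there being no solution at all otherwise), so $c=a^2+b^2$ and $\max\{a,b\}<\sqrt c\ll_c 1$. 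Thus I am reduced to $a^2+b^2=c^{z_1}$ with $c$ odd, and this is the only substantial case.

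Here I would re-run the Gaussian-integer argument from the proof of Lemma \ref{c-general-minxy=1--minXY=1}. Set $\Delta=|x_1y_2-x_2y_1|=2\,|x_2-y_2|\ll_c 1$. First I dispense with $\Delta=0$ (i.e.\ $x_2=y_2=:e$): $e=2$ would give $c^{z_2}=c^{z_1}$, forcing the two solutions to coincide; $e=1$ is impossible by an elementary argument comparing $a+b=c^{z_2}$ and $a^2+b^2=c^{z_1}$ modulo a suitable power of $c$ (using $\gcd(ab,c)=1$ and $c\ge 3$); $e\ge 3$ is killed by Proposition \ref{gp} as above. So $\Delta>0$. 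Since $c$ is odd, $a^2+b^2=c^{z_1}$ forces every prime factor of $c$ to be $\equiv 1\pmod 4$ and gives $a+bi=\epsilon\,\beta^{z_1}$ in $\mathbb Z[i]$ for a unit $\epsilon$ and some $\beta$ with $\beta\bar\beta=c$ and $\gcd(\beta,\bar\beta)=1$; hence $2a=\epsilon'(\beta^{z_1}+\delta\bar\beta^{z_1})$ with $\epsilon'$ a unit and $\delta=\pm 1$. From the two equations, reduced modulo $c^z$ with $z:=\min\{z_1,z_2\}$ exactly as in the proof of Lemma \ref{div}\,(i), one gets $a^{2\Delta}\equiv 1\pmod{c^z}$, and substituting the formula for $2a$ and reducing modulo $\bar\beta^z$ (valid since $z\le z_1$) yields $\beta^{2\Delta z_1}\equiv\pm 2^{2\Delta}\pmod{\bar\beta^z}$, the two sides being unequal (their norms would force $c^{z_1}=4$). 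Choosing a prime $\pi\mid\bar\beta$ above a rational prime $p\mid c$, this gives $\nu_\pi(\beta^{2\Delta z_1}\mp 2^{2\Delta})\ge z$, and I would bound the left side from above by Proposition \ref{BL} in $\mathbb Q(i)$ with $(\alpha_1,\alpha_2)=(\beta,2)$, $(b_1,b_2)=(2\Delta z_1,2\Delta)$, ${\rm g}=p-1$, $H_1=\log c$, $H_2=\log p$, $(D,f_\pi)=(2,1)$, getting $\nu_\pi\ll_c\log^2(\Delta z_1)\ll_c\log^2 z_1$. Finally, comparing $c^{z_1}=a^2+b^2<2\max\{a,b\}^2$ with $c^{z_2}=a^{x_2}+b^{y_2}>\max\{a,b\}$ (valid as $x_2,y_2\ge 1$) shows $z_2\gg z_1$, hence $z\gg z_1$; combining gives $z_1\ll_c\log^2 z_1$, so $z_1\ll_c 1$ and $\max\{a,b\}^2\le c^{z_1}\ll_c 1$.

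In every case $\max\{a,b\}$ is bounded by an effectively computable function of $c$, so only finitely many pairs $(a,b)$ are exceptional and they can be listed; this is the assertion. The step I expect to be the main obstacle is the subcase $x_1=y_1=2$ with $c$ odd: the finiteness results of Section \ref{sec-c-general} (Proposition \ref{gp} and the exponent bound of Lemma \ref{c-general-xyXY-ub}) do not by themselves pin down $z_1$, and one must feed in the non-Archimedean Baker-type estimate over $\mathbb Q(i)$ together with the a priori comparison $z_1\ll_c z_2\ll_c z_1$ to make the upper bound for $\nu_\pi$ conclusive; cleanly disposing of $\Delta=0$ is a minor but unavoidable nuisance.
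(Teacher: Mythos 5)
Your argument is correct and takes essentially the same route as the paper: there, Theorem \ref{x>1y>1z>1} is recorded as an immediate consequence of Lemma \ref{c-general-minxy=1--minXY=1}\,(i), whose proof is precisely the case analysis you reproduce (Proposition \ref{gp} when $\max\{x,y\}\ge 3$, and the $\mathbb Z[i]$-factorization combined with Lemma \ref{div}\,(i), Proposition \ref{BL} and the bound $\Delta \ll_c 1$ coming from Lemma \ref{c-general-xyXY-ub} when $x=y=2$). The only difference is that you unfold that lemma's proof inline and treat explicitly a few degenerate points ($c$ a power of $2$, $\Delta=0$) that the paper dispatches by citation.
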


\begin{rem}\rm
(i) As seen around Conjecture \ref{atmost1}, there are infinitely many cases for which equation \eqref{abc} has some solution in positive integers $x$ and $y$ both greater than $1$ but $N(a,b,c)=2$.
\\
(ii) Assuming the truth of $abc$-conjecture, it is possible to show that a conjecture of Le (cf.~\cite{Le2}), which is a particular case of the mentioned conjecture of Terai, holds true except for only finitely many cases, namely, there is at most one solution to equation \eqref{abc} in positive integers $x,y$ and $z$ all greater than $1,$ except for only finitely many triples of $a,b$ and $c$ (cf.~Lemma \ref{abc-xneX-yneY}\,(ii) below).
\end{rem}

The following proposition is a particular case of \cite[Theorem 4.2]{BuLu}.

\begin{prop} \label{BuLu-newyork}
Let $m$ and $n$ be positive integers with $m>n.$
Let $q$ be a positive integer with $q>1.$
Then there are only finitely many solutions to the equation
\begin{equation} \label{mnq}
\mathcal Z^m - \mathcal Z^n= q^{y_1} - q^{y_2}
\end{equation}
in positive integers $\mathcal Z, y_1$ and $y_2$ with $\mathcal Z>1$ and $\gcd(\mathcal Z,q)=1.$
\end{prop}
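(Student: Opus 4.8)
The plan is to reduce the statement to bounding $\mathcal{Z}$, and to obtain that bound from Ridout's theorem (Proposition~\ref{Ri-thm}) in the bulk of the cases, falling back on auxiliary Pillai-type equations in the residual low-degree ones. First note that $y_1=y_2$ is impossible, as it forces $\mathcal{Z}^{m-n}=1$, and that the positivity of $q^{y_1}-q^{y_2}$ lets us assume $y_1>y_2$. Rewriting the equation as $\mathcal{Z}^{n}(\mathcal{Z}^{m-n}-1)=q^{y_2}(q^{y_1-y_2}-1)$ and using $\gcd(\mathcal{Z},q)=1$ gives
\[
q^{y_2}\mid \mathcal{Z}^{m-n}-1,\qquad \mathcal{Z}^{n}\mid q^{y_1-y_2}-1,
\]
so that $q^{y_2}<\mathcal{Z}^{m-n}$ and $\mathcal{Z}^{n}<q^{y_1-y_2}$; combined with the trivial bound $q^{y_1}<2\mathcal{Z}^{m}$, this makes $y_1$, $y_2$ and $y_1-y_2$ all $\ll_{m,n,q}\log\mathcal{Z}$. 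Hence it is enough to bound $\mathcal{Z}$, since for each fixed $\mathcal{Z}$ the integer $q^{y_1}-q^{y_2}=\mathcal{Z}^{m}-\mathcal{Z}^{n}$ admits only finitely many representations as a difference of two powers of $q$.

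For the main step, from $|\mathcal{Z}^{m}-q^{y_1}|=|\mathcal{Z}^{n}-q^{y_2}|\le\max\{\mathcal{Z}^{n},q^{y_2}\}\le\mathcal{Z}^{\,m-\delta}$, where $\delta:=\min\{n,m-n\}\ge 1$, and writing $q^{y_1}=q^{r}(q^{t})^{m}$ with $r=y_1\bmod m$ and $t=\lfloor y_1/m\rfloor$, one extracts an $m$-th root — using that $|x-1|\le C_m|x^{m}-1|$ for real $x$ near $1$, together with $q^{t}\asymp\mathcal{Z}$ — to obtain
\[
\Bigl|\frac{\mathcal{Z}}{q^{t}}-q^{\,r/m}\Bigr|\ll_m\mathcal{Z}^{-\delta}.
\]
Here $q^{r/m}$ runs over the finite set $\{q^{j/m}:0\le j<m\}$ of nonzero algebraic numbers, the denominator $q^{t}$ has all its prime factors in $\mathcal{S}:=\{p:p\mid q\}$, and $\mathcal{Z}/q^{t}\ne q^{r/m}$ since otherwise $\mathcal{Z}^{m}=q^{y_1}$. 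Applying Proposition~\ref{Ri-thm} with $\xi=q^{r/m}$, this $\mathcal{S}$, and denominator $q^{t}\asymp\mathcal{Z}$, with $\varepsilon<\delta-1$, bounds $q^{t}$ and hence $\mathcal{Z}$ whenever $\delta\ge 2$.

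It remains to treat $\delta=1$, i.e.\ $n=1$ or $m=n+1$; here the displayed approximation is too weak for Ridout's theorem, and this is the main obstacle. One first disposes of the solutions for which $\mathcal{Z}^{n}$ and $q^{y_2}$ are close, say $|\mathcal{Z}^{n}-q^{y_2}|\le\mathcal{Z}^{\,n-\eta}$ for some fixed $\eta>0$: then $|m\log\mathcal{Z}-y_1\log q|$ and $|n\log\mathcal{Z}-y_2\log q|$ are both $\ll\mathcal{Z}^{-\eta}$, which forces $my_2=ny_1$ once $\mathcal{Z}$ is large; writing $g=\gcd(m,n)$, $y_2=(n/g)w$ and $y_1=(m/g)w$, the equation becomes $f(\mathcal{Z}^{g})=f(q^{w})$ with $f(t)=t^{m/g}-t^{n/g}$, and since $f$ is strictly increasing on $[1,\infty)$ this yields $\mathcal{Z}^{g}=q^{w}$, contradicting $\gcd(\mathcal{Z},q)=1$. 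For the remaining solutions one exploits the relations $\mathcal{Z}^{m-n}-1=q^{y_2}s$ and $q^{y_1-y_2}-1=\mathcal{Z}^{n}s$, with a common integer $s$ coprime to $q$: a $q$-adic analysis of these, together with a further Euclidean division and a second application of Proposition~\ref{Ri-thm} when $s$ is large, reduces the residual cases to Pillai-type equations such as $\mathcal{Z}^{2}-\mathcal{Z}=q^{y_1}-q^{y_2}$ or $(q^{\ell}-1)^{m-1}-1=q^{y_2}$, each of which has only finitely many solutions by the classical theory of linear forms in logarithms (or by $S$-unit equations in number fields, or by the modular method). Gathering the finitely many solutions from all cases completes the proof.
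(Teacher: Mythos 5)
Your opening moves match the paper's: the reduction to bounding $\mathcal Z$, the divisibilities $q^{y_2}\mid \mathcal Z^{m-n}-1$ and $\mathcal Z^n\mid q^{y_1-y_2}-1$, and the disposal of the generic case $\delta=\min\{n,m-n\}\ge 2$ via the approximation $\bigl|\mathcal Z/q^{t}-q^{r/m}\bigr|\ll_m \mathcal Z^{-\delta}$ followed by Ridout are all correct and essentially identical to the paper's argument. Your "close" sub-case (forcing $my_2=ny_1$, hence $\mathcal Z^{g}=q^{w}$, contradicting coprimality) is a clean observation not in the paper. The difficulty is that everything after that --- the entire residual case $\delta=1$ outside the "close" regime --- is asserted rather than proved, and that is exactly where the substance of the proposition lies.

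Two ingredients are missing. First, for $n=m-1$: after completing the $m$-th power (your "further Euclidean division"), the error is governed by $\max\{q^{y_2},\mathcal Z^{m-2}\}$, and for $m=2$ it is $\asymp q^{y_2}$; the relation $\mathcal Z-1=q^{y_2}s$ only gives $q^{y_2}<\mathcal Z$, which yields an approximation of quality $Q^{-1-o(1)}$ --- not usable in Ridout for any fixed $\varepsilon>0$. The paper closes this with the nontrivial gap principle $q^{y_2}\ll\mathcal Z^{1/3}$ from \cite[Lemma 9.2(i)]{MiyPin}, which you never invoke; and being in the "far" regime gives a \emph{lower} bound on $|\mathcal Z^{n}-q^{y_2}|$, not the upper bound on $q^{y_2}$ you would need. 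Your fallback, that "$\mathcal Z^{2}-\mathcal Z=q^{y_1}-q^{y_2}$ has only finitely many solutions by the classical theory of linear forms in logarithms," is circular: that is precisely equation \eqref{mnq} with $(m,n)=(2,1)$ and unbounded base $\mathcal Z$, and linear forms in logarithms alone are not known to settle it. Second, for $n=1$, $m\ge 3$ and $q$ composite: the prime powers in $q^{y_2}$ can split between $\mathcal Z-1$ and $(\mathcal Z^{m-1}-1)/(\mathcal Z-1)$ so that both cofactors are subpolynomial in $\mathcal Z$ yet unbounded; then $q^{y_2}$ can be as large as $\mathcal Z^{m-1-o(1)}$ and no fixed $\varepsilon$ in Ridout works, while your reduction to the Pillai equation $(q^{\ell}-1)^{m-1}-1=q^{y_2}$ tacitly assumes both cofactors equal $1$. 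The paper handles exactly this configuration with the Schmidt Subspace Theorem applied to the identity $(\mathcal Z^{m-1}-1)/(\mathcal Z-1)=m-1+\sum_{k=2}^{m-1}\binom{m-1}{k}(\mathcal Z-1)^{k-1}$, and explicitly remarks that this is the one place where the Subspace Theorem is genuinely required; your proposal contains no substitute for it.
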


The proof of the above proposition largely relies on Propositions \ref{Ri-thm} and \ref{Sc-subspacethm}.
Their ineffective nature makes the proof of Lemma \ref{c-prime-xneXandyneY} below provide no effective way to quantify the size of $\mathcal C$ in terms of $c$.
This means that the proofs of Theorems \ref{a-general-pillai_gcd1_ineff} and \ref{c-7etc} ultimately rely on Subspace Theorem.
It is worth noting that Subspace Theorem is required only for the case where $m \ge 3,\,n=1$ and $q$ is composite.

For the sake of completeness, and for later discussions on the possibility to make the results introduced in Section 1 except Theorem \ref{c13} effective (cf.~Section \ref{sec-appro}), at the end of this section we give a proof of Proposition \ref{BuLu-newyork} following \cite{Lu,BuLu}, with some modifications and simpler treatments.

\begin{lem}\label{c-prime-xneXandyneY}
There exists some constant $\mathcal C$ depending only on $c$ such that if $\max\{a,b\}>\mathcal C,$ then $x \ne X$ and $y \ne Y.$
\end{lem}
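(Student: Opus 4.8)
The plan is to show that if either $x=X$ or $y=Y$, then assuming $\max\{a,b\}$ is large forces a contradiction, so the two solutions would have to coincide or $\max\{a,b\}$ stays bounded. By symmetry, suppose first that $x=X$. Subtracting the 1st equation from the 2nd gives $b^Y-b^y=c^Z-c^z$. Since we have arranged $z\le Z$ and the solutions are distinct, $Y\ne y$; because $b^Y-b^y$ and $c^Z-c^z$ have the same sign we get $Y>y$, and we may write $b^y(b^{Y-y}-1)=c^z(c^{Z-z}-1)$. As $\gcd(b,c)=1$, we obtain $b^y \mid c^{Z-z}-1$ and $c^z \mid b^{Y-y}-1$; in particular $c^z \le b^{Y-y}-1 < b^Y$. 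This is exactly a Pillai-type relation for the single base pair $(b,c)$: setting $\mathcal Z:=c$, $q:=b$, $m:=Z$, $n:=z$ (note $m>n$, $\mathcal Z=c>1$, $\gcd(\mathcal Z,q)=\gcd(c,b)=1$), the equation $\mathcal Z^m-\mathcal Z^n=q^{Y}-q^{y}$ is precisely an instance of equation \eqref{mnq} in Proposition \ref{BuLu-newyork}.

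The key point is then to leverage the finiteness results already proved. By Lemma \ref{c-general-xyXY-ub} (applied once $\max\{a,b\}>\mathcal C_3$) all of $x,y,X,Y$ are bounded in terms of $c$ only; hence $Y-y$ and the exponents on $q=b$ are bounded. Likewise $Z/z \ll_c 1$. Now Proposition \ref{BuLu-newyork} says that for each fixed choice of the bounded data $(m,n)=(Z,z)$ — equivalently each fixed pair $(z, Z/z)$, of which there are only $\ll_c 1$ possibilities — and each $q=b$, there are only finitely many solutions $(\mathcal Z,Y,y)=(c,Y,y)$ with $\mathcal Z=c>1$ and $\gcd(\mathcal Z,q)=1$; but here $\mathcal Z=c$ is fixed, so for each such fixed tuple of exponents the relation $c^Z-c^z=b^Y-b^y$ determines $b^Y-b^y$, hence $b$, up to finitely many values. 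Running over the $\ll_c 1$ admissible exponent tuples shows $b\ll_c 1$, hence $\max\{a,b\}\ll_c 1$ once one also notes $a^x<c^z\ll_c b^{\,\mathrm{const}}$ from the 1st equation, giving $a\ll_c 1$ as well. The symmetric case $y=Y$ is handled identically with the roles of $a$ and $b$ interchanged, using $a^x \mid c^{Z-z}-1$ and $c^z\mid a^{X-x}-1$. Thus there is some $\mathcal C=\mathcal C(c)$ with: $\max\{a,b\}>\mathcal C \Rightarrow x\ne X$ and $y\ne Y$.

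The main obstacle, and the reason the constant $\mathcal C$ comes out \emph{ineffective}, is precisely the invocation of Proposition \ref{BuLu-newyork}: its proof rests on Proposition \ref{Ri-thm} (Ridout) and Proposition \ref{Sc-subspacethm} (the Subspace Theorem), neither of which gives an effective bound on the size of the finitely many exceptional solutions. So while Lemma \ref{c-general-xyXY-ub} cuts the problem down to an effectively bounded family of exponent tuples, the final step that bounds $b$ (equivalently, that rules out $x=X$ or $y=Y$ for $\max\{a,b\}$ large) is inherently non-effective. A secondary bookkeeping point to be careful about: one must check that $\mathcal Z=c>1$ genuinely holds (true since $c\ge 2$ in equation \eqref{abc}) and that $\gcd(\mathcal Z,q)=\gcd(c,b)=1$ (true since $a,b,c$ are pairwise coprime), so that Proposition \ref{BuLu-newyork} applies verbatim; and that the reduction $b^y(b^{Y-y}-1)=c^z(c^{Z-z}-1)$ indeed forces $Z>z$ strictly rather than $Z=z$ — but $Z=z$ together with $x=X$ would force $b^y=b^Y$, i.e.\ the two solutions coincide, contrary to assumption.
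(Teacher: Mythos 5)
Your overall strategy is the right one (eliminate $a^x$ when $x=X$ to get $b^Y-b^y=c^Z-c^z$, bound the exponents $y,Y$ via Lemma \ref{c-general-xyXY-ub}, and invoke Proposition \ref{BuLu-newyork}), and this is indeed what the paper does. But your actual invocation of Proposition \ref{BuLu-newyork} is backwards, and the step you use to patch it is false. In the proposition, $(m,n,q)$ are \emph{fixed} data and the finiteness conclusion is over the unknowns $(\mathcal Z,y_1,y_2)$. You assign $(m,n,q):=(Z,z,b)$ and $(\mathcal Z,y_1,y_2):=(c,Y,y)$; since $c,Y,y$ are already bounded in terms of $c$, the proposition then tells you nothing new, and the quantities you actually need to control, namely $b$ and $z$, sit in the ``fixed'' slots. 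You try to repair this by asserting that $(m,n)=(Z,z)$ ranges over only $\ll_c 1$ possibilities ``equivalently each fixed pair $(z,Z/z)$''. That is not true: Lemma \ref{2nd-app} bounds the \emph{ratio} $Z/z$ (and Lemma \ref{c-general-xyXY-ub} bounds $x,y,X,Y$), but $z$ itself is a priori unbounded --- indeed bounding $z$ is exactly what this lemma must accomplish, via the implication $z\ll_c 1 \Rightarrow \max\{a,b\}\ll_c 1$ stated at the start of Section \ref{sec-c-general}. Once the false claim is removed, your use of Proposition \ref{BuLu-newyork} does no work at all (the remaining argument ``$c^Z-c^z$ is then a fixed number, hence $b$ is determined'' is elementary and never uses the proposition).

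The correct assignment is forced by which quantities are known to be bounded: take $(m,n,q):=(Y,y,c)$ --- legitimate since $Y,y\ll_c 1$ by Lemma \ref{c-general-xyXY-ub} and $c$ is fixed, with $m>n$ because $z\le Z$ and the solutions are distinct --- and let the unknowns be $(\mathcal Z,y_1,y_2):=(b,Z,z)$, checking $\mathcal Z=b>1$ and $\gcd(b,c)=1$. For each of the finitely many admissible pairs $(y,Y)$, Proposition \ref{BuLu-newyork} then yields only finitely many triples $(b,Z,z)$; in particular $z\ll_c 1$, whence $\max\{a,b\}\ll_c 1$. The case $y=Y$ is symmetric. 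Your diagnosis of where the ineffectivity enters (Ridout and the Subspace Theorem inside Proposition \ref{BuLu-newyork}) is correct, but as written the proof has a genuine gap at its central step.
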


\begin{proof}
Lemma \ref{c-general-xyXY-ub} says that $x,y,X,Y \ll_c 1$.
If $x=X$, then eliminating the terms $a^x$ from 1st, 2nd equations leads to
\[
b^Y - b^y = c^Z - c^z.
\]
For each possible pair $(y,Y)$, Proposition \ref{BuLu-newyork} is applied for $(m,n,q):=(Y,y,c), (\mathcal Z,y_1,y_2):=(b,Z,z)$ to see in particular that $z$ has to be less than some constant depending only on $c$.
Similarly the same conclusion holds when $y=Y$.
This completes the proof.
\end{proof}

We are now in position to prove Theorem \ref{a-general-pillai_gcd1_ineff}.

\begin{proof}[Proof of Theorem $\ref{a-general-pillai_gcd1_ineff}$]
Let $a>1$ be a fixed positive integer. 
For some pair $(b,c)$ with $\gcd(a,b)=1$, suppose that equation \eqref{abc-pillai} has two solutions.
Write
\[
c+b^y=a^x, \quad c+b^Y=a^X.
\]
Since there is no loss of generality in assuming by \cite[Theorem 6]{LeV} that $c>1$, Lemma \ref{c-prime-xneXandyneY} is applied to the triple $(c,b,a)$ to imply that both $b,c$ have to be less than some constant depending only on $a$.
This completes the proof.
\end{proof}

The following lemma will not be actually used in the proofs of the results of this paper (cf.~Section \ref{sec-op-1}).

\begin{lem}\label{c-general-xleEyleE}
If $\max\{a,b\}>\mathcal C_5,$ then
\[
\max\{x,y\} \le \biggl \lfloor \frac{E_1 \log c}{\log c_1}\biggl \rfloor.
\]
\end{lem}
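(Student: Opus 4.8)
The plan is to bound $\max\{x,y\}$ by exploiting the congruence $h^{\Delta} \equiv {\delta_h}^{\Delta/E_1} \pmod{{c_1}^z}$ from the proof of Lemma~\ref{div} together with the basic inequality $\mathcal D_1 < h^{2E_1}$ coming from congruence~\eqref{cong-h-calD}, refined to get an $E_1$ rather than $2E_1$, and the lower bound $\log\mathcal D_1 \gg (\log c_1)z$ from Lemma~\ref{calD-l}. First I would recall that by Lemma~\ref{c-general-coprime} we may assume $\gcd(x,y,c)=1$, and by Lemma~\ref{c-general-xyXY-ub} we already know $x,y,X,Y \ll_c 1$; the point of the present lemma is to make this explicit and clean. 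The key observation is congruence~\eqref{cong-h-calD}: $h^{E_1} \equiv \delta_h \pmod{\mathcal D_1}$ for $h \in \{a,b\}$, which since $\delta_h = \pm 1$ gives $h^{2E_1} \equiv 1 \pmod{\mathcal D_1}$, hence $\mathcal D_1 \mid h^{2E_1}-1$ and in particular $\mathcal D_1 \le h^{2E_1}-1 < h^{2E_1}$.

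Next I would combine this with the trivial inequality $a^x, b^y < c^z$ from equation~\eqref{abc}. Taking logarithms in $\mathcal D_1 < h^{2E_1}$ and in $\log\mathcal D_1 > z\log c_1 - \log\Delta'$ (as in the proof of Lemma~\ref{calD-l}), one gets $2E_1 \log h > z\log c_1 - \log\Delta'$. On the other hand, from equation~\eqref{abc} we have $x\log a < z\log c$ and $y\log b < z\log c$, so $\log h > \frac{x \text{ or } y}{z}\,\frac{1}{?}$... more directly: $z > x\log a/\log c$ gives $2E_1\log a > (x\log a/\log c)\log c_1 - \log\Delta'$, i.e. $x < \frac{2E_1\log c}{\log c_1} + \frac{(\log\Delta')\log c}{(\log a)\log c_1}$, and since $\Delta' \le \Delta/E_1 \ll F_2 z / E_1$ and $z \ll_c 1$ (Lemma~\ref{2nd-app} with the $zZ$ bound, or just $z \le Z$ and finiteness) the error term $\frac{(\log\Delta')\log c}{(\log a)\log c_1}$ tends to $0$ as $a \to \infty$. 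A symmetric argument bounds $y$. Hence for $\max\{a,b\}$ larger than a suitable $\mathcal C_5$ depending only on $c$, the error terms are $< 1$, and since $x,y$ are integers we obtain $\max\{x,y\} \le \lfloor E_1\log c/\log c_1\rfloor$ --- provided the factor $2E_1$ is sharpened to $E_1$.

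The one subtlety, and what I expect to be the main technical point, is getting the constant $E_1$ rather than $2E_1$ in the final bound. The factor $2$ arises from writing $\delta_h^2 = 1$; to remove it one should use more carefully that $h^{E_1} \equiv \delta_h \pmod{\mathcal D_1}$ directly controls $\mathcal D_1$ via $\nu_p(h^{E_1}-\delta_h)$ for each prime $p \mid c_1$ (the setup of congruence~\eqref{cong-h^Delta} and Lemma~\ref{padic-lemma}): since $\mathcal D_1 = {c_1}^z/\Delta'$ and $\Delta' \mid \Delta/E_1$, the exponent of each $p \mid c_1$ in $\mathcal D_1$ is at most $\nu_p(h^{E_1}-\delta_h)$ for \emph{each} of $h=a$ and $h=b$, which forces $\mathcal D_1 \mid \gcd(a^{E_1}-\delta_a, b^{E_1}-\delta_b)$ by Lemma~\ref{div}(iii). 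In particular $\mathcal D_1 \le |h^{E_1}-\delta_h| \le h^{E_1}+1$, not merely $h^{2E_1}$. Feeding $\mathcal D_1 \le h^{E_1}+1 < h^{E_1}\cdot(1+h^{-E_1})$ into the argument above replaces $2E_1$ by $E_1$ (absorbing the harmless $\log(1+h^{-E_1}) = o(1)$ factor into the error term, which is dominated anyway), and yields exactly $\max\{x,y\} \le \lfloor E_1\log c/\log c_1\rfloor$. The rest is elementary: collecting the finitely many error terms, each $\to 0$ effectively as $\max\{a,b\} \to \infty$, and taking $\mathcal C_5$ large enough that their sum is below $1$.
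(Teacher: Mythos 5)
Your overall strategy is the paper's: both arguments rest on Lemma \ref{div}\,(iii), which gives ${c_1}^z \mid (m^{E_1}-\delta_m)\cdot \Delta/E_1$ with $m=\min\{a,b\}$ (equivalently your $\mathcal D_1 \mid \gcd(a^{E_1}-\delta_a,\,b^{E_1}-\delta_b)$ --- your refinement from $2E_1$ to $E_1$ is correct and is exactly what the paper uses), combined with $m<c^{\,z/\max\{x,y\}}$ and the smallness of $\Delta$. However, two steps as written do not go through. First, the claim ``$z\ll_c 1$'' is false: $c^z=a^x+b^y>\max\{a,b\}$, so $z\to\infty$ as $\max\{a,b\}\to\infty$ (and if $z\ll_c 1$ always held, the hypothesis $\max\{a,b\}>\mathcal C_5$ could never be met and the lemma would be vacuous). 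What you actually need is only $\Delta'\le \Delta/E_1\ll_c 1$, and that follows from $\Delta=|xY-Xy|<\max\{xY,Xy\}\ll_c 1$, i.e.\ directly from Lemma \ref{c-general-xyXY-ub}, which you already recalled --- not from $\Delta\ll F_2\,z$ plus a bound on $z$.

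Second, and more substantively, your error terms $\frac{(\log\Delta')\log c}{(\log a)\log c_1}$ and $\frac{(\log\Delta')\log c}{(\log b)\log c_1}$ tend to $0$ only as $a\to\infty$ and as $b\to\infty$, respectively, whereas the hypothesis is only $\max\{a,b\}>\mathcal C_5$. If $a$ stays bounded while $b$ grows, your bound on $x$ carries an error term that is $\ll_c 1$ but need not be smaller than $\lfloor l\rfloor+1-l$ with $l=E_1\log c/\log c_1$, so you cannot conclude $x\le\lfloor l\rfloor$. The missing (easy) step is to first exclude $\min\{a,b\}\ll_c 1$: if $m$ is bounded in terms of $c$, then ${c_1}^z\le (m^{E_1}+1)\,\Delta/E_1\ll_c 1$ forces $z\ll_c 1$, hence $\max\{a,b\}<c^z\ll_c 1$, a contradiction. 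The paper avoids both pitfalls by staying multiplicative: from ${c_1}^z\le (m^{E_1}+1)\,\Delta/E_1\ll_c c^{\,zE_1/\max\{x,y\}}$ it gets $\bigl(c_1/c^{\,E_1/\max\{x,y\}}\bigr)^z\ll_c 1$, and if $\max\{x,y\}>\lfloor l\rfloor$ the base is at least the fixed constant $c_1/c^{\,E_1/(\lfloor l\rfloor+1)}>1$, so $z\ll_c 1$ and again $\max\{a,b\}\ll_c 1$. Recast your computation in that form (or insert the $\min\{a,b\}$ reduction) and your proof becomes essentially the paper's.
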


\begin{proof}
Set $m:=\min\{a,b\}$.
By Lemma \ref{div}\,(iii), it follows that ${c_1}^z$ divides $W$, where $W=(m^{E_1}-\delta_m) \cdot \Delta/E_1.$
Note that $W \ge {c_1}^z$.
We shall estimate the size of $W$.
Since there is no loss of generality in assuming by Lemma \ref{c-general-xyXY-ub} that $\Delta <xyXY \ll {F_3}^2 {E_1}^6$, it follows that
\[
W \le (m^{E_1}+1) \cdot \Delta/E_1 \ll m^{E_1} \cdot {F_3}^2 {E_1}^5.
\]
Since $m=\min\{a,b\}<c^{\,z/\max\{x,y\}}$ by the first two inequalities in \eqref{trivial-ineqs}, one has
\[
W \ll \big(c^{ \,E_1/\max\{x,y\} }\bigr)^z \cdot {F_3}^2 {E_1}^5.
\]
Since ${c_1}^z \le W$, it follows that
\begin{equation} \label{c-general-maxxy-ineq}
{L_1}^z \ll {F_3}^2 {E_1}^5 \ll_c 1,
\end{equation}
where $L_1=c_1/\,c^{\,E_1/\max\{x,y\}}$.

Put $l=\frac{E_1\log c}{\log c_1}$.
Then $L_1>1$ if and only if $\max\{x,y\}>\lfloor l \rfloor$.
Indeed,
\[
L_1>1 \,\Leftrightarrow\, c_1>c^{\,E_1/\max\{x,y\}} \,\Leftrightarrow\, \max\{x,y\}> l.
\]
It follows from \eqref{c-general-maxxy-ineq} that if $\max\{x,y\}>\lfloor l \rfloor$, then
\[
z \ll_c \frac{1}{\log L_1} \le \frac{1}{\log (\,c_1\,/\,c^{\,E_1/(\lfloor l \rfloor+1)}\,)} \ll_c 1.
\]
This completes the proof.
\end{proof}

A summary of this section is as follows.

\begin{lem}\label{c-general-summary}
Assume that $c$ is not a power of $2.$
Let $c'$ be a positive divisor of $c$ with $c'>2$ such that $\gcd(\,c',\varphi(c')\,)=1.$
Assume that $e_{c'}(a)=e_{c'}(b).$
Put $E=e_{c'}(a).$
Then the following hold.
\begin{itemize}
\item[$\bullet$]
There exists some constant $K_1$ which depends only on $c$ and is effectively computable such that if $\max\{a,b\}>K_1,$ then
\begin{alignat*}{3}
&\min\{x,y\}=1, &&\max\{x,y\} \le \displaystyle \biggl \lfloor \frac{E \log c}{\log c'}\biggl \rfloor, \\
&\min\{X,Y\}=1, &\quad &\max\{X,Y\}<\,\frac{\mathcal C_0\,E^6 \log^4 c}{\nu_{c'}(c)^2 (\log c')^4\max\{x,y\}},
\end{alignat*}
where $\mathcal C_0$ is some positive absolute constant\footnote{We can take $\mathcal C_0=4.78 \cdot 10^7$ if $c$ is an odd prime.} being effectively computable.
\item[$\bullet$]
There exists some constant $K_2$ depending only on $c$ such that if $\max\{a,b\}>K_2,$ then $(x,Y)=(1,1)$ or $(y,X)=(1,1).$
\end{itemize}
\end{lem}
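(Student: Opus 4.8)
The plan is to collect the per‑lemma conclusions established throughout this section and repackage them into the two bullets of Lemma~\ref{c-general-summary}. Since $c$ is assumed not to be a power of $2$, one may invoke relation~\eqref{c-cond}; taking $c'$ in the role of $c_1$ (an admissible choice because the hypothesis $\gcd(c',\varphi(c'))=1$ is exactly \eqref{c-cond}), and noting that the assumption $e_{c'}(a)=e_{c'}(b)$ matches the running hypothesis $e_{c_1}(a)=e_{c_1}(b)$ made from that point on, all the lemmas of this section become available with $c_1$ replaced by $c'$ and $E_1$ by $E$. For the first bullet I would set $K_1:=\max\{\mathcal C_1,\mathcal C_2,\mathcal C_3,\mathcal C_4,\mathcal C_5\}$; then Lemma~\ref{c-general-minxy=1--minXY=1} gives $\min\{x,y\}=1$ and $\min\{X,Y\}=1$, while Lemma~\ref{c-general-xleEyleE} gives $\max\{x,y\}\le\lfloor E\log c/\log c'\rfloor$. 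For $K_2$ one simply takes $K_2:=\mathcal C$ from Lemma~\ref{c-prime-xneXandyneY}: if $\max\{a,b\}>\mathcal C$ then $x\ne X$ and $y\ne Y$, and combining this with $\min\{x,y\}=\min\{X,Y\}=1$ forces one of $(x,Y)=(1,1)$ or $(y,X)=(1,1)$ (if $x=1$ then, since $Y\ne y$, one cannot have $Y=y$, so $Y$ plays the role of the coordinate achieving the minimum on the second solution only if $X\ne 1$; chasing the four cases $x=1$ or $y=1$ against $X=1$ or $Y=1$ under $x\ne X$, $y\ne Y$ leaves exactly these two possibilities).

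The only genuinely new computation is the explicit bound on $\max\{X,Y\}$ in the first bullet, which must be extracted from the proof of Lemma~\ref{2nd-app} rather than from its statement, since the statement only records the shape $z Z\ll \frac{1}{\nu_{c_1}(c)\log^2 c_1}E_1^3\log a\log b$ with an unspecified absolute constant. I would re-run that argument keeping the constant from Proposition~\ref{Bu-madic} (the $53.6$) explicit, use $\mathcal D_1<a^{2E},b^{2E}$ together with $\log\mathcal D_1\gg(\log c')z$ from Lemma~\ref{calD-l}, and then divide through by $z$ and invoke the trivial inequalities \eqref{trivial-ineqs} in the form $X<\frac{\log c}{\log a}Z$ and $Y<\frac{\log c}{\log b}Z$, together with $\max\{X,Y\}\cdot\max\{x,y\}\ll XY\cdot\text{(stuff)}$ as in the proof of Lemma~\ref{c-general-xyXY-ub}, to land on a bound of the stated form $\max\{X,Y\}<\mathcal C_0 E^6\log^4 c/(\nu_{c'}(c)^2(\log c')^4\max\{x,y\})$. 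The footnote value $\mathcal C_0=4.78\cdot 10^7$ for $c$ an odd prime would come from tracking, in the odd-prime case, the sharper constant $27.3$ from Proposition~\ref{BL}? --- no: here the relevant input is Proposition~\ref{Bu-madic} applied with $M=\mathcal D_1$ whose prime factors divide $c'$ (odd), so the even-$M$ hypothesis is vacuous, and the numeric constant is the product of $53.6$ with the various $2$'s and $4$'s appearing when passing from $b_1=2X$, $b_2=2Y$ and from $\lfloor Z/z\rfloor\gg Z/z$; I would carry these through once, carefully.

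The main obstacle, then, is bookkeeping of constants in the re-derivation of Lemma~\ref{2nd-app}: making sure that every place where the earlier proof wrote $\ll$ is replaced by an inequality with an explicit, correctly propagated constant, that the $\log^2(\cdots)$ terms are genuinely absorbed (which requires checking that the variable $\mathcal D_1$ is large — true once $\max\{a,b\}>K_1$, since then $z$ is large unless $z\ll_c 1$, in which case the whole statement is trivial because $a,b\ll_c 1$), and that the final division by $\max\{x,y\}$ is legitimate. Everything else is assembly: the statement is, by design, a clean restatement of Lemmas~\ref{c-general-minxy=1--minXY=1}, \ref{c-general-xleEyleE} and \ref{c-prime-xneXandyneY} with the cosmetic substitution $c_1\rightsquigarrow c'$, $E_1\rightsquigarrow E$, so no additional idea is needed beyond the constant-tracking in the displayed upper bound for $\max\{X,Y\}$.
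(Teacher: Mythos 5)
Your proposal matches the paper exactly: the paper offers no separate proof of this lemma---it introduces it with ``A summary of this section is as follows''---and the intended argument is precisely the assembly you describe, with $c_1,E_1$ specialized to $c',E$, the two bullets read off from Lemmas \ref{c-general-minxy=1--minXY=1}, \ref{c-general-xleEyleE} and \ref{c-prime-xneXandyneY}, and the explicit $\max\{X,Y\}$ bound obtained by tracking the constant $53.6$ of Proposition \ref{Bu-madic} through the proofs of Lemmas \ref{2nd-app} and \ref{c-general-xyXY-ub} (using $\min\{x,y\}=\min\{X,Y\}=1$ to replace $xy$ and $XY$ by the maxima). The only trifle is that $K_2$ should be taken as $\max\{\mathcal C,\mathcal C_4\}$ (or simply $\ge K_1$) rather than $\mathcal C$ alone, so that $\min\{x,y\}=\min\{X,Y\}=1$ is actually available for your four-case chase.
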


We finish this section by introducing a proof of Proposition \ref{BuLu-newyork}, where the implied constant of each Vinogradov notation depends only on the parameters of equation \eqref{mnq} (namely, $m,n$ and $q$) and is not always effectively computable.

\begin{proof}[Proof of Proposition $\ref{BuLu-newyork}$]
Let $(\mathcal Z, y_1,y_2)$ be a solution to equation \eqref{mnq}.
Clearly, $y_1>y_2$.
We show that $\mathcal Z, y_1 \ll 1$ in any case.
Note that $\mathcal Z, y_1 \ll 1$ if $\mathcal Z \ll 1$ or $y_1 \ll 1$.

Firstly, observe that
\begin{align} 
q^{y_1/m} \ll \mathcal Z \ll q^{y_1/m},\label{dominant}\\
q^{y_2} \mid (\mathcal Z^{m-n}-1).\label{qy2div}
\end{align}

Define an integer $M$ by
\[
M:=q^{y_1} - \mathcal Z^m.
\]
Note that $M$ is nonzero as $\mathcal Z>1$ and $\gcd(\mathcal Z,q)=1$ by assumption.
We are interested in the size of $M$.
Since $M= q^{y_2} - \mathcal Z^n$ by equation \eqref{mnq}, it holds trivially that
\[
|M|<\max\{q^{y_2}, \mathcal Z^n\}.
\]
On the other hand,
\begin{align*}
|M|&=|(q^{y_1/m})^m - \mathcal Z^m|\\
&=|q^{y_1/m} - \mathcal Z| \cdot (\,(q^{y_1/m})^{m-1}+ (q^{y_1/m})^{m-2}\mathcal Z +\cdots
+\mathcal Z^{m-1}\,)\\
&\ge |q^{y_1/m} - \mathcal Z| \cdot m\,\min\{q^{y_1/m},\mathcal Z\}^{m-1}.
\end{align*}
These two bounds for $M$ together lead to
\[
|q^{y_1/m} - \mathcal Z| <\frac{\max\{q^{y_2}, \mathcal Z^n\}}{m\, \min\{q^{y_1/m},\mathcal Z\}^{m-1}}.
\]
Note that the left-hand side above does not vanish.

Write
\[
y_1=m Q_1+R_1
\]
for integers $Q_1,R_1$ with $Q_1 \ge 0$ and $0 \le R_1<m$.
Dividing both sides of the previous displayed inequality by $q^{Q_1}$ gives
\[
\left|q^{R_1/m} - \frac{\mathcal Z}{q^{Q_1}}\right| <\frac{\max\{q^{y_2}, \mathcal Z^n\}}{m\,q^{Q_1} \min\{q^{y_1/m},\mathcal Z\}^{m-1}}.
\]
Since $q^{y_1/m} \ll \mathcal Z \ll q^{y_1/m}$ by \eqref{dominant}, it follows that
\begin{equation} \label{approx}
\left|\xi - \frac{\mathcal Z}{q^{Q_1}}\right| \ll \frac{\max\{q^{y_2}, q^{n\,y_1/m}\}} {q^{Q_1+(m-1) y_1/m}},
\end{equation}
where $\xi$ is an algebraic number given by $\xi=q^{R_1/m}$.

Since $\mathcal Z \ll q^{y_1/m}$, and $q^{y_2} \ll \mathcal Z^{m-n}$ by \eqref{qy2div}, one has
\[
q^{y_2} \ll q^{(m-n) y_1/m}.
\]
It follows from
\eqref{approx} that
\begin{equation} \label{approx-2}
\begin{split}
\left|\xi - \frac{\mathcal Z}{q^{Q_1}}\right| &
\ll \frac{q^{\,\max\{m-n,\,n\}\,y_1/m }} {q^{Q_1+(m-1)y_1/m}}=\frac{1} {q^{Q_1+Q_2}},
\end{split}
\end{equation}
where
\begin{align*}
Q_2&=(\,m-1-\max\{m-n,n\}\,) \cdot y_1/m\\
&=\min\{n-1,m-n-1\} \cdot (Q_1+R_1/m).
\end{align*}

Let $\varepsilon:=Q_2 / Q_1$.
Then \eqref{approx-2} gives
\[
\left|\xi - \frac{\mathcal Z}{q^{Q_1}}\right|
\ll \frac{1} {(q^{Q_1})^{1+\varepsilon}}.
\]
This is a restricted rational approximation to $\xi$.
If $\varepsilon \gg 1$, then Proposition \ref{Ri-thm} with $\mathcal S$ the set of all prime factors of $q$ is applied to see that
\[
\left|\xi - \frac{\mathcal Z}{q^{Q_1}}\right|
> \frac{C}{(q^{Q_1})^{1+\varepsilon/2}}
\]
for some constant $C$ depending only on $\xi,\mathcal S$ and $\varepsilon$.
The two bounds for $|\xi - \mathcal Z/q^{Q_1}|$ together imply that $q^{Q_1} \ll 1$, in particular, $y_1 \ll m Q_1 \ll 1$.
By these observations, we shall next estimate the size of $\varepsilon$.
Observe that
\begin{align*}
\varepsilon=Q_2 / Q_1&=\min\{n-1,\,m-n-1\} \cdot (\,1+R_1/(m Q_1)\,)\\
& \ge \min\{n-1,\,m-n-1\}.
\end{align*}
Therefore, it suffices to consider when $n=1$ or $m-n=1$.

Suppose that $n=m-1$.
It follows from \cite[Lemma 9.2\,(i)]{MiyPin} that $\mathcal Z \gg q^{3y_2}$ if $q^{y_2}>n$.
Thus, in any case, 
\begin{equation} \label{qy2small}
q^{y_2} \ll \mathcal Z^{1/3}.
\end{equation}
Multiplying both sides of equation \eqref{mnq} by $m^m$ leads to
\begin{equation} \label{mnqmm}
m^m q^{y_1} - (m \mathcal Z-1)^m = m^m q^{y_2} - G(\mathcal Z),
\end{equation}
where $G$ is a polynomial with integer coefficients of degree $m-2$ given by
\begin{align*}
G(\mathcal Z)&=(m \mathcal Z-1)^m - m^m(\mathcal Z^m-\mathcal Z^{m-1})\\
&=\binom{m}{2}(m \mathcal Z)^{m-2} - \binom{m}{3}(m \mathcal Z)^{m-3} + \cdots + (-1)^m.
\end{align*}
Note that the left-hand side of \eqref{mnqmm} does not vanish (as it is not divisible by $m$).
Similarly to a previous case, putting $\xi:=m q^{R_1/m}$, one finds from \eqref{mnqmm} with \eqref{qy2small} that
\begin{align*}
\left| \xi - \frac{m \mathcal Z -1}{q^{Q_1}}\right| &< \frac{\max\{m^m q^{y_2},G(\mathcal Z)\}} {m\,q^{Q_1} \min\{m q^{y_1/m},m\mathcal Z-1\}^{m-1}}\\
&\ll \frac{\max\{q^{y_2},\mathcal Z^{m-2}\}
}{
q^{Q_1} \min\{q^{y_1/m},\mathcal Z\}^{m-1}
}\\
&\ll \frac{
\mathcal Z^{\,\max\{1/3,\,m-2\}}
}{
q^{Q_1} (q^{y_1/m})^{m-1}
}\\
&\ll \frac{q^{\,\max\{1/3,\,m-2\} \, y_1/m}}{
q^{Q_1+(m-1)y_1/m}} = \frac{1}{q^{Q_1+Q_2}},
\end{align*}
where $Q_2= \min\{m-4/3,1\} \cdot y_1/m$.
Since $m \ge 2$ and $y_1 / m \ge Q_1$, it follows that $Q_2 \ge (2/3)Q_1$, so that
\[
\left|\xi - \frac{m \mathcal Z -1}{q^{Q_1}}\right| \ll \frac{1} {(q^{Q_1})^{5/3}}.
\]
Similarly to a previous case, by considering the above restricted rational approximation to $\xi$, it turns out from applying Proposition \ref{Ri-thm} that $Q_1 \ll 1$, and $y_1 \ll 1$.
Therefore, it remains to consider when $n=1$ with $m \ge 3$.

In the remaining part, we suppose that $m \ge 3$ and $n=1$.
By divisibility relation \eqref{qy2div}, 
\[
q^{y_2} \ \Big| \ (\mathcal Z-1) \cdot \frac{\mathcal Z^{m-1}-1}{\mathcal Z-1}.
\]
Since the greatest common divisor of $\mathcal Z-1$ and $\frac{\mathcal Z^{m-1}-1}{\mathcal Z-1}$ is a divisor of $m-1$, the above relation implies that
\begin{equation} \label{q1q2uv}
\mathcal Z-1={q_2}^{y_2}\,u, \ \ \frac{\mathcal Z^{m-1}-1}{\mathcal Z-1}={q_3}^{y_2}\,v
\end{equation}
for some positive integers $q_2,q_3$ with $q_2 q_3=q$, and some positive rational numbers $u,v$ each of whose denominator is a divisor of $m-1$.
Let $\delta>0$ be any fixed constant satisfying $\delta<\frac{2(m-2)}{(m-1)(m-2)+2}\,(<1)$.
We distinguish two cases according to whether $\max\{u,v\} \ge \mathcal Z^\delta$ or not.

Assume that
\begin{equation} \label{uorvnotsmall}
u \ge \mathcal Z^\delta \text{ \ or \ } v \ge \mathcal Z^\delta.
\end{equation}
Then
\[
q^{y_2} = {q_2}^{y_2} \cdot {q_3}^{y_2} = \frac{\mathcal Z^{m-1}-1}{u v}
< \frac{\mathcal Z^{m-1}}{\max\{u,v\}}
\le \mathcal Z^{m-1-\delta}.
\]
This leads to
\[
| M |=| \mathcal Z^1 - q^{y_2} | \ll \mathcal Z^{\,\max\{1,\,m-1-\delta\}}=\mathcal Z^{m-1-\delta}\ll (q^{y_1/m})^{m-1-\delta}.
\]
Similarly to previous cases, the above implies that
\[
\left|q^{R_1/m} - \frac{\mathcal Z}{q^{Q_1}} \right| 
\ll \frac{(q^{y_1/m})^{m-1-\delta}}{q^{Q_1+(m-1)\,y_1/m}}=\frac{1}{q^{Q_1+\delta(y_1/m)}}
\le \frac{1}{(q^{Q_1})^{1+\delta}}.
\]
Thus, Proposition \ref{Ri-thm} is applied for the above restricted rational approximation to $q^{R_1/m}$ to find that the solutions to equation \eqref{mnq} with property \eqref{uorvnotsmall} are only finitely many.

From now on, we consider only the solutions with the property that
\begin{equation} \label{uandvsmall}
u< \mathcal Z^\delta \text{ \ and \ } v<\mathcal Z^\delta.
\end{equation}
We shall observe that $\mathcal Z \ll 1$.
Write $D$ for the least common multiple of the denominators of $u$ and $v$.
We know that $D$ is composed of primes less than $m$.
Observe that
\[
\frac{\mathcal Z^{m-1}-1}{\mathcal Z-1}=
m-1+\sum_{k=2}^{m-1}\binom{m-1}{k}(\mathcal Z-1)^{k-1}.
\]
With the notation in \eqref{q1q2uv}, one multiplies both sides above by $D^{m-2}$ and rearranges terms to find that
\begin{equation} \label{key-subspace}
(m-1) D^{m-2} = {q_3}^{y_2}v D^{m-2} - \sum_{k=2}^{m-1}\binom{m-1}{k}\, {q_2}^{(k-1)y_2}\,u^{k-1} D^{m-2}.
\end{equation}
For applying Proposition \ref{Sc-subspacethm} to the above relation, set $N:=m-1\,(\ge 2)$, and let $\mathcal S$ be the set of all primes which divide $q$ or are less than $m$.
For each $p \in \mathcal S$, define $N$ linear forms $L_{p,i}=L_{p,i}(X_1,...,X_N) \,(i=1,...,N)$ by $L_{p,i}:=X_i$ for any $i$.
Further, define $N$ linear forms $L_{\infty,i} =L_{\infty,i}(X_1,...,X_N) \,(i=1,...,N)$ by $L_{\infty,i}:=X_i$ for $i>1$, and
\[
L_{\infty,1}:= X_1 - \sum_{k=2}^{N}\binom{N}{k} X_k.
\]
Note that the defined forms depend only on $N$, and that $L_{\mu,1},...,L_{\mu,N}$ are linearly independent for each $\mu \in \mathcal S \cup\{\infty\}$.

We shall estimate the left-hand side of the inequality in Proposition \ref{Sc-subspacethm} at the points $\bm x=(x_1,...,x_N) \in {\mathbb Q}^N$ given by
\begin{align*}
&x_1:={q_3}^{y_2} v D^{N-1} \ =\frac{\mathcal Z^N-1}{\mathcal Z-1}\,D^{N-1}, \\
&x_k:= {q_2}^{(k-1)y_2}\,u^{k-1} D^{N-1} \ =(\mathcal Z-1)^{k-1} D^{N-1} \ \text{for $k \ge 2$}.
\end{align*}
Clearly, $x_i \in \mathbb Z$ for any $i$, and $\max\{|x_i|:i=1,...,N\}=x_1 \ll \mathcal Z^{N-1}$.
Note that $q_2,q_3,D$ are composed of primes in $\mathcal S$.
By the product formula, one finds that
\begin{align*}
\prod_{\mu \in \mathcal S} \, |L_{\mu,1}(\bm x)|_\mu
=\prod_{\mu \in \mathcal S} \, |x_1|_\mu
&=\prod_{\mu \in \mathcal S} \, |{q_3}^{y_2}v D^{N-1}|_\mu\\
&=\prod_{\mu \in \mathcal S} \, |{q_3}^{y_2}D^{N-2}|_\mu\,\cdot\, \prod_{\mu \in \mathcal S} \, |v D|_\mu\\
&=\frac{1}{|{q_3}^{y_2}D^{N-2}|_\infty}\cdot\prod_{\mu \in \mathcal S} \, |v D|_\mu
\le \frac{1}{{q_3}^{y_2}D^{N-2}},
\end{align*}
and $|L_{\infty,1}(\bm x)|_\infty= N D^{N-1}$ by \eqref{key-subspace}.
For $k \ge 2$,
\begin{align*}
&\prod_{\mu \in \mathcal S \cup\{\infty\}} |L_{\mu,k}(\bm x)|_\mu
=\prod_{\mu \in \mathcal S \cup\{\infty\}} |x_k|_\mu
=\prod_{\mu \in \mathcal S \cup\{\infty\}} \,\bigr|\,{q_2}^{(k-1)y_2}\,u^{k-1} D^{N-1}\,\bigr|_{\mu}\\
&=
\prod_{\mu \in \mathcal S \cup\{\infty\}} |\,{\rm num}\,(u)^{k-1}\,|_\mu \ \cdot
\prod_{\mu \in \mathcal S \cup\{\infty\}} \bigr|\,{q_2}^{(k-1)y_2}\,D^{N-1}/{\rm den}\,(u)^{k-1}\,\bigr|_\mu\\
&=\Biggr(\prod_{\mu \in \mathcal S \cup\{\infty\}} |{\rm num}\,(u)|_\mu\Biggr)^{k-1} \cdot \,1\\
&\le \bigl(\,|{\rm num}\,(u)|_{\infty}\,\bigl)^{k-1}
= {\rm num}\,(u)^{k-1},
\end{align*}
where ${\rm num}\,(u),{\rm den}\,(u)$ are the numerator and denominator of $u$, respectively.
Since ${\rm den}\,(u) \ll 1$, and ${q_3}^{y_2}\,v \gg \mathcal Z^{N-1}$ by \eqref{q1q2uv}, it follows from property \eqref{uandvsmall} that
\begin{alignat*}{3}
\prod_{\mu \in \mathcal S \cup\{\infty\}}\,\prod_{i=1}^{N} \, |L_{\mu,i}(\bm x)|_\mu &
\le \frac{{\rm num}\,(u)^{1+2+\cdots+N-1} \cdot N D^{N-1}}{{q_3}^{y_2} D^{N-2}}&\\
&\ll \frac{ {\rm num}\,(u)^{\frac{N(N-1)}{2}} }{{q_3}^{y_2}}&\\
& \ll \frac{ u^{\frac{N(N-1)}{2}}\,v}{ \mathcal Z^{N-1} }&\\
& \ll \frac{ \mathcal Z^{(N^2/2-N/2+1)\delta} }{ \mathcal Z^{N-1} }
=\frac{ 1 }{ \mathcal Z^{N-1-(N^2/2-N/2+1)\delta} }
=\frac{ 1 }{ (\mathcal Z^{N-1})^{\varepsilon} },
\end{alignat*}
where $\varepsilon=1-\frac{N(N-1)+2}{2(N-1)}\,\delta$.
To sum up, 
\[
\prod_{\mu \in \mathcal S \cup\{\infty\}} \,\prod_{i=1}^{N} \, |L_{\mu,i}(\bm x)|_\mu
\ll \frac{1}{ (\,\max\{\,|x_i|:i=1,...,N\,\}\,)^{\varepsilon} },
\]
Since $0<\varepsilon<1$ by the choice of $\delta$, Proposition \ref{Sc-subspacethm} is applied to imply that there are only finitely many proper subspaces of $\mathbb Q^N$ such that each of the points $\bm x$ under consideration belongs to one of them.
This means that for each such $\bm x=(x_1,...,x_N)$ the linear relation $\sum_{k=1}^{N}R_k x_k =0$ holds for some $N$-tuple $(R_1,...,R_N)$ of integers belonging to a finite subset of ${\mathbb Z}^N \backslash \{\bm 0\}$ depending only on $N$.
Thus
\[
R_1 w_1 + \sum_{k=2}^{N} R_k {w_2}^{k-1} =0,
\]
where $w_1=\frac{{\mathcal Z}^N-1}{\mathcal Z-1}, \,w_2=\mathcal Z-1$.
Observe that $w_2 \mid R_1 N$ as $w_2 \mid R_1 w_1$ with $\gcd(w_1,w_2) \mid N$.
If $R_1 \ne 0$, then $w_2 \le R_1 N \ll 1$.
If $R_1=0$, then the non-trivial algebraic equation $\sum_{k=2}^{N} R_k {w_2}^{k-1}=0$ for $w_2$ holds, in particular, $w_2 \ll_{R_2,...,R_N} 1 \ll 1$.
To sum up, $\mathcal Z=w_2+1 \ll 1$, and the solutions to equation \eqref{mnq} with property \eqref{uandvsmall} are only finitely many.
This completes the proof.
\end{proof}

\section{Cases for prime values of $c$}\label{sec-c-prime}

Throughout this section, we assume that $c$ is an odd prime, and prove stronger results than some of the lemmas in the previous section.
In particular, Lemma \ref{c-general-xleEyleE} is substantially improved.
Note that in \eqref{c-cond} we can take $c_1$ as $c$ itself.
Further, we simply write $E_1=E$.

\begin{lem}\label{c-prime-coprime}
Assume that $c$ is an odd prime.
Then $\gcd(x,y,c)=1$ for any solution $(x,y,z)$ to equation $\eqref{abc}.$
\end{lem}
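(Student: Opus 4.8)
The plan is to argue by contradiction. Since $c$ is prime, having $\gcd(x,y,c)>1$ would force $c\mid x$ and $c\mid y$, so it suffices to rule this out. Writing $x=cx_0$ and $y=cy_0$ with $x_0,y_0\ge 1$, and putting $A:=a^{x_0}\ge 2$ and $B:=b^{y_0}\ge 2$ (so that $\gcd(A,B)=1$ since $\gcd(a,b)=1$), equation \eqref{abc} becomes $A^c+B^c=c^z$. I would then exploit the factorization
\[
A^c+B^c=(A+B)\,Q,\qquad Q:=\sum_{k=0}^{c-1}(-1)^k A^{c-1-k}B^k,
\]
in which both factors are positive integers whose product is the prime power $c^z$; hence each of $A+B$ and $Q$ is itself a power of $c$. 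Since $A+B\ge 4>1$, we get $c\mid A+B$, and consequently $c\nmid A$ and $c\nmid B$ (otherwise $c$ would divide $\gcd(A,B)=1$).

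Next I would determine $\nu_c(Q)$ exactly. Applying Lemma \ref{padic-lemma} with $p=c$, $(U,V)=(A,-B)$ and $N=c$ — the hypotheses hold because $\gcd(A,-B)=1$, $A\equiv -B\pmod c$, and $c$ is odd so $A^c-(-B)^c=A^c+B^c$ — yields
\[
\nu_c(A^c+B^c)=\nu_c(A+B)+\nu_c(c)=\nu_c(A+B)+1 .
\]
Hence $\nu_c(Q)=1$, and since $Q$ is a power of $c$ this forces $Q=c$.

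Finally I would contradict $Q=c$ by an elementary lower bound for $Q$. As $Q$ is symmetric in $A$ and $B$, we may assume $A\ge B$, and pairing consecutive terms of the alternating sum gives
\[
Q=\sum_{j=0}^{(c-3)/2}A^{c-2-2j}B^{2j}(A-B)+B^{c-1}\ \ge\ B^{c-1}\ \ge\ 2^{c-1}\ >\ c,
\]
the last inequality holding for every integer $c\ge 3$. This contradiction completes the argument. I do not expect a genuine obstacle here: the only points needing a little care are verifying the hypotheses of Lemma \ref{padic-lemma} and the bookkeeping in the estimate for $Q$. One could alternatively invoke Proposition \ref{gp} as in the proof of Lemma \ref{c-general-coprime}, but that route only bounds $a$ and $b$ in terms of $c$, whereas using the primality of $c$ through Lemma \ref{padic-lemma} yields the sharp statement with no exceptions whatsoever.
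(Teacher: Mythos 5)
Your argument is correct: the factorization $A^c+B^c=(A+B)Q$, the computation $\nu_c(Q)=1$ via Lemma \ref{padic-lemma} applied to $(U,V)=(A,-B)$, and the lower bound $Q\ge B^{c-1}\ge 2^{c-1}>c$ together give a clean contradiction, and all hypotheses are verified. The paper itself only cites \cite[Lemma 8.1(i)]{MiyPin2} for this lemma, and your self-contained proof is the standard lifting-the-exponent argument that such a reference encapsulates, so this is essentially the same approach rather than a genuinely different route.
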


\begin{proof}
This can be proved in the same way as in the proof of \cite[Lemma 8.1(i)]{MiyPin2}.
\end{proof}

From now on, similarly to the previous section, we assume that equation \eqref{abc} has two solutions $(x,y,z)$ and $(X,Y,Z)$ with $z \le Z$.

The following is a direct consequence of Scott \cite[Lemma 6]{Sco}.

\begin{prop}\label{twoclass}
If $c$ is an odd prime, then $x \not\equiv X \pmod{2}$ or $y \not\equiv Y \pmod{2},$ except for $(a,b,c)=(3,10,13)$ or $(10,3,13).$
\end{prop}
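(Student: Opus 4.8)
The statement is flagged in the surrounding text as a direct consequence of \cite[Lemma 6]{Sco}, so the plan is to reduce our assertion verbatim to that lemma rather than to reprove Scott's argument. First I would record that the hypotheses under which Scott's lemma is formulated are all in force here: this section assumes $c$ is an odd prime, $a,b,c$ are pairwise coprime, and Lemma \ref{c-prime-coprime} supplies $\gcd(x,y,c)=1=\gcd(X,Y,c)$. Then, supposing toward a contradiction that $(a,b,c)\notin\{(3,10,13),(10,3,13)\}$ while equation \eqref{abc} admits two distinct solutions with $x\equiv X\pmod 2$ and $y\equiv Y\pmod 2$, I would translate $(a,b,c)$ and the two solutions into Scott's notation and invoke \cite[Lemma 6]{Sco}, which directly forbids precisely this configuration; the contradiction yields the claim.

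For orientation, recall the mechanism behind Scott's lemma, which explains why a single exception survives. Writing $x=2r+\epsilon_a$, $y=2s+\epsilon_b$ with $\epsilon_a,\epsilon_b\in\{0,1\}$, the 1st equation takes the shape $a^{\epsilon_a}(a^{r})^2+b^{\epsilon_b}(b^{s})^2=c^z$; reduction modulo $c$ shows $-a^{\epsilon_a}b^{\epsilon_b}$ is a quadratic residue modulo $c$, so $c$ splits in $K:=\mathbb Q(\sqrt{-a^{\epsilon_a}b^{\epsilon_b}})$, say $c\mathcal O_K=\pi\bar\pi$. A standard factorization of the left-hand side as a norm form over $\mathcal O_K$ forces an explicit algebraic integer of $\mathcal O_K$ built from $(a^{r},b^{s})$ to equal $\pi^{z}$ times an element of $\mathcal O_K$ of bounded size. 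A second solution $(X,Y,Z)$ with the same parity pair $(\epsilon_a,\epsilon_b)$ yields another such element attached to $\pi^{Z}$ with $Z\ge z$; dividing the two, one obtains an $\mathcal O_K$-integer whose $\pi$-adic valuation equals $Z-z$ while its archimedean absolute value is $\ll c^{z}$, which confines $Z-z$ to a bounded interval, and a finite check then isolates $(a,b,c)=(3,10,13)$ --- realized by the solutions $(1,1,1)$ and $(7,1,3)$ --- as the only escape from the dichotomy.

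The part needing real attention is bookkeeping rather than new mathematics: one must match Scott's normalization --- coprimality, $c$ an odd prime, and whatever convention \cite[Lemma 6]{Sco} imposes on which squared term is taken ``even'' --- to the setting here, and verify that symmetrizing Scott's exceptional configuration in the roles of $a$ and $b$ gives exactly $\{(3,10,13),(10,3,13)\}$. All of this is guaranteed by the standing assumptions of this section together with Lemma \ref{c-prime-coprime}, so the proof collapses to the cited lemma. The main obstacle, to the extent there is one, lies entirely inside \cite{Sco}, namely the height/size estimate that bounds $Z-z$ and the ensuing finite search; here we simply quote it.
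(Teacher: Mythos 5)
Your proposal matches the paper exactly: Proposition \ref{twoclass} is stated there as ``a direct consequence of Scott \cite[Lemma 6]{Sco}'' with no further argument supplied, so reducing the claim to that lemma (after checking coprimality and the odd-prime hypothesis) is precisely what the paper does. Your additional sketch of the mechanism inside Scott's lemma is orientation rather than a divergence in approach.
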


\begin{lem}\label{c-prime-parity}
Assume that $c$ is an odd prime.
If $\max\{a,b\}>\mathcal C_4,$ where $\mathcal C_4$ is defined in Lemma $\ref{c-general-minxy=1--minXY=1},$ then $\Delta$ is odd.
In particular, $\max\{a,b\} \le \mathcal C_4$ if $E$ is even.
\end{lem}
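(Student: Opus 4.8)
The plan is to read off the parity of $\Delta$ from Proposition \ref{twoclass} (Scott's parity dichotomy) once one has normalized the exponents so that $\min\{x,y\}=\min\{X,Y\}=1$. So, assuming $\max\{a,b\}>\mathcal C_4$, I would first apply Lemma \ref{c-general-minxy=1--minXY=1} to get $\min\{x,y\}=1$ and $\min\{X,Y\}=1$. Since $\mathcal C_4$ exceeds $10$ (enlarging it at the outset if necessary, which is harmless), the triples $(3,10,13)$ and $(10,3,13)$ are excluded, so Proposition \ref{twoclass} applies and yields
$x\not\equiv X\pmod 2$ or $y\not\equiv Y\pmod 2$.

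Next I would go through the (at most) four possibilities according to which of $x,y$ equals $1$ and which of $X,Y$ equals $1$, checking in each that $\Delta=|xY-Xy|$ is odd. If $x=X=1$, then $\Delta=|Y-y|$, and since the alternative ``$x\not\equiv X$'' fails, Proposition \ref{twoclass} forces $y\not\equiv Y\pmod 2$, so $\Delta$ is odd. If $x=1$ and $Y=1$, then $\Delta=|1-Xy|$, and Proposition \ref{twoclass} gives that $X$ is even (from $x\not\equiv X$) or $y$ is even (from $y\not\equiv Y$); either way $Xy$ is even, so $\Delta$ is odd, and the case $y=X=1$ is symmetric. Finally, if $y=Y=1$, then $\Delta=|x-X|$, and the alternative ``$y\not\equiv Y$'' fails, so $x\not\equiv X\pmod 2$ and $\Delta$ is odd. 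Because $\min\{x,y\}=\min\{X,Y\}=1$, one of these four cases always holds, so $\Delta$ is odd in all of them; note $\Delta>0$ is guaranteed by the standing hypothesis, so there is no degenerate subcase to worry about.

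For the final assertion I would use Lemma \ref{div}(ii): since here $E_1=E$, we have $E\mid\Delta$. If $E$ were even, then $\Delta$ would be even, contradicting the oddness of $\Delta$ just established under the hypothesis $\max\{a,b\}>\mathcal C_4$; hence $\max\{a,b\}\le\mathcal C_4$ whenever $E$ is even.

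This is really a bookkeeping argument once the two external inputs (Proposition \ref{twoclass} and the reduction $\min\{x,y\}=\min\{X,Y\}=1$ from Lemma \ref{c-general-minxy=1--minXY=1}) are in place; the only mild obstacle is making sure that the constant $\mathcal C_4$ is taken large enough to discard the two sporadic triples in Proposition \ref{twoclass}, and that the four parity cases are genuinely exhaustive.
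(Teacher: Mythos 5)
Your proposal is correct and uses exactly the same ingredients as the paper's proof: Proposition \ref{twoclass}, Lemma \ref{c-general-minxy=1--minXY=1}, and Lemma \ref{div}\,(ii) for the final assertion. The only difference is cosmetic: the paper argues by contraposition (if $\Delta$ is even, then $xY\equiv Xy \pmod 2$ combined with Scott's parity dichotomy forces $x,y$ both even or $X,Y$ both even, contradicting $\min\{x,y\}=\min\{X,Y\}=1$), whereas you normalize first and run a four-case check, which amounts to the same parity bookkeeping.
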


\begin{proof}
Suppose that $\Delta$ is even, that is,
\[
x Y \equiv X y \mod{2}.
\]
On the other hand, by Proposition \ref{twoclass}, one may assume that
\[
x \not\equiv X \mod{2} \quad \text{or} \quad y \not\equiv Y \mod{2}.
\]
These congruence conditions together imply that $x,y$ are even or $X,Y$ are even.
Thus Lemma \ref{c-general-minxy=1--minXY=1} gives $a,b \le \mathcal C_4$.
The second assertion follows from Lemma \ref{div}\,(ii).
\end{proof}

The following lemma for the case where $E=3$ will be mainly used in the proofs of Theorems \ref{c-7etc} and \ref{c13}.

\begin{lem}\label{c-prime-xleE-2--yleE-2}
Assume that $c$ is an odd prime.
Further, assume that $E$ is odd with $E>1.$
If $\max\{a,b\}>{\mathcal C_5}',$ then $\max\{x,y\} \le E-2.$
\end{lem}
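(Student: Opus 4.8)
The plan is: assuming $\max\{a,b\}$ large and (the standing hypothesis) that \eqref{abc} has two solutions $(x,y,z)$, $(X,Y,Z)$ with $z\le Z$, I would show that $\max\{x,y\}\ge E-1$ forces $z\ll_c1$, hence $\max\{a,b\}\ll_c1$ — a contradiction. I use freely the facts already available for $c$ prime and $\max\{a,b\}$ large: $\min\{x,y\}=\min\{X,Y\}=1$ (Lemma \ref{c-general-minxy=1--minXY=1}), $x,y,X,Y\ll_c1$, $Z\ll_cz$, $\Delta\ll_cz$ (Lemma \ref{Delta-ub}) so that $v:=\nu_c(\Delta/E)\ll_c\log z$ and $\mathcal D_1=c^{\,z-v}$ with $z-v\gg z$ (Lemma \ref{calD-l}), the congruences $a^E\equiv\delta_a$, $b^E\equiv\delta_b\pmod{\mathcal D_1}$ (congruence \eqref{cong-h-calD}), $\Delta\equiv0\pmod E$ (Lemma \ref{div}\,(ii)), and $\gcd(c,E)=1$ (from \eqref{c-cond} and $E\mid\varphi(c)$). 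After relabelling $a,b$ we may assume $y=1$ and $x=\max\{x,y\}\ge2$, so the first equation is $a^x+b=c^z$.

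First I would rule out $x\ge E$. Factor $a^E-\delta_a=(a-\delta_a)\,\Psi(a)$ with $\Psi(a)=\sum_{i=0}^{E-1}\delta_a^{\,i}a^{E-1-i}$; since $\Psi(a)\equiv E\pmod{a-\delta_a}$ and $\gcd(c,E)=1$, the full power $c^{\,z-v}$ (which divides $a^E-\delta_a$) divides exactly one of the two factors. From $\Psi(a)\le E a^{E-1}$ one concludes in either case that $a\ge c^{(z-v)/(E-1)}/E$; combined with $a<c^{\,z/x}$ (from $a^x<c^z$) this gives $z\bigl(\tfrac1{E-1}-\tfrac1x\bigr)\ll_c\log z$, and for $x\ge E$ the bracket is $\ge\tfrac1{E(E-1)}>0$, whence $z\ll_c1$. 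So $\max\{x,y\}\le E-1$, and then $E-1$ is even. In the remaining case $x=E-1$ the same splitting kills the sub-case $c^{\,z-v}\mid a-\delta_a$ (then $a\ge c^{\,z-v}-1$ is incompatible with $a<c^{\,z/(E-1)}$ for $z$ large, since $z-v>z/2\ge z/(E-1)$), so I may assume $c^{\,z-v}\mid\Psi(a)$ and $c\nmid a-\delta_a$; here $\Psi(a)<2a^{E-1}<2c^z$ is too weak, and one must bring in the second solution.

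Write $M=\max\{X,Y\}$. Reducing $\Delta=|xY-Xy|$ modulo $E$ with $x=E-1$ forces $M\equiv-1\pmod E$ in every case, and excludes $M=E-1$ when the non-trivial exponent of the second equation sits on $a$ (that would give $\Delta=0$), so then $M\ge2E-1$. \emph{When the exponent sits on $a$} (so $a^M+b=c^Z$), subtracting $a^{E-1}+b=c^z$ gives $a^{E-1}(a^{\,M-E+1}-1)=c^z(c^{\,Z-z}-1)$, hence $c^z\mid a^{\,M-E+1}-1$ with $M-E+1=Ew$, $w\ll_c1$; applying Lemma \ref{padic-lemma} to $(a^E)^w-1$ (or to $(a^{2E})^{w/2}-1$ when $\delta_a=-1$) yields $s_a:=\nu_c(a^E-\delta_a)\ge z-O_c(1)$ with a \emph{bounded} defect. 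Re-factoring $a^E-\delta_a$: if $c^{\,s_a}\mid a-\delta_a$ then $a$ is too large; if $c^{\,s_a}\mid\Psi(a)$ then $\Psi(a)=c^{\,s_a}j$ with $j=\Psi(a)/c^{\,s_a}<Ec^{\,z-s_a}\ll_c1$ \emph{bounded}, and from $\Psi(a)=a^{E-1}+O(a^{E-2})$ one gets $|a-\xi c^{\sigma}|\ll_E1$, where $s_a=(E-1)\sigma+\rho$ and $\xi=(c^{\rho}j)^{1/(E-1)}$ runs over a finite set of algebraic numbers depending only on $c$. Ridout's theorem (Proposition \ref{Ri-thm}) with $\mathcal S$ the set of prime factors of $c$ then forces $c^{\sigma}\ll_c1$, i.e. $z\ll_c1$ (the degenerate case $\xi\in\mathbb Q$ is treated directly and also yields $z\ll_c1$).

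\emph{When the exponent sits on $b$} (so $a+b^M=c^Z$), one has $|c^Z-b^M|=a<c^{\,z/(E-1)}\le c^{\,Z/(E-1)}$, so $b^M\asymp c^Z$ and, writing $Z=Mq+\rho$, one obtains the $c$-restricted approximation $\bigl|b/c^q-c^{\rho/M}\bigr|\ll c^{-Z(E-2)/(E-1)}$; Proposition \ref{Ri-thm} applied to $\xi=c^{\rho/M}$ gives $Z\ll_c1$ provided $M(E-2)>E-1$, which holds for every admissible $(E,M)$ except $(E,M)=(3,2)$. For that last pair — where necessarily $v=0$ because $\Delta=3=E$ — I would argue directly: from $a=c^Z-b^2$ and $a^2+b=c^z$ one gets $c^z\mid a^4+a^2+1=(a^2+a+1)(a^2-a+1)$ with the two factors coprime, so (comparing with $a^2<c^z$) one of them equals $c^z$; substituting back into $a^2+b=c^z$ makes $b$ an affine function of $a$, whence $c^{\,Z-z}<2$ gives $Z=z$ and then $a=0$, a contradiction. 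Thus in every case $\max\{a,b\}$ large forces $z\ll_c1$, hence $\max\{a,b\}\ll_c1$, contradicting $\max\{a,b\}>{\mathcal C_5}'$; therefore $\max\{x,y\}\le E-2$. The main obstacle is the case $x=E-1$ with $c^{\,z-v}\mid\Psi(a)$: one genuinely needs the second equation together with a Diophantine-approximation input, and the quantitative point that makes Ridout's theorem applicable is that the divisibility defect there is \emph{bounded}, not merely $O(\log z)$, so that the relevant algebraic targets form a finite set; a handful of smallest cases escape the approximation argument and must be disposed of by explicit factorisation identities.
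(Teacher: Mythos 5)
Your argument reaches the right conclusion and its individual steps are sound, but it takes a genuinely different --- and substantially heavier --- route than the paper in the only case that matters, and in doing so it loses a property that the paper's statement actually requires. The paper never splits according to where the large exponent of the second solution sits and never invokes Ridout's theorem: after the easy elimination of $\max\{x,y\}\ge E$ (done essentially as you do it, except that the paper works with $m=\min\{a,b\}$ and uses the absolute bound $\Delta<xyXY\ll_c 1$ from Lemma \ref{c-general-xyXY-ub} rather than $\nu_c(\Delta/E)\ll_c\log z$), the case $\max\{x,y\}=E-1$ is finished in a few lines: $(m^E-\delta_m)/(m-\delta_m)$ equals $K c^{z-n}$ with $K,n\ll_c 1$, hence its greatest prime factor is $\ll_c 1$, and the effective theorem on the greatest prime factor of values of a polynomial without multiple roots (\cite[Theorem 7.1]{ShTi}) gives $m\ll_{c,E} 1$ and then $z\ll_c 1$. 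No congruence $M\equiv-1\pmod{E}$, no approximation to $(c^\rho j)^{1/(E-1)}$ or to $c^{\rho/M}$, and no separate treatment of $(E,M)=(3,2)$ is needed.

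The substantive defect in your version is effectiveness. The paper declares at the head of Section \ref{sec-c-general} that ${\mathcal C_5}'$ is effectively computable, and this is not decorative: the proof of Lemma \ref{c13-Delta-odd}\,(v),(vi) for $c=13$ explicitly follows the proof of Lemma \ref{c-prime-xleE-2--yleE-2} to extract the numerical bounds that feed the computer search of Section \ref{sec-c13-1}. Your use of Proposition \ref{Ri-thm} (Ridout) in both of your main subcases makes the constant ineffective, so your proof establishes only a weakened form of the lemma and could not be reused there. Two smaller remarks: since $e_c(a)=E>1$ forces $c\nmid a-\delta_a$, your subcases ``$c^{z-v}\mid a-\delta_a$'' are vacuous rather than merely easy; and your analysis could be collapsed back to the paper's argument by replacing Ridout with the same greatest-prime-factor input, since at the key point you already have $\Psi(a)=c^{s_a}j$ with $j$ bounded.
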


\begin{proof}
The proof proceeds along the similar lines to that of Lemma \ref{c-general-xleEyleE}.
By Lemma \ref{div}\,(iii), it follows that $c^z$ divides $(m^E-\delta_m) \cdot \Delta/E$, where $m=\min\{a,b\}$.
Since $c$ is a prime, $m \not\equiv \delta_m \pmod{c}$ as $E>1$, and $E$ is odd, one finds that $c^z$ divides $W$, where
\[
W:=\frac{m^E-\delta_m}{m-\delta_m} \cdot \gcd(\Delta/E,c^z).
\]

We shall estimate the size of $W$.
Since one may assume by Lemma \ref{c-general-xyXY-ub} that $\Delta<xyXY \ll E^6$, it follows that
\[
W \le (m^{E-1}+m^{E-2}+\cdots+1) \cdot \Delta/E \ll m^{E-1} \cdot E^5.
\]
Since $m<c^{\,z/\max\{x,y\}}$, it follows that
\begin{equation}\label{W}
W \ll c^{\,z(E-1)/\max\{x,y\}} \cdot E^5.
\end{equation}

If $\max\{x,y\} \ge E$, then, since $c^z \le W \ll c^{\,z(E-1)/E} \cdot E^5$ by \eqref{W}, one has $c^{z/E} \ll E^5$, so that $c^z \ll_{c} 1$, whereby $z \ll_c 1$.

Finally, suppose that $\max\{x,y\}=E-1$.
It suffices to show that $z \ll_{c} 1$.
By \eqref{W},
\[ W/c^z \ll E^5 \]
with $W \equiv 0 \pmod{c^z}$.
Thus
\[ W =K \cdot c^z \]
for some positive integer $K$ satisfying $K \ll E^5 \ll_{c} 1$.
Since $\gcd(\Delta/E,c^z)=c^n$ with some integer $n \ge 0$ being absolutely finite and effectively computable as $\Delta \ll E^6<c^6$, one concludes from the definition of $W$ that
\begin{equation}\label{hKzn}
\frac{m^E-\delta_m}{m-\delta_m}=K \cdot c^{z-n}.
\end{equation}
This implies that the integer
\[ \ \ \
m^{E-1}+\delta_m m^{E-2}+{\delta_m}^2\,m^{E-3}+\cdots +{\delta_m}^{E-1} \left(=\frac{m^E-\delta_m}{m-\delta_m}\right)
\]
is composed of only the primes dividing $K$ and $c$.
In particular,
\[
P[\,g(m)\,] \ll_c 1,
\]
where $g$ is a polynomial with integer coefficients given by $g(t)=t^{E-1}+\delta_m t^{E-2}+{\delta_m}^{2}\,t^{E-3}+\cdots +{\delta_m}^{E-1}$.
Since $g$ has no multiple roots of degree $E-1 \ge 2$, it is known for each possible $E$ that 
\[
\lim_{j \to \infty} P[\,g(j)\,]=\infty, \ \ \text{effectively}
\]
(cf.~\cite[Theorem 7.1]{ShTi}).
Therefore, $m \ll_{c,E} 1$.
Since $z-n \ll_{m,E} 1$ by \eqref{hKzn}, one has $z \ll_{c} 1$.
\end{proof}

A summary of this section is as follows.

\begin{lem}\label{c-prime-summary}
Assume that $c$ is an odd prime and that $e_{c}(a)=e_{c}(b).$
Put $E=e_{c}(a).$
Assume that $E>1.$
Then there exist some constants $L_1$ and $L_2$ which depend only on $c$ and are effectively computable such that the following hold.
\begin{itemize}
\item[$\bullet$]
If $E$ is even, then $\max\{a,b\}<L_1.$
\item[$\bullet$]
If $\max\{a,b\}>L_2$ and $E$ is odd, then $\max\{x,y\} \le E-2.$
\end{itemize}
\end{lem}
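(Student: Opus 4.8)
The plan is to read Lemma~\ref{c-prime-summary} as a bookkeeping statement that records the two principal conclusions of this section, so that its proof is essentially an assembly of Lemmas~\ref{c-prime-parity} and~\ref{c-prime-xleE-2--yleE-2} with the constants renamed. As in the rest of the section, I keep in force the standing hypothesis that equation~\eqref{abc} has two distinct solutions $(x,y,z)$ and $(X,Y,Z)$ with $z\le Z$, since otherwise neither $\Delta$ nor the pair $x,y$ appearing in the statement is defined.

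For the first bullet I would take $L_1:=\mathcal C_4+1$, where $\mathcal C_4$ is the constant of Lemma~\ref{c-general-minxy=1--minXY=1}. Assume $E=e_c(a)=e_c(b)$ is even. By Lemma~\ref{div}\,(ii) we have $E\mid\Delta$, hence $\Delta$ is even. But Lemma~\ref{c-prime-parity} asserts that $\Delta$ is odd whenever $\max\{a,b\}>\mathcal C_4$, so this case cannot occur; thus $\max\{a,b\}\le\mathcal C_4<L_1$. (This is exactly the ``in particular'' clause of Lemma~\ref{c-prime-parity}, which I would just quote; note that Scott's Proposition~\ref{twoclass} enters there modulo a fixed finite exceptional set absorbed into $\mathcal C_4$.) For the second bullet I would take $L_2:={\mathcal C_5}'$, the constant of Lemma~\ref{c-prime-xleE-2--yleE-2}, which states verbatim that if $c$ is an odd prime, $E$ is odd with $E>1$, and $\max\{a,b\}>{\mathcal C_5}'$, then $\max\{x,y\}\le E-2$. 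Since $\mathcal C_4$ and ${\mathcal C_5}'$ depend only on $c$ and are effectively computable, so are $L_1$ and $L_2$, and the proof is complete.

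Accordingly there is no real obstacle remaining at the level of Lemma~\ref{c-prime-summary} itself; the difficulty has already been discharged. If I were to flag where the weight of the argument actually lies, it is twofold. First, Lemma~\ref{c-general-xyXY-ub}, which bounds $x,y,X,Y$ by a constant depending only on $c$ and is what makes both the parity bookkeeping of Lemma~\ref{c-prime-parity} and the endgame of Lemma~\ref{c-prime-xleE-2--yleE-2} usable; this rests on feeding the divisibility relation of Lemma~\ref{div}\,(iii) into the $p$-adic linear-forms-in-two-logarithms estimates (Propositions~\ref{Bu-madic} and~\ref{BL}), exploiting that $\Delta$ is tiny relative to $c^z$. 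Second, the endgame of Lemma~\ref{c-prime-xleE-2--yleE-2} needs the effective statement $P[g(j)]\to\infty$ for the separable polynomial $g(t)=t^{E-1}+\delta_m t^{E-2}+\cdots+{\delta_m}^{E-1}$. Granting those two inputs, Lemma~\ref{c-prime-summary} is a two-line corollary.
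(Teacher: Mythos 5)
Your proof is correct and matches the paper's intent exactly: Lemma~\ref{c-prime-summary} is presented there only as ``a summary of this section,'' with the even-$E$ bullet being precisely the ``in particular'' clause of Lemma~\ref{c-prime-parity} (via $E\mid\Delta$ from Lemma~\ref{div}\,(ii)) and the odd-$E$ bullet being Lemma~\ref{c-prime-xleE-2--yleE-2} verbatim. Your choices $L_1=\mathcal C_4+1$ and $L_2={\mathcal C_5}'$, together with the standing two-solution hypothesis, are exactly what the paper leaves implicit.
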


\section{Proof of Theorem \ref{c-7etc}}

Let $c$ be any prime of the form $c=2^r \cdot 3 +1$ with a positive integer $r$.
Assume that $\max\{a,b\}>\mathcal C_4,{\mathcal C_5}'$.
Suppose that equation \eqref{abc} has two solutions $(x,y,z),(X,Y,Z)$ with $z \le Z$.
Note that $E>1$ by \cite[Corollary 1]{MiyPin2}.
Since $E \mid \Delta$ by Lemma \ref{div}\,(ii), and $\Delta$ is odd by Lemma \ref{c-prime-parity}, it follows that $E$ is odd.
Now, Lemmas \ref{c-prime-xleE-2--yleE-2} and \ref{c-general-minxy=1--minXY=1}\,(ii) say that
\begin{gather*}
x \le E-2 \text{ and } y \le E-2;\\
X=1 \text{ or } Y=1,
\end{gather*}
respectively.
Further, one can show that $E=3$ as follows.
Recall that $E \mid \varphi(c)$.
Since $\varphi(c)=c-1=2^r \cdot 3$, it follows that $E=3$.
Therefore, $x=y=1$.
Lemma \ref{c-prime-xneXandyneY} says that $a,b \le \mathcal C$.
This completes the proof of Theorem \ref{c-7etc}.

\section{Towards effective version of Theorem \ref{c-7etc}} \label{sec-eff}

Based on the proof of Theorem \ref{c-7etc} with Proposition \ref{twoclass}, we can find the following proposition towards making Theorem \ref{c-7etc} effective.

\begin{prop}\label{prop-c7etc}
Assume that $c$ is a positive integer having the following three properties\,$:$
\begin{itemize}
\item[\rm (i)] $c$ is a prime\,$;$
\item[\rm (ii)] $\varphi(c)=2^r \cdot 3$ with some positive integer $r\,;$
\item[\rm (iii)] There exist a positive constant $\epsilon$ with $\epsilon<0.6$ and a positive integer $k_0$ such that the inequality
\[
\left|\sqrt{c}-\frac{P}{Q}\right| >\frac{1}{Q^{1+\epsilon}}
\]
holds for any integer $P$ and for any positive integer $Q$ of the form $Q=c^k$ with some positive integer $k$ satisfying $k \ge k_0.$
\end{itemize}
Let $b$ be a positive integer with $b>1$ and $\gcd(b,c)=1.$
Let $Y_0$ be a positive even integer with $4 \le Y_0 \ll_c 1.$
Then the equation
\begin{equation}\label{bcY0zZ}
c^Z-c^z=b^{Y_0}-b
\end{equation}
has no solution in positive integers $z$ and $Z$ such that $c^z>b,$ whenever $b$ exceeds some constant which depends only on $c,\epsilon$ and $k_0$ and is effectively computable.
\end{prop}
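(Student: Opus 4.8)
The plan is to argue by contradiction: assume $b$ (equivalently $z$, $Z$, by the size relations below) is large and that a solution $(z,Z)$ with $c^z>b$ exists, and to contradict property (iii). First I would record the elementary consequences of \eqref{bcY0zZ}. Since $c^z>b>1$ and $Y_0\ge4$, the right side $b^{Y_0}-b$ is positive, so $Z>z$. Writing $c^z(c^{Z-z}-1)=b(b^{Y_0-1}-1)$ and using $\gcd(b,c)=1$ gives $c^z\mid b^{Y_0-1}-1$ and $b\mid c^{Z-z}-1$; in particular $c^z\le b^{Y_0-1}$, $c^{Z-z}>b$, and $Z-z>(\log b)/\log c$. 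From $b^{Y_0}=c^Z-c^z+b$ one gets $b^{Y_0}<c^Z$ and (using $Z>z$) $b^{Y_0}>c^z$, hence $c^{z/Y_0}<b<c^{Z/Y_0}$ and $z<(Y_0-1)Z/Y_0$. So it suffices to bound $z$ in terms of $c,\epsilon,k_0$ only.

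Next I would use that $Y_0$ is even, $Y_0=2Y_1$ with $Y_1\ge2$, to convert \eqref{bcY0zZ} into a restricted rational approximation to $\sqrt c$, distinguishing the parity of $Z$. If $Z=2Z_1$ is even, the factorization $(c^{Z_1}-b^{Y_1})(c^{Z_1}+b^{Y_1})=c^Z-b^{Y_0}=c^z-b$ has both factors positive integers with $c^{Z_1}+b^{Y_1}>c^{Z_1}$, forcing $c^{Z_1}<c^z$, i.e. $z>Z/2$; combined with the $2$-adic structure this lets one descend on $\nu_2$ of the exponents, or reduces to a case where $b$ is already bounded. If $Z=2Z_1+1$ is odd, dividing $b^{Y_0}=c^{2Z_1+1}-c^z+b$ by $c^{2Z_1}$ yields
\[
\left(\frac{b^{Y_1}}{c^{Z_1}}\right)^{2}=c-\frac{c^z-b}{c^{2Z_1}},
\qquad\text{so}\qquad
\left|\,\sqrt c-\frac{b^{Y_1}}{c^{Z_1}}\,\right|<\frac{c^z-b}{\sqrt c\,\,c^{2Z_1}},
\]
a rational approximation to $\sqrt c$ with denominator $Q=c^{Z_1}$, a power of $c$.

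I would then split on the size of $z$ relative to $Z$. In the ``good'' regime $c^z-b<\sqrt c\,(c^{Z_1})^{1-\epsilon}$ (roughly $z\lesssim\tfrac12(1-\epsilon)Z$), the displayed estimate beats $Q^{-1-\epsilon}$, so property (iii) applied with $Z_1\ge k_0$ gives a contradiction; hence $Z_1<k_0$, whence $Z\ll_{c,\epsilon,k_0}1$ and therefore $z,b\ll_{c,\epsilon,k_0}1$. The complementary regime is where $c^z$ is close to $b^{Y_0-1}$; here I would exploit the linked relations $b^{Y_0-1}-1=c^zk$ and $c^{Z-z}-1=bk$ with the same integer $k\ge1$. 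The case $k=1$ gives $b=c^{Z-z}-1$ and $c^z=b^{Y_0-1}-1=(b-1)(b^{Y_0-2}+\dots+1)$, which forces $c^{Z-z}-2$ to be a power of the prime $c$; that is impossible when $c\ge7$ is of the form $2^r\cdot3+1$ (this is exactly where (i) and (ii) enter), so $k=1$ forces $b$ bounded. For $k\ge2$ one obtains a sharper bound relating $z$ and $Z$ which, together with $Y_0\ge4$, returns one to the good regime.

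The hard part will be making the numerology close with only $\epsilon<0.6$. The approximation coming directly from \eqref{bcY0zZ} has quality only about $Q^{-2/Y_0}$, which for $Y_0>2$ is weaker than Liouville's bound for the quadratic irrational $\sqrt c$; so one cannot proceed uniformly and the case split above is essential, the point being to confine $z$ to the range where $c^z-b<Q^{1-\epsilon}$ and to control the complementary range by arithmetic. Keeping these two ranges jointly tractable for every even $Y_0\ge4$ is precisely what forces $\epsilon<0.6$, and the smallest case $Y_0=4$ (so $Y_1=2$) is the tightest: it is there that one most likely needs the $2$-adic descent in the even-$Z$ case and the special form of $c$ from (i)–(ii). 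Everything else should be routine estimation of the implied constants.
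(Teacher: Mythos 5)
Your reduction of the odd-$Z$ case to a restricted rational approximation of $\sqrt{c}$ with denominator $c^{(Z-1)/2}$ is exactly the paper's Lemma \ref{sharpboundforZ/z}, and combined with hypothesis (iii) it gives the correct conclusion $Z<\frac{2}{1-\epsilon}\,z$ once $Z\ge 2k_0+1$ (for even $Z$ one simply notes $c^{Z/2}+b^{Y_0/2}$ divides $c^z-b$, hence $Z<2z$; no $2$-adic descent is needed). The genuine gap is in your ``complementary regime.'' There your only arithmetic input is $c^z\mid b^{Y_0-1}-1$, which yields merely $b\gtrsim c^{z/(Y_0-1)}$, and the split on $k=(b^{Y_0-1}-1)/c^z$ does not help: $k$ can be exponentially large in $z$, and for $k\ge 2$ the relation $c^{Z-z}-1=bk$ gives only $Z-z\gtrsim z/(Y_0-1)$, nowhere near strong enough to push $z$ back below the threshold $\approx\frac{1-\epsilon}{2}Z$. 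The range $\frac{1-\epsilon}{2}Z\lesssim z\lesssim\frac{Y_0-1}{Y_0}Z$ is left completely uncontrolled. What the paper uses instead, and what your sketch misses, is the real role of hypotheses (i)--(ii): they permit the reduction to $e_c(b)=3$, whence Lemma \ref{div}\,(iii) gives $c^z\mid (b^2+\delta_b b+1)\cdot\gcd\bigl((Y_0-1)/3,\,c^z\bigr)$ with $Y_0-1\ll_c 1$, so that $b\gg_c c^{z/2}$. Feeding this \emph{quadratic} lower bound into $b<c^{Z/Y_0}$ and $Z<\frac{2}{1-\epsilon}z$ yields $c^{(\frac12-\frac{2}{(1-\epsilon)Y_0})z}\ll_c 1$, which bounds $z$ precisely when $Y_0\ge 10$ and $\epsilon<0.6$ (and $Y_0\equiv 4\pmod 6$ excludes $Y_0\in\{6,8\}$). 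This is the step that makes the numerology close; without it your argument cannot be completed.

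The case $Y_0=4$ is also not covered by your plan. The paper handles it by proving $Z=3z$ exactly: the upper bound $Z\le 3z$ comes from $c^{4z-Z}\equiv 1\pmod a$, and the lower bound $Z\ge 3z$ from writing $b^2+b+1=Kc^z$ and showing $3K^2\equiv -1\pmod{c^z}$, whence $K\gg c^{z/2}$ and $b^4\gg c^{3z}$; the equality $Z=3z$ then forces $c^z<16$ and a finite check. Your remark that $Y_0=4$ ``most likely needs the $2$-adic descent and the special form of $c$'' does not identify any of this, so that case too remains open in your proposal.
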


As seen in \eqref{c-2r31prime} there are many values of $c$ for which both conditions (i) and (ii) of the above proposition hold.
Among those values, we could only find that condition (iii) also holds for $c=13$ with $(\epsilon,k_0)=(0.53,3)$, and this fact follows from a striking result of Bauer and Bennett \cite[Corollary 1.6;\,$y=13$]{BaBe} based on the hypergeometric method.
Another key idea for proving Proposition \ref{prop-c7etc} is to find a non-trivial lower bound for the ratio of $z$ to $Z$ in equation \eqref{bcY0zZ}.
Note that exceptional cases in Theorem \ref{c-7etc} for $c=13$ can be effectively determined by Proposition \ref{prop-c7etc} together with the proof of Theorem \ref{c-7etc}, and those cases will be handled completely in the next section.

Below, in order to prove Proposition \ref{prop-c7etc}, we shall prepare a few lemmas, where, to follow the arguments in previous sections, in equation \eqref{bcY0zZ} we define a positive integer $a$ by
\[
a+b=c^z, \ \ a+b^{Y_0}=c^Z.
\]
It suffices to consider when $a>1$ by \cite[Theorem 6]{LeV}.
The above two equations are in this section also referred to the 1st and 2nd equation, respectively.
Note that $\Delta=Y_0-1$.
Further, similarly to the proof of Theorem \ref{c-7etc}, one may assume $e_c(b)=E$, where $E=3$.

\begin{lem}\label{sharpboundforZ/z}
Let $(z,Z)$ be a solution to equation $\eqref{bcY0zZ}.$
If $Z \ge 2k_0+1,$ then $Z<\frac{2}{1-\epsilon} \,z.$
\end{lem}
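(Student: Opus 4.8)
The plan is to distinguish the two cases $Z$ even and $Z$ odd, and in each of them to manufacture a rational approximation to $\sqrt{c}$ whose denominator is a power of $c$, so that hypothesis (iii) can be invoked. Throughout I will use that $Y_0$ is even, that $a>1$, and the elementary identity $b^{Y_0}=c^{Z}-c^{z}+b$ coming from \eqref{bcY0zZ}; in particular $c^{Z}-b^{Y_0}=c^{z}-b=a>0$, so $c^{Z}>b^{Y_0}$.

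First I would dispose of the case $Z$ even. Then $c^{Z/2}$ and $b^{Y_0/2}$ are positive integers with $c^{Z/2}>b^{Y_0/2}$, and
\[
a=\bigl(c^{Z/2}\bigr)^{2}-\bigl(b^{Y_0/2}\bigr)^{2}=\bigl(c^{Z/2}-b^{Y_0/2}\bigr)\bigl(c^{Z/2}+b^{Y_0/2}\bigr)\ge c^{Z/2}+b^{Y_0/2}>c^{Z/2},
\]
while $a=c^{z}-b<c^{z}$; hence $Z<2z$, which is stronger than the claimed bound. (Here the hypothesis $Z\ge 2k_0+1$ is not even needed.)

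The substantive case is $Z$ odd. Here I would put $Q:=c^{(Z-1)/2}$ and $P:=b^{Y_0/2}$, both positive integers, and rewrite $b^{Y_0}=c^{Z}-c^{z}+b$ as $P^{2}=cQ^{2}-a$, so that $c-(P/Q)^{2}=a/Q^{2}>0$ and thus $0<P/Q<\sqrt{c}$. This gives
\[
\Bigl|\sqrt{c}-\tfrac{P}{Q}\Bigr|=\frac{c-(P/Q)^{2}}{\sqrt{c}+P/Q}=\frac{a/Q^{2}}{\sqrt{c}+P/Q}<\frac{a}{\sqrt{c}\,Q^{2}}.
\]
Since $Z\ge 2k_0+1$ means $k:=(Z-1)/2\ge k_0$, hypothesis (iii) applied to $Q=c^{k}$ yields $|\sqrt{c}-P/Q|>Q^{-(1+\epsilon)}$. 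Comparing the two estimates gives $a>\sqrt{c}\,Q^{1-\epsilon}=c^{1/2+(1-\epsilon)(Z-1)/2}$, whereas $a=c^{z}-b<c^{z}$; taking logarithms to base $c$ and simplifying, $2z>1+(1-\epsilon)(Z-1)=(1-\epsilon)Z+\epsilon>(1-\epsilon)Z$, i.e.\ $Z<\tfrac{2}{1-\epsilon}z$, as required.

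The one point that needs care — and the reason the exponent in (iii) is taken to be exactly $1+\epsilon$ — is that the factor $\sqrt{c}=c^{1/2}$ must be retained in the bound $|\sqrt{c}-P/Q|<a/(\sqrt{c}\,Q^{2})$: the resulting summand $\tfrac12$ in the exponent $1/2+(1-\epsilon)(Z-1)/2$ is precisely what offsets the loss incurred by using $Q=c^{(Z-1)/2}$ rather than $c^{Z/2}$ when $Z$ is odd, so that the clean inequality $Z<\tfrac{2}{1-\epsilon}z$ (and not merely $Z<\tfrac{2}{1-\epsilon}z+1$) comes out. Beyond this, the argument is entirely routine.
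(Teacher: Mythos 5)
Your proposal is correct and follows essentially the same route as the paper: the even case is handled by the factorization $a=(c^{Z/2}-b^{Y_0/2})(c^{Z/2}+b^{Y_0/2})\ge c^{Z/2}$ together with $a<c^z$, and the odd case by bounding $|\sqrt{c}-b^{Y_0/2}/c^{(Z-1)/2}|$ above by $a/(c^{Z-1/2})$ and below via hypothesis (iii) with $k=(Z-1)/2\ge k_0$. The only difference is cosmetic bookkeeping in the final exponent comparison, which yields the same inequality $Z<\tfrac{2}{1-\epsilon}z$.
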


\begin{proof}
We distinguish two cases according to whether $Z$ is even or odd.

First, suppose that $Z$ is even.
From 2nd equation, it follows that $a$ is divisible by the positive integer $ c^{Z/2}+b^{Y_0/2}$.
In particular, $a \ge c^{Z/2}+b^{Y_0/2}>c^{Z/2}$.
Since $a<c^z$, one has $Z<2z$.

Next, suppose that $Z$ is odd.
From 2nd equation,
\[
a=c^Z-b^{Y_0}
=\bigr(c^{\frac{Z-1}{2}}\sqrt{c}+b^{\frac{Y_0}{2}}\big)\big(c^{\frac{Z-1}{2}}\sqrt{c}-b^{\frac{Y_0}{2}}\bigr).
\]
Dividing both leftmost and rightmost sides above by $(c^{\frac{Z}{2}}+b^{\frac{Y_0}{2}})\,c^{\frac{Z-1}{2}}$ yields
\[
\frac{a}{(c^{\frac{Z}{2}}+b^{\frac{Y_0}{2}})\,c^{\frac{Z-1}{2}} }=\sqrt{c}-\frac{b^{\frac{Y_0}{2}}}{c^{\frac{Z-1}{2}}}.
\]
Since $a<c^z$, it follows that
\[ \ \
0<\sqrt{c}-\frac{b^{\frac{Y_0}{2}}}{c^{\frac{Z-1}{2}}}
<\frac{c^z}{c^{\frac{Z}{2}} \cdot c^{\frac{Z-1}{2}}},
\]
which leads to a restricted rational approximation to $\sqrt{c}$, as follows:
\[
\left|\sqrt{c}-\frac{b^{\frac{Y_0}{2}}}{c^{\frac{Z-1}{2}}}\right|<\frac{1}{c^{\,Z-\frac{1}{2}-z}}.
\]
On the other hand, if $\frac{Z-1}{2} \ge k_0$, then property (iii) of $c$ for $(P,Q,k):=(b^{\frac{Y_0}{2}},c^{\frac{Z-1}{2}},\frac{Z-1}{2})$ is used to give an opposite restriction to the above approximation, as follows:
\[
\left|\sqrt{c}-\frac{b^{\frac{Y_0}{2}}}{c^{\frac{Z-1}{2}}}\right| > \frac{1}{c^{\frac{Z-1}{2} \cdot (1+\epsilon)}}.
\]
These inequalities together lead to
\[
Z-\frac{1}{2}-z<\frac{Z-1}{2} \cdot (1+\epsilon),
\]
so that
\[
\frac{1-\epsilon}{2} \cdot Z<z-\frac{\epsilon}{2}.
\]
Since $\epsilon<1$, this implies the assertion.
\end{proof}

\begin{lem}\label{Y0>4}
If $Y_0>4,$ then equation \eqref{bcY0zZ} has no solution, whenever $b$ exceeds some constant which depends only on $c,\epsilon$ and $k_0$ and is effectively computable.
\end{lem}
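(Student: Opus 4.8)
The plan is to suppose $(z,Z)$ is a solution of \eqref{bcY0zZ} with $c^z>b$ and with $b$ large, and to reach a contradiction by squeezing the exponent $Y_0$ between a congruence‑type lower bound and an upper bound coming from Lemma \ref{sharpboundforZ/z} together with a sharp $c$‑adic estimate for $z$. I keep the set‑up of the section: $a=c^z-b=c^Z-b^{Y_0}$ is a positive integer (with $a>1$ by \cite[Theorem 6]{LeV}), $a,b,c$ are pairwise coprime, $\Delta=Y_0-1$, and $e_c(b)=E=3$; note also $Z>z$ since $c^Z-c^z=b^{Y_0}-b>0$.

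First I would pin down $Y_0$ modulo $6$ and bound $z$ sharply. Rewriting \eqref{bcY0zZ} as $c^z(c^{Z-z}-1)=b(b^{Y_0-1}-1)$ and using $\gcd(b,c)=1$ and $Z>z$ gives $z=\nu_c(b^{Y_0-1}-1)$; in particular $b^{Y_0-1}\equiv 1\pmod c$, so the multiplicative order of $b$ modulo $c$ divides both $Y_0-1$ and $\varphi(c)=2^r\cdot 3$. As $Y_0-1$ is odd this order is an odd divisor of $2^r\cdot 3$, hence $1$ or $3$; it is not $1$ because $e_c(b)=3$, so it equals $3$. Thus $3\mid Y_0-1$ (whence, $Y_0$ being even, $Y_0\equiv 4\pmod 6$, so the hypothesis $Y_0>4$ forces $Y_0\ge 10$), $b^3\equiv 1\pmod c$, and $c\nmid b-1$. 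Now Lemma \ref{padic-lemma} applied to $(b^3)^{(Y_0-1)/3}-1$ gives $z=\nu_c(b^3-1)+\nu_c((Y_0-1)/3)=\nu_c(b^2+b+1)+\nu_c((Y_0-1)/3)$, so $c^z\le(b^2+b+1)\cdot(Y_0-1)/3\ll_c b^2$, i.e.\ $z\log c\le 2\log b+O_c(1)$. Replacing $b^3-1$ by its quadratic factor $b^2+b+1$ here is the crucial refinement.

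It remains to bound $Z$. From $a>0$ and $a=c^Z-b^{Y_0}$ we get $c^Z>b^{Y_0}$, i.e.\ $Z\log c>Y_0\log b$; and if $Z\le 2k_0$ then $Z$ is bounded in terms of $k_0$, forcing $c^Z$ (hence $b^{Y_0}$, hence $b$) bounded, contrary to assumption. So $Z\ge 2k_0+1$ and Lemma \ref{sharpboundforZ/z} gives $Z<\frac{2}{1-\epsilon}\,z$. Combining, $Y_0\log b<Z\log c<\frac{2}{1-\epsilon}\,z\log c\le\frac{4\log b+O_c(1)}{1-\epsilon}$, so $Y_0<\frac{4}{1-\epsilon}+O_c(1)/\log b$. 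Since $\epsilon<0.6$ we have $\frac{4}{1-\epsilon}<10$, hence $Y_0<10$ once $b$ exceeds an effectively computable constant depending only on $c,\epsilon,k_0$; this contradicts $Y_0\ge 10$, and the lemma follows.

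The hard part is making the $c$‑adic bound on $z$ strong enough: the naive estimate $c^z\mid b^{Y_0-1}-1\mid b^{3t}-1$ only yields $z\log c\le 3\log b+O_c(1)$, hence $Y_0<6/(1-\epsilon)$, which under $\epsilon<0.6$ still leaves $Y_0=10$ admissible; one genuinely needs that the multiplicative order of $b$ modulo $c$ is exactly $3$ (forced by the parity of $Y_0-1$) to confine the $c$‑part of $b^3-1$ to the factor $b^2+b+1$ and so replace the constant $3$ by $2$. It is also worth noting that the hypothesis $\epsilon<0.6$ is precisely the inequality under which $4/(1-\epsilon)<10$, the threshold imposed by the smallest admissible exponent $Y_0=10$; the condition is exactly tailored to this argument.
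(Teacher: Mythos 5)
Your proof is correct and follows essentially the same route as the paper: both combine the lower bound $b \gg_c c^{z/2}$ (coming from the fact that the $c$-part of $b^{Y_0-1}-1$ is confined to $b^2+\delta b+1$ times the small factor $(Y_0-1)/3$), the trivial bound $b^{Y_0}<c^Z$, and Lemma \ref{sharpboundforZ/z} to force $Y_0<\frac{4}{1-\epsilon}+o(1)<10$, contradicting the fact that $Y_0\equiv 4\pmod 6$ and $Y_0>4$ force $Y_0\ge 10$. The only cosmetic difference is that you re-derive the divisibility of $(b^2+b+1)\cdot(Y_0-1)/3$ by $c^z$ directly from the equation via Lemma \ref{padic-lemma}, whereas the paper invokes the relation of Lemma \ref{div}\,(iii) as in the proof of Theorem \ref{c-7etc}.
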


\begin{proof}
Let $(z,Z)$ be a solution to equation \eqref{bcY0zZ}.
Similarly to the proof of Theorem \ref{c-7etc}, one finds that $b^2+\delta b+1$ is divisible by $c^z/\gcd(\Delta/E,c^z)$ for some $\delta=\pm1$.
In particular,
\[
b^2+\delta b+1 \ge \frac{c^z}{\gcd(\Delta/E,c^z)}.
\]
Since $\Delta=Y_0-1$, and $Y_0 \ll_c 1$ by assumption, it follows that
\[
2b^2>b^2+\delta b+1 \ge \frac{c^z}{\Delta/E} \gg_{c} c^z,
\]
so that $b \gg_c c^{z/2}$.
This together with the trivial inequality $b<c^{Z/Y_0}$ leads to
\[
c^{z/2} \ll_c c^{Z/Y_0}.
\]
Since $b<c^Z \ll_{c,k_0} 1$ if $Z<2k_0+1$, one may assume by Lemma \ref{sharpboundforZ/z} that $Z<\frac{2}{1-\epsilon}\,z$.
Thus the above displayed inequality implies that
\begin{equation}\label{ineq-ep0.6}
c^{\,\mathcal E z} \ll_c 1,
\end{equation}
where
\[
\mathcal E:=\frac{1}{2} - \frac{2}{(1-\epsilon)Y_0}.
\]
From now on, suppose that $Y_0>E+1\,(=4)$.
Since $Y_0$ is even and $Y_0=\Delta+1$ with $\Delta \equiv 0 \pmod{E}$, it follows that $Y_0 \ge 3E+1=10$, whereby
\[
\mathcal E \ge \frac{1}{2} - \frac{1}{5(1-\epsilon)}>0,
\]
where the last inequality follows from assumption $\epsilon<0.6$.
To sum up, putting $\epsilon':=\frac{1}{2} - \frac{1}{5(1-\epsilon)}$, one has $c^{\,\epsilon'z} \ll_c 1$ by \eqref{ineq-ep0.6}, so that $b<c^z \ll_{c,\epsilon} 1$.
This completes the proof.
\end{proof}

\begin{lem}\label{Zle3z}
Let $(z,Z)$ be a solution to equation \eqref{bcY0zZ} for $Y_0=4.$
Then $Z \le 3z.$
\end{lem}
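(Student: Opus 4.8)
The plan is to treat the two parities of $Z$ separately. If $Z$ is even the matter is elementary: from $a+b^{4}=c^{Z}$ with $a=c^{z}-b\ge 2$ one has $c^{Z}>b^{4}$, so $c^{Z/2}>b^{2}$ and $a=(c^{Z/2}-b^{2})(c^{Z/2}+b^{2})\ge c^{Z/2}+b^{2}>c^{Z/2}$; since $a<c^{z}$ this forces $Z<2z$, a fortiori $Z\le 3z$. So from now on $Z$ is odd, which is the substantial case, and I would lean on hypothesis (iii) of Proposition \ref{prop-c7etc} together with the divisibility information already available in this section (recall $\Delta=Y_{0}-1=3$ and $e_{c}(b)=E=3$).

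First I would record the divisibility. Since $c^{Z}-c^{z}=b(b^{3}-1)$ with $\gcd(b,c)=1$ and $Z>z$, we get $\nu_{c}(b^{3}-1)=\nu_{c}(c^{Z}-c^{z})=z$; as $c\nmid b-1$ (because $e_{c}(b)=3>1$), this gives $c^{z}\mid b^{2}+b+1$, say $b^{2}+b+1=c^{z}k$ with $k$ a positive integer coprime to $c$, and $c^{z}>b$ shows $k<c^{z}$. Dividing $c^{Z}-c^{z}=b(b-1)(b^{2}+b+1)$ by $c^{z}$ then yields the basic identity
\[
c^{\,Z-z}=k\,b(b-1)+1 .
\]

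Next I would build the restricted rational approximation to $\sqrt c$: since $Z$ is odd, $c^{Z}=c\,(c^{(Z-1)/2})^{2}$ and
\[
a=\bigl(\sqrt c\,c^{(Z-1)/2}-b^{2}\bigr)\bigl(\sqrt c\,c^{(Z-1)/2}+b^{2}\bigr),
\]
the first factor being positive because $c^{Z}>b^{4}$ (so $b^{2}<\sqrt c\,c^{(Z-1)/2}$, and the second factor lies between $\sqrt c\,c^{(Z-1)/2}$ and $2\sqrt c\,c^{(Z-1)/2}$). Using $a<c^{z}$ one obtains $0<\sqrt c-b^{2}/c^{(Z-1)/2}<c^{\,z-Z+1/2}$; when $(Z-1)/2\ge k_{0}$, hypothesis (iii) supplies the reverse estimate $>c^{-(Z-1)(1+\epsilon)/2}$, and comparing the two yields a bound of the shape $Z<\dfrac{2z-\epsilon}{1-\epsilon}$ (slightly sharper if one retains the factor $2$ in the lower bound for the conjugate factor).

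Finally I would feed this bound on $Z$ back into the identity $c^{Z-z}=k\,b(b-1)+1\le c^{z}k^{2}$ (using $b^{2}<c^{z}k$), together with the control it now gives on the size of $b$ (namely $b^{4}\approx c^{Z}$ is small relative to $c^{3z}$), to deduce $c^{Z-z}\ll b^{4}/c^{z}\ll c^{2z}$, i.e.\ $Z\le 3z$. The residual cases where the approximation is unavailable or too crude — $Z<2k_{0}+1$, or $z$ small — I would settle directly: $b<c^{z}$ already gives $c^{Z}-c^{z}=b^{4}-b<c^{4z}-c^{z}$, hence $Z\le 4z-1$, which is cut down to $Z\le 3z$ by combining with the even-$Z$ estimate and, for $z=1$, with the sharper $b<c$ (whence $c^{Z}-c<c^{4}-c$, so $Z\le 3$). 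The delicate point is this last synthesis: each of the two main tools alone delivers only a bound $Z<(\text{const})\,z$ with constant near $4$ or $5$, so one must interlock them — and exploit that $c^{Z-z}$ and $c^{z}$ are exact powers of $c$, which excludes the near-extremal configurations with $b$ close to $c^{z}$ — in order to pin the exponent down exactly to $3$.
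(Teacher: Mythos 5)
Your treatment of the peripheral cases is fine: the even-$Z$ factorization giving $Z<2z$, the crude bound $Z\le 4z-1$ for $Z<2k_0+1$ with $z\ge 2$, and the $z=1$ case via $b<c$ are all correct. But the main case --- $Z$ odd, $Z\ge 2k_0+1$, $z\ge 2$ --- is not closed, and this is a genuine gap. Lemma \ref{sharpboundforZ/z} only delivers $Z<\frac{2}{1-\epsilon}\,z$, which for $\epsilon<0.6$ can be as large as $5z$ (and is about $4.26\,z$ for $c=13$ with $\epsilon=0.53$); this is strictly weaker than $Z\le 3z$ for every $z\ge 2$. Your proposed repair via the identity $c^{Z-z}=k\,b(b-1)+1$ is circular: since $c^{z}k=b^{2}+b+1$, that identity is literally a restatement of $c^{Z}-c^{z}=b^{4}-b$, and the step ``$c^{Z-z}\ll b^{4}/c^{z}\ll c^{2z}$'' presupposes $b^{4}\ll c^{3z}$, which (because $c^{Z}=a+b^{4}$ with $0<a<c^{z}$) is equivalent to the conclusion you are trying to prove. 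The only unconditional upper bound on $b$ available at this point is $b<c^{z}$, which yields $c^{Z-z}\ll c^{3z}$, i.e.\ $Z\le 4z$, and the appeal to ``exact powers of $c$ excluding near-extremal configurations'' is not substantiated by any argument.

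The paper closes this case by an elementary device you do not use and for which your proposal has no substitute: reducing \emph{both} equations $a+b=c^{z}$ and $a+b^{4}=c^{Z}$ modulo $a$ gives $c^{4z}\equiv b^{4}\equiv c^{Z}\pmod{a}$, hence $c^{4z-Z}\equiv 1\pmod{a}$ with $4z-Z\ge 1$ (from $(a+b)^{4}>a+b^{4}$), so $c^{4z-Z}>a$. One then splits on the size of $a$: if $a\le c^{z-1}$ then $b\ge(1-1/c)c^{z}$ forces $c^{Z}>\bigl((1-1/c)c^{z}\bigr)^{4}$, which is incompatible with $4z-Z\ge 1$ once $c\ge 4$; if $a>c^{z-1}$ then $c^{4z-Z}>c^{z-1}$ gives $4z-Z\ge z$, i.e.\ $Z\le 3z$. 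Note that this argument is entirely elementary and does not need hypothesis (iii) at all, whereas your route both invokes the approximation hypothesis and still fails to reach the stated bound.
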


\begin{proof}
We shall preserve the letter $Y_0$.
Observe from 1st, 2nd equations that $c^{Y_0 z}=(a+b)^{Y_0}>a+b^{Y_0}=c^Z$, so that $Y_0 z>Z$.
Further, one reduces 1st, 2nd equations modulo $a$ to see that
\[
c^{Y_0 z-Z} \equiv 1 \mod{a}.
\]
In particular, $c^{Y_0 z-Z}>a$.
We distinguish two cases.

Suppose that $a \le c^{z-1}$.
Since $c^Z>b^{Y_0}$ and $b=c^z-a \ge (1-1/c)\,c^z$, it follows that
\[
c^Z>(\,(1-1/c)\,c^z\,)^{Y_0}=\frac{1}{\bigr(\frac{c}{c-1}\bigr)^{Y_0}}\,c^{Y_0 z},
\]
which implies that
\[
Y_0>\frac{\log c}{\log(1+\frac{1}{c-1})} \cdot (Y_0 z-Z).
\]
Since $c \ge 4$ and $Y_0 z-Z \ge 1$, the above gives $Y_0>4.81\ldots$, a contradiction.

If $a>c^{z-1}$, then $c^{Y_0 z-Z}>a>c^{z-1}$, so that $Y_0 z-Z \ge z$, i.e., $Z \le (Y_0-1)z=3z$.
\end{proof}

\begin{lem}\label{Zge3z}
Let $(z,Z)$ be a solution to equation \eqref{bcY0zZ} for $Y_0=4.$
Then $Z \ge 3z.$
\end{lem}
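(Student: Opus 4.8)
The plan is to push the argument of the preceding lemma one step further and show that $Z$ cannot be appreciably smaller than $3z$; along the way I would freely use the reductions already in force ($e_{c}(b)=E=3$, $b>1$, $\gcd(b,c)=1$, $a>1$).

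\emph{Arithmetic set-up.} Since here $\Delta=Y_{0}-1=3=E$, Lemma~\ref{div}(iii) gives $c^{z}\mid b^{3}-\delta_{b}$. Reducing $a+b^{4}=c^{Z}$ modulo $c^{z}$ and using $a\equiv-b$ and $b^{4}\equiv\delta_{b}b\pmod{c^{z}}$ gives $(\delta_{b}-1)b\equiv0\pmod{c^{z}}$, and $\gcd(b,c)=1$ forces $\delta_{b}=1$. Hence $c^{z}\mid b^{3}-1$; as $b\not\equiv\pm1\pmod c$ (because $e_{c}(b)=3$) this yields $c^{z}\mid b^{2}+b+1$, so also $(2b+1)^{2}=4(b^{2}+b+1)-3\equiv-3\pmod{c^{z}}$. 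Put $s:=(b^{2}+b+1)/c^{z}$; then $b^{4}-b=c^{Z}-c^{z}$ rewrites as $c^{Z-z}=1+sb(b-1)$, and comparing $c$-adic valuations ($\gcd(c,c^{Z-z}-1)=1$, $\gcd(c,b(b-1))=1$) gives $\gcd(c,s)=1$. Finally $s\neq1$: if $s=1$ then $a=b^{2}+1$ and $c^{Z}=a+b^{4}=b^{4}+b^{2}+1=(b^{2}+b+1)(b^{2}-b+1)=c^{z}(b^{2}-b+1)$, so $b^{2}-b+1$ would be a power of $c$ coprime to $c^{z}=b^{2}+b+1$, forcing $b^{2}-b+1=1$ and $b\le1$, impossible.

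\emph{Parity of $Z$, and $Z\ge2z$.} If $Z$ is even, then $a=c^{Z}-b^{4}=(c^{Z/2}-b^{2})(c^{Z/2}+b^{2})$ with $c^{Z/2}>b^{2}$, so $a\ge c^{Z/2}+b^{2}>c^{Z/2}$; combined with $a<c^{z}$ this forces $Z<2z$. But $Z<2z$ gives $b^{4}<c^{Z}<c^{2z}$, hence $b^{2}<c^{z}$ and $c^{z}\le b^{2}+b+1<2b^{2}<2c^{z}$, i.e. $s=1$ --- contradicting the last step. So $Z$ is odd, and the same implication rules out $Z<2z$, whence $Z\ge2z$.

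\emph{The lower bound.} Assume $Z\le3z-1$. Substituting $a=c^{z}-b$ and $b^{2}=sc^{z}-b-1$ into $a+b^{4}=c^{Z}$ gives $c^{Z-z}=s^{2}c^{z}-2sb+1-s$; since $Z-z\ge z$ we have $c^{z}\mid c^{Z-z}$, hence $c^{z}\mid s(2b+1)-1$. Multiplying by $2b+1$ and using $(2b+1)^{2}\equiv-3\pmod{c^{z}}$ gives $c^{z}\mid3s+2b+1$, so $3s+2b+1=kc^{z}$ with $k\ge1$. Eliminating $s$ through $c^{z}s=b^{2}+b+1$ yields
\[
kc^{2z}=3(b^{2}+b+1)+(2b+1)c^{z}.
\]
On the other hand $Z\le3z-1$ and $a\ge1$ force $b^{4}<c^{Z}\le c^{3z-1}$, so $b<c^{(3z-1)/4}$; then the right-hand side above is $\ll c^{(7z-1)/4}$, while the left-hand side is $\ge c^{2z}$. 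Thus $c^{(z+1)/4}\ll1$, which bounds $z$ by an explicit constant depending only on $c$; for $z$ beyond that bound we get $Z\ge3z$, and the finitely many residual values of $z$ are cleared by direct computation.

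I expect the delicate point to be precisely the passage from $Z\ge2z$ to $Z\ge3z$: purely metric estimates only deliver $Z\ge2z$, and the extra saving rests entirely on the ``second level'' congruence $c^{z}\mid3s+2b+1$, hence on having pinned down $\delta_{b}=1$ and on $(2b+1)^{2}\equiv-3\pmod{c^{z}}$. Eliminating the residual finite check in a clean way --- for instance by feeding the approximation $0<\sqrt c-b^{2}/c^{(Z-1)/2}<c^{\,z-Z+1/2}$ (valid for odd $Z$, as in Lemma~\ref{sharpboundforZ/z}) into the restricted rational approximation hypothesis~(iii) on $\sqrt c$ --- is the part that needs care.
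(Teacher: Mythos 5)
Your proof runs on the same arithmetic engine as the paper's: both start from $b^2+b+1=Kc^z$ (your $s$ is the paper's $K$), derive $K(2b+1)\equiv 1\pmod{c^z}$ from the second equation using $Z\ge 2z$, and then exploit $(2b+1)^2\equiv -3\pmod{c^z}$. The divergence is in how a lower bound is extracted from this. You multiply the first congruence by $2b+1$ to get the \emph{linear} relation $3s+2b+1\equiv 0\pmod{c^z}$ and argue from $3s+2b+1\ge c^z$; under the hypothesis $Z\le 3z-1$ this only yields $c^{(z+1)/4}<12$, which bounds $z$ (for $c=13$, $z\le 2$; for a general $c\ge 7$ in the family, $z\le 4$) instead of excluding it, and you dismiss the residual cases with ``cleared by direct computation'' without performing the computation. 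That is the genuine gap: the lemma is asserted for every $c$ satisfying the hypotheses of Proposition \ref{prop-c7etc}, and your residual check is $c$-dependent (the range is $b<c^{(3z-1)/4}$ with $z$ up to $4$), so it cannot be discharged once and for all for a potentially infinite family; you would need either to actually carry it out for the relevant $c$ or to sharpen the inequality so that no residue survives.

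The paper closes the argument without any residue by taking the \emph{quadratic} combination instead: squaring $K(2b+1)\equiv 1$ gives $3K^2\equiv -1\pmod{c^z}$, and since $K$ is odd one has $3K^2+1\equiv 4\pmod 8$, whence $4c^z\mid 3K^2+1$ and $K\ge\sqrt{(4c^z-1)/3}$. This feeds into $2b^2>Kc^z$ to give $b^4>\frac{1-1/(4c^z)}{3}\,c^{3z}>c^{3z-1}$, and $Z\ge 3z$ follows at once from $c^Z>b^4$. The extra factor $4$ coming from the parity of $K$ is exactly what makes the conclusion unconditional. Your preliminary observations ($\delta_b=1$, $s\ne 1$, $Z$ odd, $Z\ge 2z$) are all correct, but only $Z\ge 2z$ is needed on the paper's route, and that is obtained there more quickly from $2b^2>c^z$ and $c^Z>b^4$.
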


\begin{proof}
We have $b(b-1)(b^2+b+1)=c^z(c^{Z-z}-1)$.
Since $b-1$ is not divisible by $c$ (as $e_c(b)=E>1$) and $c$ is a prime, it follows that $b^2+b+1 \equiv 0 \pmod{c^z}$, so that
\begin{gather}
b^2+b+1=Kc^z,\label{K-1}\\
(b^2-b)K=c^{Z-z}-1\label{K-2}
\end{gather}
for some positive odd integer $K$.
The key idea is to derive a non-trivial lower bound for $K$.
For this we follow an idea used in the proofs of \cite[Lemma 9.2]{MiyPin} and \cite[Lemma 5.6]{MiyPin2}.
Note that $Z \ge 2z$ as $b^2>c^z/2$ by \eqref{K-1}, and $c^Z>b^{Y_0}=b^4$ by 2nd equation with $c>4$.

Since, by \eqref{K-1},
\[
b^2-b \equiv -2b-1 \mod{c^z},
\]
one finds from \eqref{K-2} that
\[
(2b+1)K \equiv 1 \mod{c^z}.
\]
Squaring both sides above shows
\[
(2b+1)^2 K^2 \equiv 1 \mod{c^z}.
\]
Since
\[
(2b+1)^2 \equiv 4b^2+4b+1 \equiv -3 \mod{c^z},
\]
it follows that
\[
3 K^2 \equiv -1 \mod{c^z}.
\]
This implies the following lower bound for $K$:
\[
K \ge \sqrt{\frac{4c^z-1}{3}}=\sqrt{\frac{4-1/c^z}{3}} \cdot c^{z/2}.
\]
Then
\[
2 b^2>b^2+b+1=K c^z \ge \sqrt{\frac{4-1/c^z}{3}} \cdot c^{3z/2},
\]
so that
\[
b^4>\frac{1-1/(4c^z)}{3} \cdot c^{3 z}.
\]
Since $c^Z>b^4$ with $c \ge 4$, it follows that
\[
c^{Z-3z}>\frac{1-1/(4c^z)}{3}>c^{-1}.
\]
This leads to the assertion.
\end{proof}

\begin{lem}\label{Y0=4}
If $Y_0=4,$ then equation \eqref{bcY0zZ} has no solution.
\end{lem}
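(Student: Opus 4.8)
The plan is to combine Lemmas \ref{Zle3z} and \ref{Zge3z}, which together show that every solution of \eqref{bcY0zZ} with $Y_0=4$ satisfies $Z=3z$, with a sharp two-sided estimate for the auxiliary integer $K$ produced in the proof of Lemma \ref{Zge3z}. Specialising that proof to the case $Z-z=2z$, one has
\[
b^2+b+1=Kc^z,\qquad (b^2-b)K=c^{2z}-1,\qquad 3K^2\equiv-1\pmod{c^z},
\]
where $K$ is a positive odd integer with $\gcd(K,c)=1$; throughout I also use the standing hypothesis $b<c^z$, which comes from the 1st equation $a+b=c^z$ with $a\ge1$.

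The first step is to eliminate $b^2$ between the first two relations, which gives $K^2c^z=c^{2z}-1+K(2b+1)$. Since $b\ge1$ the last term is positive, so $K^2>c^z-c^{-z}$ and hence $K^2\ge c^z$; the equality $K^2=c^z$ is impossible because it would force $z$ even and $K=c^{z/2}$, contradicting $\gcd(K,c)=1$, so in fact $K^2\ge c^z+1$. On the other hand $b\le c^z-1$ gives $2b+1\le 2c^z-1$, and substituting into the same identity yields $(K^2-2K)c^z<c^{2z}$, i.e.\ $(K-1)^2\le c^z$. Therefore $\sqrt{c^z}<K\le\sqrt{c^z}+1$. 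If $z$ were even this would force $K=c^{z/2}+1$, which is even because $c$ is odd, contradicting the parity of $K$; hence $z$ is odd and $K=\lceil\sqrt{c^z}\,\rceil$.

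Next I feed this pincer into the congruence. Writing $3K^2+1=nc^z$ with $n$ a positive integer, the inequality $K^2\ge c^z+1$ gives $n\ge4$, while $K^2\le(\sqrt{c^z}+1)^2$ gives $(n-3)c^z\le6\sqrt{c^z}+4$, so $c^z\le6\sqrt{c^z}+4$ and thus $c^z\le43$. Since $c$ is a prime with $\varphi(c)=2^r\cdot3$, this leaves only $c\in\{7,13\}$, necessarily with $z=1$ (and $z$ odd). For $c=13$ one gets $K=\lceil\sqrt{13}\,\rceil=4$, which is even, a contradiction; for $c=7$ one gets $K=3$, and then $b^2+b+1=21$ forces $b=4$, whereas $(b^2-b)K=36\ne48=c^2-1$, again a contradiction. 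Hence \eqref{bcY0zZ} has no solution when $Y_0=4$.

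The algebraic elimination and the closing numerical check are routine; the step I expect to require the most care is establishing the narrow window $\sqrt{c^z}<K\le\sqrt{c^z}+1$, since this is exactly where the hypothesis $c^z>b$ must be exploited to its limit, and it is the interaction of this window with the congruence $3K^2\equiv-1\pmod{c^z}$ — which forces $3K^2+1$ to be a multiple of $c^z$ only slightly larger than $3c^z$ — that collapses the problem to a finite, ultimately empty, list of cases.
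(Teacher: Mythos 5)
Your proof is correct, and after the common first step (combining Lemmas \ref{Zle3z} and \ref{Zge3z} to get $Z=3z$) it diverges genuinely from the paper's argument. The paper finishes by reducing the two equations $a+b=c^z$ and $a+b^4=c^{3z}$ modulo $a$ to get $b\equiv 1\pmod{a}$, hence $b>a$, so $c^z=a+b<2b<2c^{3z/4}$ and therefore $c^z<16$; the remaining cases are then dismissed by a brute-force computation. You instead stay entirely inside the $K$-machinery of Lemma \ref{Zge3z}: eliminating $b^2$ between $b^2+b+1=Kc^z$ and $(b^2-b)K=c^{2z}-1$ gives $K^2c^z=c^{2z}-1+K(2b+1)$, the squeeze $\sqrt{c^z}<K\le\sqrt{c^z}+1$ follows correctly from $1\le b\le c^z-1$ together with $\gcd(K,c)=1$, and feeding this window into $3K^2\equiv-1\pmod{c^z}$ (so that $3K^2+1=nc^z$ with $n\ge 4$) yields $c^z\le 43$; the hypothesis $\varphi(c)=2^r\cdot 3$ then leaves only $(c,z)=(7,1)$ or $(13,1)$, which you kill by the parity of $K$ and a direct substitution. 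Each step checks out (in particular the identity $K^2c^z=c^{2z}-1+K(2b+1)$ and the deduction $(K-1)^2\le c^z$ from integrality). What the paper's route buys is brevity and a sharper bound $c^z<16$ obtained without any appeal to the structure of $K$; what yours buys is a completely explicit closing verification with no computer search, plus the structural byproducts that $z$ is odd and $K=\lceil\sqrt{c^z}\,\rceil$, which are not needed here but are in the same spirit as the restricted-approximation theme of this section.
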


\begin{proof}
For $Y_0=4$, suppose that equation \eqref{bcY0zZ} has a solution $(z,Z)$.
Lemmas \ref{Zle3z} and \ref{Zge3z} together say that $Z=3z$.
Reducing 1st, 2nd equations modulo $a$ implies that $b \equiv 1 \pmod{a}$,
in particular, $b>a$.
Thus,
\[
c^z=a+b<2b<2 c^{Z/Y_0}=2 c^{3z/4},
\]
so that $c^z<16$.
Now brute force computation suffices to verify that equation \eqref{bcY0zZ}: $c^{3z}-c^z=b^4-b$ does not hold in any possible case.
\end{proof}

Proposition \ref{prop-c7etc} follows from the combination of Lemmas \ref{Y0>4} and \ref{Y0=4}.

\section{Case where $c=13$ with $\Delta$ odd} \label{sec-c13-1}

In this and the next section, we put $c=13$.
We may assume that $e_{c}(a)=e_{c}(b)>1$, and write $E=e_{c}(a)=e_{c}(b)$.
Observe that 
\[
e_{c}(h)=3 \ \Leftrightarrow \ h \equiv 3,4,9,10 \pmod{c}. 
\]
It suffices to consider when $\max\{a,b\}>c$ (because Conjecture \ref{atmost1} is proved to be true for each pair $(a,b)$ with $\max\{a,b\}<c$ (cf.~\cite{Sty})).

Let $x,y,z,X,Y,Z$ be positive integers such that
\begin{eqnarray} \label{c13-main}
&a^x+b^y=c^z, \label{1steq-c13}\\
&a^X+b^Y=c^Z. \label{2ndeq-c13}
\end{eqnarray}
As in previous sections, we put $\Delta,\Delta',\mathcal D$ as follows: 
\[
\Delta=|xY-yX|, \ \ \Delta'=\gcd(\Delta/E,c^{\,\min\{z,Z\}}), \ \ \mathcal D=c^{\,\min\{z,Z\}}/\Delta'.
\]
Since $c$ is a prime, there exists some nonnegative integer $n'$ with $n' \le \min\{z,Z\}$ such that
\[
\Delta'=c^{n'}, \ \ \mathcal D=c^{\,\min\{z,Z\}-n'}.
\]
Further, for each $h \in \{a,b\}$, we have
\begin{equation} \label{c13-divrel}
h^{E-1}+\delta_h h^{E-2}+{\delta_h}^2\,h^{E-3}+\cdots +{\delta_h}^{E-1} \equiv 0 \mod{\mathcal D},
\end{equation}
where $\delta_h \in \{1,-1\}$ is defined by the congruence $h^E \equiv \delta_h \pmod{c}$.
This in particular implies that 
\[
m > (\mathcal D /2)^{\frac{1}{E-1}},
\]
where $m:=\min\{a,b\}$.

In the remainder of this section, we assume that $z \le Z$.
Equations \eqref{1steq-c13} and \eqref{2ndeq-c13} are often referred to the 1st and 2nd equation, respectively.

\subsection{Finding explicit bounds for solutions}

Here we shall attempt to find sharp upper bounds for the solutions such that $\Delta$ is odd.
The aim is to show the following lemma.

\begin{lem} \label{c13-Delta-odd}
Suppose that $\Delta$ is odd.
Then the following hold.
\begin{itemize}
\item[(i)] 
$Z<K_1\,\log a\,\log b,$ where
\[ K_1=\begin{cases}
\,2367 & \text{if $m<c,$} \\
\,843 & \text{if $m>c$}.
\end{cases} \]
\item[(ii)] $\Delta<K_1(\log^2 c)\,z.$
\item[(iii)] If $z \ge 9,$ then $z\,Z \le K_2\,\log a\,\log b,$ where $K_2=10459.$
\item[(iv)] $\Delta \le K_3,$ where
\[ K_3=\begin{cases}
\,77862 & \text{if $m<c$ and $z \le 8,$} \\
\,44368 & \text{if $m>c$ and $z \le 8,$} \\
\,68809 & \text{if $z \ge 9.$}
\end{cases} \]
Further, $n' \le 3.$
\item[(v)] If $\max\{x,y\} \ge 3,$ then $z \le 9.$
\item[(vi)] If $\max\{x,y\}=2,$ then 
\[ z \le \begin{cases}
\,23650 & \text{if $n'=0$}, \\
\,23651 & \text{if $n'=1$}, \\
\,47322 & \text{if $n'=2$}, \\
\,70986 & \text{if $n'=3$}.
\end{cases} \]
\item[(vii)] $x>1$ or $y>1.$
\end{itemize}
\end{lem}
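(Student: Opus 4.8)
The plan is to obtain all seven parts as a chain of explicit estimates, the engine being the observation that here $E=3$: since $\Delta$ is odd and $E\mid\Delta$ by Lemma~\ref{div}(ii), $E$ is odd, and as $E\mid\varphi(13)=12$ with $E>1$ the only possibility is $E=3$. Then $h^{3}\equiv\delta_{h}\pmod{13}$, the divisibility \eqref{c13-divrel} becomes $h^{2}+\delta_{h}h+1\equiv0\pmod{\mathcal D}$ for $h\in\{a,b\}$ (so $\mathcal D\le m^{2}+m+1<2m^{2}$ and $h^{3}\equiv\delta_{h}$, hence $h^{6}\equiv1$, modulo $\mathcal D$), and one may take $\mathrm g=6$ throughout, since $\gcd(6,13^{k})=1$.

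For (i) I would rerun the argument of Lemma~\ref{1st-app} on the 2nd equation with the explicit $p$-adic estimate of Proposition~\ref{Bu-madic} (at $p=13$, where Proposition~\ref{BL} may be used for sharper constants): from $Z=\nu_{13}(13^{Z})=\nu_{13}(a^{X}+b^{Y})\le\nu_{13}(a^{2X}-b^{2Y})$ and \eqref{trivial-ineqs}, with $\mathrm g=6$, $H_{1}=\max\{\log a,\log 13\}$, $H_{2}=\max\{\log b,\log 13\}$, and the usual iteration to resolve the two branches of the maximum in the Baker bound, one gets $Z<K_{1}\log a\log b$, the two values of $K_{1}$ according to whether both $a,b$ exceed $13$ (both heights genuine) or one lies below $13$ (one height pinned at $\log 13$, which simultaneously shrinks $\log a\log b$ and forces the worse constant). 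Part (ii) follows at once from \eqref{Delta-basic-ub}, $\Delta<\tfrac{\log^{2}c}{\log a\log b}\,zZ$, and (i). For (iii) I would repeat the proof of Lemma~\ref{2nd-app} with modulus $\mathcal D=13^{z-n'}$ in place of $13$: now $\nu_{\mathcal D}(13^{Z})\ge\lfloor Z/z\rfloor\ge Z/z-1$, the bound $\mathcal D<2m^{2}$ keeps $\log\mathcal D$ comparable to $\log a$ and $\log b$, and the hypothesis $z\ge9$ together with a bound on $n'$ (absolutely bounded by Lemma~\ref{c-general-xyXY-ub}, and $n'\le3$ once (iv) is in hand) makes $\log\mathcal D=(z-n')\log 13$ large enough for the resulting inequality in $z,Z$ to close after iteration, yielding $zZ\le K_{2}\log a\log b$. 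For (iv): when $z\ge9$ combine (iii) with \eqref{Delta-basic-ub} to get $\Delta<K_{2}\log^{2}c$; when $z\le8$ use (ii) directly, with the finer height bookkeeping available in the $m<c$ case; in every case $13^{n'}\mid\Delta/E$ and $\Delta<3\cdot13^{4}$, whence $n'\le3$.

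Parts (v)--(vii) are elementary granted these bounds. For (v): if $\max\{x,y\}\ge3$, then one of the terms $a^{x},b^{y}$ of the 1st equation is at least $m^{\max\{x,y\}}$, so $m^{\max\{x,y\}}<13^{z}$, whence $13^{z-n'}\le\mathcal D<2m^{2}<2\cdot13^{2z/3}$, giving $13^{z/3-n'}<2$ and, with $n'\le3$, $z\le9$. For (vi), where $\max\{x,y\}=2$: then $\Delta$ odd forces $\min\{x,y\}=1$, and $h^{2}+\delta_{h}h+1\equiv0\pmod{\mathcal D}$ for $h=m$, together with $m^{2}<13^{z}$, gives $m^{2}+\delta_{m}m+1=k\cdot13^{z-n'}$ with $1\le k<2\cdot13^{n'}\le2\cdot13^{3}$ absolutely bounded; for each of the finitely many triples $(k,n',\delta_{m})$ this is a generalized Ramanujan--Nagell equation $(2m+\delta_{m})^{2}+3=4k\cdot13^{z-n'}$, and $z-n'$, hence $z$ (since $n'\le3$), is bounded through an effective measure of irrationality for $\sqrt{13}$ and its small multiples, along the lines used in Section~\ref{sec-eff}; the four cases $n'=0,1,2,3$ and the stated thresholds arise from this. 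Finally (vii): if $x=y=1$, then $a\equiv-b\pmod{13^{z}}$; reducing the 2nd equation modulo $13^{z}$ and using $3\mid\Delta$ with $\Delta$ odd (so $b^{\Delta}\equiv\delta_{b}\pmod{\mathcal D}$), in combination with Lemma~\ref{c-prime-xneXandyneY} (which forces $X,Y\ge2$) and Proposition~\ref{twoclass}, pins the residues of $a,b$ modulo $13$ and reduces to a bounded search excluded by $\max\{a,b\}>c$; alternatively one applies Proposition~\ref{gp}, in effective form, to $a^{X}+b^{Y}=13^{Z}$ with $\max\{X,Y\}\ge3$.

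The step I expect to be hardest is the explicit-constant bookkeeping in the two Baker applications (i) and (iii): the choice of the heights, the iteration on the $\max$-branch, and the $m\lessgtr c$ split must all be handled tightly enough for the final numerical constants to come out as small as stated. A secondary difficulty is (vi): the a priori bound $zZ\le K_{2}\log a\log b$ cannot by itself yield an \emph{absolute} bound on $z$, and one genuinely needs the quadratic relation $m^{2}+\delta_{m}m+1=k\cdot13^{z-n'}$ and the effective $\sqrt{13}$-approximation of Section~\ref{sec-eff}.
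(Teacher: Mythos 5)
Your treatment of parts (i)--(v) is essentially the paper's: explicit reruns of Lemmas \ref{1st-app}, \ref{calD-l} and \ref{2nd-app} with $E=3$ and ${\rm g}=6$, followed by the divisibility $c^{z-n'}\mid(m^2+\delta_m m+1)$ combined with $m<c^{z/\max\{x,y\}}$. One imprecision in (iv): the reduction of the $m<c$, $z\le 8$ bound to $77862$ is not ``height bookkeeping'' but the arithmetic observation that $m\in\{3,4,9,10\}$ forces $\nu_{13}(m^2+\delta_m m+1)\le 1$, hence $z-n'\le 1$ and $z\le 5$; without this refinement the bound in that case is $124579>3\cdot 13^4$, and your derivation of $n'\le 3$ from ``$\Delta<3\cdot 13^4$ in every case'' does not close.

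The genuine gaps are in (vi) and (vii). For (vi) you correctly reduce to the generalized Ramanujan--Nagell equations $(2m+\delta_m)^2+3=4K\cdot 13^{\,z-n'}$ with $K\le 13^{n'}$, but the tool you invoke --- effective irrationality measures, restricted to denominators that are powers of $13$, for $\sqrt{13}$ ``and its small multiples'' --- is not available for the numbers that actually arise: one would need such measures for $2\sqrt{K}$ and $2\sqrt{13K}$ for every admissible $K$ up to $13^{3}$, whereas the Bauer--Bennett result exploited in Section \ref{sec-eff} covers only $\sqrt{13}$ itself, and Section \ref{sec-appro} stresses that even single further instances are open. The paper instead factors $m^2+\delta_m m+1$ over the Eisenstein integers, writes $m-\delta_m\omega=K_0\pi^{z_1}$ with $\pi\bar\pi=13$, derives $\nu_{\pi}({\bar\pi}^{\,z_1}-\alpha_2)\ge z_1$ from the conjugate relation, and applies the $p$-adic two-logarithm bound of Proposition \ref{BL}; this is what produces the four explicit thresholds. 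For (vii) your argument is both circular and ineffective: Lemma \ref{c-prime-xneXandyneY} rests on Proposition \ref{BuLu-newyork}, hence on Ridout's theorem and the Subspace Theorem, so invoking it destroys exactly the effectivity that Lemma \ref{c13-Delta-odd} exists to supply (and it only applies for $\max\{a,b\}$ beyond an unknowable constant). Your fallback via Proposition \ref{gp} also fails, because the case that actually survives is $X=1$ with $Y$ even and large, where the hypotheses $m>1$, $n>1$ of that proposition are violated (and its effective constants would in any case be useless for a finite search). The paper's proof of (vii) uses Proposition \ref{twoclass} to make $Y$ even, the Bennett--Siksek classification of $S^2-13^k=T^n$ (Proposition \ref{BeSi-LeNa}) to force $X=1$, and then the explicit $\sqrt{13}$-approximation machinery of Section \ref{sec-eff} applied to $13^{Z}-13^{z}=b^{Y}-b$ to obtain $Z<90$ and conclude by computation.
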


Before going to the proof of the above lemma, we make some remarks on its last two statements.
For (vi), while we follow the proof of Lemma \ref{c-prime-xleE-2--yleE-2} for $E=3$, which relies on explicit version of the size of greatest prime factor of the values of polynomials with integer coefficients at integer points, we adopt another way using the non-Archimedean analogue to Baker's method over the ring of Eisenstein integers in a non-artificial sense, which is regarded as an analogue to a bright idea of Luca \cite{Lu} over the ring of Gaussian integers.
For (vii), in order to show that $\min\{X,Y\}=1$, we rely on modular approach recently established by Bennett and Siksek \cite{BeSi} instead of a strategy used in the proof of Lemma \ref{c-general-minxy=1--minXY=1}. 

\begin{proof}[Proof of Lemma $\ref{c13-Delta-odd}$] 
Note that $E=3$, and that
\[
a \equiv 3,4,9,10 \pmod{c}, \ \ b \equiv 3,4,9,10 \pmod{c}.
\]
In particular, either $m \in \{3,4,9,10\}$ or $m>c$.
\par
(i) We follow the proof of Lemma \ref{1st-app} on solution $(X,Y,Z)$.
Then
\[
Z \le \nu_{c}(a^{2X}-b^{2Y}) \le \frac{53.6\cdot 2E\cdot \log a'\,\log b'}{\log^4 c} \cdot {\mathcal B'}^2,
\]
where $a'=\max\{a,c\},\,b'=\max\{b,c\}$,
\[
\mathcal{B'}=\log\,\max \bigr\{\,4\exp(0.64)(\log^2 c)\,T,\,c^4 \,\bigr\}
\]
with $T=\frac{Z}{\log a\,\log b}$.
Thus
\[
T<f\cdot \frac{53.6\cdot 2E}{\log^4 c} \cdot \mathcal{B'}^2,
\]
where $f=\frac{\log a'}{\log a}\cdot \frac{\log b'}{\log b}$.
Since $\max\{a,b\}>c$, one has
\[ 
f=\frac{\log\,\max \{m,c\}}{\log m}=
\begin{cases}
\,\frac{\log c}{\log m} & \text{if $m<c,$} \\
\,1 & \text{if $m>c.$}
\end{cases} \]
These together imply that $T<K_1$.
\par
(ii) This follows from inequality \eqref{Delta-basic-ub} together with (i).
\par
(iii) Firstly we follow the proof of Lemma \ref{calD-l}.
Since $\Delta' \le \Delta/E<K_1/E\cdot(\log^2 c)\,z$ by (ii), one has
\[
\frac{\log \mathcal D}{z}
> \log c-\frac{\log\bigr( K_1/E \cdot (\log^2{c}) \, z \bigr)}{z}.
\]
Assume that $z \ge 9$.
Then the above displayed inequality implies
\begin{equation}\label{c13-Deltaodd-logcalD-l}
\log{\mathcal D}>\zeta \,z,
\end{equation}
where
\[ \zeta=\begin{cases}
\,1.37 & \text{if $m<c,$} \\
\,1.48 & \text{if $m>c.$}
\end{cases}\]
Since $\mathcal D>\exp(\zeta\,z) \ge \exp(1.37\cdot9)>2.2 \cdot 10^5$, it follows that $m=\min\{a,b\} > (\mathcal D /2)^{\frac{1}{E-1}}>13\,(=c)$.
Thus,
\begin{gather}
\mathcal D>\exp(\zeta\,z)>6 \cdot 10^5,
\label{c13-calD-l}\\
m > (\mathcal D /2)^{\frac{1}{E-1}}> (\,\exp(\zeta \,z) /2\,)^{\frac{1}{E-1}}
\label{c13-m-l}
\end{gather}
with $\zeta=1.48$.

Next, we closely follow the proof of Lemma \ref{2nd-app}. 
Then
\[
Z/z-1 < \nu_{\mathcal D}(a^{2X}-b^{2Y}) \le \frac{53.6\cdot 2E\cdot \log a'\,\log b'}{\log^4 \mathcal D} \cdot {\mathcal B'}^2,
\]
where $a':=\max\{a,\mathcal D\},\, b':=\max\{b,\mathcal D\}$, 
\[
\mathcal{B'}:=\log\,\max \bigr\{ \,4\,{\rm e}^{0.64}(\log c)(\log \mathcal D)\,T/z, \, {\mathcal D}^4\, \bigr\}
\]
with ${\rm e}=\exp(1)$ and $T:=\frac{z\,Z}{\log a\,\log b}$. 
These together imply
\[
T<\frac{53.611\cdot 2E\cdot f\,z^2}{\log^4 \mathcal D}\cdot \mathcal{B'}^2,
\]
where $f:=\frac{\log a'}{\log a}\cdot \frac{\log b'}{\log b}$.
Observe that
\[
f \le \max\biggr\{1,\,\frac{\log^2 \mathcal D}{\log^2 m}\biggr\}.
\] 

If $4\,{\rm e}^{0.64}(\log c)(\log \mathcal D)\,T/z>\mathcal D^4$, then 
\[
\frac{\mathcal D^4}{4\,{\rm e}^{0.64}(\log c)\log \mathcal D}<T/z<\frac{53.611\cdot 2E\cdot f\,z}{\log^4 \mathcal D}\,\log^2 \bigr(\,4\,{\rm e}^{0.64}(\log c)(\log \mathcal D)\,T/z\,\bigr).
\]
Since $\mathcal D^4>(\,4\,{\rm e}^{0.64}(\log c)\log \mathcal D\,)^9$ by \eqref{c13-calD-l}, it follows from 
the left-hand inequality displayed above, one sees that 
\[
(T/z)^{1/8} \ge 4\,{\rm e}^{0.64}(\log c)\log \mathcal D.
\]
In particular, $T/z>2 \cdot 10^{19}$.
Therefore, by the second one, 
\[
T/z<\frac{53.611\cdot 2E\cdot f\,z}{\log^4 \mathcal D}\, \log^2 \bigr(\,(T/z)^{9/8}\bigr).
\]
Since $z \ge 9$, $m > (\mathcal D /2)^{\frac{1}{E-1}}$ by \eqref{c13-m-l}, and $\log \mathcal D>\zeta\,z$ by \eqref{c13-Deltaodd-logcalD-l}, it follows that
\begin{align*}
\frac{T/z}{\log^2 \bigr(\,(T/z)^{9/8}\bigr)}
&<53.611\cdot 2E\cdot \max\biggr\{\frac{z}{\log^4 \mathcal D},\,
\frac{z}{\log^2 m\,\log^2 \mathcal D}\biggr\}\\
&<53.611\cdot 2E\cdot \max\biggr\{\frac{1}{\zeta^4 z^3},\,
\frac{1}{\zeta^2(\log^2 m)\,z}\biggr\}
\ < \ 0.41,
\end{align*}
which is a contradiction as $T/z$ is large.

Finally, if $4\,{\rm e}^{0.64}(\log c)(\log \mathcal D)\,T/z \le \mathcal D^4$, then
\begin{align*}
T&<\frac{53.611\cdot 2E\cdot f\,z^2 \cdot 4^2}{\log^2 \mathcal D}\\
&<53.611\cdot 2E\cdot \max\biggr\{\frac{z^2}{\log^2 \mathcal D},
\frac{z^2}{\log^2 m}\biggr\} \cdot 4^2\\
&<53.611\cdot 2E\cdot \max\biggr\{\frac{1}{\zeta^2},
\frac{z^2}{\log^2 m}\biggr\} \cdot 4^2 
\ \le \ K_2.
\end{align*}
\par
(iv) Firstly, since $\Delta<K_1(\log^2 c)\,z$ by (ii), if $z \le 8$, then
\begin{equation} \label{c13-iv-1}
\Delta \le
\begin{cases}
\,124579 & \text{if $m<c,$} \\
\,44368 & \text{if $m>c.$}
\end{cases}
\end{equation}
If $z \ge 9$, then, since $zZ \le K_2\,\log a\,\log b$ by (iii), it follows that $\Delta<\frac{z\,Z}{\log a \, \log b} \log^2 c \le K_2\log^2 c< 68810$. 
Thus the inequality $\Delta \le 124579$ always holds. 
Also, $n' \le 4$, as $c^{n'}=\Delta' \le \Delta/E<41527<c^5$.

We shall reduce the bound for $\Delta$ in \eqref{c13-iv-1} when $m<c$. 
By congruence \eqref{c13-divrel} with $h=m$, 
\begin{equation} \label{c13-divrel-m}
c^{z-n'} \mid (m^2+\delta_m \,m+1).
\end{equation}
If $m<c$, then $\nu_c(m^2+\delta_m \,m+1) \le 1$, so that $c^{z-n'} \mid c,$ whereby $z \le 1+n' \le 5$.
Then $\Delta<2367 \cdot (\log^2 c) \cdot 5<77863$. 
To sum up, the asserted bounds for $\Delta$ hold.
Further observe that $n' \le 3$.
\par
(v) We basically follow the proof of Lemma \ref{c-prime-xleE-2--yleE-2} for the case where $\max\{x,y\} \ge E\,(=3)$.
Relation \eqref{c13-divrel-m} implies that $c^{z-n'} \le m^2+m+1$.
Since $n' \le 3$ by (iv), and $m<c^{\,z/\max\{x,y\}} \le c^{z/3}$ by assumption that $\max\{x,y\} \ge 3$, it follows that $c^{z-3}<c^{2z/3}+c^{z/3}+1$, leading to $z \le 9$.
\par
(vi) Firstly, we follow the proof of Lemma \ref{c-prime-xleE-2--yleE-2} for the case where $\max\{x,y\}=E-1\,(=2)$.
By \eqref{c13-divrel-m}, there is a positive integer $K$ such that
\begin{equation} \label{c13-vi-0}
m^2+\delta_m \,m+1=K c^{z-n'}.
\end{equation}
Since $a \ne b$, and $\min\{x,y\}=1$ as $\Delta$ is odd, one finds that
\[
K c^{z-n'}=m^2+\delta_m \,m+1 \le m^2+m+1 \le m^2+\max\{a,b\} \le c^z,
\]
so that
\begin{equation} \label{Kup}
K \le c^{n'}.
\end{equation}

Below, we shall argue over the ring of Eisenstein integers.
Let ${\mathcal O}$ be the ring of integers of $\mathbb{Q}(\omega)$, where $\omega=\exp(2\pi i/3)$.
One factorizes the left-hand side of \eqref{c13-vi-0} to see that
\begin{equation} \label{c13-vi-2}
f \cdot \bar{f}=K' \cdot c^{z_1},
\end{equation}
where 
\[
f=m -\delta_m\,\omega, \ \ \bar{f}=m -\delta_m\,\bar{\omega}, 
\ \ K'=K/c^{\,\nu_c(K)}, \ \ z_1=z-n'+\nu_c(K).
\]
Note that $z_1>0$. 
Observe that
\begin{equation} \label{c13-vi-3}
f - \bar{f} = -\delta_m\,\omega (1-\omega).
\end{equation}
In particular, $f-\bar{f}$ is not divisible by any rational prime. 
From this, it is not hard to see that any rational prime dividing the right-hand side of \eqref{c13-vi-2} has not to be inert, namely, it has to be factorized into the product of two prime elements. 
This implies that
\[
f= K_0 \cdot \pi^{z_1}
\]
for some $K_0 \in {\mathcal O}$ with $K_0\, \overline{K_0}=K'$, and $\pi$ is a prime element with $\pi \bar{\pi}=c$.
Note that $\pi \ne \bar{\pi}$ (as $c \ne 3$). 
Equality \eqref{c13-vi-3} becomes
\[
K_0 \cdot \pi^{z_1}-\overline{K_0} \cdot {\bar{\pi}}^{\,z_1}= -\delta_m\,\omega (1-\omega).
\]
Reducing this modulo $\pi^{z_1}$ gives
\[
\overline{K_0} \cdot {\bar{\pi}}^{\,z_1} -\delta_m\,\omega (1-\omega) \equiv 0 \mod{\pi^{z_1}}.
\]
Putting 
\[
\alpha_1:=\bar{\pi}, \ \ \alpha_2:=\delta_m\,\omega (1-\omega)/\,\overline{K_0},
\]
one has
\[
\nu_{\pi} ({\alpha_1}^{z_1} - {\alpha_2}) \ge z_1.
\]
Now we shall apply Proposition \ref{BL} to find an upper bound for the left-hand side above. 
For this, set $(b_1,b_2):=(z_1,1)$.
Observe that $p=c,\,D=2,\,f_{\pi}=1$.
It is easy to see that $\alpha_1,\alpha_2$ are multiplicatively independent from the fact that $\pi$ divides neither $K_0$ nor $1-\omega$.
One may choose ${\rm g}:=\pi \cdot \bar{\pi}-1=c-1$. 
Since ${\rm h}(\alpha_1)=\frac{1}{2}\log c$, one may take $H_1:=\log c$.
Further, observe that ${\rm h}(\alpha_2)={\rm h}({\alpha_2}')$, where ${\alpha_2}'=(1-\omega)/\,\overline{K_0}$, and that the minimal polynomial $g_2(t)$ of ${\alpha_2}'$ over $\mathbb Z$ is given by
\[
g_2(t)=
\begin{cases}
K' t^2 - 3u t +3 & \text{if $3 \nmid K'$},\\
(K'/3)\,t^2 - u t +1 & \text{if $3 \mid K'$},
\end{cases}
\]
where $u$ is the rational integer such that ${\alpha_2}'=u+v\,\omega$ with some $v \in \mathbb Z$.
Since $|{\alpha_2}'|=\sqrt{3/K'}$, it turns out that
\begin{align*}
{\rm h}( {\alpha_2}')
&\le
\frac{1}{2}\,\Bigr(\!\log K' + 2\,\log\,\max \{\,1,|{\alpha_2}'|\,\}\Bigr)=\frac{1}{2}\log\, \max \{K',3\}.
\end{align*}
Since $K' \not\equiv 0 \pmod{c}$ and $K'$ is a divisor of the left-hand side of \eqref{c13-vi-0}, it follows from inequality \eqref{Kup} that
\[
K' \le K_u,
\]
where $K_u =c^{n'}$.
Therefore, 
\[
\frac{D}{f_\pi}\,{\rm h}(\alpha_2)=2\,{\rm h}({\alpha_2}') \le \log K_u, 
\]
thereby one may take $H_2:=\log\,\max\{K_u,c\}$.
Then
\[
\nu_{\pi} ({\alpha_1}^{z_1} - {\alpha_2}) < \frac{27.3 \cdot 2^2\,c\,H_2}{\log^3 c} \cdot \mathcal B^2
\]
with $\mathcal B=\max\{\, \log (z_1/H_2+1/\log c)+\log \log c+0.4,\,4\log c \,\}$.
Finally, the two bounds for $\nu_{\pi} ({\alpha_1}^{z_1} - {\alpha_2})$ together lead to the bounds for $z_1\,(=z-n')$, implying the assertion. 
\par
(vii) Suppose that $x=y=1$.
Since $\{a,b\} \ne \{3,10\}$ as $\max\{a,b\}>c=13$, by Proposition \ref{twoclass}, there is no loss of generality in assuming that $X$ is odd and $Y$ is even.
Equation \eqref{2ndeq-c13} is rewritten as
\[
(b^{Y/2})^2-13^Z=(-a)^X.
\]
We can directly apply the following result of Bennett and Siksek which is a consequence of \cite[Theorems 2 and 5;\,$q=13$]{BeSi}.

\begin{prop}\label{BeSi-LeNa}
All solutions to the equation
\[
S^2 - 13^k = T^n
\]
in integers $S,T,k$ and $n$ with $S>0, \gcd(S,13)=1, k \ge 1, n \ge 3$ are given by $(S,T,k,n)=(16,3,1,5),(14,3,2,3)$ and $(499,12,2,5).$
\end{prop}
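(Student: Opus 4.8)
The plan is to reduce Proposition~\ref{BeSi-LeNa} to the results of Bennett and Siksek \cite{BeSi} on the equation $x^{2}-q^{k}=y^{n}$ for a fixed prime $q$. First I record two elementary reductions. Since $k\ge 1$ and $\gcd(S,13)=1$, reducing $S^{2}=T^{n}+13^{k}$ modulo $13$ yields $\gcd(T,13)=1$, and then $\gcd(S,T)=1$, as any common prime factor would divide $13^{k}$. Next, it is enough to treat the case where $n=p$ is prime: if $n$ has an odd prime divisor $p$, replace $T$ by $T^{n/p}$; and if $n$ is a power of $2$ (so $4\mid n$, since $n\ge 3$), then $T^{n/2}=(T^{n/4})^{2}\ge 1$ and the factorization $(S-T^{n/2})(S+T^{n/2})=13^{k}$, whose two factors are coprime and positive, forces $S-T^{n/2}=1$ and hence $2T^{n/2}=13^{k}-1$; a short $2$-adic argument shows $k$ is even, and iterating the same factorization one descends to an equation $t^{m}+1=13^{j}$ with $m\ge 2$, which has no solutions (e.g.\ by a factorization over $\mathbb{Z}[i]$ when $m=2$, and by standard results on generalized Lebesgue--Nagell equations otherwise), so the case $n$ a power of $2$ contributes nothing. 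From now on $n=p$ is an odd prime.

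For odd $p$, the equation $S^{2}-T^{p}=13^{k}$ with $S>0$ and $\gcd(13,ST)=1$ is precisely a generalized Lebesgue--Nagell equation of the shape treated in \cite{BeSi}. I would invoke \cite[Theorem~2]{BeSi}, specialized to $q=13$, for the prime exponents that it covers --- these are handled by the modular method, attaching a Frey curve to a hypothetical solution, applying Ribet level lowering, and eliminating the finitely many surviving newforms, if necessary by a multi-Frey argument --- and \cite[Theorem~5]{BeSi}, again with $q=13$, for the small exponents $p=3$ and $p=5$ (and any further ones not subsumed), for which the modular method alone is insufficient: the cubic case is reduced to finitely many Mordell equations $Y^{2}=X^{3}+m$ and solved by descent together with a computer algebra computation, and the quintic case is handled by elliptic Chabauty over an auxiliary number field. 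The outcome is exactly the list $(S,T,k,n)=(16,3,1,5),(14,3,2,3),(499,12,2,5)$; finally one notes that none of these arises from a composite exponent, since $T=3$ is prime and $T=12$ is not a perfect power. This proves the proposition.

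The substantive difficulty lies entirely within \cite{BeSi}: in particular the exponent $p=3$, where there is no a priori upper bound for $k$, so that pure descent is unavailable and one must combine the modular method with estimates for linear forms in logarithms and for $S$-integral points on elliptic curves, together with the elimination of exceptional newforms for the remaining small prime exponents. From the standpoint of the present paper, however, the only steps that must be carried out here are the two reductions above and the verification that the hypotheses of \cite[Theorems~2 and~5]{BeSi} --- positivity of $S$, coprimality to $13$, $n\ge 3$, and the covering of all exponents --- match those of Proposition~\ref{BeSi-LeNa}; everything else is a direct quotation from \cite{BeSi}. Thus, were one to prove the proposition from scratch, the main obstacle would be precisely these hard cases of the modular method, which we do not reprove.
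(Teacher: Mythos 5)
Your proposal is correct and follows essentially the same route as the paper: the paper offers no independent proof of Proposition \ref{BeSi-LeNa}, stating only that it is "a consequence of [BeSi, Theorems 2 and 5; $q=13$]", which is precisely the citation you make. Your additional reductions (coprimality of $S$ and $T$, reduction to prime exponents, and the elementary elimination of the power-of-two case) are reasonable bookkeeping for matching the hypotheses of Bennett--Siksek, but the substance in both cases is the direct quotation of their results.
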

It turns out that $X=1$, so that
\begin{equation} \label{c13-vii-2}
c^Z-c^z=b^Y-b
\end{equation}
with $c^z>b$.
Since $Y-1=\Delta \le K_3$ by (iv), and $\Delta \equiv 0 \pmod{E}$ with $\Delta$ odd, it holds that
\begin{equation} \label{c13-vii-0}
Y \le K_3+1, \ \ Y \equiv 4 \pmod 6.
\end{equation}
Thus equation \eqref{c13-vii-2} is the same as \eqref{bcY0zZ}, and we can use the arguments in Section \ref{sec-eff} for $c=13$ with $(\epsilon,k_0)=(0.53,3)$.

First, Lemma \ref{Y0=4} says that $Y>4$. 
Next, we shall apply Lemma \ref{sharpboundforZ/z}.
If $Z \ge 7$, then $Z \ge 2k_0+1\,(=7)$, so that
\[
Z < \frac{200}{47}\,z.
\]
On the other hand, from the proof of Lemma \ref{Y0>4}, 
\[
c^{z-n'}=\frac{c^z}{\Delta'} <2 b^2<2 c^{2Z/Y}.
\]
These together imply
\[
c^{\,(1-\frac{400}{47Y})\,z-n'}<2.
\]
Since $n' \le 3$ by (iv), and $Y \ge 10$ by \eqref{c13-vii-0}, one concludes from the above displayed inequality that $z \le 21$. 
Thus $Z<\frac{200}{47}\,z<90$.
To sum up, one always has $Z<90$.
Now brute force computation suffices to verify that equation \eqref{c13-vii-2} does not hold in any possible case.
\end{proof}

\subsection{Sieving the remaining cases}
Here we sieve the remaining cases in Lemma \ref{c13-Delta-odd}\,(v,\,vi). 
Based on restrictions provided by Lemma \ref{c13-Delta-odd} and its proof, we check whether the system of 1st and 2nd equations holds, according to assumptions $z \le 9$ in (v) and $z \le 70986$ in (vi), respectively.
For this purpose, it suffices to consider only the case where $x \ge y$ by symmetry of $a$ and $b$. 

We know from congruence \eqref{c13-divrel} with $h=a$ there is a positive integer $t=t(a)$ such that $a^2+\delta_a \, a+1=tc^{z-n'}$, whence
\begin{equation}\label{check3}
(2h+\delta_a)^2=4tc^{z-n'}-3.
\end{equation}
This implies that
\begin{equation}\label{check4}
t \not\equiv 0 \pmod{2}, \ \ t \not\equiv 0 \pmod{9}, \ \ p \not\equiv 2 \pmod{3}
\end{equation}
for any odd prime factor $p$ of $t$.
Further, since $a<c^{z/x} \le c^{z/2}$ as $x=\max\{x,y\} \ge 2$, and $t=(a^2+\delta_a \,a+1)/c^{z-n'} \le (a^2+a+1)/c^{z-n'}$, observe that
\begin{gather}\label{check5}
t< (1+1/c^{z/2}+1/c^z)\,c^{n'}.
\end{gather}

\subsubsection{Case {\rm (v)}}\label{sec-case v}
We perform the algorithm consisting of the following three steps.
The computation time was a few seconds.

\vspace{0.2cm}\noindent 
{\bf Step 1.} {\it Find all possible numbers $z,n',t.$}

\vspace{0.1cm}
We generate a list including all possible values of $z,n'$ and $t$ satisfying \eqref{check3}, \eqref{check4} and \eqref{check5}.
The fact that $4tc^{z-n'}-3$ is a square following from \eqref{check3} is a key sieving relation.
It turns out that the resulting list, say $list1$, contains 114 elements.

\vspace{0.2cm}{\tt
begin
\vskip.1cm 
\hskip.2cm 
$z_u:=9$
\vskip.1cm
\hskip.2cm 
for $n':=0$ to $3$ do
\vskip.1cm
\hskip.2cm 
for $z:=\max\{1,n'\}$ to $z_u$ do
\vskip.1cm 
\hskip.2cm 
if $z \ge 7$ then $t_u:=c^{n'}$; else $t_u:= \lfloor (1+1/c^{z/2}+1/c^z)\,c^{n'}\rfloor$
\vskip.1cm
\hskip.2cm 
for $t:=1$ to $t_u$ do
\vskip.1cm
\hskip.2cm 
Sieve with \eqref{check4}
\vskip.1cm
\hskip.2cm 
if $4tc^{z-n'}-3$ is a square then
\vskip.1cm
\hskip.2cm 
Put $[z,n',t]$ into $list1$
\vskip.1cm
end}

\vspace{0.2cm}\noindent 
{\bf Step 2.} {\it Find all possible numbers $a,b,x,y,z,n'.$}

\vspace{0.1cm}
We generate a list including all possible values of $a,b,x,y,z$ and $n'$ by using 1st equation.
It turns out that the resulting list, say $list2$, contains 108 elements.

\vspace{0.4cm}{\tt
begin
\vskip.1cm
\hskip.2cm $x_l:=3$ 
\vskip.1cm
\hskip.2cm for each element $[z,n',t]$ in $list1$ do
\vskip.1cm
\hskip.2cm $\mathcal D:=c^{z-n'}$; $A:=\sqrt{4t\mathcal D-3}$
\vskip.1cm
\hskip.2cm $m_l:=\max\bigr\{ \,3, \bigr\lceil \,(\,\mathcal D/2\,)^{\frac{1}{E-1}} \,\bigr\rceil \, \bigr\}$
\vskip.1cm
\hskip.2cm for $\delta_a$ in $[-1,1]$ do
\vskip.1cm
\hskip.2cm $a:=(A-\delta_a)/2$
\vskip.1cm
\hskip.2cm if $a \ge m_l$ and $a \le \lfloor c^{z/x_l} \rfloor$ and $a \equiv 3,4,9,10 \pmod{c}$
\vskip.1cm
\hskip.2cm and $a^2+\delta_a \, a+1 \equiv 0 \pmod{c^{z-n'}}$ then
\vskip.1cm
\hskip.2cm for $x:=x_l$ to $\bigr\lfloor \frac{\log c}{\log a}\,z \bigr\rfloor$ do
\vskip.1cm
\hskip.2cm for $y:=1$ to $x$ do
\vskip.1cm
\hskip.2cm if $c^z-a^x$ is a $y$\,th power then
\vskip.1cm
\hskip.2cm $b:=(c^z-a^x)^{1/y}$
\vskip.1cm
\hskip.2cm if $b \ge m_l$ and $b \equiv 3,4,9,10 \pmod{c}$
\vskip.1cm \hskip.2cm 
and $b^3 \equiv \pm1 \pmod{c^{z-n'}}$ 
\vskip.1cm \hskip.2cm 
and $b \not\equiv a \pmod{2}$ and $\gcd(b,a)=1$ then
\vskip.1cm \hskip.2cm 
Put $[a,b,x,y,z,n']$ into $list2$
\vskip.1cm
end}

\vspace{0.2cm}\noindent 
{\bf Step 3.} {\it Find all possible numbers $a,b,x,y,z,X,Y,Z.$}

\vspace{0.1cm}
We search all possible values of $a,b,x,y,z,X,Y$ and $Z$ by using 2nd equation.
For this, we use Lemma \ref{c13-Delta-odd}\,(i,\,ii,\,iv) and the latter two inequalities in \eqref{trivial-ineqs}, and distinguish two cases according to whether $\Delta=xY-Xy$ or $Xy-xY$.
It turns out that the output is the empty list.

\vspace{0.2cm}{\tt
begin
\vskip.2cm \hskip.2cm 
for each element $[a,b,x,y,z,n']$ in $list2$ do
\vskip.1cm \hskip.2cm
$X_u:=\lfloor K_1 \log b\,\log c \rfloor$;
$Y_u:=\lfloor K_1 \log a\,\log c \rfloor$;
\vskip.1cm \hskip.2cm
$\Delta_u:=\min\bigr\{\,\lfloor K_1 (\log^2 c)\,z \rfloor,\, K_3 \,\bigr\}$
\vskip.2cm \hskip.4cm 
(Case where $\Delta=xY-Xy$)
\vskip.1cm \hskip.4cm 
for $X:=1$ to $\min\bigr\{X_u, \bigr\lfloor (x Y_u - E c^{n'}) / y \bigr\rfloor\,\bigr\}$ do
\vskip.1cm \hskip.4cm 
for $k:=1$ to $\bigr\lfloor\,\min\{ \Delta_u, \,x \cdot Y_u-y \cdot X\}/(E\, c^{n'})
\ \bigr\rfloor$ by 2 do
\vskip.1cm \hskip.4cm 
$xY:=y \cdot X+k\cdot E \cdot c^{n'}$
\vskip.1cm
\hskip.4cm if $xY \equiv 0 \pmod{x}$ then
\vskip.1cm
\hskip.4cm $Y:=xY$ div $x$
\vskip.1cm
\hskip.4cm if $a^X+b^Y$ is a power of $c$ then
\vskip.1cm
\hskip.4cm $Z:=\log(a^X+b^Y) / \log c$
\vskip.1cm
\hskip.4cm Print $[a,b,x,y,z,X,Y,Z]$
\vskip.3cm \hskip.4cm 
(Case where $\Delta=Xy-xY$)
\vskip.1cm \hskip.4cm 
for $Y:=1$ to $\min\bigr\{Y_u, \bigr\lfloor (X_u y - E c^{n'}) / x \bigr\rfloor\,\bigr\}$ do
\vskip.1cm \hskip.4cm 
for $k:=1$ to $\bigr\lfloor\,\min\{ \Delta_u, \,y \cdot X_u-x \cdot Y\}/(E\,c^{n'}) \ \bigr\rfloor$ by 2 do
\vskip.1cm
\hskip.4cm $yX:=x \cdot Y+k\cdot Ec^{n'}$
\vskip.1cm
\hskip.4cm if $yX \equiv 0 \pmod{y}$ then
\vskip.1cm
\hskip.4cm $X:=yX$ div $y$
\vskip.1cm
\hskip.4cm if $a^X+b^Y$ is a power of $c$ then
\vskip.1cm
\hskip.4cm $Z:=\log(a^X+b^Y) / \log c$
\vskip.1cm
\hskip.4cm Print $[a,b,x,y,z,X,Y,Z]$
\vskip.4cm 

end}

\subsubsection{Case {\rm (vi)}}
This case is dealt with by performing the same algorithm as in Section \ref{sec-case v} by resetting the value of $x$ as $x:=2$ and resetting the value of $z_u$ as $z_u:=23650, 23651, 47322$ or $70986$, according to whether $n'=0,1,2$ or $3$, respectively.
The computation time was less than 1 hour. 
We mention that it turned out from Step 1, which was the most time-consuming part, that none of the numbers $4tc^{z-n'}-3$ is a square for $z \ge 8$.

This finishes studying system of equations \eqref{1steq-c13} and \eqref{2ndeq-c13} for $c=13$ with $\Delta$ odd.

\section{Case where $c=13$ with $\Delta$ even} \label{sec-c13-2}

With the same notation as in the previous section, we consider the system of equations \eqref{1steq-c13} and \eqref{2ndeq-c13}.
Suppose that $\Delta$ is even.
We shall observe that this leads to a contradiction.

From the proof of Lemma \ref{c-prime-parity}, either $x$ and $y$ are even, or $X$ and $Y$ are even.
Since the following argument hold in either case, we will assume that both $X$ and $Y$ are even.
Write $X=2X'$ and $Y=2Y'$.
Equation \eqref{2ndeq-c13} becomes
\begin{eqnarray} \label{2ndeq'-c13-dleta-even}
a^{2X'}+b^{2Y'}=c^Z.
\end{eqnarray}
Below, we follow the first part of the proof of \cite[Theorem 3]{MiyPin2}.

We can write 
\[
c=\beta \cdot \bar{\beta},
\]
where $\beta=2+3i$.
By a usual factorization argument to equation \eqref{2ndeq'-c13-dleta-even} over $\mathbb Z[i]$ (see the proof of \cite[Lemma 7.4\,(i)]{MiyPin2}), one can show that
\begin{equation}\label{gauss-fac}
\{a^{X'},b^{Y'}\}=\{a(Z),b(Z)\},
\end{equation}
where 
\[
a(Z)=\dfrac{1}{2}\,\bigr|\beta^Z+(-\bar{\beta})^Z\bigr|, \quad
b(Z)=\dfrac{1}{2}\,\bigr|\beta^Z-(-\bar{\beta})^Z\bigr|.
\]

\begin{lem}\label{ecvalues}
The following hold.
\[
\bigr(\,e_{c}(\,a(Z)\,),\,e_{c}(\,b(Z)\,)\,\bigr)=\,
\begin{cases}
\,(3,6) & \text{if $Z \equiv 1,3 \pmod{6}$},\\
\,(2,1) & \text{if $Z \equiv 2 \pmod{6}$},\\
\,(6,3) & \text{if $Z \equiv 0,4 \pmod{6}$},\\
\,(1,2) & \text{if $Z \equiv 5 \pmod{6}$}.
\end{cases}
\]
\end{lem}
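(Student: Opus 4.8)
The plan is to reduce $a(Z)$ and $b(Z)$ to the two integer sequences
\[
P_Z := \beta^Z + \bar{\beta}^{\,Z}, \qquad Q_Z := \frac{\beta^Z - \bar{\beta}^{\,Z}}{\beta - \bar{\beta}},
\]
which both satisfy $U_Z = 4\,U_{Z-1} - 13\,U_{Z-2}$ with initial values $(P_0,P_1)=(2,4)$ and $(Q_0,Q_1)=(0,1)$, since $\beta$ and $\bar\beta$ are the roots of $t^2-4t+13$. Because $\beta-\bar\beta=6i$, a short computation distinguishing the parity of $Z$ gives $\beta^Z+(-\bar\beta)^Z=P_Z$ and $\beta^Z-(-\bar\beta)^Z=6i\,Q_Z$ when $Z$ is even, with the two roles exchanged when $Z$ is odd; as $P_Z$ is always even, this yields
\[
\bigl(a(Z),b(Z)\bigr)=
\begin{cases}
\bigl(|P_Z|/2,\ 3|Q_Z|\bigr) & Z \text{ even},\\[2pt]
\bigl(3|Q_Z|,\ |P_Z|/2\bigr) & Z \text{ odd}.
\end{cases}
\]

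Next I would reduce everything modulo $c=13$. Since the term $13\,U_{Z-2}$ vanishes modulo $13$, the recurrence collapses to $U_Z\equiv 4\,U_{Z-1}\pmod{13}$, so that $P_Z\equiv 4^Z$ and $Q_Z\equiv 4^{Z-1}\pmod{13}$ for every $Z\ge 1$; in particular $a(Z)$ and $b(Z)$ are prime to $13$ (as is clear anyway from $\gcd(a,13)=\gcd(b,13)=1$ together with \eqref{gauss-fac}), so $e_{13}(a(Z))$ and $e_{13}(b(Z))$ are well defined. Using that $2$ is a primitive root modulo $13$ with $4\equiv 2^2$, $3\equiv 2^4$ and $-1\equiv 2^6$, the residues above become powers of $2$: for $Z$ even one gets $a(Z)\equiv\pm 2^{2Z-1}$ and $b(Z)\equiv\pm 2^{2Z+2}\pmod{13}$, and for $Z$ odd the exponents $2Z-1$ and $2Z+2$ are interchanged.

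Finally, by definition $e_{13}(2^k)$ is the least $e>0$ with $2^{ke}\equiv 2^0$ or $2^6\pmod{13}$, i.e.\ with $ke\equiv 0\pmod 6$, hence $e_{13}(2^k)=6/\gcd(k,6)$; this depends only on $k$ modulo $6$, which is consistent with the sign ambiguity above. Plugging in the exponents from the previous paragraph and running through the six residue classes of $Z$ modulo $6$ produces exactly the four cases in the statement. The only delicate point in the whole argument is the bookkeeping of the first step — correctly matching $a(Z)$ and $b(Z)$ to $P_Z$ or $Q_Z$ according to the parity of $Z$, and tracking the factor $6i$ and the absolute values — after which the computation modulo $13$ is entirely mechanical.
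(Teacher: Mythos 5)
Your proposal is correct and follows essentially the same route as the paper: both express $a(Z)$ and $b(Z)$ via the Lucas sequences $U_Z,V_Z$ attached to $(\beta,\bar\beta)$ according to the parity of $Z$, reduce modulo $13$ where everything becomes $\pm$ a power of $4=2^2$, and read off the extended orders. The only (harmless) difference is one of bookkeeping: the paper checks $Z=1,\dots,6$ by hand and then proves $6$-periodicity from $e_{13}(4)=3$, whereas you obtain the closed forms $P_Z\equiv 4^Z$, $Q_Z\equiv 4^{Z-1}\pmod{13}$ directly from the collapsed recurrence and finish with the formula $e_{13}(2^k)=6/\gcd(k,6)$.
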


\begin{proof}
We show the assertion only for the value of $e_{c}(\,a(Z)\,)$.
It is easy to verify the assertion for $Z=1,2,...,6$.
Thus it suffices to show that 
\begin{equation} \label{claim}
a(Z+6) \equiv \pm a(Z) \mod{c} \ \ \ (Z=1,2,\ldots).
\end{equation}
For this, one can use some congruence properties of the Lucas sequence of the first and second kinds corresponding to the pair $(\beta,\bar{\beta})$.
Define two integer sequences $\{U_{n}\}_{n=1}^{\infty}$ and $\{V_{n}\}_{n=1}^{\infty}$ as follows:
\[
U_{n}
:= \frac{\beta^n-{\bar{\beta}}^n}{\beta-\bar{\beta}}, \ \ 
V_{n}
:= \beta^n+{\bar{\beta}}^n.
\]
With these notation, since $\beta-\bar{\beta}=6i$, one may write
\[
2a(Z) = \begin{cases}
\,\bigr|\beta^Z-{\bar{\beta}}^Z\bigr|=6\,|\,U_{Z}\,| &\text{if $Z$ is odd,}\\
\,\bigr|\beta^Z+{\bar{\beta}}^Z\bigr|=|\,V_{Z}\,| &\text{if $Z$ is even.}
\end{cases}
\]
It is known that
\[
U_{n} \equiv V_{n-1} \mod{c}, \quad V_{n} \equiv P^n \mod{c} \quad (n=1,2,\ldots),
\] 
where $P=\beta+\bar{\beta}=4$ (cf.~\cite[(IV.12)]{Ri-book2}).
These together imply that
\[
\begin{cases}
a(Z) \equiv \pm 4^{Z+1} \!\!\! \pmod{c} &\text{if $Z$ is odd,}\\
a(Z) \equiv \pm 7 \cdot 4^Z \!\!\! \pmod{c} &\text{if $Z$ is even.}
\end{cases}
\]
This together with the fact that $e_{c}(4)=3$ implies congruence \eqref{claim}.
\end{proof}

Since $e_{c}(a)=e_{c}(b)$, and 
\[
e_{c}(a^{X'})=\frac{e_{c}(a)}{\gcd(\,e_{c}(a),X'\,)}, \ \ e_{c}(b^{Y'})=\frac{e_{c}(b)}{\gcd(\,e_{c}(b),Y'\,)}
\]
(cf.~\cite[Lemma 2.1\,(iii)]{MiyPin2}), it follows from Lemma \ref{ecvalues} that at least one of $X'$ or $Y'$ is even.
Then a simple application of the works \cite{BeElNg,Br,El} on the generalized Fermat equation of signature $(2,4,r)$ with $r \ge 4$ to equation \eqref{2ndeq'-c13-dleta-even} yields $Z \le 3$.
We shall apply relation \eqref{gauss-fac} for $Z=1,2,3$.
It turns out that if $Z \ne 3$, then both $a^{X'}$ and $b^{Y'}$ are not perfect powers, in particular $X'=Y'=1$.
Further, $a \in \{3,9\}$ and $b=46$ for $Z=3$.
In any case, one observes that $e_{c}(a) \ne e_{c}(b)$, contradicting the premise.

The contents of this and the previous sections together complete the proof of Theorem \ref{c13}.

\section{Application of abc-conjecture} \label{sec-abc}

Here we give some applications of $abc$-conjecture to Conjecture \ref{atmost1}, which strengthen many of the lemmas in Sections \ref{sec-c-general} and \ref{sec-c-prime}.
We begin by quoting the following result of Bugeaud and Luca \cite{BuLu}. 

\begin{prop}[Theorem 6.1 of \cite{BuLu}]\label{BuLu-abc}
Under the truth of abc-conjecture, there are only finitely many solutions to the equation
\[
\mathcal A^{x_1} - \mathcal A^{x_2} = \mathcal B^{y_1} - \mathcal B^{y_2}
\]
in positive integers $\mathcal A,\mathcal B,x_1,x_2, y_1$ and $y_2$ such that $\mathcal A>1, \mathcal B>1, x_1 \ne x_2$ and $\mathcal A^{x_1} \ne \mathcal B^{y_1}.$
\end{prop}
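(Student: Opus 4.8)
The plan is to combine the $abc$-conjecture with the unconditional finiteness results of Bugeaud--Luca for Pillai-type equations. First I would normalise the solution: interchanging $(\mathcal A,x_1,x_2)$ with $(\mathcal B,y_1,y_2)$ if necessary, and using $x_1\ne x_2$, I may assume $x_1>x_2\ge 1$, $y_1>y_2\ge 1$ and that $\mathcal A^{x_1}$ is the largest of the four terms. Set $d_1=x_1-x_2$, $d_2=y_1-y_2$ and $M=\mathcal A^{x_1}-\mathcal B^{y_1}=\mathcal A^{x_2}-\mathcal B^{y_2}$; the hypothesis $\mathcal A^{x_1}\ne\mathcal B^{y_1}$ says precisely that $M\ne 0$, and since $0<|M|<\mathcal A^{x_2}\le\mathcal A^{x_1}/2$ one gets $|x_1\log\mathcal A-y_1\log\mathcal B|\le\log 2$. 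A routine argument isolating the part of $\mathcal A$ and $\mathcal B$ built from their common primes reduces everything to the case $\gcd(\mathcal A,\mathcal B)=1$, which I assume henceforth; then $\mathcal A^{x_2}\mid\mathcal B^{d_2}-1$ and $\mathcal B^{y_2}\mid\mathcal A^{d_1}-1$, so one may write $\mathcal A^{d_1}-1=\mathcal B^{y_2}s$ and $\mathcal B^{d_2}-1=\mathcal A^{x_2}s$ with one and the same positive integer $s$.

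The core of the argument is to apply the $abc$-conjecture (with a fixed small $\varepsilon$) to the three triples
\[
1+\mathcal B^{y_2}s=\mathcal A^{d_1},\qquad 1+\mathcal A^{x_2}s=\mathcal B^{d_2},\qquad M+\mathcal B^{y_1}=\mathcal A^{x_1},
\]
in each of which coprimality of the two summands is immediate from $\gcd(\mathcal A,\mathcal B)=1$. Multiplying the inequalities coming from the first two triples and using $\mathrm{rad}(s)\le s$ together with $s^{2}<\mathcal A^{d_1}\mathcal B^{d_2}/(\mathcal A^{x_2}\mathcal B^{y_2})$, and then combining with the inequality from the third triple (where $|M|<\mathcal A^{x_2}$) and the near-equality $|x_1\log\mathcal A-y_1\log\mathcal B|\le\log 2$, a short computation in the variables $x_1,x_2,y_1,y_2,\log\mathcal A,\log\mathcal B$ yields that each of the quantities
\[
x_2\log\mathcal A,\quad y_2\log\mathcal B,\quad (x_1-x_2)\log\mathcal A,\quad (y_1-y_2)\log\mathcal B
\]
is $\ll_\varepsilon\log\mathcal A+\log\mathcal B$. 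In particular, for whichever of $\mathcal A$ and $\mathcal B$ has the larger logarithm, both of its exponents in the equation are then bounded by an absolute constant.

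Thus there remain only finitely many possibilities for that pair of exponents, and for each of them the equation $\mathcal A^{x_1}-\mathcal A^{x_2}=\mathcal B^{y_1}-\mathcal B^{y_2}$ — now with two of the four exponents fixed — becomes an instance covered by the unconditional Theorem~4.2 of \cite{BuLu}, which would finish the proof. I expect the middle step to be the main obstacle: one must arrange the $abc$ estimates so that they genuinely close up into the displayed bound rather than degenerate into a tautology relating one large quantity to another, the crucial leverage being that the single integer $s$ divides two ``small'' quantities simultaneously and contributes only $\mathrm{rad}(s)\le s$, and that the hypothesis $M\ne 0$ forces the two sides of the original equation to have essentially the same size. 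The reduction to $\gcd(\mathcal A,\mathcal B)=1$, and pinning down exactly the generality of \cite[Theorem 4.2]{BuLu} that is required, are the remaining points that need care.
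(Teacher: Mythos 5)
First, a caveat: the paper does not prove this proposition --- it is quoted as Theorem 6.1 of \cite{BuLu} and used as a black box --- so there is no internal proof to measure your attempt against, and I can only judge the argument on its own terms. Your normalisation, the identity $M=\mathcal A^{x_1}-\mathcal B^{y_1}=\mathcal A^{x_2}-\mathcal B^{y_2}\ne 0$, the existence of a single cofactor $s$ with $\mathcal A^{d_1}-1=\mathcal B^{y_2}s$ and $\mathcal B^{d_2}-1=\mathcal A^{x_2}s$ in the coprime case, and the three $abc$ applications are all sound; I checked that the resulting system of inequalities does close up for a small fixed $\varepsilon$ and yields $x_2\log\mathcal A,\ y_2\log\mathcal B,\ d_1\log\mathcal A,\ d_2\log\mathcal B\ll_\varepsilon\log\mathcal A+\log\mathcal B$, hence that the two exponents of whichever base has the larger logarithm are absolutely bounded.

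The genuine gap is in the final step. After the $abc$ stage the residual problem is $\mathcal A^{m}-\mathcal A^{n}=\mathcal B^{y_1}-\mathcal B^{y_2}$ with $m>n$ fixed but with $\mathcal A,\mathcal B,y_1,y_2$ all still unbounded: your estimates give only $y_1\log\mathcal B\ll\log\mathcal A$, which is compatible both with $\mathcal B$ small and $y_1$ enormous, and with $y_1$ small and $\mathcal B$ of the size of a power of $\mathcal A$. Theorem 4.2 of \cite{BuLu} --- at least in the form recorded as Proposition \ref{BuLu-newyork} above, whose Ridout/Subspace proof is ineffective and visibly non-uniform in $q$ --- fixes the base $q=\mathcal B$ as well as the exponents $m>n$, so it does not cover a family in which $\mathcal B$ ranges over infinitely many values. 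Closing this requires either a version of that theorem uniform in $q$, or a further argument bounding $\mathcal B$ (equivalently $y_1$); and in the sub-case where all four exponents end up bounded one still needs finiteness of integral points on the curves $T^m-T^n=S^p-S^q$, which is again not what Theorem 4.2 asserts. A secondary weakness: the reduction to $\gcd(\mathcal A,\mathcal B)=1$ is not routine. For a common prime $p$ one gets only $x_2\nu_p(\mathcal A)=y_2\nu_p(\mathcal B)$, and after stripping the common part the equation reads $\mathcal A_2^{x_2}(\mathcal A^{d_1}-1)=\mathcal B_2^{y_2}(\mathcal B^{d_2}-1)$, where $\mathcal A_2,\mathcal B_2$ are the parts of $\mathcal A,\mathcal B$ prime to $\gcd(\mathcal A,\mathcal B)$ but the full bases still sit inside the parentheses; this is not an instance of the original equation with coprime bases, and your common cofactor $s$ no longer exists in the form you use.
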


It should be noted that the conclusion of the above proposition ensures that the number of exceptional triples $(a,b,c)$ for Conjecture \ref{atmost1pillai} is finite.

Supposing that equation \eqref{abc} has two solutions $(x,y,z), (X,Y,Z)$ with $z \le Z$, the following two lemmas are proved by using Proposition \ref{BuLu-abc}.
We emphasize that any Vinogradov notation appearing in their proofs is not always effective on its implied constant.

\begin{lem}\label{abc-xneX-yneY}
Assume that $abc$-conjecture is true.
Further, assume that $\max\{a,b,c\}$ exceeds some absolute constant $\mathcal C'.$
Then the following hold.
\begin{itemize}
\item[\rm (i)]
$x \ne X,\,y \ne Y,\,z \ne Z.$
\item[\rm (ii)]
$\min\{x,y,X,Y\}=1.$
\end{itemize}
\end{lem}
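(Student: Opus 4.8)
The plan is to deduce both parts of Lemma~\ref{abc-xneX-yneY} from Proposition~\ref{BuLu-abc} by the same elimination trick used in the proof of Lemma~\ref{c-prime-xneXandyneY}, but now phrased so that the \emph{two base numbers among $a,b,c$ that survive} play the roles of $\mathcal A$ and $\mathcal B$. First I would treat part~(i). Suppose $x=X$. Subtracting the 1st equation from the 2nd eliminates the $a$-term and yields
\[
b^{Y}-b^{y}=c^{Z}-c^{z},
\]
an instance of the equation in Proposition~\ref{BuLu-abc} with $(\mathcal A,\mathcal B)=(b,c)$, $(x_1,x_2,y_1,y_2)=(Y,y,Z,z)$. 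Here $x_1\ne x_2$ because $y\ne Y$ would be forced (if also $y=Y$ then the two solutions coincide, contradicting $(x,y,z)\ne(X,Y,Z)$); and $\mathcal A^{x_1}\ne\mathcal B^{y_1}$, i.e. $b^{Y}\ne c^{Z}$, since $b^{Y}<c^{Z}$ by the 2nd equation. Thus $b,c>1$ gives, under the $abc$-conjecture, only finitely many solutions, hence $\max\{b,c\}\ll 1$, and then $a\ll 1$ follows from $a^{x}<c^{z}\ll 1$ once $x\ge 1$ and $z\ll 1$; so $\max\{a,b,c\}\ll 1$. The cases $y=Y$ (eliminate $b$, compare $a$ and $c$) and $z=Z$ (eliminate $c$, obtaining $a^{X}-a^{x}=b^{y}-b^{Y}$, compare $a$ and $b$, with $x\ne X$ and $y\ne Y$ now automatic) are symmetric, each producing $\max\{a,b,c\}\ll 1$. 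Choosing $\mathcal C'$ larger than all the implied constants gives part~(i).

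For part~(ii) I would combine part~(i) with the finiteness of $x,y,X,Y$ already available. Indeed, once $\max\{a,b,c\}>\mathcal C'$ is large, one may also assume $\max\{a,b\}$ is large (if $c$ is the large one, then $z\ll_c 1$ forces $a,b\ll_c 1$, a contradiction for $\mathcal C'$ large, or one simply notes $c$ bounded reduces to the degenerate small-$c$ situation), so by Lemma~\ref{c-general-xyXY-ub} all of $x,y,X,Y$ lie in a fixed finite set. Now suppose $\min\{x,y,X,Y\}\ge 2$. For each admissible quadruple $(x,y,X,Y)$ of integers $\ge 2$, I would again eliminate one term: from the 1st equation $a^{x}=c^{z}-b^{y}$ and the 2nd $a^{X}=c^{Z}-b^{Y}$, whence
\[
(c^{z}-b^{y})^{X}=(c^{Z}-b^{Y})^{x},
\]
a polynomial identity in the two multiplicatively-structured quantities $c^{z},b^{y}$ (and $b^{Y},c^{Z}$); more cleanly, since $x\ne X$ by~(i) one passes through $a$: writing $u=a$, the pair of relations $u^{x}+b^{y}=c^{z}$, $u^{X}+b^{Y}=c^{Z}$ with $x\ne X$, $y\ne Y$ lets one eliminate $u$ to get a Pillai-type relation between powers of $b$ and powers of $c$ of the shape handled by Proposition~\ref{BuLu-abc} (after raising to suitable exponents to clear the different powers of $u$), again with the non-degeneracy $b^{(\cdot)}\ne c^{(\cdot)}$ guaranteed by the inequalities in \eqref{trivial-ineqs}. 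This bounds $z$, hence $\max\{a,b,c\}$, in terms of an absolute constant, so enlarging $\mathcal C'$ once more forces $\min\{x,y,X,Y\}=1$.

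The main obstacle I expect is bookkeeping in part~(ii): turning "$\min\{x,y,X,Y\}\ge 2$" into a clean instance of $\mathcal A^{x_1}-\mathcal A^{x_2}=\mathcal B^{y_1}-\mathcal B^{y_2}$ requires eliminating the base whose two exponents may differ and then normalizing the mismatched exponents by raising to a common multiple, while carefully preserving the hypotheses $\mathcal A,\mathcal B>1$, $x_1\ne x_2$, and $\mathcal A^{x_1}\ne\mathcal B^{y_1}$ in every one of the finitely many exponent configurations; the non-vanishing and coprimality inputs come from \eqref{trivial-ineqs}, Lemma~\ref{c-general-coprime} and the standard fact that $\Delta\ne 0$ (cf.~\cite[Lemma 3.3]{HuLe}), but one must check them uniformly. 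A secondary point is that, because Proposition~\ref{BuLu-abc} is ineffective, the resulting constant $\mathcal C'$ is ineffective, which is exactly why the lemma is stated with an unspecified absolute constant; I would flag this explicitly, consistent with the remark preceding Lemma~\ref{c-prime-xneXandyneY} about the ineffectivity propagating from Subspace Theorem and its $abc$ analogue.
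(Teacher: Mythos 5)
Your part~(i) is essentially the paper's argument: in each of the three coincidence cases one eliminates a term, lands on an instance of $\mathcal A^{x_1}-\mathcal A^{x_2}=\mathcal B^{y_1}-\mathcal B^{y_2}$, and invokes Proposition~\ref{BuLu-abc}; your verification of the hypotheses $x_1\ne x_2$ and $\mathcal A^{x_1}\ne\mathcal B^{y_1}$ is correct, and the paper likewise treats one case explicitly ($z=Z$, giving $a^X-a^x=b^y-b^Y$) and declares the others symmetric.

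Part~(ii), however, has a genuine gap. Eliminating $a$ from $a^{x}+b^{y}=c^{z}$ and $a^{X}+b^{Y}=c^{Z}$ with $x\ne X$ does \emph{not} produce an equation of the shape $\mathcal A^{x_1}-\mathcal A^{x_2}=\mathcal B^{y_1}-\mathcal B^{y_2}$: the identity $(c^{z}-b^{y})^{X}=(c^{Z}-b^{Y})^{x}$ expands into a polynomial relation with many cross terms in $b$ and $c$, and no amount of ``raising to a common multiple'' collapses it to a four-term Pillai equation, so Proposition~\ref{BuLu-abc} simply cannot be applied there. A second problem is your appeal to Lemma~\ref{c-general-xyXY-ub}: its bounds on $x,y,X,Y$ are $\ll_{c}1$, i.e.\ they depend on $c$, whereas the constant $\mathcal C'$ in the present lemma must be absolute; the configuration where $c$ is huge and $a,b$ are small is not excluded by your parenthetical remark and cannot be handled with $c$-dependent bounds. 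The paper's actual route for (ii) is different and is the idea you are missing: apply the $abc$-conjecture \emph{directly} to the second equation to get $c^{Z}\ll_{\varepsilon}(abc)^{1+\varepsilon}$, insert the height inequalities $a<c^{p}$, $b<c^{q}$ with $p=\min\{z/x,Z/X\}$, $q=\min\{z/y,Z/Y\}$, and deduce $c^{\mathcal E}\ll_{\varepsilon}1$ with $\mathcal E=Z-(p+q)(1+\varepsilon)-1-\varepsilon$. If all of $x,y,X,Y$ are $\ge 2$ this exponent is bounded below by a positive multiple of $z$ (or of $Z$) except in a handful of small exponent configurations ($x=y=2$ together with $Z=2$ and $\{X,Y\}\in\{\{3,4\},\{3,5\},\{3,6\}\}$), which are then eliminated by elementary congruence and size arguments. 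You would need to replace your elimination step by this direct use of the radical bound to close the proof.
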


\begin{proof}
(i) The proof proceeds similarly to that of Lemma \ref{c-prime-xneXandyneY}.
If $z=Z$, then
\[
a^X - a^x = b^y - b^Y.
\]
Proposition \ref{BuLu-abc} for $(\mathcal A,\mathcal B):=(a,b),\, (x_1,x_2, y_1, y_2):=(X,x,y,Y)$ is applied to see in particular that $a,b,x,y \ll 1$, so that $c \le c^z=a^x+b^y \ll 1$.
Similarly, the same conclusion holds when $x=X$ or $y=Y$.
\par
(ii) Let $\varepsilon>0$ be any constant. 
A direct application of $abc$-conjecture to 2nd equation implies
\[
c^Z \ll_{\varepsilon} (abc)^{1+\varepsilon}.
\]
Since $a<c^p$ and $b<c^q$, where $p=\min\{z/x,Z/X\}$ and $q=\min\{z/y,Z/Y\}$, it follows that $c^Z \ll_{\varepsilon} (c^{\,p+q+1})^{1+\varepsilon}$, so that
\begin{equation} \label{abc-pq}
c^{\,\mathcal E} \ll_{\varepsilon} 1,
\end{equation}
where
\[
\mathcal E:=Z-(p+q)(1+\varepsilon) \,-1-\varepsilon.
\]
In particular, $\mathcal E \ll_{\varepsilon} 1$. 

Suppose that $x>1,\,y>1$. 
Since $p \le z/x$ and $q \le z/y$, and one may assume by (i) that $Z \ge z+1$, it follows that
\[
\mathcal E \ge \bigg(1-\biggr(\frac{1}{x}+\frac{1}{y}\bigg)(1+\varepsilon) \bigg)\,z \,-\varepsilon.
\]
Thus, if $x>2$ or $y>2$, then, since $1/x+1/y \le 1/2+1/3=5/6$, one takes $\varepsilon=1/77$ in \eqref{abc-pq} to find that
\[
\mathcal E \ge \biggr(\frac{1}{6}-\frac{5\varepsilon}{6} \bigg)\,z\,-\varepsilon
= \frac{72 z-6}{462} \ge \frac{z}{7}.
\]
Since $\mathcal E \gg z$, it follows from inequality \eqref{abc-pq} that $c^z \ll 1$, so that $a,b,c \ll 1$.
Thus, we may assume that $x=y=2$.
Also, $X \ne Y$ as $\Delta=xY-Xy$ is nonzero.

Suppose that $X>1,\,Y>1$.
Since one may assume by (i) that $X \ne x$ and $Y \ne y$, it follows that $X \ge 3$ and $Y \ge 3$, with $\max\{X,Y\} \ge 4$. 
Since $p \le Z/X$ and $q \le Z/Y$, putting $\mu:=1/X+1/Y$, one has
\[
\mathcal E \ge \big(\,1-(1+\varepsilon)\mu\,\big)\,Z \,-(1+\varepsilon).
\]
Thus, if
\begin{equation} \label{ineq-abc-epmuZ}
1-(1+\varepsilon)\mu \,-\frac{1+\varepsilon}{Z} \gg 1,
\end{equation}
then $Z \ll \mathcal E$, so that $c^Z \ll_{\varepsilon} 1$ by \eqref{abc-pq}. 

Since $\mu \le 1/3+1/4=7/12$, if $Z \ge 3$, then, setting $\varepsilon:=1/12$, one finds that
\[
1-(1+\varepsilon)\mu \,-\frac{1+\varepsilon}{Z} 
\ge 1-\frac{7(1+\varepsilon)}{12} \,-\frac{1+\varepsilon}{3}
= \frac{1}{144}.
\]
Also, if $\mu \le 10/21$, then, setting $\varepsilon:=1/42$, one finds that
\[
1-(1+\varepsilon)\mu \,-\frac{1+\varepsilon}{Z}
\ge 1-\frac{10(1+\varepsilon)}{21} \,-\frac{1+\varepsilon}{2}
= \frac{1}{1764}.
\]
To sum up, if $Z \ge 3$ or $\mu \le 10/21$, then inequality \eqref{ineq-abc-epmuZ} holds, and $c^Z \ll 1$, $a,b,c \ll 1$.

Finally, we assume that $Z=2$ and $\mu>10/21$.
We shall finish the proof by showing that this leads to a contradiction. 
Since $z<Z$, and $\mu>10/21$ if and only if $\{X,Y\} \in \{\{3,4\},\{3,5\},\{3,6\}\}$, there is no loss of generality in assuming that we are in the case where
\[
a^2+b^2=c, \ \ a^3+b^Y=c^2
\]
with $Y \in \{4,5,6\}$.
If $Y=4$, then $a=a^3/a^2=(c^2-b^4)/(c-b^2)=c+b^2>c$, which immediately yields a contradiction to the first equation above. 
Further, reducing the above displayed equations modulo $a^2$ and $b^2$ implies that $b^{Y-4} \equiv 1 \pmod{a^2}$ and $a \equiv 1 \pmod{b^2}$, respectively.
If $Y \in \{5,6\}$, then these congruences together show that $b^{Y-4}>a^2>(b^2)^2=b^4$, so that $Y \ge 9$, a contradiction. 
\end{proof}

\begin{lem}\label{abc-minxy1-minXY1}
Assume that $abc$-conjecture is true.
If $\max\{a,b\}$ exceeds some constant depending only on $c,$ then $(x,y,X,Y)=(2,1,1,2)$ or $(1,2,2,1).$
\end{lem}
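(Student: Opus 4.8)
The plan is to build on Lemma~\ref{abc-xneX-yneY}, which (assuming $abc$, and noting that $\max\{a,b\}$ large forces $\max\{a,b,c\}$ large) already gives $z<Z$, $x\neq X$, $y\neq Y$, $z\neq Z$ and $\min\{x,y,X,Y\}=1$; moreover $c^{z}=a^{x}+b^{y}>\max\{a,b\}$, so $z$, hence $Z$, is forced to be arbitrarily large, and recall $a,b\geq 2$. The engine used repeatedly is a direct application of the $abc$-conjecture to the larger equation $a^{X}+b^{Y}=c^{Z}$: since $a^{X},b^{Y},c^{Z}$ are pairwise coprime, $c^{Z}\ll_{\varepsilon}(abc)^{1+\varepsilon}$, and combining the bounds $a<c^{Z/X}$, $b<c^{Z/Y}$ from this equation with the bounds $a<c^{z/x}$, $b<c^{z/y}$ from $a^{x}+b^{y}=c^{z}$ yields, for $\varepsilon$ fixed small, an inequality $Z\leq\kappa z+O_{c}(1)$ with a constant $\kappa$ read off from the exponents (the verification being of the kind carried out in the proof of Lemma~\ref{abc-xneX-yneY}(ii)); whenever $\kappa<1$ this contradicts $z<Z$ for $z$ large, which bounds $z$, hence $a$ and $b$, in terms of $c$ only.

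\emph{Step 1: $\min\{x,y\}=1$.} If $x,y>1$, then exactly as in the proof of Lemma~\ref{abc-xneX-yneY}(ii) one gets $x=y=2$; then $\min\{X,Y\}=1$, since otherwise the same $abc$ estimate applied to $a^{X}+b^{Y}=c^{Z}$ (with $Z$ large) forces $X=Y=2$, whence $\Delta=|xY-Xy|=0$, impossible. By the $a\leftrightarrow b$ symmetry take $Y=1$; then $X\neq 1$ (otherwise $a^{2}+b^{2}=c^{z}<c^{Z}=a+b$, contradicting $a,b\geq 2$) and $X\neq x=2$, so $X\geq 3$. The engine applied to $a^{X}+b=c^{Z}$, using $a<c^{Z/X}\leq c^{Z/3}$ and $b<c^{z/2}$ (from $b^{2}<c^{z}$), gives $Z\leq\frac{3}{4}z+O_{c}(1)$ for $\varepsilon$ small, contradicting $z<Z$. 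Hence $\min\{x,y\}=1$.

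\emph{Step 2: $\min\{X,Y\}=1$.} If $X,Y>1$, the $abc$ estimate applied to $a^{X}+b^{Y}=c^{Z}$ forces $X=Y=2$. Since $\min\{x,y\}=1$ and $\Delta=2|x-y|>0$, take $y=1$, $x\geq 2$; as $x\neq X=2$, in fact $x\geq 3$. This is the main obstacle: we now face $a^{x}+b=c^{z}$ and $a^{2}+b^{2}=c^{Z}$ with $x\geq 3$ and $z<Z$, and the larger equation contains no exponent $\geq 3$, so the first run of the engine does not close. I would apply $abc$ to $a^{2}+b^{2}=c^{Z}$ using $a<c^{z/x}\leq c^{z/3}$ (from the first equation) and $b<c^{\min\{z,\,Z/2\}}$: if $Z\geq 2z$ one gets $Z\leq\frac{4}{3}z+O_{c}(1)$, hence $Z<2z$ for $z$ large, impossible; so $Z<2z$, whereupon $b<c^{Z/2}$ and the engine gives $Z\leq\frac{2}{3}z+O_{c}(1)$, again contradicting $z<Z$. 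Hence $\min\{X,Y\}=1$.

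\emph{Step 3: identifying the configuration.} Now $\min\{x,y\}=\min\{X,Y\}=1$ with $x\neq X$ and $y\neq Y$, so the two ``$1$''s sit in opposite positions; up to the $a\leftrightarrow b$ symmetry the configuration is $(x,y,X,Y)=(x,1,1,Y)$ with $x,Y\geq 2$, that is, $a^{x}+b=c^{z}$ and $a+b^{Y}=c^{Z}$. If $Y\geq 3$, the engine applied to $a+b^{Y}=c^{Z}$ with $a<c^{z/x}\leq c^{z/2}$ and $b<c^{Z/Y}\leq c^{Z/3}$ gives $Z\leq\frac{3}{4}z+O_{c}(1)$, contradicting $z<Z$; so $Y=2$. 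If then $x\geq 3$, the engine applied to $a+b^{2}=c^{Z}$ with $a<c^{z/x}\leq c^{z/3}$ and $b<c^{Z/2}$ gives $Z\leq\frac{2}{3}z+O_{c}(1)$, again impossible; so $x=2$. Thus $(x,y,X,Y)=(2,1,1,2)$, whose mirror under $a\leftrightarrow b$ is $(1,2,2,1)$, completing the argument. The single delicate point, which I expect to be the real obstacle, is the hard case $(x,1,2,2)$ in Step~2, where the contradiction only materialises after first ruling out $Z\geq 2z$.
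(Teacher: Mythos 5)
Your argument is correct, and its final step coincides with the paper's, but you reach the two reductions $\min\{x,y\}=1$ and $\min\{X,Y\}=1$ by a genuinely different route. The paper simply quotes Lemma~\ref{c-general-minxy=1--minXY=1}, an \emph{unconditional} statement proved earlier via Proposition~\ref{gp} (greatest prime factors of $\mathcal X^m+\mathcal Y^n$) and a Gaussian-integer/Baker argument for the case $(2,2)$; combined with Lemma~\ref{abc-xneX-yneY}\,(i) this immediately puts the two $1$'s in opposite positions, and the only work left is exactly your Step~3 (the paper runs it with $\varepsilon=1/7$, getting $\mathcal E\ge z/21-8/7$ whenever $1/x+1/Y\le 5/6$). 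You instead re-derive both reductions from the $abc$-conjecture itself, which forces you through the extra configurations $(2,2,X,1)$ with $X\ge3$ and, most delicately, $(x,1,2,2)$ with $x\ge3$, where your two-stage argument (first excluding $Z\ge 2z$ to upgrade the bound on $b$ from $c^{z}$ to $c^{Z/2}$) is needed and is sound; note that the fixed value of $c$ and the divergence of $z$ let you bypass the $Z=2$, $\mu>10/21$ subcase that the paper must treat inside Lemma~\ref{abc-xneX-yneY}\,(ii). What your route buys is self-containment: the lemma becomes a consequence of the $abc$-conjecture and Lemma~\ref{abc-xneX-yneY} alone, with no appeal to the effective greatest-prime-factor machinery. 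What the paper's route buys is brevity and the reuse of an unconditional ingredient that it needs elsewhere anyway.
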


\begin{proof}
We may assume that $\max\{a,b\}>{\mathcal C}_{4},\mathcal C'$.
Lemmas \ref{c-general-minxy=1--minXY=1} and \ref{abc-xneX-yneY}\,(i) say that
\begin{gather*}
\min\{x,y\}=1, \ \min\{X,Y\}=1,\\
x \ne X, \ y \ne Y, \ z<Z,
\end{gather*}
respectively.
Therefore, there is no loss of generality in assuming that $x>1,y=1, X=1, Y>1$, that is,
\[
a^x+b=c^z, \quad a+b^Y=c^Z.
\]
We shall use inequality \eqref{abc-pq} with $\varepsilon=1/7$.
Since $p \le z/x, \, q \le Z/Y$ and $Z>z$, one has
\begin{align*}
\mathcal E &\ge Z-(z/x+Z/Y)(1+\varepsilon) \,-\,1-\varepsilon \\
&\ge \bigg(1-\biggr(\frac{1}{x}+\frac{1}{Y}\bigg)(1+\varepsilon) \bigg)\,z \,-1\,-\varepsilon.
\end{align*}
Suppose that $x>2$ or $Y>2$.
Since $1/x+1/Y \le 5/6$, it follows that
\[
\mathcal E \ge \biggr(\frac{1}{6}-\frac{5\varepsilon}{6} \bigg)\,z-1-\varepsilon=\frac{z}{21}-\frac{8}{7}.
\]
Thus $z \ll 1$ by inequality \eqref{abc-pq}.
This completes the proof.
\end{proof}

Lemma \ref{abc-minxy1-minXY1} together with \cite[Corollary 1]{MiyPin2} and Lemmas \ref{div}\,(ii) implies the following contribution to Step 1 of Problem \ref{prob-abc}.

\begin{thm}\label{c_abc}
Let $c$ be any fixed positive integer satisfying at least one of the following conditions\,{\rm :}
\begin{itemize}
\item[(i)] $c \not\equiv 0 \pmod{9}$ and $c$ has no prime factor congruent to $1$ modulo $3\,;$
\item[(ii)] $c$ is a prime.
\end{itemize}
Then, assuming the truth of abc-conjecture, $N(a,b,c) \le 1,$ except for only finitely many pairs of $a$ and $b.$
\end{thm}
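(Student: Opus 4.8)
The plan is to assume $N(a,b,c)\ge 2$ with $\max\{a,b\}$ exceeding the threshold of Lemma~\ref{abc-minxy1-minXY1}, and to reach a contradiction, so that $N(a,b,c)\le 1$ for all but finitely many pairs $(a,b)$. Applying Lemma~\ref{abc-minxy1-minXY1} (after, where convenient, the reduction of Section~\ref{sec-weakform} for a divisor $c_1$ of $c$ with $c_1>2$ and $\gcd(c_1,\varphi(c_1))=1$, which arranges $e_{c_1}(a)=e_{c_1}(b)$ without changing $N(a,b,c)$ or the largeness of $\max\{a,b\}$), we may assume, after swapping $a$ and $b$ if necessary, that the two solutions carry exponent vectors $(x,y,X,Y)=(2,1,1,2)$, that is,
\[
a^2+b=c^z,\qquad a+b^2=c^Z,
\]
with $z\le Z$. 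Since $z=Z$ forces $a^2-a=b^2-b$ and hence $a=b$, contradicting $\gcd(a,b)=1$, we have $z<Z$; moreover $\Delta=|xY-Xy|=3$.

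Suppose first that $c$ satisfies (ii). The primes $c=2$ (Scott~\cite{Sco}) and $c=3$ (\cite{MiyPin2}) are already settled, so assume $c$ is a prime with $c>3$. From $z<Z$ we get $c^z\mid c^Z=a+b^2$, and reducing modulo $c^z$ via $b\equiv -a^2$ gives $c^z\mid a(a^3+1)$; since $\gcd(a,c)=1$, this forces $c^z\mid a^3+1=(a+1)(a^2-a+1)$. As $\gcd(a+1,a^2-a+1)$ divides $3$ and $c>3$, the prime power $c^z$ divides one of the two factors, whence $c^z\le a^2-a+1<a^2+b=c^z$, a contradiction.

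Suppose now that $c$ satisfies (i). If $c=2^j3^k$, then $2^{\nu_2(c)}\ne 3^{\nu_3(c)}$ since $c>1$, so $\max\{2^{\nu_2(c)},3^{\nu_3(c)}\}>\sqrt c$ and $c$ is already covered by Proposition~\ref{result-MiyPin2}. Otherwise $c$ has a prime factor $\ge 5$, so there is a divisor $c_1\ge 5$ of $c$ with $\gcd(c_1,\varphi(c_1))=1$; moreover $3\nmid\varphi(c_1)$, because under (i) no prime factor of $c_1$ is $\equiv 1\pmod 3$ and $9\nmid c_1$. With this choice of $c_1$, writing $E_1=e_{c_1}(a)=e_{c_1}(b)$, Lemma~\ref{div}\,(ii) gives $E_1\mid\Delta=3$ and also $E_1\mid\varphi(c_1)$, so $E_1\mid\gcd(3,\varphi(c_1))=1$, that is, $E_1=1$; but $E_1>1$ by \cite[Corollary~1]{MiyPin2}, a contradiction.

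The substance of the argument lies entirely in Lemma~\ref{abc-minxy1-minXY1} (which rests on Proposition~\ref{BuLu-abc}, and hence on the $abc$-conjecture) and in the earlier results \cite{MiyPin2} and Proposition~\ref{result-MiyPin2} disposing of the values $c=2^j3^k$; the remaining steps above are short and elementary. I therefore expect the main difficulty to be organizational rather than mathematical: pinning down precisely which divisor $c_1$ of $c$ is the one to which \cite[Corollary~1]{MiyPin2} applies, confirming that condition (i) is exactly what forces $3\nmid\varphi(c_1)$ for that divisor, and checking that the values of $c$ lying outside the generic argument are indeed all of the form $2^j3^k$. One should also verify that requiring $\max\{a,b\}$ large in Lemma~\ref{abc-minxy1-minXY1}, together with the largeness of $\max\{a,b,c\}$ underlying Proposition~\ref{BuLu-abc}, excludes only finitely many pairs $(a,b)$ once $c$ is fixed.
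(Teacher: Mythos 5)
Your proof is correct and follows the same overall route as the paper: both reduce, via Lemma~\ref{abc-minxy1-minXY1} (after the Section~\ref{sec-weakform} normalization), to the system $a^2+b=c^z$, $a+b^2=c^Z$ with $\Delta=3$, and then finish with elementary congruences. For condition (i) your argument is essentially the paper's ($E$ divides $\gcd(3,\varphi(\cdot))=1$, then \cite[Corollary 1]{MiyPin2}), differing only in that you route through a prime divisor $c_1\ge 5$ and dispose of $c=2^j3^k$ separately via Proposition~\ref{result-MiyPin2}, whereas the paper works with the modulus $c$ itself and needs no case split. For condition (ii) your endgame is genuinely different and a little stronger: from $b\equiv -a^2\pmod{c^z}$ you deduce $c^z\mid a^3+1=(a+1)(a^2-a+1)$ and, since $c>3$ is prime and the two factors have greatest common divisor dividing $3$, you obtain the size contradiction $c^z\le a^2-a+1<a^2+b=c^z$ outright; the paper instead subtracts the two equations to get $(a-b)(a+b-1)\equiv 0\pmod{c^z}$, concludes $a\equiv -1\pmod{c}$ so that $E=1$, and again invokes \cite[Corollary 1]{MiyPin2}. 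Your variant thus shows the reduced system has no solutions at all for prime $c>3$ and removes one appeal to the earlier corollary, at no extra cost; the bookkeeping you flag at the end (which divisor carries the order condition, and that the largeness thresholds exclude only finitely many pairs for fixed $c$) is handled exactly as you anticipate.
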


Note that the first condition in the above theorem is equivalent to that $\varphi(c) \not\equiv 0 \pmod{3}$.

\begin{proof}[Proof of Theorem $\ref{c_abc}$]
By Lemma \ref{abc-minxy1-minXY1}, without loss of generality, we may assume that we have the following system of two equations: 
\[
a^2+b=c^z, \quad a+b^2=c^Z,
\]
where $E=e_{c}(a)=e_{c}{b}$ and $z \le Z$.
Observe that $\Delta=3$.

Since $E \mid \gcd(\varphi(c),\Delta)$, one sees under the condition (i) that $E=1$.
Then \cite[Corollary 1]{MiyPin2} completes the proof for (i).

Since it is clear that both $a^2+b, a+b^2$ are divisible by $c^z$, it follows that $a^2+b-(a+b^2) \equiv 0 \pmod{c^z}$, so that
\[
(a-b)(a+b-1) \equiv 0 \mod{c^z}.
\]
Since $a+b-1<a^2+b=c^z$, one obtains $\gcd(a-b,c^z)>1$.
Thus, if $c$ is a prime, then $a-b \equiv 0 \pmod{c}$, which immediately yields that $a \equiv -1 \pmod{c}$, so that $E=1$.
Then \cite[Corollary 1]{MiyPin2} completes the proof for condition (ii).
\end{proof}

\section{Open problems} \label{sec-op}

In this final section, we discuss further problems.

\subsection{Solving the system with the values of $c$ not being handled}\label{sec-op-1}

Here we shall be interested in considering Step 1 of Problem \ref{prob-abc} for the values of $c$ not being handled by the results in this paper and our previous work.
In this direction, not only Theorem \ref{c-7etc} but also Proposition \ref{result-MiyPin2} fails to handle the following values of $c$ (not perfect powers) as follows:
\begin{equation} \label{c-value-1}
c=10, 11, 14, 15, 19, 20, 21, 22, 23, 26, 28, 29, 30, 31, 33, 34, \dots
\end{equation}
For our aim for each case in \eqref{c-value-1}, it is enough to solve the system of 1st and 2nd equations with the exponents $x,y,X$ and $Y$ restricted as in the conclusions of Lemmas \ref{c-general-summary} or \ref{c-prime-summary}.
We leave open problems corresponding to some prime values of $c$.

\begin{prob}\label{prob-c11}
Let $x$ and $Y$ be positive integers with $2 \le x \le 3$ and $2 \le Y \le 3.74 \cdot 10^{11}$ such that $xY \equiv 1 \pmod{5}.$
Prove that there are only finitely many pairs $(a,b)$ of relatively prime positive integers greater than $1$ such that
\[
a^x+b=11^z, \ \ a+b^Y=11^Z
\]
for some positive integers $z$ and $Z$ with $z \le Z.$
\end{prob}

\begin{prob}\label{prob-c11}
Let $x$ and $Y$ be positive integers with $x \in \{2,4,5,7\}$ and $2 \le Y \le 1.3 \cdot 10^{13}$ such that $xY \equiv 1 \pmod{3}.$
Prove that there are only finitely many pairs $(a,b)$ of relatively prime positive integers greater than $1$ such that
\[
a^x+b=19^z, \ \ a+b^Y=19^Z
\]
for some positive integers $z$ and $Z$ with $z \le Z.$
\end{prob}

\begin{rem}\rm
For some composite value of $c$, if one succeeds in making the conclusion of Lemma \ref{c-general-xleEyleE} sharper, then it might be possible to work out Step 1 of Problem \ref{prob-abc} for such $c$. 
However, it seems that working out Step 2 further is difficult by the methods of this paper, because one has to rely on an explicit version of Proposition \ref{gp} (cf.~\cite[Theorem 2]{Bu}) in order to in particular consider how large the size of $\mathcal C_1$ in Lemma \ref{c-general-coprime} is (but, the so-called modular approach on ternary Diophantine equations (cf.~\cite[Ch.\,14]{Co}) might be helpful towards overcoming such a difficulty).
\end{rem}

Among the values of $c$ in \eqref{c-value-1}, even assuming $abc$-conjecture, we still fail in Theorem \ref{c_abc} to handle the following cases:
\begin{equation} \label{c-value-2}
c=14, 21, 26, 28, 35, 38, 39, 42, 52, 57, 62, 65, 70, 74, 76, 77, \dots
\end{equation}
For considering Problem \ref{prob-abc} for each case in \eqref{c-value-2}, we are by Lemma \ref{abc-minxy1-minXY1} led to consider the following problem:

\begin{prob}\label{prob-c14}
For each value of $c$ in \eqref{c-value-2}$,$ prove that there are only finitely many pairs $(a,b)$ of relatively prime positive integers greater than $1$ such that
\[
a^2+b=c^z, \ \ a+b^2=c^Z
\]
for some distinct positive integers $z$ and $Z.$
\end{prob}

Solving Problem \ref{prob-c14} only for a special value of $c$ (for instance, $c=14$) is a good result even if assuming the truth of $abc$-conjecture.\footnote{Problem \ref{prob-c14} was recently solved in \cite{LeMiy}.}

\subsection{Restricted rational approximations to certain algebraic irrationals}\label{sec-appro}

As seen in the proof of Proposition \ref{BuLu-newyork}, in order to make Theorem \ref{a-general-pillai_gcd1_ineff} (or each of Corollaries \ref{cor-a-prime-pillai} and \ref{cor-to-BuLu}) effective for each prime value of $a$, it is enough, for any positive integers $m$ and $i$ with $m>1$ and $i<m$ and for any positive number $\varepsilon$, to find a positive constant $\mathcal C$ such that the inequality
\begin{equation}\label{xi-ineq1}
\left|a^{\frac{i}{m}}-\frac{P}{Q}\right| >\frac{\mathcal C}{Q^{1+\varepsilon}}
\end{equation}
holds for any integer $P$ and for any positive integer $Q$ which equals a power of $a$.

As argued in Section \ref{sec-eff}, in order to make Theorem \ref{c-7etc} effective for each prime $c$ in \eqref{c-2r31prime}, it is enough to find positive constants $\epsilon$ and $\mathcal C$ with $\epsilon<0.6$ such that the inequality
\begin{equation}\label{xi-ineq2}
\left|\sqrt{c}-\frac{P}{Q}\right| >\frac{\mathcal C}{Q^{1+\epsilon}}
\end{equation}
holds for any integer $P$ and for any positive integer $Q$ which equals a power of $c$.

Closely related works to establishing inequalities \eqref{xi-ineq1} and \eqref{xi-ineq2} can be found in \cite{Beu,BaBe,Be_crelle_01}, but it seems that the methods therein are not enough to establish such results, except for \eqref{xi-ineq2} with $c=13$.\footnote{private communication with M. Bennett at Number Theory Conference 2022 at the University of Debrecen in Hungary}

\subsection*{Acknowledgements}
We would like to sincerely thank the anonymous referee for the thorough reading and for the helpful remarks and suggestions which substantially improved an earlier draft.
We are also grateful to Mihai Cipu and Reese Scott for their comments and remarks.


\begin{thebibliography}{2}
\bibitem[BaBe]{BaBe}
M.~Bauer and M.A.~Bennett,
{\it Applications of the hypergeometric method to the generalized Ramanujan-Nagell equation},
Ramanujan J.\ {\bf 6}\,(2002), no. 2, 209--270.
%
\bibitem[Be]{Be_cjm_01}
M.A.~Bennett,
{\it On some exponential equations of S. S. Pillai},
Canad.\ J.\ Math. {\bf 53}\,(2001), no. 5, 897--922.
%
\bibitem[Be2]{Be_crelle_01}
M.A.~Bennett,
{\it Rational approximation to algebraic numbers of small height{\rm :} The Diophantine equation $|ax^n-by^n|=1$},
J.\ Reine.\ Angew.\ Math.\ {\bf 535}\,(2001), 1--49.
%
\bibitem[BeElNg]{BeElNg}
M.A.~Bennett, J.S.~Ellenberg and N.C.~Ng,
{\it The Diophantine equation $A^4+2^\delta B^2=C^n$},
Int.\ J.\ Number Theory {\bf 6}\,(2010), no. 2, 311--338.
%
\bibitem[BeSi]{BeSi}
M.A. Bennett and S. Siksek,
{\it Differences between perfect powers$:$ prime power gaps}, Algebra Number Theory {\bf 17}\,(2023), no. 10, 1789--1846.
%
\bibitem[Beu]{Beu}
F.~Beukers,
{\it On the generalized Ramanujan-Nagell equation I},
Acta Arith.\ {\bf 38}\,(1981), 389--410.
%
\bibitem[BiBuMi]{BiBuMi}
Y.F.~Bilu, Y. Bugeaud and M. Mignotte,
{\it The Problem of Catalan}, Springer, 2014.
%
\bibitem[BoCaPl]{BoCaPl}
W. Bosma, J. Cannon and C. Playoust,
{\it The Magma Algebra System I$:$ The User Language},
J.\ Symb.\ Comp.\ {\bf 24}\,(1997), no. 3-4, 235--265.
%
\bibitem[Br]{Br}
N. Bruin,
{\it The Diophantine equations $x^2\pm y^4=\pm z^6$ and $x^2+y^8=z^3$},
Compositio Math.\ {\bf 118}\,(199), no. 3, 305--321.
%
\bibitem[Bu]{Bu}
Y.~Bugeaud,
{\it On the greatest prime factor of $ax^m+by^n$, II.},
Bull.\ London Math.\ Soc.\ {\bf 32}\,(2000), no. 6, 673--678.
%
\bibitem[Bu2]{Bu-madic}
Y.~Bugeaud,
{\it Linear forms in two $m$-adic logarithms and applications to Diophantine problems},
Compositio Math.\ {\bf 132}\,(2002), no. 2, 137--158.
%
\bibitem[Bu3]{Bu-book}
Y. Bugeaud,
{\it Linear forms in logarithms and applications},
IRMA Lect.\ Math.\ Theor.\ Phys.\ vol. 28, European Mathematical Society (EMS), Z\"urich, 2018.
%
\bibitem[BuLa]{BuLa}
Y. Bugeaud and M. Laurent,
{\it Minoration effective de la distance $p$-adique entre puissances de nombres alg\'ebriques},
J.\ Number Theory {\bf 61}\,(1996), no. 2, 311--342.
%
\bibitem[BuLu]{BuLu}
Y. Bugeaud and F. Luca,
{\it On Pillai's Diophantine equation},
New York J.\ Math.\ {\bf 12}\,(2006), 193--217.
%
\bibitem[Co]{Co}
H. Cohen,
{\it Number Theory. Vol. II. Analytic and Modern Tools}, Grad. Texts in Math. vol. 240, Springer, 2007.
%
\bibitem[El]{El}
J.S.~Ellenberg,
{\it Galois representations attached to $\mathbb Q$-curves and the generalized Fermat equation $A^4+B^2=C^p$}.
Amer.\ J.\ Math.\ {\bf 126}\,(2004), no. 4, 763--787.
%
\bibitem[EvGy]{EvGy}
J.H.~Evertse and K. Gy{\H o}ry,
{\it Unit Equations in Diophantine Number Theory},
Cambridge University Press, Cambridge, 2015.
%
\bibitem[Gu]{Gu}
R.K.~Guy,
{\it Unsolved Problems in Number Theory},
Springer, 2004.
%
\bibitem[HuLe]{HuLe}
Y.-Z. Hu and M.-H. Le,
{\it A note on ternary purely exponential diophantine equations},
Acta Arith.\ {\bf 171}\,(2015), no. 2, 173--182.
%
\bibitem[HuLe2]{HuLe2}
Y.-Z. Hu and M.-H. Le,
{\it An upper bound for the number of solutions of ternary purely exponential diophantine equations},
J.\ Number Theory \,{\bf 183}\,(2018), 62--73.
%
\bibitem[HuLe3]{HuLe3}
Y.-Z. Hu and M.-H. Le,
{\it An upper bound for the number of solutions of ternary purely exponential diophantine equations II},
Publ.\ Math.\ Debrecen {\bf 95}\,(2019), no. 3-4, 335--354.
%
\bibitem[Le]{Le}
M.-H. Le,
{\it A note on the diophantine equation $ax^m-by^n=k$},
Indag.\ Math.\ {\bf 3}\,(1992), 185--191.
%
\bibitem[Le2]{Le2}
M.-H. Le,
{\it A conjecture concerning the exponential diophantine equation $a^x+b^y=c^z$},
Acta Arith.\ {\bf 106}\,(2003), no. 4, 345--353.
%
\bibitem[LeMiy]{LeMiy}
M.-H. Le and T. Miyazaki,
{\it An application of $abc$-conjecture to a conjecture of Scott and Styer on purely exponential equations},
preprint 2024, arXiv:2407.07407.
%
\bibitem[LeV]{LeV}
W.J.~LeVeque,
{\it On the equation $a^x-b^y=1$},
Amer.\ J.\ Math.\ {\bf 74}\,(1952), 325--331.
%
\bibitem[Lu]{Lu}
F.~Luca,
{\it On the diophantine equation $p^{x_1}-p^{x_2}=q^{y_1}-q^{y_2}$},
Indag.\ Math.\ (N.S.) {\bf 14}\,(2003), no. 2, 207--222.
%
\bibitem[Lu2]{Luca_ActaArith_12}
F. Luca,
{\it On the system of Diophantine equations $a^2+b^2=(m^2+1)^r$ and $a^x+b^y=(m^2+1)^z$},
Acta Arith.\ {\bf 153}\,(2012), no. 4, 373--392.
%
\bibitem[Mih]{Mih}
P. Mih\u{a}ilescu,
{\it Primary cyclotomic units and a proof of Catalan's conjecture},
J.\ Reine.\ Angew.\ Math.\ {\bf 572}\,(2004), 167--195.
%
\bibitem[MiyPin]{MiyPin}
T. Miyazaki and I. Pink,
{\it Number of solutions to a special type of unit equations in two unknowns}, 
Amer.\ J.\ Math.\ {\bf 146}\,(2024), no. 2, 295--369.
%
\bibitem[MiyPin2]{MiyPin2}
T. Miyazaki and I. Pink,
{\it Number of solutions to a special type of unit equations in two unknowns, I\hspace{-1.2pt}I}, 
Res.\ Number Theory {\bf 10}\,(2024), no. 2, 41 pp.
%
\bibitem[Pi]{Pi}
S.S.~Pillai,
{\it On the inequality $0<a^x-b^y \le n$}, J.\ Indian Math.\ Soc.\ {\bf 19}\,(1931), 1--11.
%
\bibitem[Pi2]{Pi2}
S.S.~Pillai,
{\it On $a^x-b^y=c$}, J. Indian Math. Soc. (N.S.) {\bf 2}\,(1936), 119--122 and 215.
%
\bibitem[Ri]{Ri-book}
P. Ribenboim,
{\it Catalan's Conjecture$:$ Are $8$ and $9$ the only Consecutive Powers$?$}, MA: Academic Press, Boston, 1994.
%
\bibitem[Ri2]{Ri-book2}
P.~Ribenboim,
{\it The little book of bigger primes}, Springer, 1991.
%
\bibitem[Rid]{Rid}
D. Ridout,
{\it Rational approximations to algebraic numbers},
Mathematika {\bf 4}\,(1957), 125--131.
%
\bibitem[Sc]{Sc}
W.M.~Schmidt,
{\it Diophantine approximation}, 
Lecture Notes in Mathematics, vol. 785, Springer, 1980.
%
\bibitem[Sc2]{Sc2}
W.M.~Schmidt,
{\it Diophantine approximations and Diophantine equations}, 
Lecture Notes in Mathematics, vol. 1467, Springer, 1991.
%
\bibitem[Sco]{Sco}
R. Scott,
{\it On the equations $p^x-b^y=c$ and $a^x+b^y=c^z$},
J.\ Number Theory {\bf 44}\,(1993), no. 2, 153--165.
%
\bibitem[ScoSty]{ScSt_jnt2004}
R. Scott and R. Styer,
{\it On $p^x-q^y=c$ and related three term exponential Diophantine equations with prime bases},
J.\ Number Theory {\bf 105}\,(2004), no. 2, 212--234.
%
\bibitem[ScoSty2]{ScSt_jnt2006}
R. Scott and R. Styer,
{\it On the generalized Pillai equation $\pm a^x \pm b^y=c$},
J.\ Number Theory {\bf 118}\,(2006), no. 2, 236--265.
%
\bibitem[ScoSty3]{ScSt_jtnb2013}
R. Scott and R. Styer,
{\it The number of solutions to the generalized Pillai equation $\pm r a^x \pm s b^y=c$},
J.\ Theor.\ Nombres Bordeaux {\bf 25}\,(2013), no. 1, 179--210.
%
\bibitem[ScoSty4]{ScSt_jtnb2015}
R. Scott and R. Styer,
{\it Bennett's Pillai theorem with fractional bases and negative exponents allowed},
J.\ Theor.\ Nombres Bordeaux {\bf 27}\,(2015), no. 1, 289--307.
%
\bibitem[ScoSty5]{ScoSt_PMD_2016}
R. Scott and R. Styer,
{\it Number of solutions to $a^x+b^y=c^z$},
Publ.\ Math.\ Debrecen {\bf 88}\,(2016), no. 1-2, 131--138.
%
\bibitem[ScoSty6]{ScSty}
R. Scott and R. Styer,
{\it Number of solutions to $a^x+b^y=c^z$ with $\gcd(a,b)>1$},
preprint 2024, arXiv:2401.04197.
%
\bibitem[ShTi]{ShTi}
T.N.~Shorey and R. Tijdeman,
{\it Exponential Diophantine Equations, Cambridge Tracts in Math.},
vol. 87, Cambridge University Press, Cambridge, 1986.
%
\bibitem[StTi]{StTi}
R.J. Stroeker and R. Tijdeman, 
{\it Diophantine Equations}, 
Computational Methods in Number Theory, M.C. Tract 155, Centre for Mathematics and Computer Science, 1982, pp. 321--369.
%
\bibitem[Sty]{Sty}
R. Styer,
{\it At most one solution to $a^x+b^y=c^z$ for some ranges of $a, b, c$},
Glas.\ Mat.\ Ser.\ III {\bf 59}\,(2024), no. 2, 277--298.
%
\bibitem[Te]{Te}
N. Terai,
{\it Applications of a lower bound for linear forms in two logarithms to exponential Diophantine equations},
Acta Arith.\ {\bf 90}\,(1999), no. 1, 17--35.
%
\bibitem[Wa]{Wa}
M. Waldschmidt, 
{\it Perfect Powers$:$ Pillai's works and their developments}, 
preprint 2009, arXiv:0908.4031.
%
\end{thebibliography}
\end{document}